\setlist[enumerate]{leftmargin=.5in}
\setlist[itemize]{leftmargin=.5in}
\crefname{hypothesis}{Hypothesis}{Hypotheses}
\crefname{fact}{Fact}{Facts}
\title{Consistency of Variational Inference for Nonlinear Inverse Problems of Partial Differential Equations\thanks{Submitted to the editors May 29,2025.
\funding{
This research was partially funded by the National Key Research and Development Program of China (Grant No. 2022YFA1004100), the National Natural Science Foundation of China (Grant Nos. 12322116, 12271428, 12326606, and 42474139), the Fundamental and Interdisciplinary Disciplines Breakthrough Plan of the Ministry of Education of China (Grant No. JYB2025XDXM101), and the Major Projects of the National Natural Science Foundation of China (Grant Nos. 12090021 and 12090020).
}}}
\author{
	Shaokang Zu\thanks{School of Mathematics and Statistics,
	Xi'an Jiaotong University,
	Xi'an
        710049, China
	(\email{incredit1@stu.xjtu.edu.cn}).}
\and 
	Junxiong Jia\thanks{Corresponding author. 
	School of Mathematics and Statistics,
	Xi'an Jiaotong University,
	Xi'an
	710049, China
	(\email{jjx323@xjtu.edu.cn}).}
\and 
	Deyu Meng\thanks{
	School of Mathematics and Statistics and Ministry of  Education Key Lab of Intelligent Networks and Network Security, 
	Xi’an Jiaotong University, Pazhou Laboratory (Huangpu),  
	Xi’an, 710049, China
	(\email{dymeng@mail.xjtu.edu.cn}).}
}
\crefname{condition}{Condition}{Conditions}
\crefname{example}{Example}{Examples}
\newcommand{\abs}[1]{\lvert {#1} \rvert}
\newcommand{\norm}[1]{\lVert {#1} \rVert}
\newcommand{\pdt}[2]{\langle {#1},{#2} \rangle}
\newcommand{\im}{\mathrm{i}}
\newcommand{\mexp}[1]{\exp\Big\{{#1}\Big\}}
\newcommand{\bbra}[1]{\bigg({#1}\bigg)}
\newcommand{\set}[1]{\Big\{{#1}\Big\}}
\begin{document}
\maketitle
\begin{abstract}
We investigate the convergence rates of variational posterior distributions for statistical inverse problems involving nonlinear partial differential equations (PDEs). Departing from exact Bayesian inference, variational inference transforms the inference problem into an optimization problem by introducing variational sets. Based on a modified ``prior mass and testing'' framework, we propose general conditions for three categories of inverse problems: mildly ill-posed, severely ill-posed, and those with unknown model parameters. Concentrating on the variational sets comprising the restricted Gaussian or widely utilized Gaussian mean-field families, we demonstrate that for all three categories, the convergence rate can be decomposed into a true distribution term and a variational approximation term. Moreover, we illustrate that the true distribution term dominates the convergence rates, thereby substantiating the effectiveness of the variational inference method for inverse problems of PDEs. As specific examples, we examine a collection of non-linear inverse problems, including the Darcy flow problem, the inverse potential problem for a subdiffusion equation, and the inverse medium scattering problem. Besides, we show that our convergence rates are minimax optimal for these inverse problems.
\end{abstract}

% REQUIRED
\begin{keywords}
variational inference,
Bayesian nonlinear inverse problems,
inverse scattering problems,
elliptic partial differential equations,
subdiffusion equation
\end{keywords}

% REQUIRED
\begin{MSCcodes}
65N21, 62G20
\end{MSCcodes}

\section{Introduction} 
Motivated by significant applications in radar imaging, seismic explorations, and many other domains, inverse problems of partial differential equations (PDEs) have undergone enormous development over the past few decades \cite{IntroPDE_haber2003learning}. As computational power continues to increase, researchers are not satisfied with obtaining just an estimated solution but pursue performing some statistical analysis based on uncertain information. The Bayesian inverse approach draws the attention of researchers because it transforms inverse problems into statistical inference problems and has provided a framework for analysing the uncertainties of the parameters interested in inverse problems of PDEs \cite{IntroPDE_stuart2010inverse,IntroPDE_Dashti2017}.
\par
To extract information from the posterior probability distribution, sampling methods such as Markov chain Monte Carlo (MCMC) are frequently utilized. Although MCMC is highly efficient and theoretically sound as a sampling method, its computational cost can become excessive for many applications. This is due to the need to calculate a computationally intensive likelihood function for inverse problems of PDEs \cite{Fichtner2011Book}. Consequently, variational Bayesian inference methods have emerged as a popular alternative. Given a variational set $\mathcal{Q}$ and a posterior distribution $\Pi(\cdot\vert D_N)$, variational Bayesian inference seeks to identify $\hat{Q}$, the closest approximation to $\Pi(\cdot\vert D_N)$ under the Kullback-Leibler divergence within $\mathcal{Q}$. The structure of these variational sets $\mathcal{Q}$ often allows variational Bayesian inference to operate much faster than MCMC while achieving similar approximation accuracy. Moreover, variational Bayesian inference generally scales better to cases involving large datasets and computationally intensive likelihood functions due to its optimization-based nature \cite{IntroVI_yang2020alpha}. This approach is increasingly favored in Bayesian inverse problems, as illustrated by recent studies \cite{IntroVI_blei2017variational,IntroVI_meng2023sparse,IntroVI_povala2022variational,IntroVI_jia2021variational} and their references. Recently, theoretical results for variational Bayesian inference have begun to emerge \cite{IntroVI_alquier2020concentration,IntroVI_yang2020alpha,zhang2020convergence,IntroVI_wang2019frequentist,IntroVI_ray2022variational}, which provide theoretical guarantees for variational Bayesian inference and its variants.
However, there remains limited theoretical understanding of variational Bayesian inference in inverse problems. In particular, to our knowledge, \textcolor{black}{few researches} have investigated the convergence rate of the variational posterior $\hat{Q}$ for nonlinear inverse problems. Compared to the classical statistical inference problems investigated in \cite{IntroVI_blei2017variational,IntroVI_zhang2018advances,zhang2020convergence}, statistical inverse problems of PDEs involve a complex nonlinear forward operator, necessitating nontrivial modifications of both inference and PDE theories. Due to the distinct features of the variational Bayesian approach, the methodologies developed for statistical inverse problems, as illustrated in \cite{IntroNonLinear_nickl2023bayesian,IntroNonLinear_giordano2020consistency}, can hardly be directly employed.
\par
Here, we approach the variational posterior $\hat{Q}$ as the solution to the optimization problem 
\[\mathop{\min}_{Q\in \mathcal{Q}} D(Q\Vert\Pi(\cdot|D_{N})),\] where $D(\cdot \Vert \cdot)$ represents the Kullback-Leibler (KL) divergence, and $\Pi(\cdot|D_{N})$ denotes the posterior distribution derived from the data $D_N$ within a natural statistical observation model of the forward map $\mathcal{G}$ (refer to \cref{GeneralSetting}). The primary goal of this study is to determine the convergence rate of the variational posterior $\hat{Q}$ towards the truth. We propose general conditions on the forward map $\mathcal{G}$, the prior and the variational class $\mathcal{Q}$ to describe this contraction. Assuming that the forward map $\mathcal{G}$ satisfies our regularity and conditional stability conditions, we demonstrate that, for a broad class of Gaussian process priors, the variational posterior $\hat{Q}$ converges to the true parameter at the rate specified by \[\varepsilon_N^2 + \frac{1}{N}\mathop{\inf}_{Q \in \mathcal{Q}}P_{0}^{(N)}D(Q\Vert \Pi_N(\cdot|D_N)).\]
The first term, $\varepsilon_N^2$, represents the convergence rate of the posterior $\Pi_N(\cdot|D_N)$ and is ordered as $N^{-a}$ for some $a > 0$. The second term represents the variational approximation error arising from the data-generating process $P_{0}^{(N)}$, which is induced by the true parameter. When the variational set $\mathcal{Q}$ comprises {\color{black} restricted} Gaussian measures or a mean-field variational class, we demonstrate that the variational approximation error is dominated by $\varepsilon_N^2$ (up to a logarithmic factor). Therefore, this implies that the convergence rates of the variational posterior distributions for nonlinear inverse problems can be same as the posterior distributions (up to a logarithmic factor). In \cref{ApplicationSection}, we derive convergence rates of variational posteriors for the Darcy flow problem and the inverse potential problem for a subdiffusion equation. We also prove that these rates are minimax optimal (up to a logarithmic factor).
\par
In addition to the general conditions established for ensuring the convergence of variational posteriors in nonlinear inverse problems, this paper also explores several additional aspects. The initial conditions on the forward map $\mathcal{G}$ require polynomial growth in the norm of the parameters for the constants involved (see \cref{VariationalConsistency}). Some inverse problems may not meet these requirements, such as the inverse medium scattering problem, which has been shown to have at most log-type conditional stability \cite{furuya2024consistency}. For this kind of problem, we devise a specific prior to allow the variational posterior to still contract to the true parameter under weaker conditions on the forward map $\mathcal{G}$. The inverse medium scattering problem fits into these settings, with its variational posterior demonstrated to converge to the true parameter at a minimax optimal rate. Besides, we also address inverse problems that include extra unknown parameters. Compared to the conditions on the forward map $\mathcal{G}$ proposed in \cref{VariationalConsistency}, we introduce conditions that include additional requirements related to these unknown parameters. Inspired by model selection methods, we determine the unknown parameters and simultaneously obtain the convergence rate of the variational posterior $\hat{Q}$ towards the true target parameter by optimizing the evidence lower bound. Our theorems are applied to the inverse potential problem for a subdiffusion equation with an unknown fractional order. 
\subsection*{Related work}
% \\
% \\
% \emph{Related\ work.}\quad 
The theory behind Bayesian methods for linear inverse problems is now well-founded. Initial research into asymptotic behavior focused on conjugate priors \cite{IntroLinear_knapik2011bayesian}, and later extended to non-conjugate priors \cite{IntroLinear_ray2013bayesian}. See \cite{IntroLinear_knapik2013bayesian,IntroLinear_agapiou2014bayesian,IntroLinear_agapiou2013posterior,IntroLinear_jia2021posterior} for more references. In recent years, the theory of Bayesian nonlinear inverse problems has seen significant advancements. 
Nickl et al. \cite{IntroNonLinear_nickl2020convergence} present the theory for the convergence rate of maximum a posterior estimates with Gaussian process priors, providing examples involving the Darcy flow problem and the Schr\"{o}dinger equation. For
X-ray transforms, Monard et al. \cite{IntroNonLinear_monard2019efficient,IntroNonLinear_monard2021consistent} prove Bernstein–von Mises theorems for a large family of one-dimensional linear functionals of the target parameter, and show the convergence rate of the statistical error in the recovery of a matrix field with Gaussian process priors. Subsequently, Giordano and Nickl \cite{IntroNonLinear_giordano2020consistency} demonstrated the convergence rate of the posterior with Gaussian process priors for an elliptic inverse problem. In 2020, Abraham and Nickl \cite{IntroNonLinear_abraham2020statistical} showed that a statistical algorithm constructed in the paper recovers the target parameter in supremum-norm loss at a statistical optimal convergence rate of the logarithmic order. For the Schr\"{o}dinger equation, Nickl \cite{IntroNonLinear_nickl2020bernstein} established the Bernstein–von Mises theorem for the posterior distribution using a prior based on a specific basis. For additional references, see \cite{IntroNonLinear_nickl2023some,IntroNonLinear_bohr2022bernstein,IntroNonLinear_monard2021statistical} and the monograph \cite{IntroNonLinear_nickl2023bayesian}.
\par
Variational Bayes inference has seen extensive use across various fields. Recently, theoretical results start to surface. In \cite{IntroVI_wang2019frequentist}, Wang and Blei established Bernstein–von Mises type of results for parametric models. For other related references on theories on parametric variational Bayes inference, refer to \cite{IntroVI_blei2017variational}. Regarding nonparametric and high-dimensional
models that are more relevant to non-linear inverse problems, recent researches \cite{IntroVI_yang2020alpha,IntroVI_alquier2020concentration} investigated variational approximation to tempered posteriors.
In the context of variational Bayesian inference for usual posterior distributions, studies \cite{zhang2020convergence,IntroVI_pati2018statistical} achieved results comparable to those obtained for tempered posteriors. When it comes to inverse problems, some algorithmic developments can be found in \cite{IntroVI_jia2021variational,IntroVI_jin2012variational,IntroVI_guha2015variational}. For theoretical results of linear inverse problems, Randrianarisoa and Szabo \cite{IntroLinear_randrianarisoa2023variational} employed the inducing variable method to derive the contraction rates of variational posterior with Gaussian process priors around the true function. Concerning non-linear inverse problems, to our knowledge, we are the first in extending these theoretical results of variational posterior to non-linear inverse problem settings.
% \\
% \\
% \emph{Organization.}\quad
\subsection*{Organization}
The structure of this paper is as follows. In \cref{GeneralSetting}, we begin by introducing the notations and statistical settings for the inverse regression model. We also define some information distances and connect these distances to the $L^2$ norm. In \cref{VariationalConsistency}, we establish a theory that guarantees convergence rates of variational Bayesian inference with Gaussian priors for nonlinear inverse problems. In \cref{ApplicationSection}, we demonstrate the application of our theory to the Darcy flow problem and the inverse potential problem for a subdiffusion equation, showing that the convergence rates for these problems are minimax optimal. In \cref{SectionContractionRateUnstable}, we extend our theory to certain inverse problems that lack conditional stability, including an application to the inverse medium scattering problem. In \cref{SectionContractionExtra}, we present a theory based on model selection methods to address some inverse problems with extra unknown parameters. This theory ensures the determination of unknown parameters and the simultaneous convergence rate of the variational posterior $\hat{Q}$ to the true target parameter, with an application to the inverse potential problem for a subdiffusion equation with an unknown fractional order. All proofs are provided in the Supplementary Material \cite{Zu2024AoS}.

\section{General settings}\label{GeneralSetting}
\subsection{Basic notations}
For $\mathcal{X} \subset \mathbb{R}^d$ an open set, $L^2(\mathcal{X})$ denotes the standard Hilbert space of square integrable functions on $\mathcal{X}$ with respect to Lebesgue measure, with inner product 
$\pdt{\cdot}{\cdot}_{L^2}$ and $\norm{\ \cdot\ }_{L^2}$ norm where we omit $\mathcal{X}$ when no confusion may arise. $L^2_{\lambda}(\mathcal{X})$ denotes the
space of $\lambda$-square integrable function on $\mathcal{X}$, where $\lambda$ is a measure on $\mathcal{X}$.
\par
For a multi-index $i = (i_1, ..., i_d)$, $i_j = \mathbb{N}\cup\{0\}$. Let $D^i$ denote the $i$-th (weak) partial differential operator of order $\abs{i} = \sum_j i_j$. Then the Sobolev spaces are defined as
\begin{align*}
H^{\alpha}(\mathcal{X}) = \set{{f \in L^2(\mathcal{X}) : D^i f \in L^2(\mathcal{X}) \ \forall |i| \leq \alpha}},\quad \alpha \in \mathbb{N}
\end{align*}
normed by $\norm{f}_{H^{\alpha}(\mathcal{X})} = \sum_{\abs{i}\leq \alpha} \norm{D^i f}_{L^2(\mathcal{X})}.$
For a non-integer real number $\alpha >0$, one defines $H^{\alpha}$ by interpolation.
\par
The space of bounded and continuous functions on $\mathcal{X}$ is denoted by $C^0(\mathcal{X})$,
equipped with the supremum norm $\norm{\ \cdot\ }_{\infty}$. For $\alpha \in \mathbb{N}\cup\{0\}$, the space of $\alpha$-times differentiable functions on $\mathcal{X}$ can be similarly defined as
\begin{align*}
C^{\alpha}(\mathcal{X}) = \set{{f \in C^0(\mathcal{X}) : D^i f \in C^0(\mathcal{X}) \ \forall |i| \leq \alpha}},\quad \alpha \in \mathbb{N}
\end{align*}
normed by
$\norm{f}_{C^{\alpha}(\mathcal{X})} = \sum_{\abs{i}\leq \alpha} \norm{D^i f}_{\infty}.$ 
The symbol $C^{\infty}(\mathcal{X})$ denotes the set of all infinitely differentiable functions on $\mathcal{X}$.
For a non-integer real number $\alpha >0$, we say $f \in C^{\alpha}(\mathcal{X})$ if for all multi-indices $i$ with $\abs{i}\leq \lfloor \alpha \rfloor$, $D^if$ exists and is $\alpha - \lfloor \alpha \rfloor$-H$\ddot{\mathrm{o}}$lder continuous. The norm on the space $C^{\alpha}(\mathcal{X})$ for such $\alpha$ is given by
\begin{align*}
\norm{f}_{C^{\alpha}(\mathcal{X})} = \norm{f}_{C^{\lfloor \alpha \rfloor}(\mathcal{X})} + \sum_{\abs{i} = \lfloor \alpha \rfloor} \sup_{x,y\in\mathcal{X}, x\neq y}\frac{\abs{D^i f(x)-D^i f(y)}}{\abs{x-y}^{\alpha - \lfloor \alpha \rfloor}}. 
\end{align*}
\par
For any space $S(\mathcal{X})$ with norm $\norm{\ \cdot \ }_{S(\mathcal{X})}$, we will sometimes omit $\mathcal{X}$ in the
notation and denote the norm by $\norm{\ \cdot \ }_{S}$ when no confusion may arise. The notation $S_0(\mathcal{X})$ denotes the subspace $(S_0(\mathcal{X}),\norm{\ \cdot \ }_{S(\mathcal{X})})$, consisting of elements of $S(\mathcal{X})$ that vanish at $\partial\mathcal{X}$. The notation $S_c(\mathcal{X})$ denotes the subspace $(S_c(\mathcal{X}),\norm{\ \cdot \ }_{S(\mathcal{X})})$, consisting of elements of $S(\mathcal{X})$ that are compactly supported in $\mathcal{X}$. Similarly, $S_K(\mathcal{X})$ denotes the subspace $(S_K(\mathcal{X}),\norm{\ \cdot \ }_{S(\mathcal{X})})$, consisting of elements of $S(\mathcal{X})$ that are supported in a subset $K\subset \mathcal{X}$. We use $S(\mathcal{X})^*$ to denote the dual space of $S(\mathcal{X})$. $B_{S(\mathcal{X})}(r)$ denotes $\{x \in S(\mathcal{X}) : \norm{x}_{S(\mathcal{X})} < r\}$ for $r\geq0$.
\par
All preceding spaces and norms can be defined for vector fields $f : \mathcal{X} \rightarrow \mathbb{R}^{d'}$ with
standard modification of the norms, by requiring each of the coordinate functions $f_i(\cdot), i = 1,\dots,d'$, to belong to the corresponding space of real-valued maps. We denote preceding spaces $S(\mathcal{X})$ defined for vector fields by $S(\mathcal{X},\mathbb{R}^{d'})$.
\par
Throughout the paper, $C, c$ and their variants denote generic constants that are either universal or ``fixed'' depending on the context.
For $a, b \in\mathbb{R}$, let $a\vee b = \max(a,b)$ and $a\wedge b = \min(a,b)$. The relation $a\lesssim b$ denotes an inequality $a\leq Cb$, and the corresponding convention is used for $\gtrsim$. The relation $a \simeq b$ holds if both $a\lesssim b$ and $a\gtrsim b$ hold. We define $\mathbb{R}^+=\{x\in\mathbb{R}: x\geq 0\}$. For $x\in \mathbb{R}^+$, $\lceil x \rceil$ is the smallest integer no smaller than $x$ and $\lfloor x \rfloor$ is the largest integer no larger than $x$. Given a set $S$, $\abs{S}$ denotes its cardinality, and $\textbf{1}_{S}$ is  the associated indicator function. The notations $\mathbb{P}$ and $\mathbb{E}$ are used to denote generic probability and expectation respectively, whose distribution is determined from the context. Additionally, the notation $\mathbb{P}f$ also means expectation of $f$ under $\mathbb{P}$, that is $\mathbb{P}f = \int f d\mathbb{P}$.
\par
For any $\alpha > d/2$, $0< \beta < \alpha - d/2$ and $\mathcal{X}$ a bounded domain with smooth boundary, the Sobolev imbedding implies that $H^{\alpha}(\mathcal{X})$ embeds continuously into $C^{\beta}(\mathcal{X})$, with norm estimates
\begin{align}
\norm{f}_{\infty} \lesssim \norm{f}_{C^{\beta}} \lesssim \norm{f}_{H^{\alpha}}, \quad \forall f\in H^{\alpha}(\mathcal{X}).
\end{align}
We repeatedly use the inequalities
\begin{align}
\norm{fg}_{H^{\alpha}} & \lesssim \norm{f}_{H^{\alpha}}\norm{g}_{H^{\alpha}}, \qquad \alpha >d/2, \label{Sobolevinter1} \\
\norm{fg}_{H^{\alpha}} & \lesssim \norm{f}_{C^{\alpha}}\norm{g}_{H^{\alpha}}, \qquad \alpha \geq 0. \label{Sobolevinter2}
\end{align}
We also need the following interpolation inequality. For all $\alpha_1, \alpha_2 \geq 0$ and $\theta \in [0,1]$,
\begin{align}
\norm{f}_{H^{\theta\alpha_1 + (1-\theta)\alpha_2}} \lesssim \norm{f}_{H^{\alpha_1}}^{\theta}\norm{g}_{H^{\alpha_2}}^{1-\theta}, \qquad \forall f \in H^{\alpha_1}\cap H^{\alpha_2}.
\end{align} 

\subsection{Statistical settings} 
Let $(\mathcal{X},\mathcal{A})$ and $(\mathcal{Z},\mathcal{B})$ be measurable spaces equipped with probability measure $\lambda$ and Lebesgue measure respectively. Let $\mathcal{Z}$ be a bounded smooth domain in $\mathbb{R}^d$. Moreover, let $V$ be a vector space of fixed finite dimension $p_V \in \mathbb{N}$, with inner product $\pdt{\cdot}{\cdot}_V$ and norm $\norm{\ \cdot\ }_V$. For parameter $f$, we assume that the parameter spaces $\mathcal{F}$ is a Borel-measurable subspace of $L^2(\mathcal{Z},\mathbb{R})$. We define $G:\mathcal{F} \rightarrow L^2_{\lambda}(\mathcal{X},V)$ to be a measurable forward map. However, the parameter spaces $\mathcal{F}$ may not be linear spaces. In order to use Gaussian process priors that are naturally supported in linear spaces such as Sobolev spaces, we now consider a bijective reparametrization of $\mathcal{F}$ through a regular link function $\Phi$ as in \cite{IntroNonLinear_nickl2020convergence,IntroNonLinear_giordano2020consistency}. The parameter spaces can be reparametrized as \textcolor{black}{$\mathcal{F} = \{{f_{\theta} = \Phi(\theta)}\, : \, \theta \in \Theta\}$} where $\Theta$ is a subspace of $L^2(\mathcal{Z},\mathbb{R})$. For the forward map $G:\mathcal{F} \rightarrow L^2_{\lambda}(\mathcal{X},V)$, we define the reparametrized forward map $\mathcal{G}$ by 
\[\mathcal{G}(\theta) = G(\Phi(\theta)), \qquad \forall \ \theta \in \Theta,\] 
and consider independent and identically distributed (i.i.d.) random variables $(Y_i,X_i)_{i=1}^N$ of the following random design regression model
\begin{align}\label{model}
    Y_i = \mathcal{G}({\theta})(X_i) + \varepsilon_i, \quad \varepsilon_i \mathop{\sim}^{iid} N(0,I_V), \quad i = 1, 2, \dots, N,
\end{align}
with the diagonal covariance matrix $I_V$.
The joint law of the random variables $(Y_i,X_i)_{i=1}^N$ defines a product probability measure on $(V\times\mathcal{X})^N$ and we denote it by 
$P_{\theta}^{(N)}=\bigotimes_{i=1}^NP_{\theta}^i$ where $P_{\theta}^i=P_{\theta}^1$ for all $i$. We write $P_{\theta}$ for the law of a copy $(Y,X)$ which has probability density 
\begin{align}\label{modeldensity}
    \frac{dP_{\theta}}{d\mu}(y,x) \equiv p_{\theta}(y,x) = \frac{1}{(2\pi)^{p_V/2}}\mexp{-\frac{1}{2}\abs{y-\mathcal{G}_{\theta}(x)}_V^2}
\end{align}
for dominating measure $d\mu = dy \times d\lambda$ where $dy$ is Lebesgue measure on $V$. We also represent the full data with the notation
\[D_N := \set{(Y_i,X_i) : i = 1, 2, \dots, N}, \qquad N\in \mathbb{N}.\]
\par
In order to give the result about variational posterior, we introduce some information distances and relate theses distances on the laws $\{P_{\theta} : \theta \in \Theta\}$ to the forward map $\mathcal{G}({\theta})$.
With $\rho>0$ and $\rho \neq 1$, the $\rho$-Rényi divergence between two probability measures $P_1$ and $P_2$ is defined as
\begin{align*}
    D_{\rho}(P_1\Vert P_2) = \left\{ \begin{aligned}
                 &\frac{1}{\rho - 1} \log \int \bbra{\frac{dP_1}{dP_2}}^{\rho-1} dP_1  &\text{if}\ P_1 \ll P_2,\\
                 &+ \infty  &\text{otherwise}.
            \end{aligned} \right.
\end{align*}
When $\rho \rightarrow 1$, the Rényi divergence converges to the KL divergence, defined as
\begin{align*}
    D(P_1\Vert P_2) = \left\{ \begin{aligned}
                &\int \log\bbra{\frac{dP_1}{dP_2}} dP_1 & \text{if}\ P_1 \ll P_2,\\
                 &+ \infty & \text{otherwise}.
            \end{aligned} \right.
\end{align*}
Moreover, the Rényi divergence $D_{\rho}(P_1||P_2)$ is a non-decreasing function of $\rho$, which gives
\[D(P_1\Vert P_2) \leq D_{2}(P_1\Vert P_2). \]
The Hellinger distance $h$ is defined as %$h^2(P_1,P_2) = \frac{1}{2}\int\bbra{\sqrt{dP_1}-\sqrt{dP_2}}^2.$
\begin{align*}
    h^2(P_1,P_2) = \frac{1}{2}\int\bbra{\sqrt{dP_1}-\sqrt{dP_2}}^2.
\end{align*}
\par
The following proposition relates these information distances on the laws $\{P_{\theta} : \theta \in \Theta\}$ to the $L^2_{\lambda}(\mathcal{X},V)$ norm, assuming $\mathcal{G}(\theta)$ is uniformly bounded.
\begin{proposition}\label{le2.1}
    Suppose that for a subset $\Theta \subset L^2(\mathcal{Z},\mathbb{R})$ and some finite constant $U = U_{\mathcal{G},\Theta} > 0$, we have
    \[\mathop{\sup}_{\theta \in \Theta}\Vert\mathcal{G}(\theta)\Vert_{\infty} \leq U.\]
    For the model density from \cref{modeldensity}, we have for every $\theta_1,\theta_2 \in \Theta$,
    \begin{gather}
         D_2(P_{\theta_1}\Vert P_{\theta_2}) \leq e^{4U^2} \norm{\mathcal{G}(\theta_1)-\mathcal{G}(\theta_2)}_{L^2_{\lambda}(\mathcal{X},V)}^2,\label{D2} \\
        D(P_{\theta_1}\Vert P_{\theta_2}) = \frac{1}{2} \norm{\mathcal{G}(\theta_1)-\mathcal{G}(\theta_2)}_{L^2_{\lambda}(\mathcal{X},V)}^2, \label{KL}
    \end{gather}
    and for $\textcolor{black}{C_{\mathcal{G}}} = \frac{1-e^{-U^2/2}}{2U^2}$,
    \begin{align}\label{hellinger}
        \textcolor{black}{C_{\mathcal{G}}}\Vert\mathcal{G}(\theta_1)-\mathcal{G}(\theta_2)\Vert_{L^2_{\lambda}(\mathcal{X},V)}^2 \leq h^2(p_{\theta_1},p_{\theta_2}) \leq \frac{1}{4}\Vert\mathcal{G}(\theta_1)-\mathcal{G}(\theta_2)\Vert_{L^2_{\lambda}(\mathcal{X},V)}^2.
    \end{align}
\end{proposition}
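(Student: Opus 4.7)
The plan is to compute each of the three quantities in closed form by direct Gaussian integration, then bound the resulting integrals using the uniform boundedness assumption $\|\mathcal{G}(\theta)\|_\infty\le U$. Throughout I will abbreviate $a(x)=\mathcal{G}(\theta_1)(x)$, $b(x)=\mathcal{G}(\theta_2)(x)$, so $|a(x)-b(x)|_V\le 2U$ pointwise, and work with the joint density $p_\theta(y,x)$ of (\ref{modeldensity}) against $d\mu=dy\,d\lambda$.

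First I would dispatch the KL identity (\ref{KL}). Writing $\log(p_{\theta_1}/p_{\theta_2})=\tfrac12(|y-b|_V^2-|y-a|_V^2)$ and expanding under $P_{\theta_1}$ (where $Y=a(X)+\varepsilon$ with $\varepsilon\sim N(0,I_V)$), the cross term $\langle\varepsilon,a-b\rangle_V$ vanishes in expectation, leaving $\tfrac12\mathbb E|a(X)-b(X)|_V^2=\tfrac12\|\mathcal{G}(\theta_1)-\mathcal{G}(\theta_2)\|_{L^2_\lambda(\mathcal{X},V)}^2$. This is a one-line computation and uses only that $\varepsilon$ has mean zero and identity covariance.

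For (\ref{D2}) I would compute the chi-square type integral $\int p_{\theta_1}^2/p_{\theta_2}\,d\mu$ by completing the square in the quadratic $-|y-a|_V^2+\tfrac12|y-b|_V^2$. The $y$-integration leaves a Gaussian normalising constant, and the remaining $x$-integral equals $\int e^{|a(x)-b(x)|_V^2}\,d\lambda(x)$. Since $|a-b|_V^2\le 4U^2$ pointwise, I can use the elementary inequality $e^t\le 1+te^T$ for $t\in[0,T]$ with $T=4U^2$, followed by $\log(1+s)\le s$, to obtain $D_2\le e^{4U^2}\|\mathcal{G}(\theta_1)-\mathcal{G}(\theta_2)\|_{L^2_\lambda(\mathcal{X},V)}^2$.

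For (\ref{hellinger}) I would compute the Hellinger affinity $\int\sqrt{p_{\theta_1}p_{\theta_2}}\,d\mu$ in the same way: completing the square in $\tfrac14(|y-a|_V^2+|y-b|_V^2)$ gives $\int \exp\{-\tfrac18|a(x)-b(x)|_V^2\}\,d\lambda(x)$, so $h^2=\int(1-\exp\{-\tfrac18|a-b|_V^2\})\,d\lambda$. The upper bound then follows from $1-e^{-t}\le t$. For the lower bound, the ratio $(1-e^{-t})/t$ is decreasing on $[0,T]$ with $T=U^2/2$ (since $|a-b|_V^2/8\le U^2/2$), which yields $1-e^{-t}\ge t(1-e^{-U^2/2})/(U^2/2)$ and hence a lower bound proportional to $\|\mathcal{G}(\theta_1)-\mathcal{G}(\theta_2)\|_{L^2_\lambda(\mathcal{X},V)}^2$ with the stated constant $C_U$.

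There is no real obstacle here; everything reduces to Gaussian moment generating function calculations combined with elementary inequalities for $e^t$ and $1-e^{-t}$ on a bounded interval, with the uniform sup-norm bound $U$ controlling that interval. The only point to be careful about is tracking numerical constants in the Hellinger step so that the factor $C_U=(1-e^{-U^2/2})/(2U^2)$ comes out correctly after the change-of-variable.
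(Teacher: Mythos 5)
Your computations for \eqref{KL} and \eqref{D2} are correct and, for \eqref{D2}, essentially identical to the paper's displayed argument (the paper completes the square via the Gaussian identity $Ee^{\langle u,Z\rangle_V}=e^{|u|_V^2/2}$ to get $D_2=\log\int_{\mathcal X}e^{|\mathcal G(\theta_1)-\mathcal G(\theta_2)|_V^2}d\lambda$ and then uses $e^z\le 1+ze^z$; your $e^t\le 1+te^T$ with $T=4U^2$ gives the same bound). For \eqref{KL} and \eqref{hellinger} the paper simply cites Proposition 1.3.1 of the Nickl monograph, so your direct derivations are filling in material the paper outsources.

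The one genuine problem is exactly the point you flagged at the end: the constant in the Hellinger lower bound does \emph{not} come out as stated under the paper's own normalisation. With $h^2(P_1,P_2)=\tfrac12\int(\sqrt{dP_1}-\sqrt{dP_2})^2=1-\int\sqrt{dP_1\,dP_2}$, your (correct) affinity computation gives $h^2=\int_{\mathcal X}\bigl(1-e^{-|a-b|_V^2/8}\bigr)d\lambda$. Since $|a-b|_V^2/8\le U^2/2$, the monotonicity of $(1-e^{-t})/t$ yields
\begin{equation*}
1-e^{-t}\;\ge\;\frac{1-e^{-U^2/2}}{U^2/2}\,t,\qquad t=\tfrac18|a-b|_V^2,
\end{equation*}
hence $h^2\ge \frac{1-e^{-U^2/2}}{4U^2}\,\|\mathcal G(\theta_1)-\mathcal G(\theta_2)\|_{L^2_\lambda}^2$, i.e.\ \emph{half} the stated $C_U$. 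The stated $C_U=(1-e^{-U^2/2})/(2U^2)$ (and the non-tight upper constant $1/4$ rather than $1/8$) corresponds to the unhalved convention $h^2=\int(\sqrt{p_1}-\sqrt{p_2})^2$ used in the cited reference; with the halved definition the claimed lower bound actually fails when $|a-b|_V\equiv 2U$. So either adopt the unhalved normalisation throughout, or replace $C_U$ by $(1-e^{-U^2/2})/(4U^2)$; your proof as written asserts the stated constant emerges, which it does not under the definition of $h$ given in Section 2.
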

Now we introduce the variational posterior distribution. Let prior $\Pi$ be a Borel probability measure on $\Theta$. The posterior distribution  $\Pi_N(\cdot|D_N) = \Pi_N(\cdot|(Y_i,X_i)_{i=1}^N)$ of $\theta|(Y_i,X_i)_{i=1}^N$ on $\Theta$ arising from the data in model \cref{model} is given by
\begin{align}\label{Post}
    d\Pi(\theta|D_N) = \frac{\prod^{N}_{i=1}p_{\theta}(Y_i,X_i)d\Pi(\theta)}{\int_{\Theta}\prod^{N}_{i=1}p_{\theta}(Y_i,X_i)d\Pi(\theta)}.
\end{align}
To address computational difficulty of forward problems in posterior distributions, variational inference aim to find the closest element to the posterior distribution in a variational set $\mathcal{Q}$ consisting of probability measures. The most popular definition of variational inference is given through the KL-divergence. The variational posterior is defined as 
\begin{align}\label{variationalposterior}
    \hat{Q} = \mathop{\mathrm{argmin}}_{Q\in \mathcal{Q}} D(Q\Vert \Pi(\theta|D_{N})).
\end{align}
The choice of variational set $\mathcal{Q}$ usually determines the effect of variational posterior. The variational posterior from the variational set $\mathcal{Q}$ can be regarded as the projection of the true posterior onto $\mathcal{Q}$ under KL-divergence. When $\mathcal{Q}$ is large enough (even consisting of all probability measures), $\hat{Q}$ is exactly the true posterior $\Pi(\theta|D^{N})$. However, a larger $\mathcal{Q}$ often leads to a higher computational cost of the optimization problem \cref{variationalposterior}. Thus, it is of significance to choose a proper variational set $\mathcal{Q}$ in order to reduce the computational difficulty and give a good approximation of posterior simultaneously. \textcolor{black}{We note that the posterior under the Gaussian prior is not Gaussian due to the nonlinearity of the forward map $\mathcal{G}$. In \cref{VariationalConsistency}, we will give the convergence rate of variational posteriors when the non-Gaussian posterior is approximated by the variational set $\mathcal{Q}$ containing Gaussian measures.} 

\section{Variational posterior consistency theorem}\label{VariationalConsistency}
In this section, assuming that the data $D_N$ are generated through the model \cref{model} of law $P^{(N)}_{\theta_0}$ and the forward map $\mathcal{G}$ to satisfy certain conditions, we will show that the variational posterior distribution arising from certain Gaussian priors concentrates to any sufficiently regular ground truth $\theta_0$ (or, equivalently, $f_0 = \Phi(\theta_0)$), and show the rate of this contraction. \textcolor{black}{The proof of the main theorems (\cref{mainthm,mainthmsv}) uses Theorem 2.1 in \cite{zhang2020convergence} combined with the technique from \cite{IntroNonLinear_nickl2023bayesian}.}
\subsection{Conditions for the forward map}
In this part, we give the regularity and conditional stability conditions which resemble the conditions in \cite{IntroNonLinear_nickl2023bayesian}. The following regularity condition requires the forward map $\mathcal{G}$ to be uniformly bounded and Lipschitz continuous. Compared to the conditions in \cite{IntroNonLinear_nickl2023bayesian},  we further require polynomial growth in parameter's norm of uniform upper bound and the Lipschitz constants.
\begin{condition}\label{condreg}
     Consider a parameter space $\Theta \subseteq L^2(\mathcal{Z},\mathbb{R})$. 
  The forward map $\mathcal{G}:\Theta \rightarrow L^2_{\lambda}(\mathcal{X},V)$ is measurable. 
  Suppose for some normed linear subspace $(\mathcal{R},\Vert \cdot\Vert _{\mathcal{R}})$ of $\Theta$ and all $M>1$, there exist finite constants $C_U > 0$, $C_L > 0$ , ${\kappa}\geq0$, $p\geq 0$ and $l \geq 0$ such that 
  \begin{gather}
          \mathop{\sup}_{\theta \in \Theta \cap B_{\mathcal{R}}(M)}\mathop{\sup}_{x \in \mathcal{X}}\abs{\mathcal{G}(\theta)(x)}_V\leq C_{U}M^p,\label{bound}\\
          \norm{\mathcal{G}(\theta_1)-\mathcal{G}(\theta_2)}_{L^{2}_{\lambda}(\mathcal{X},V)} \leq C_L(1+\norm{\theta_1}^l_{\mathcal{R}}\vee \norm{\theta_2}^l_{\mathcal{R}})\norm{\theta_1-\theta_2} _{(H^{{\kappa}})^*}, \quad \theta_1,\theta_2 \in \mathcal{R}.\label{lip}
      \end{gather}
\end{condition}
Similarly, the conditional stability condition is given below.
\begin{condition}\label{condstab}
    Consider a parameter space $\Theta \subseteq L^2(\mathcal{Z},\mathbb{R})$. 
  The forward map $\mathcal{G}:\Theta \rightarrow L^2_{\lambda}(\mathcal{X},V)$ is measurable. 
  Suppose for some normed linear subspace $(\mathcal{R},\Vert \cdot\Vert _{\mathcal{R}})$ of $\Theta$ and all $M>1$, there exist a \textcolor{black}{non-constant function} $F: \mathbb{R}^{+} \rightarrow \mathbb{R}^{+}$ and finite constants $C_T > 0$, $q\geq 0$ such that
        \begin{align}\label{stab}
              F(\Vert f_{\theta} - f_{\theta_0}\Vert_{\mathcal{F}} ) \leq C_TM^q\Vert \mathcal{G}(\theta)-\mathcal{G}(\theta_0)\Vert _{L^{2}_{\lambda}(\mathcal{X},V)}, \forall \theta \in \Theta \cap B_{\mathcal{R}}(M).
          \end{align}
\end{condition}
These regularity and stability conditions on the forward map $\mathcal{G}$ require explicit estimates on growth rates of coefficients. We will verify these conditions for the inverse problems related to the Darcy flow equation and a subdiffusion equation in \cref{ApplicationSection}.
\subsection{Rescaled Gaussian prior}
Consider a centred Gaussian Borel probability measure $\Pi'$. To deal with the non-linearity of the inverse problem, we assume $\Pi'$ {\color{black} satisfies} some conditions.
\begin{condition}\label{prior}
    Let $\Pi'$ be a centred Gaussian Borel probability measure on the linear space $\Theta \subset L^2(\mathcal{Z})$ with reproducing
kernel Hilbert spaces (RKHS) $\mathcal{H}$. Suppose further that $\Pi^{'}(\mathcal{R}) = 1$ for some separable normed linear subspace $(\mathcal{R},\Vert \cdot\Vert _{\mathcal{R}})$ of $\Theta$.
\end{condition}
We build the priors $\Pi_N$ through a $N$-dependent rescaling step to $\Pi'$ as in \cite{IntroNonLinear_giordano2020consistency}.
The prior $\Pi = \Pi_N$ arises from the base prior $\Pi'$ defined as the law of
\begin{align}\label{rescaledprior}
    \theta = N^{-d/(4\alpha+4{\kappa}+2d)}\theta',
\end{align}
where $\theta' \sim \Pi'$,  $\alpha \geq 0$ represents the ``regularity'' of the ground truth $\theta_0$, and ${\kappa}$ represents the ``forward smoothing degree'' of $\mathcal{G}$ in \cref{condreg}.
Again, $\Pi_N$ defines a centered Gaussian prior on $\Theta$ and its RKHS $\mathcal{H}_N$ is still $\mathcal{H}$ but with the norm
\begin{align*}
    \norm{\theta}_{\mathcal{H}_N} = N^{d/(4\alpha+4{\kappa}+2d)}\norm{\theta}_{\mathcal{H}}, \quad \forall \theta \in \mathcal{H}.
\end{align*}
We first show the variational posterior converges to the ground truth $\theta_0$ at an explicit rate.
\begin{theorem} \label{mainthm}
  Suppose \cref{condreg} holds for the forward map $\mathcal{G}$, the separable normed linear subspace $(\mathcal{R},\Vert \cdot\Vert _{\mathcal{R}})$ and the finite constants $C_U > 0$, $C_L > 0$ , ${\kappa}\geq0$, $p\geq 0$, $l \geq 0$.
  Let $\Pi^{'}$ satisfy \cref{prior} for this $\mathcal{R}$ with $RKHS$ $\mathcal{H}$. Suppose that for some integer $\alpha \geq 0$, $\mathcal{H}$ satisfies the continuous embedding
  \[ \mathcal{H}\subset H^{\alpha}_c(\mathcal{Z})\text{ if }{\kappa}\geq 1/2\quad \text{ or }\quad \mathcal{H}\subset H^{\alpha}(\mathcal{Z})\text{ if }{\kappa} < 1/2.\]
  Denote by $\Pi_N$ the rescaled prior as in \cref{rescaledprior}.
  $\Pi_N(\cdot|(Y_i,X_i)_{i=1}^N) = \Pi_N(\cdot|D_N)$ is the corresponding posterior distribution in \cref{Post} arising from the data in the model \cref{model}.
  Assume that $\alpha + {\kappa} \geq \frac{d(l+1)}{2}$ and $\theta_0\in\mathcal{H} \cap \mathcal{R}$. Then, for $\varepsilon_N=N^{-\frac{\alpha+{\kappa}}{2\alpha+2{\kappa}+d}}$, 
  \[\gamma_N^2 = \frac{1}{N}\mathop{\inf}_{Q \in \mathcal{Q}}P_{\theta_0}^{(N)}D(Q\Vert \Pi_N(\cdot|D_N)),\]
  and variational posterior $\hat{Q}$ defined in \cref{variationalposterior}, we have
  \begin{align}
      P_{\theta_0}^{(N)}\hat{Q}\Vert \mathcal{G}(\theta)-\mathcal{G}(\theta_0)\Vert _{L^{2}_{\lambda}}^{\frac{2}{p+1}}\lesssim \varepsilon_N^{\frac{2}{p+1}} +\gamma^2_N \cdot \varepsilon_N^{-\frac{2p}{p+1}}.
  \end{align}
  Moreover, assume that \cref{condstab} also holds for $\mathcal{G}$, $\mathcal{R}$, function $F$ and finite constants $C_T > 0$, $q\geq 0$. Then, we further have
  \begin{align}
      P_{\theta_0}^{(N)}\hat{Q}[F(\Vert f_{\theta} - f_{\theta_0}\Vert_{\mathcal{F}} )]^{\frac{2}{p+q+1}}\lesssim \varepsilon_N^{\frac{2}{p+q+1}} +\gamma^2_N \cdot \varepsilon_N^{-\frac{2p+2q}{p+q+1}}.
  \end{align}
\end{theorem}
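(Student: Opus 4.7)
The overall strategy couples the Ghosal--Ghosh--van der Vaart (GGvdV) posterior contraction machinery, adapted to non-linear inverse PDE problems as in \cite{IntroNonLinear_giordano2020consistency,IntroNonLinear_nickl2023bayesian}, with the Donsker--Varadhan variational identity employed in variational Bayes works such as \cite{zhang2020convergence,IntroLinear_randrianarisoa2023variational}. The proof proceeds in three stages: quantitative contraction of the full posterior $\Pi_N(\cdot|D_N)$ equipped with a sub-Gaussian tail, upgrading to the variational posterior $\hat{Q}$ via Donsker--Varadhan with a carefully tuned rate parameter $r_N$, and translating the $L^2_\lambda$ rate to the $F$-rate via Condition~\ref{condstab}.

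For the first stage, I would build the sieve $\Theta_N = \{\theta : \|\theta\|_{\mathcal{R}} \leq M_N\}$ with $M_N$ polylogarithmic in $N$, and verify the three GGvdV ingredients for $\Pi_N$: a small-ball probability $\Pi_N(B_{\mathrm{KL}}(\theta_0,\varepsilon_N)) \geq e^{-CN\varepsilon_N^2}$, obtained from the concentration function of the centred Gaussian at $\theta_0\in\mathcal{H}$ together with (\ref{KL}); prior tail control $\Pi_N(\Theta_N^c) \leq e^{-(C+4)N\varepsilon_N^2}$ by Borell--Sudakov; and an $L^2_\lambda$-entropy testing bound on $\Theta_N$ built from (\ref{lip}) and (\ref{hellinger}). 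The assumption $\alpha+\kappa\geq d(l+1)/2$ is precisely what absorbs the polynomial factor $(1+\|\theta\|_{\mathcal{R}}^l)$ in (\ref{lip}) into the entropy over $\Theta_N$. The GGvdV theorem then yields, on a $P_{\theta_0}^{(N)}$-high-probability event, $\Pi_N(\Theta_N^c|D_N)\leq e^{-cN\varepsilon_N^2}$ together with the finer sub-Gaussian tail
\[ \Pi_N\bigl(\|\mathcal{G}(\theta)-\mathcal{G}(\theta_0)\|_{L^2_\lambda} > u M_N^p \varepsilon_N \,\big|\, D_N\bigr) \leq e^{-c N u^2 \varepsilon_N^2}, \qquad u \geq 1. \]

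The second stage uses the Donsker--Varadhan formula $\hat{Q}\phi \leq \log\Pi_N(e^\phi|D_N) + D(\hat{Q}\|\Pi_N(\cdot|D_N))$ together with the minimising property of $\hat{Q}$, which yields $P_{\theta_0}^{(N)}D(\hat{Q}\|\Pi_N(\cdot|D_N))\leq N\gamma_N^2$. Applied with $\phi = r_N \|\mathcal{G}(\theta)-\mathcal{G}(\theta_0)\|_{L^2_\lambda}^{2/(p+1)}$ and $r_N \simeq N\varepsilon_N^{2p/(p+1)}/M_N^{2p/(p+1)}$, a layer-cake integration using the sub-Gaussian tail above produces $\log\Pi_N(e^\phi|D_N) \lesssim N\varepsilon_N^2$ on the good event, the contribution from $\Theta_N^c$ being controlled by combining the Gaussian tail of $\Pi_N$ coming from (\ref{rescaledprior}) with the polynomial growth (\ref{bound}); dividing by $r_N$ then yields the first displayed inequality. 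For the second inequality, Condition~\ref{condstab} gives $F(\|f_\theta-f_{\theta_0}\|) \leq C_T M_N^q\|\mathcal{G}(\theta)-\mathcal{G}(\theta_0)\|_{L^2_\lambda}$ on $\Theta_N$, and rerunning this Donsker--Varadhan step with exponent $2/(p+q+1)$ and $r_N \simeq N\varepsilon_N^{2(p+q)/(p+q+1)}/M_N^{2(p+q)/(p+q+1)}$ closes the argument.

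The main obstacle lies in making the choice of $r_N$ deliver the sharp second term. A naive single-threshold bound $\Pi_N(\|\mathcal{G}\| > L M_N^p\varepsilon_N|D_N) \leq e^{-cN\varepsilon_N^2}$ would only yield the weaker correction $\gamma_N^2\varepsilon_N^{-2}$; extracting the stated $\gamma_N^2\varepsilon_N^{-2p/(p+1)}$ requires the full sub-Gaussian posterior decay at every threshold $uM_N^p\varepsilon_N$ for $u\geq 1$, which is exactly what the GGvdV testing construction provides. The bookkeeping required to ensure that polylogarithmic factors of $M_N$ are absorbed into $\lesssim$, and that the contribution from outside $\Theta_N$ remains negligible under the rescaled Gaussian prior, is delicate but standard.
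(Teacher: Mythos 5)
Your overall route --- tests plus prior-mass plus a R\'enyi/KL small-ball bound for the full posterior, followed by a Donsker--Varadhan transfer to $\hat{Q}$ using $P_{\theta_0}^{(N)}D(\hat{Q}\Vert\Pi_N(\cdot|D_N))\leq N\gamma_N^2$ --- is exactly the paper's route; the paper simply packages the transfer step as an application of \cite[Theorem 2.1]{zhang2020convergence} after verifying its conditions (C1)--(C3). The problem is concentrated in your Stage 1: the sieve you build does not support the two claims you extract from it.

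First, a plain ball $\Theta_N=\{\Vert\theta\Vert_{\mathcal{R}}\leq M_N\}$ has no controlled metric entropy at the generality of the theorem: Condition \ref{prior} only asserts $\Pi'(\mathcal{R})=1$ and says nothing about entropy of $\mathcal{R}$-balls (elsewhere the paper takes $\mathcal{R}=C^{\gamma}$). The entropy must come from the RKHS component via $\mathcal{H}\subset H^{\alpha}_c$ and the duality bound (\ref{entropy}); this forces the two-part sieve $H_N(\varepsilon)\cap B_{\mathcal{R}}(M\varepsilon/\varepsilon_N)$, whose complement is controlled by Borell's inequality together with Fernique. Second, and more seriously, the sub-Gaussian posterior tail $e^{-cNu^2\varepsilon_N^2}$ for \emph{all} $u\geq 1$ cannot hold with a fixed polylogarithmic radius: the tail saturates at the prior mass of $\Theta_N^c$, of order $e^{-c(\log N)^2N\varepsilon_N^2}$, while your $r_N$ makes $\phi=r_N\Vert\mathcal{G}(\theta)-\mathcal{G}(\theta_0)\Vert^{2/(p+1)}$ as large as $CN\varepsilon_N^{2p/(p+1)}=CN\varepsilon_N^2\,\varepsilon_N^{-2/(p+1)}$ already for $\Vert\theta\Vert_{\mathcal{R}}\simeq M_N$. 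Since $\varepsilon_N^{-2/(p+1)}$ is a positive power of $N$, the shell $\Vert\theta\Vert_{\mathcal{R}}\in[M_N,\varepsilon_N^{-1}]$ contributes a factor $e^{CN\varepsilon_N^{2p/(p+1)}-c(\log N)^2N\varepsilon_N^2}$ to your layer-cake bound, which blows up whenever $p>0$; the Gaussian tail of $\Pi_N$ against the polynomial growth (\ref{bound}) does not rescue this. The cure is the paper's level-indexed sieve of radius $M\varepsilon/\varepsilon_N$: its complement has prior mass $e^{-cM^2N\varepsilon^2}$ for every $\varepsilon>\varepsilon_N$, and on it (\ref{bound}) gives $\Vert\mathcal{G}(\theta)\Vert_{\infty}\leq C_U(M\varepsilon/\varepsilon_N)^p$, so the Hellinger-to-$L^2_{\lambda}$ comparison of Proposition \ref{le2.1} converts the testing event $\{h\geq 4\bar{m}\varepsilon\}$ into $\{N\varepsilon_N^{2p/(p+1)}\Vert\mathcal{G}(\theta)-\mathcal{G}(\theta_0)\Vert^{2/(p+1)}\gtrsim N\varepsilon^2\}$. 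This matching of sieve radius to deviation level is precisely where the exponent $2/(p+1)$ and the correction $\gamma_N^2\varepsilon_N^{-2p/(p+1)}$ come from. For $p=0$ your version essentially works; for $p>0$ it does not close without this modification.
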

The convergence rate can be represented as $(\varepsilon_N^2+\gamma_N^2) \cdot \varepsilon_N^{-\eta}$ for some nonnegative $\eta < 2$. We see that the convergence rate is controlled by two terms $\varepsilon_N^2$ and $\gamma_N^2$. The first term $\varepsilon_N^2$ is the convergence rate of the true posterior $\Pi(\theta|D_N)$ and it can reach the same rate in \cite{IntroNonLinear_nickl2023bayesian} which is optimal in minimax sense for several inverse problems. The second term $\gamma_N^2$ characterizes the approximation error given by the variational set $\mathcal{Q}$. A larger $\mathcal{Q}$ leads to a faster rate $\gamma_N^2$. To use \cref{mainthm} in specific problems, we would like to choose a proper $\mathcal{Q}$ such that the approximation error reaches the posterior convergence rate, that is $\gamma_N^2\lesssim \varepsilon_N^2$. Similar to the illustration shown in \cite{zhang2020convergence}, 
we could derive a general upper bound of $\gamma_N^2$. For {\color{black}the} reader's convenience, we exhibit the simple derivation as follows:
\begin{align*}
N\gamma_N^2 & \leq P_{\theta_0}^{(N)}D(Q\Vert \Pi_N(\cdot|D_N)) = D(Q\Vert \Pi_N) + Q\Big[\int\log\Big(\frac{dP_{\Pi}^{(N)}}{dP^{(N)}_{\theta}}\Big)dP^{(N)}_{\theta_0}\Big] \\
& = D(Q\Vert \Pi_N) + Q[D(P^{(N)}_{\theta_0}\Vert P^{(N)}_{\theta})-D(P^{(N)}_{\theta_0}\Vert P^{(N)}_{\Pi})] \\
& \leq D(Q\Vert \Pi_N) + QD(P^{(N)}_{\theta_0}\Vert P^{(N)}_{\theta}),
\end{align*}
where $Q\in \mathcal{Q}$ and $P^{(N)}_{\theta} = \int P^{(N)}_{\theta} d\Pi_N(\theta)$.
Then we obtain the following upper bound
\[\gamma_N^2 \leq \mathop{\inf}_{Q \in \mathcal{Q}}R(Q),\quad R(Q): = \frac{1}{N}(D(Q\Vert \Pi) + Q[D(P^{(N)}_{\theta_0}\Vert P^{(N)}_{\theta})]).\]
% where \[R(Q) = \frac{1}{N}(D(Q\Vert \Pi) + Q[D(P^{(N)}_{\theta_0}\Vert P^{(N)}_{\theta})]).\]
In the following theorem, we provide a possible estimate for $\mathop{\inf}_{Q \in \mathcal{Q}}R(Q)$.
\begin{theorem} \label{boundgam}
    Suppose that the forward map $\mathcal{G}$ satisfies \cref{condreg} and the Gaussian process prior $\Pi_N$ defined as in \cref{mainthm}. Then, for $\theta_0\in\mathcal{H} \cap \mathcal{R}$ and $\varepsilon_N=N^{-\frac{\alpha+{\kappa}}{2\alpha+2{\kappa}+d}}$, there exists a {\color{black} restricted} Gaussian measure $Q_N$ such that
    \begin{align}
        R(Q_N) \lesssim \varepsilon_N^2.
    \end{align}
\end{theorem}
From \cref{boundgam}, if the variational set $\mathcal{Q}$ contains the {\color{black} restricted} Gaussian measure $Q_N$, we can give a bound
$\gamma_N^2 \leq \mathop{\inf}_{Q \in \mathcal{Q}}R(Q) \lesssim \varepsilon_N^2.$  Thus, the approximation error contracts at the same rate as the posterior distribution, leading to the subsequent theorem.
\begin{theorem}\label{finalthm}
    Suppose \cref{condreg} holds for the forward map $\mathcal{G}$, the separable normed linear subspace $(\mathcal{R},\Vert \cdot\Vert _{\mathcal{R}})$ and the finite constants $C_U > 0$, $C_L > 0$ , ${\kappa}\geq0$, $p\geq 0$ and $l \geq 0$.
Let $\Pi^{'}$ satisfy \cref{prior} for this $\mathcal{R}$ with $RKHS$ $\mathcal{H}$. Suppose that for some integer $\alpha \geq 0$, $\mathcal{H}$ satisfies the continuous embedding
\[ \mathcal{H}\subset H^{\alpha}_c(\mathcal{Z})\text{ if }{\kappa}\geq 1/2\quad \text{ or }\quad \mathcal{H}\subset H^{\alpha}(\mathcal{Z})\text{ if }{\kappa} < 1/2.\]
Denote by $\Pi_N$ the rescaled prior as in \cref{rescaledprior}.
$\Pi_N(\cdot|(Y_i,X_i)_{i=1}^N) = \Pi_N(\cdot|D_N)$ is the corresponding posterior distribution in \cref{Post} arising from the data in the model \cref{model}.
Assume that $\alpha + {\kappa} \geq \frac{d(l+1)}{2}$ and $\theta_0\in\mathcal{H} \cap \mathcal{R}$. Then, for $\varepsilon_N=N^{-\frac{\alpha+{\kappa}}{2\alpha+2{\kappa}+d}}$ and the variational posterior $\hat{Q}$ defined in \cref{variationalposterior} with any variational set $\mathcal{Q}$ containing the {\color{black} restricted} Gaussian measure $Q_N$ introduced in \cref{boundgam}, we have
\begin{align}
  P_{\theta_0}^{(N)}\hat{Q}\Vert \mathcal{G}(\theta)-\mathcal{G}(\theta_0)\Vert _{L^{2}_{\lambda}}^{\frac{2}{p+1}}\lesssim \varepsilon_N^{\frac{2}{p+1}}.
\end{align}
Moreover, assume that \cref{condstab} also holds for $\mathcal{G}$, $\mathcal{R}$, function $F$ and finite constants $C_T > 0$, $q\geq 0$. Then, we further have
\begin{align}
  P_{\theta_0}^{(N)}\hat{Q}[F(\Vert f_{\theta} - f_{\theta_0}\Vert_{\mathcal{F}} )]^{\frac{2}{p+q+1}}\lesssim \varepsilon_N^{\frac{2}{p+q+1}}.
\end{align}
\end{theorem}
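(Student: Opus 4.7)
The plan is to derive Theorem \ref{finalthm} as a direct composition of Theorems \ref{mainthm} and \ref{boundgam}, bridged by the general inequality
\[\gamma_N^2 \leq \inf_{Q\in\mathcal{Q}} R(Q)\]
that appears in the paragraph just before Theorem \ref{boundgam}. The assumption that $\mathcal{Q}$ contains the truncated Gaussian measure $Q_N$ produced by Theorem \ref{boundgam} is exactly what converts this infimum into an explicit rate.

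First I would invoke Theorem \ref{boundgam} to obtain $R(Q_N) \lesssim \varepsilon_N^2$, and then use $Q_N \in \mathcal{Q}$ together with the displayed inequality above to conclude that $\gamma_N^2 \lesssim \varepsilon_N^2$. Next I apply Theorem \ref{mainthm}, whose hypotheses coincide verbatim with those of Theorem \ref{finalthm}, to get
\[
P_{\theta_0}^{(N)}\hat{Q}\Vert\mathcal{G}(\theta)-\mathcal{G}(\theta_0)\Vert_{L^2_\lambda}^{2/(p+1)} \lesssim \varepsilon_N^{2/(p+1)} + \gamma_N^2 \cdot \varepsilon_N^{-2p/(p+1)}.
\]
Substituting the bound on $\gamma_N^2$ and performing the elementary exponent simplification
\[\varepsilon_N^2 \cdot \varepsilon_N^{-2p/(p+1)} = \varepsilon_N^{(2(p+1) - 2p)/(p+1)} = \varepsilon_N^{2/(p+1)}\]
collapses the two terms into a single $\varepsilon_N^{2/(p+1)}$, which is the first assertion of Theorem \ref{finalthm}.

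For the second assertion, under the additional Condition \ref{condstab}, I would repeat the same substitution on the corresponding bound from Theorem \ref{mainthm} with $p$ replaced by $p+q$ throughout the exponents. The identical arithmetic check $\varepsilon_N^2 \cdot \varepsilon_N^{-(2p+2q)/(p+q+1)} = \varepsilon_N^{2/(p+q+1)}$ again collapses the two terms to deliver the stated rate for $P_{\theta_0}^{(N)}\hat{Q}[F(\Vert f_\theta - f_{\theta_0}\Vert)]^{2/(p+q+1)}$. Since all substantive analytic content sits in Theorems \ref{mainthm} and \ref{boundgam}, no genuine obstacle remains here: the only items to check are the (immediate) transfer of $R(Q_N)\lesssim\varepsilon_N^2$ through the infimum inequality and the exponent bookkeeping above. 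The real difficulty of the paper therefore lies upstream, in constructing the truncated Gaussian $Q_N$ with the correct approximation error (Theorem \ref{boundgam}) and in controlling $\hat{Q}$ by the evidence lower bound argument underlying Theorem \ref{mainthm}.
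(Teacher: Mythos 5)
Your proposal is correct and follows exactly the route the paper itself takes: Theorem \ref{finalthm} is obtained by combining Theorems \ref{mainthm} and \ref{boundgam} through the bound $\gamma_N^2 \leq \inf_{Q\in\mathcal{Q}}R(Q) \leq R(Q_N) \lesssim \varepsilon_N^2$, followed by the exponent simplification you describe. The arithmetic checks out in both cases, so nothing is missing.
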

     Our results suggest that the convergence rates of variational posterior distributions for non-linear inverse problems can attain the same convergence rates as those of posterior distributions, which are demonstrated to be optimal in a minimax sense for various inverse problems in \cite{IntroNonLinear_nickl2023bayesian}. We highlight that the probability measure $Q_N$ presented in \cref{boundgam} is a {\color{black} restricted} Gaussian measure. Actually, the construction of $Q_N$ in the proof of \cref{boundgam} (see the Supplementary Material \cite{Zu2024AoS}) essentially requires the variational family to contain the prior restricted to a Rényi ball centered at the true parameter. Without prior knowledge of the true parameter or the radius, such a family is not feasible for practical variational inference. Hence, \cref{boundgam} should be regarded as an existence result rather than a practically implementable setting. To solve the optimization \cref{variationalposterior}, it may be more practical to employ a variational set $\mathcal{Q}$ composed of Gaussian measures rather than such {\color{black} restricted} Gaussian measures $Q_N$. Consequently, we introduce high-dimensional Gaussian sieve priors to provide contraction theorems with a variational set $\mathcal{Q}$ of Gaussian mean-field families in \cref{High-dimensional Gaussian sieve priors}.
     \begin{remark}
    {\color{black} For the general results stated in \cref{finalthm} (also \cref{finalthmsv}), the presence of the power terms in the loss function, e.g. $\norm{\, \cdot\, }^
{\frac{2}{p+1}}$, is to the best of our current knowledge, required in order to relate the Hellinger distance 
$h(p_{\theta}, p_{\theta_0})$ to either 
$\lVert \mathcal{G}(\theta) - \mathcal{G}(\theta_0) \rVert_{L^2_{\lambda}}$ 
or 
$F\bigl(\lVert f_{\theta} - f_{\theta_0} \rVert_{\mathcal{F}}\bigr)$ (see the proof of \cref{mainthm} for detail). 
Through Markov’s inequality, we can convert our power bounds into
‘standard’ risk with squared distances (e.g., $L^2$ norm):
\begin{align*}
    &P_{\theta_0}^{(N)}\hat{Q}\bbra{\Vert \mathcal{G}(\theta)-\mathcal{G}(\theta_0)\Vert _{L^{2}_{\lambda}}\geq M_N\varepsilon_N} \rightarrow 0,\\
    &P_{\theta_0}^{(N)}\hat{Q}\bbra{F(\Vert f_{\theta} - f_{\theta_0}\Vert_{\mathcal{F}} )\geq M_N\varepsilon_N} \rightarrow 0,
\end{align*}
for any diverging sequence $M_N\rightarrow \infty$, which aligns with the posterior contraction in terms of standard squared distances (such as the results in \cite{knapik_general_2018}).
}
\end{remark}
    \par
\subsection{High-dimensional Gaussian sieve priors} \label{High-dimensional Gaussian sieve priors}
The settings of high-dimensional Gaussian sieve priors in this section follow those in \cite[Section 2.2.2]{IntroNonLinear_giordano2020consistency}, and we record them here for convenience.
First, we introduce an orthonormal wavelet basis of the Hilbert space $L^2(\mathbb{R}^d)$ as
\begin{align} \label{Dbase}
\left\{\psi_{lr}: r \in \mathbb{Z}^d, l \in \mathbb{N}\cup\{-1,0\}\right\}
\end{align}
composed of sufficiently regular, compactly supported Daubechies wavelets (for details see Chapter 4 in \cite{gin2015mathematical}). 
For a bounded smooth domain $\mathcal{Z} \subset \mathbb{R}^d$, assume $\theta_0 \in H^{\alpha}_K(\mathcal{Z})$ for some compact set $K\subset \mathcal{Z}$,
and denote by $R_l$ the set of indices $r$ for which the support of $\psi_{lr}$ intersects $K$.
We can find a compact set $K'$ such that $K \subsetneq K' \subset \mathcal{Z}$, and a cut-off function $\chi \in C_c^{\infty}(\mathcal{Z})$
such that $\chi = 1$ on $K'$.
% The details of the above wavelet can be seen in \cite{nickl2020bernstein,triebel2008function}, and here we mention some properties
% \begin{align}
%     N_l \leq c_0 2^{ld}, \quad 
% \end{align}
Note that $\theta_0$ compactly support on $K$, it can be represented as
\begin{align}
\theta_0 = \chi\theta_0= \sum_{l=-1}^{\infty}\sum_{r\in R_l}\pdt{\theta_0}{\psi_{lr}}_{L^2(\mathcal{Z})}\chi\psi_{lr}.
\end{align}
We define finite dimensional approximation of $\theta_0$ to be
\begin{align}
P_J(\theta_0) = \sum_{l=-1}^{J}\sum_{r\in R_l}\pdt{\theta_0}{\psi_{lr}}_{L^2(\mathcal{Z})}\chi\psi_{lr}, \quad J \in \mathbb{N}.
\end{align}
For integer $\alpha > d/2$, we consider the prior $\Pi'_J$ is defined by the law of $\chi \upsilon$, where
\begin{align}\label{seiveprior1}
\upsilon = \sum_{l=-1}^{J}\sum_{r\in R_l}2^{-l\alpha}\xi_{lr}\psi_{lr}, \quad \xi \mathop{\sim}^{iid} N(0,1).
\end{align}
The truncation point $J = J_N$ diverges when $N\rightarrow\infty$.
Then $\Pi'_J$ is a centred Gaussian probability measure with RKHS
\begin{align*}
\mathcal{H}_J=\set{h : h = \sum_{l=-1}^J\sum_{r\in R_l}2^{-l\alpha}h_{lr}\chi\psi_{lr}},
\end{align*} whose norm is defined by
$\norm{h}_{\mathcal{H}_J}^2 = \sum_{l=-1}^J\sum_{r\in R_l}h^2_{lr}.$
The RKHS $\mathcal{H}_J$ embeds into $H^{\alpha}_c(\mathcal{Z})$ with 
\begin{align}
\norm{h}_{H^{\alpha}(\mathcal{Z})} \leq c\norm{h}_{\mathcal{H}_J}, \quad \forall h \in \mathcal{H}_J,
\end{align}
for some constant $c>0$ (see \cite[Appendix B]{IntroNonLinear_giordano2020consistency}).
% 截断高斯先验的主要定理
\begin{theorem} \label{mainthmsv}
Suppose \cref{condreg} holds for the forward map $\mathcal{G}$, the separable normed linear subspace $(\mathcal{R},\Vert \cdot\Vert _{\mathcal{R}}) = (H^{\alpha}(\mathcal{Z}),\Vert \cdot \Vert _{H^{\alpha}(\mathcal{Z})})$ with integer $\alpha \geq 0$, and the finite constants $C_U > 0$, $C_L > 0$ , ${\kappa}\geq0$, $p\geq 0$, $l \geq 0$.
Let the probability measure $\Pi'_J$ as in \cref{seiveprior1}, and $J = J_N \in \mathbb{N}$ is such that $2^{J} \simeq N^{\frac{1}{2\alpha +2{\kappa} +d}}$. 
% Its RKHS is $\mathcal{H} = \mathcal{H}_J$ and satisfies the continuous embedding $\mathcal{H}_J \subset H^{\alpha}_c(\mathcal{Z})$. 
Denote by $\Pi_N$ the rescaled prior as in \cref{rescaledprior} with $\theta' \sim \Pi'_J$, and $\Pi_N(\cdot|(Y_i,X_i)_{i=1}^N) = \Pi_N(\cdot|D_N)$ is the corresponding posterior distribution in \cref{Post} arising from data in model \cref{model}. Assume that $\alpha + {\kappa} \geq \frac{d(l+1)}{2}$ and $\theta_0\in H_K^\alpha(\mathcal{Z})$. Then, for $\varepsilon_N=N^{-\frac{\alpha+{\kappa}}{2\alpha+2{\kappa}+d}}$, $\gamma_N^2 = \frac{1}{N}\mathop{\inf}_{Q \in \mathcal{Q}}P_{\theta_0}^{(N)}D(Q\Vert \Pi_N(\cdot|D_N))$, and variational posterior $\hat{Q}$ defined in \cref{variationalposterior}, we have      
  \begin{align}
      P_{\theta_0}^{(N)}\hat{Q}\Vert \mathcal{G}(\theta)-\mathcal{G}(\theta_0)\Vert _{L^{2}_{\lambda}}^{\frac{2}{p+1}}\lesssim \varepsilon_N^{\frac{2}{p+1}} +\gamma^2_N \cdot \varepsilon_N^{-\frac{2p}{p+1}}.
  \end{align}
  Moreover, assume that \cref{condstab} also holds for $\mathcal{G}$, $\mathcal{R}$, the function $F$, and the finite constants $C_T > 0$, $q\geq 0$. Then, we further have
  \begin{align}
      P_{\theta_0}^{(N)}\hat{Q}[F(\Vert f_{\theta} - f_{\theta_0}\Vert_{\mathcal{F}} )]^{\frac{2}{p+q+1}}\lesssim \varepsilon_N^{\frac{2}{p+q+1}} +\gamma^2_N \cdot \varepsilon_N^{-\frac{2p+2q}{p+q+1}}.
  \end{align}
\end{theorem}

We note that the sample of prior $\Pi_N$ is parameterized by the coefficients of the basis:
\[\left\{\Tilde{\theta} = (\Tilde{\theta}_{lr})\in \mathbb{R}^{d_J}: r \in R_l, l \in \{-1,0,\dots,J\}\right\},\]
where $d_J:= \sum_{l=-1}^{J}\abs{R_l}\simeq 2^{Jd}$.
Thus, it is possible to find a variational posterior among finite-dimensional probability measures.
Define a Gaussian mean-field family $\Tilde{\mathcal{Q}}_G^J$ as
\begin{align}\label{OGMF}
\Bigg\{Q = \mathop{\bigotimes}_{l=-1}^J\mathop{\bigotimes}_{r\in R_l}N(\mu_{lr},\sigma_{lr}^2):\mu_{lr} \in \mathbb{R}, \sigma_{lr}^2 \geq 0\Bigg\}.
\end{align}
The map $\Psi_J: \mathbb{R}^{d_J} \mapsto L^2(\mathcal{Z})$ is defined by
\begin{align}\label{pushforwardmap}
\Psi_J(\Tilde{\theta}) = \sum_{l=-1}^J\sum_{r\in R_l}\Tilde{\theta}_{lr}\chi\psi_{lr},\quad \forall \Tilde{\theta} = (\Tilde{\theta}_{lr})\in \mathbb{R}^{d_J}.
\end{align}
Thus, our variational set $\mathcal{Q}_G^J$ is obtained by the push-forward of $\Tilde{\mathcal{Q}}_G^J$ via $\Psi_J$, i.e.,
\begin{align}\label{GMF}
\mathcal{Q}_G^J = \Bigg\{Q = \Tilde{Q}\circ\Psi_J^{-1} : \Tilde{Q} \in \Tilde{\mathcal{Q}}_G^J\Bigg\}.
\end{align}
We can find a probability measure $Q\in \mathcal{Q}_G^J$ such that $R(Q) \lesssim \varepsilon_N^2\log N$, which gives the convergence rate in the following theorem.
\begin{theorem}\label{boundgamfinite}
Suppose the forward map $\mathcal{G}$ satisfies \cref{condreg} and the Gaussian process prior $\Pi_N$ defined as in \cref{mainthmsv}. Then, for $\theta_0\in H_K^\alpha(\mathcal{Z})$ and $\varepsilon_N=N^{-\frac{\alpha+{\kappa}}{2\alpha+2{\kappa}+d}}$, there exists a probability measure $Q_N \in \mathcal{Q}_G^J$ such that
        \begin{align*}
            R(Q_N) \lesssim \varepsilon_N^2\log N.
        \end{align*}
\end{theorem}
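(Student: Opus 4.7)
The plan is to construct an explicit $Q_N\in \mathcal{Q}_G^J$ as a product of Gaussians on the sieve coefficients, centred at the wavelet expansion of $\theta_0$ with variance $1/N$, and then to bound the two contributions to $R(Q_N)$ separately. Writing $\theta = \Psi_J(\tilde\theta)$ for coefficients $\tilde\theta = (\tilde\theta_{jr})$, the sieve prior $\Pi_N$ is the push-forward of $\bigotimes_{j,r}N(0,\tau_j^2)$ with $\tau_j^2 := N^{-d/(2\alpha+2\kappa+d)}2^{-2j\alpha}$. Let $Q_N := \tilde Q_N\circ \Psi_J^{-1}$ where
\[ \tilde Q_N = \bigotimes_{j=-1}^J\bigotimes_{r\in R_j} N(\mu_{jr},\sigma_{jr}^2), \quad \mu_{jr}=\pdt{\theta_0}{\psi_{jr}}_{L^2(\mathcal{Z})},\quad \sigma_{jr}^2=\tfrac{1}{N}. \]
Since $\Psi_J$ is injective on $\mathbb{R}^{d_J}$, KL is preserved under push-forward, and tensorising the Gaussian KL formula gives
\[ D(Q_N\Vert \Pi_N) = \sum_{j=-1}^J\sum_{r\in R_j}\bbra{\tfrac{1}{2}\log\tfrac{\tau_j^2}{\sigma_{jr}^2}+\tfrac{\mu_{jr}^2+\sigma_{jr}^2}{2\tau_j^2}-\tfrac{1}{2}}. \]

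The three sums are bounded in turn. First, the wavelet characterisation of $H^\alpha$ applied to the compactly supported $\theta_0$ gives $\sum_{j,r}\mu_{jr}^2 2^{2j\alpha}\lesssim \norm{\theta_0}_{H^\alpha}^2$, hence $\sum_{j,r}\mu_{jr}^2/(2\tau_j^2)\lesssim N^{d/(2\alpha+2\kappa+d)}=N\varepsilon_N^2$. Second, using $|R_j|\asymp 2^{jd}$ and $2^J\simeq N^{1/(2\alpha+2\kappa+d)}$,
\[ \sum_{j,r}\frac{\sigma_{jr}^2}{2\tau_j^2} \lesssim \frac{N^{d/(2\alpha+2\kappa+d)}}{N}\,2^{J(d+2\alpha)} \asymp N^{(d-2\kappa)/(2\alpha+2\kappa+d)}\leq N\varepsilon_N^2, \]
as $\kappa\geq 0$. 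Third, $\log(\tau_j^2/\sigma_{jr}^2) = \tfrac{2(\alpha+\kappa)}{2\alpha+2\kappa+d}\log N - 2j\alpha\log 2$ is nonnegative for every $j\leq J$ and $O(\log N)$ uniformly, so the log contribution is $\lesssim d_J\log N\lesssim N\varepsilon_N^2\log N$ since $d_J\asymp 2^{Jd}\asymp N\varepsilon_N^2$. Together these give $D(Q_N\Vert \Pi_N)\lesssim N\varepsilon_N^2\log N$.

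For the data-dependent term, the KL identity (\ref{KL}) gives $Q_N D(P_{\theta_0}^{(N)}\Vert P_\theta^{(N)}) = (N/2)Q_N\norm{\mathcal{G}(\theta)-\mathcal{G}(\theta_0)}_{L^2_\lambda}^2$, and applying the Lipschitz bound (\ref{lip}) and Cauchy--Schwarz yields
\[ Q_N\norm{\mathcal{G}(\theta)-\mathcal{G}(\theta_0)}_{L^2_\lambda}^2 \lesssim \bbra{Q_N(1+\norm{\theta}_{H^\alpha}^l)^4}^{1/2}\bbra{Q_N\norm{\theta-\theta_0}_{(H^\kappa)^*}^{4}}^{1/2}. \]
A direct variance calculation gives $Q_N\norm{\theta}_{H^\alpha}^2\lesssim \norm{\theta_0}_{H^\alpha}^2 + N^{-1}2^{J(d+2\alpha)}\lesssim 1$, and Gaussian concentration (Borell--Sudakov--Tsirelson applied to $\norm{\theta}_{H^\alpha}$ as a Lipschitz functional of the underlying standard Gaussian vector that parametrises $\tilde\theta$) promotes this to uniform bounds on all polynomial moments, so the first factor is $O(1)$. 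For the second factor I split $\theta-\theta_0=(\theta-P_J(\theta_0))+(P_J(\theta_0)-\theta_0)$: the deterministic truncation error is $\lesssim 2^{-J(\alpha+\kappa)}\norm{\theta_0}_{H^\alpha}\lesssim \varepsilon_N$ by standard wavelet approximation, while $\theta-P_J(\theta_0)$ is centred Gaussian with second $(H^\kappa)^*$-moment bounded by $\sum_{j,r}\sigma_{jr}^2 2^{-2j\kappa}\lesssim \varepsilon_N^2$ and fourth moment by the square of the second. Hence $Q_N\norm{\theta-\theta_0}_{(H^\kappa)^*}^4\lesssim \varepsilon_N^4$, yielding $N^{-1}Q_N D(P_{\theta_0}^{(N)}\Vert P_\theta^{(N)})\lesssim \varepsilon_N^2$.

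The delicate point is the trade-off in choosing $\sigma_{jr}^2$: shrinking it tightens $Q_N\norm{\theta-\theta_0}_{(H^\kappa)^*}^2$ but inflates the logarithmic penalty $\sum\log(\tau_j/\sigma_{jr})$, while enlarging it risks violating $\sum\sigma_{jr}^2/\tau_j^2\lesssim N\varepsilon_N^2$. The choice $\sigma_{jr}^2=1/N$ is essentially the smallest value for which the latter constraint is still borderline tight, and the ensuing $d_J\log N\simeq N\varepsilon_N^2\log N$ accumulation in the log term is precisely what forces the extra $\log N$ factor in the final bound $R(Q_N)\lesssim \varepsilon_N^2\log N$.
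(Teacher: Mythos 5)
Your construction is essentially identical to the paper's: the same product Gaussian $Q_N$ centred at the wavelet coefficients of $\theta_0$ with per-coordinate variance $\simeq 1/N$ (the paper writes it as $\sigma^2 = 2^{-2J(\alpha+\kappa+d/2)}\simeq N^{-1}$), the same KL decomposition into mean, variance and logarithmic contributions with the $\log N$ arising from $d_J\log N\simeq N\varepsilon_N^2\log N$, and the same split of $\theta-\theta_0$ into a centred Gaussian part and the deterministic truncation error $\lesssim\varepsilon_N$. The only (harmless) deviation is in controlling $Q_N\lVert\mathcal{G}(\theta)-\mathcal{G}(\theta_0)\rVert^2_{L^2_\lambda}$: you use Cauchy--Schwarz plus Gaussian concentration to bound polynomial moments of $\lVert\theta\rVert_{H^\alpha}$, whereas the paper dominates the polynomial factor by $e^{\lVert\theta\rVert_{H^\alpha}^2}$ and evaluates chi-squared moment generating functions; both yield the same $O(1)$ factor, and note that only the data-processing inequality $D(Q_N\Vert\Pi_N)\le D(\tilde Q_N\Vert\tilde\Pi_N)$ is needed, so your injectivity claim for $\Psi_J$ is not actually required.
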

With \cref{boundgamfinite}, it is easy to bound $\gamma_N^2$ by
$\gamma_N^2\leq \inf_{Q\in \mathcal{Q}_G^J}R(Q) \leq R(Q_N).$
Then, \cref{mainthmsv,boundgamfinite} together show the convergence rate of variational posterior from $\mathcal{Q}_G^J$.
\begin{theorem}\label{finalthmsv}
      Suppose \cref{condreg} holds for the forward map $\mathcal{G}$, the separable normed linear subspace $(\mathcal{R},\Vert \cdot\Vert _{\mathcal{R}}) = (H^{\alpha}(\mathcal{Z}),\Vert \cdot \Vert _{H^{\alpha}(\mathcal{Z})})$ with integer $\alpha \geq 0$ and the finite constants $C_U > 0$, $C_L > 0$, ${\kappa}\geq0$, $p\geq 0$, $l \geq 0$.
Let probability measure $\Pi'_J$ as in \cref{seiveprior1}, and $J = J_N \in \mathbb{N}$ is such that $2^{J} \simeq N^{\frac{1}{2\alpha +2{\kappa} +d}}$. Denote by $\Pi_N$ the rescaled prior as in \cref{rescaledprior} with $\theta' \sim \Pi'_J$, and $\Pi_N(\cdot|(Y_i,X_i)_{i=1}^N) = \Pi_N(\cdot|D_N)$ is the corresponding posterior distribution in \cref{Post} arising from the data in the model \cref{model}. Assume that $\alpha + {\kappa} \geq \frac{d(l+1)}{2}$ and $\theta_0\in H_K^\alpha(\mathcal{Z})$. Then, for $\varepsilon_N=N^{-\frac{\alpha+{\kappa}}{2\alpha+2{\kappa}+d}}$, the variational set $\mathcal{Q}_G^J$ defined by \cref{GMF} and variational posterior $\hat{Q}$ defined in \cref{variationalposterior}, we have      
  \begin{align*}
      P_{\theta_0}^{(N)}\hat{Q}\Vert \mathcal{G}(\theta)-\mathcal{G}(\theta_0)\Vert _{L^{2}_{\lambda}}^{\frac{2}{p+1}}\lesssim \varepsilon_N^{\frac{2}{p+1}}\log N.
  \end{align*}
  Moreover, assume that \cref{condstab} also holds for $\mathcal{G}$, $\mathcal{R}$, the function $F$ and the finite constants $C_T > 0$, $q\geq 0$. Then, we further have
  \begin{align*}
      P_{\theta_0}^{(N)}\hat{Q}[F(\Vert f_{\theta} - f_{\theta_0}\Vert_{\mathcal{F}} )]^{\frac{2}{p+q+1}}\lesssim \varepsilon_N^{\frac{2}{p+q+1}}\log N.
  \end{align*}
\end{theorem}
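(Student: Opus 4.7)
The plan is to observe that Theorem \ref{finalthmsv} is essentially a direct corollary obtained by combining Theorem \ref{mainthmsv} with Theorem \ref{boundgamfinite}. The only real work is an elementary exponent calculation that shows the variational approximation term gets absorbed into the posterior contraction term (up to the logarithmic factor).

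First I would fix $\mathcal{Q} = \mathcal{Q}_G^J$ and invoke Theorem \ref{boundgamfinite} to obtain the measure $Q_N \in \mathcal{Q}_G^J$ with $R(Q_N) \lesssim \varepsilon_N^2 \log N$. Combined with the general bound $\gamma_N^2 \leq \inf_{Q \in \mathcal{Q}} R(Q)$ derived in the discussion following Theorem \ref{mainthm}, this yields
\begin{equation*}
\gamma_N^2 \;\leq\; \inf_{Q \in \mathcal{Q}_G^J} R(Q) \;\leq\; R(Q_N) \;\lesssim\; \varepsilon_N^2 \log N.
\end{equation*}

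Next I would substitute this estimate into the two conclusions of Theorem \ref{mainthmsv}. For the first assertion, the upper bound becomes
\begin{equation*}
\varepsilon_N^{\frac{2}{p+1}} \;+\; \gamma_N^2 \cdot \varepsilon_N^{-\frac{2p}{p+1}} \;\lesssim\; \varepsilon_N^{\frac{2}{p+1}} \;+\; \varepsilon_N^{2 - \frac{2p}{p+1}} \log N.
\end{equation*}
The exponent in the second term simplifies as $2 - \frac{2p}{p+1} = \frac{2(p+1) - 2p}{p+1} = \frac{2}{p+1}$, so both terms are of order $\varepsilon_N^{2/(p+1)}$, with the second carrying the $\log N$ factor. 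This immediately gives the claimed bound. The identical computation with $p$ replaced by $p+q$ handles the stability conclusion, where $2 - \frac{2(p+q)}{p+q+1} = \frac{2}{p+q+1}$.

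There is no real obstacle here beyond verifying that the hypotheses of Theorems \ref{mainthmsv} and \ref{boundgamfinite} are the same as those assumed in Theorem \ref{finalthmsv}, which is immediate from inspection, and checking the two elementary exponent identities above. All the substantive work---the posterior contraction analysis giving the $(\varepsilon_N^2 + \gamma_N^2) \cdot \varepsilon_N^{-\eta}$ bound, and the construction of a mean-field Gaussian measure whose Kullback--Leibler distance to the prior plus expected log-likelihood ratio is controlled by $\varepsilon_N^2 \log N$---has already been accomplished in the two preceding theorems, so the proof is at most a few lines of algebra.
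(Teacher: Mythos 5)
Your proposal is correct and matches the paper's own argument: the text immediately preceding Theorem \ref{finalthmsv} derives $\gamma_N^2 \leq \inf_{Q\in\mathcal{Q}_G^J} R(Q) \leq R(Q_N) \lesssim \varepsilon_N^2\log N$ from Theorem \ref{boundgamfinite} and then substitutes into Theorem \ref{mainthmsv}, exactly as you do. Your exponent identities $2-\tfrac{2p}{p+1}=\tfrac{2}{p+1}$ and $2-\tfrac{2(p+q)}{p+q+1}=\tfrac{2}{p+q+1}$ are the only computation needed and are carried out correctly.
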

\begin{remark}
\textcolor{black}{The convergence rates achieved in this theorem include an extra logarithmic factor compared to the convergence rates from \cref{finalthm}. This difference arises because our variational set $\mathcal{Q}_G^J$ has been limited to the push-forward of a Gaussian mean-field family. 
We note that \cref{finalthmsv} is not restricted to the Gaussian mean-field family $\mathcal{Q}_G^J$. In fact, $\mathcal{Q}_G^J$ serves only as a sufficiently rich variational class for which the approximation error $\gamma_N^2$ can be controlled by the convergence rate $\varepsilon_N^2$. \cref{finalthmsv} is applicable to any variational class $\mathcal{Q}$ that contains $\mathcal{Q}_G^J$. In the context of inverse problems, many applications of variational inference methods fall within the scope of our theorem \cite{pinski2015kullback,pinski2015algorithms,jia2022stein,zhao2025functionalnormalizingflowstatistical}. In \cref{sec:computation} of the Supplementary Material \cite{Zu2024AoS}, we explicitly illustrate how \cref{finalthmsv} applies to these methods, and we also discuss the corresponding computational procedures when the specific variational family $\mathcal{Q}_G^J$ is employed.}
% We note that \cref{finalthmsv} maintains its convergence rates when we extend our variational set from $\mathcal{Q}_G^J$ to $\mathcal{Q}_{MF}^J$, which is defined by the push-forward of a standard mean-field family. The set $\mathcal{Q}_{MF}^J$ is explicitly given by
% \begin{align}\label{MF}
%     \mathcal{Q}_{MF}^J = \Bigg\{Q = \Tilde{Q}\circ\Psi_J^{-1} : d\Tilde{Q}(\Tilde{\theta}) = \prod_{l=-1}^J\prod_{r\in R_l} d\Tilde{Q}_{lr}(\Tilde{\theta}_{lr}) \Bigg\},
% \end{align}
% which has been extensively utilized in various practical applications 
% % as demonstrated in references 
% \cite{IntroVI_zhang2018advances,IntroVI_blei2017variational,IntroVI_jia2021variational}.
\end{remark}

\section{Contraction rate for two typical inverse problems}\label{ApplicationSection}
In this section, we apply \cref{finalthmsv} to the Darcy flow problem and the inverse potential problem for a subdiffusion equation. In order to verify \cref{condreg,condstab} for these problems, we use link functions satisfying specific properties as in \cite{IntroNonLinear_nickl2020convergence}. \textcolor{black}{For regularity and conditional stability estimates of the Darcy flow problem, we follow the results in section 5 of \cite{IntroNonLinear_nickl2020convergence}. For estimates of the subdiffusion equation, we use the technique in \cite{Frac_jing2022simultaneous}.}
\subsection{Darcy flow problem}\label{SubsectionDarcyFlow}
For a bounded smooth domain $\mathcal{X} \subset \mathbb{R}^d$ ($d\in \mathbb{N}$) and a given source function $g \in C^{\infty}(\mathcal{X})$, we consider solutions $u=u_f$ to the Dirichlet boundary problem
\begin{align}\label{Darcy}
    \left\{\begin{aligned}
    &\nabla\cdot(f \nabla u) = g \quad \mbox{on } \mathcal{X},\\
    &u = 0 \quad \mbox{on } \partial\mathcal{X}.
    \end{aligned}\right.  
\end{align}
In this subsection, we will identify $f$ from the observation of $u_f$.
Assume the parameter $f \in \mathcal{F}_{\alpha,K_{\min}}$ for some integer $\alpha > 1 + d/2$, $K_{\min} \in [0,1)$, where  $\mathcal{F}_{\alpha,K_{\min}}$ is defined as
\begin{align}
\begin{aligned}
    \bigg\{ &f\in H^{\alpha}(\mathcal{X}): f > K_{\min} \text{\ on\ } \mathcal{X}, f= 1 \text{\ on\ } \partial \mathcal{X},  
      \frac{\partial^j f}{\partial n^j} = 0 \text{\ on\ } \partial \mathcal{X} \text{\ for\ } j= 1,\dots,\alpha -1 \bigg\}.
\end{aligned}  
\end{align}
Here, the forward map $G$ is defined by 
\begin{align*}
G : \mathcal{F}_{\alpha,K_{\min}} \rightarrow L^2_{\lambda}(\mathcal{X}),\qquad f \mapsto u_f,
\end{align*}
where the probability measure $\lambda$ is chosen as the uniform distribution on $\mathcal{X}$. 
To build re-parametrisation of $\mathcal{F}_{\alpha,K_{\min}}$, we introduce the approach of using regular link functions $\Phi$ as in \cite{IntroNonLinear_nickl2020convergence}.
Define a function $\Phi$ that satisfies the following properties:
\begin{description}
    \item[{\color{white}a}(i)] For given $K_{\min}> 0$, $\Phi: \mathbb{R}\rightarrow (K_{\min},\infty)$ is a smooth, strictly increasing bijective function such that $\Phi(0)=1$ and $\Phi'>0$ on $\mathbb{R}$;
    \item[{\color{white}a}(ii)] All derivatives of $\Phi$ are bounded, i.e.,
        $\sup_{x\in\mathbb{R}}\abs{\Phi^{(k)}(x)} < \infty$ for all $k\geq 1.$
\end{description}
An example of such a link function is given in \cref{Darcylinkex} of the Supplementary Material \cite{Zu2024AoS}.
Then, the reparametrized forward map $\mathcal{G}$ is defined as
\begin{align}\label{forwardmapDarcyflow}
\mathcal{G} : \Theta_{\alpha,K_{\min}} \rightarrow L^2_{\lambda}(\mathcal{X}), \qquad \theta \mapsto \mathcal{G}(\theta):=G(\Phi(\theta)),
\end{align}
where $\Theta_{\alpha,K_{\min}} : = \set{\theta = \Phi^{-1}\circ f: f\in \mathcal{F}_{\alpha,K_{\min}} }$. It is verified by the properties of $\Phi$ that
\begin{align*}
\Theta_{\alpha,K_{\min}} = \bigg\{\theta \in H^{\alpha}: \frac{\partial^j \theta}{\partial n^j} = 0 \text{\ on\ } \partial \mathcal{X} \text{\ for\ } j=0,\dots,\alpha-1\bigg\} = H^{\alpha}_c(\mathcal{X}).
\end{align*}
The reason why we use the link functions $\Phi$ here instead of the common choice $\Phi = \exp$ is that \cref{condreg,condstab} require the polynomial growth in $\norm{\theta}_{\mathcal{R}}$ of those coefficients. If we use $\Phi = \exp$ as the link function, the polynomial growth is not satisfied.
\begin{theorem}\label{mainthmDarcy}
% For integer $\alpha > (2 + d/2)\vee(2d-1)$ , $d \in \mathbb{N}$ and ${\kappa}=1$, let probability measure $\Pi'_J$ as in \cref{seiveprior1}, and $J = J_N \in \mathbb{N}$ is such that $2^{J} \simeq N^{\frac{1}{2\alpha +2{\kappa} +d}}$.
Let integer $\alpha > (2 + d/2)\vee(2d-1)$ , $d \in \mathbb{N}$ and ${\kappa}=1$. Consider the forward map $\mathcal{G}$ as in \cref{forwardmapDarcyflow}.
Let $\Pi_N$, $\Pi_N(\cdot|D_N)$, $\mathcal{Q}_G^J$ and $\hat{Q}$ be as defined in \cref{finalthmsv}.
% Denote by $\Pi_N$ the rescaled prior as in \cref{rescaledprior} with $\theta' \sim \Pi'_J$, and $\Pi_N(\cdot|(Y_i,X_i)_{i=1}^N) = \Pi_N(\cdot|D_N)$ is the corresponding posterior distribution in \cref{Post} arising from the data in the model \cref{model}. 
Assume that $\theta_0\in H_K^\alpha(\mathcal{X})$. Then, for $\varepsilon_N=N^{-\frac{\alpha+{\kappa}}{2\alpha+2{\kappa}+d}}$,
% the variational set $\mathcal{Q}_G^J$ defined by \cref{GMF} and variational posterior $\hat{Q}$ defined in \cref{variationalposterior}, 
we have
\begin{gather}
    P_{\theta_0}^{(N)}\hat{Q} \Vert u_{f_{\theta}} - u_{f_0}\Vert_{L^2}^{\frac{2}{p+1}}\lesssim \varepsilon_N^{\frac{2}{p+1}}\log N,\label{DracyGeneralerror}\\
    P_{\theta_0}^{(N)}\hat{Q} \Vert f_{\theta} - f_{0}\Vert_{L^2}^{\frac{\alpha+1}{\alpha-1}\cdot\frac{2}{p+q+1}}\lesssim \varepsilon_N^{\frac{2}{p+q+1}} \log N. \label{Dracyerror}
\end{gather}
with $t = \lceil d/2 \rceil+2$, $p = t^3 + t^2$, $q = \frac{(2\alpha^2+1)(\alpha+1)}{\alpha-1}$. 
\end{theorem}
We note that for the ``PDE-constrained regression'' problem of recovering $u_{f_0}$ in ``prediction'' loss, the convergence rate
obtained in \cref{DracyGeneralerror} can be shown to be minimax optimal (up to a logarithmic factor) \cite[Section 2.3.2]{IntroNonLinear_giordano2020consistency}. For a smooth truth $f_0$, both of the rates obtained in \cref{DracyGeneralerror} and \cref{Dracyerror} approach the optimal rate $N^{-1/2}$ of finite-dimensional models when we let $\alpha \rightarrow +\infty$. However, the optimal reconstruction rate for the Darcy flow problem with general Sobolev regularity of the truth $f_0$ remains to be studied for future research.
\begin{remark}
For the inverse potential problem of the Schr\"{o}dinger equations discussed in \cite{IntroNonLinear_nickl2020bernstein}, it is worth noting that the convergence rate of the variational posterior can be obtained using the link function detailed in \cref{SubsectionSuddiffuion}. The convergence rate towards the truth $f_0$ reaches the same rate $N^{-\frac{\alpha}{2\alpha + 4 + d}}$ (up to a logarithmic factor) as that proved to be minimax optimal in \cite{IntroNonLinear_nickl2020bernstein}. Since our results for Schr\"{o}dinger equations can be obtained directly through a process similar to that used for the Darcy flow problem, using regularity and conditional stability estimates from \cite{IntroNonLinear_nickl2023bayesian}, we will not provide theorems and proofs here. Instead, we present our results on the inverse potential problem for a subdiffusion equation in \cref{SubsectionSuddiffuion}.
\par
\end{remark}
\subsection{Inverse potential problem for a subdiffusion equation}\label{SubsectionSuddiffuion}
Let domain $\Omega = (0,1)$ and we consider solutions $u(t)=u_{\beta,q}(t)$ to a subdiffusion equation with a non-zero Dirichlet boundary condition:
\begin{align}\label{Fractional}
    \left\{\begin{aligned}
    &\partial_t^{\beta}u - \partial_{xx}u + qu = f \quad \mbox{in } \Omega \times (0,T],\\
    &u(0,t) = a_0, u(1,t) = a_1 \quad \mbox{on } (0,T],\\
    &u(0) = u_0 \quad \mbox{in } \Omega,
    \end{aligned}\right.  
\end{align}
where $\beta \in (0,1)$ represents the fractional order, $T > 0$ stands for a fixed final time, $f>0$ is a specified source term, $u_0>0$ denotes given initial data, the non-negative function $q\in L^{\infty}(\Omega)$ refers to a spatially dependent potential, and $a_0$ and $a_1$ are positive constants. The notation $\partial^\beta_tu(t)$ denotes the Djrbashian--Caputo fractional derivative in time $t$ of order $\beta\in (0,1)$, 
\begin{align}
\partial_t^{\beta}u(t) = \frac{1}{\Gamma(1-\beta)}\int^t_0(t-s)^{-\beta}u'(s)ds,
\end{align}
where $\Gamma(x)$ is the Gamma function. 
For in-depth analysis of fractional differential equations and the Djrbashian-Caputo fractional derivative, please refer to references \cite{jin2021fractional,Jia2017JDE}. \textcolor{black}{There are also posterior consistency results for subdiffusion equations \cite{Frac_kow2025consistency}, where the subdiffusion equation is governed by the Riemann--Liouville derivative different from the Djrbashian--Caputo derivative we consider here.}
\par 
In this section we consider the identification of the potential $q$ from the observation of $u(T)$.
For $\alpha \in \mathbb{N}$, we define the parameter space
\begin{align}\label{FracParameterspcae}
\mathcal{F}_{\alpha,M_0} =\left\{ q\in H^{\alpha} \cap \mathcal{I}: q\vert_{\partial\Omega}=1, \frac{\partial^jq}{\partial n^j}\Big|_{\partial\Omega} =0 \text{\ for\ } j=1,\dots,\alpha -1 \right\},
\end{align}
where $\mathcal{I} = \{q \in L^{\infty}: 0< q < M_0\}$ for $M_0 >1$, and its subclasses
\[\mathcal{F}_{\alpha,M_0}(R) =\left\{ q\in \mathcal{F}_{\alpha,M_0}: \norm{q}_{H^{\alpha}}\leq R\right\}, \quad R > 0.\]
We assume $u_0 \in H^{\alpha}(\Omega)$, $f \in H^{\alpha}(\Omega)$ with $u_0, f \geq L_0$ a.e. and $a_0, a_1 \geq L_0$ for $L_0>0$.
Here the forward map $G$ is defined by 
\begin{align*}
G : \mathcal{F}_{\alpha,M_0} \rightarrow L^2_{\lambda}(\Omega),\qquad q \mapsto u_q(T),
\end{align*}
where probability measure $\lambda$ is chosen as the uniform distribution on $\Omega$.
We use a link function $\Phi$ to construct a reparametrization of $\mathcal{F}_{\alpha,M_0}$.
Define $\Phi$ that satisfies the following properties:
\begin{description}
    \item[{\color{white}a}(i)] For given $M_0> 1$, $\Phi: \mathbb{R}\rightarrow (0,M_0)$ is a smooth, strictly increasing bijective function such that $\Phi(0)=1$ and $\Phi'>0$ on $\mathbb{R}$;
    \item[{\color{white}a}(ii)] All derivatives of $\Phi$ are bounded, i.e.,
	$\sup_{x\in\mathbb{R}}\abs{\Phi^{(k)}(x)} < \infty$ for all $k\geq 1.$
\end{description}
One example to satisfy (i) and (ii) is the logistic function \cite{furuya2024consistency}:
\[\Phi(t) = \frac{M_0}{M_0+(M_0-1)(e^{-t}-1)}.\]
Then, the reparametrized forward map $\mathcal{G}$ is then defined as
\begin{align}\label{forwardmapFrac}
\mathcal{G} : \Theta_{\alpha,M_0} \rightarrow L^2_{\lambda}(\Omega), \qquad \theta \mapsto \mathcal{G}(\theta):=G(\Phi(\theta)).
\end{align}
with $\Theta_{\alpha,M_0} : = \left\{\theta = \Phi^{-1}\circ q: q\in \mathcal{F}_{\alpha,M_0} \right\}$. 
It is verified through the properties of $\Phi$ that
\begin{align*}
\Theta_{\alpha,M_0} = \left\{\theta \in H^{\alpha}: \frac{\partial^j \theta}{\partial n^j} = 0 \text{\ on\ } \partial \Omega \text{\ for\ } j=0,\dots,\alpha-1\right\} = H^{\alpha}_c(\Omega).
\end{align*}
\par
\begin{theorem}\label{mainthmFrac}
% For $\alpha \in \mathbb{N}$, $d = 1$ and ${\kappa}= 2$, let probability measure $\Pi'_J$ as in \cref{seiveprior1}, and $J = J_N \in \mathbb{N}$ is such that $2^{J} \simeq N^{\frac{1}{2\alpha +2{\kappa} +d}}$.
Let $\alpha \in \mathbb{N}$, $\alpha> 2+d/2$, $d = 1$ and ${\kappa}= 2$.
Consider the forward map $\mathcal{G}$ as in \cref{forwardmapFrac} with terminal time $T\geq T_0$ where $T_0$ is large enough.
Let $\Pi_N$, $\Pi_N(\cdot|D_N)$, $\mathcal{Q}_G^J$ and $\hat{Q}$ be as defined in \cref{finalthmsv}.
% Denote by $\Pi_N$ the rescaled prior as in \cref{rescaledprior} with $\theta' \sim \Pi'_J$, and $\Pi_N(\cdot|(Y_i,X_i)_{i=1}^N) = \Pi_N(\cdot|D_N)$ is the corresponding posterior distribution in \cref{Post} arising from data in the model \cref{model}.
Assume that $\theta_0\in H_K^\alpha(\Omega)$. Then, for $\varepsilon_N=N^{-\frac{\alpha+{\kappa}}{2\alpha+2{\kappa}+d}}$,
% the variational set $\mathcal{Q}_G^J$ defined by \cref{GMF} and variational posterior $\hat{Q}$ defined in \cref{variationalposterior},
we have
\begin{gather}
   P_{\theta_0}^{(N)}\hat{Q} \Vert u_{q_{\theta}}(T) - u_{q_{0}}(T)\Vert_{L^2}^{2}\lesssim \varepsilon_N^{2}\log N, \label{FracGeneralerror}\\
    P_{\theta_0}^{(N)}\hat{Q} \Vert q_{\theta} - q_{0}\Vert_{L^2}^{\frac{2+\alpha}{\alpha}\cdot\frac{2}{q+1}}\lesssim \varepsilon_N^{\frac{2}{q+1}}\log N, \label{Fracerror}
\end{gather}
with $q = 2+4\alpha$.
\end{theorem}
\par
For the ``PDE-constrained regression'' problem of recovering $u_{q_{0}}(T)$ in ``prediction'' loss, the convergence rate found in \cref{FracGeneralerror} will be demonstrated to be minimax optimal (up to a logarithmic factor), as evidenced by \cref{Fracminmax} given below. The convergence rate for recovering $q_{0}$ in $L^2$ norm as presented in \cref{Fracerror} will similarly be proven minimax optimal (up to a logarithmic factor) in \cref{Fracminmax}. For a smooth truth $q_0$, the rates obtained in \cref{FracGeneralerror} and \cref{Fracerror} both approach the optimal rate $N^{-1/2}$ of finite-dimensional models as $\alpha \rightarrow +\infty$.
\begin{theorem}\label{Fracminmax}
For $M_0>1$, $\alpha \in \mathbb{N}$, $q\in\mathcal{F}_{\alpha,M_0}$, consider the solution $u_q(t)$ of the problem \cref{Fractional}. Then there exist fixed $T_0>0$ and a finite constant $C>0$ such that for $N$ large enough, the terminal time $T\geq T_0$, variational posterior $\hat{Q}$ defined in \cref{mainthmFrac} and any $\eta_1,\eta_2>0$,
    \begin{gather*}
         \inf_{\Tilde{u}_N}\sup_{q_0\in\mathcal{F}_{\alpha,M_0}(R)}P^{(N)}_{\theta_0}\hat{Q}\norm{\Tilde{u}_N-u_{q_0}(T)}^{\eta_1}_{L^2(\Omega)}\geq C N^{-\frac{\alpha+2}{2\alpha+4+1}\cdot\eta_1},\\
         \inf_{\Tilde{q}_N}\sup_{q_0\in\mathcal{F}_{\alpha,M_0}(R)}P^{(N)}_{\theta_0}\hat{Q}\norm{\Tilde{q}_N-q_0}^{\frac{\alpha+2}{\alpha}\cdot \eta_2}_{L^2(\Omega)}\geq C N^{-\frac{\alpha+2}{2\alpha+4+1}\cdot \eta_2},
     \end{gather*}
      where $\theta_0 = \Phi^{-1}(q_0)$ and the infimums range over all measurable functions $\Tilde{u}_N = \Tilde{u}_N(\theta)$, $\Tilde{q}_N = \Tilde{q}_N(\theta)$ that take value in $L^2(\Omega)$ with $\theta$ from $\hat{Q}$ respectively. 
\end{theorem}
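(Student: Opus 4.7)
The plan is to invoke the classical many-hypothesis minimax lower bound via Fano's lemma on a wavelet packing of $\mathcal{F}_{\alpha,M_0}$. Since the estimators $\Tilde{u}_N(\theta)$ and $\Tilde{q}_N(\theta)$ take as input a sample $\theta$ from $\hat{Q}$ and $\hat{Q}$ depends measurably on the data $D_N$, both are (possibly randomized) measurable functionals of $D_N$, so standard minimax lower bounds for the model \eqref{model} apply.

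I would fix a smooth baseline $q^* \in \mathcal{F}_{\alpha,M_0}$ (e.g.\ $q^*\equiv 1$, modified smoothly near $\partial\Omega$ to satisfy the boundary conditions) and, for $J = J_N \in \mathbb{N}$ with $2^J \simeq N^{1/(2\alpha+2{\kappa}+d)} = N^{1/(2\alpha+5)}$, define $q_\omega = q^* + \delta_N \sum_{r \in R_J} \omega_r \psi_{J,r}$ for $\omega \in \{0,1\}^{|R_J|}$, with $\{\psi_{J,r}\}$ the Daubechies wavelets at resolution $J$ supported in a compact $K \subset \Omega$ and $\delta_N \simeq 2^{-J(\alpha+d/2)}$. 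Wavelet norm equivalences give $\Vert q_\omega - q^*\Vert_{H^\alpha} \lesssim 1$ (so $q_\omega \in \mathcal{F}_{\alpha,M_0}$ for large $N$) and $\Vert q_\omega - q^*\Vert_{(H^{{\kappa}})^*} \lesssim 2^{-J(\alpha+{\kappa})} \simeq \varepsilon_N$; combining Condition \ref{condreg} with \eqref{KL} then yields $D(P^{(N)}_{\theta_\omega}\Vert P^{(N)}_{\theta^*}) \lesssim N\varepsilon_N^2 \simeq |R_J|$. Varshamov--Gilbert produces a sub-packing $\Omega_0$ with $|\Omega_0| \geq 2^{c|R_J|}$ and pairwise Hamming distance $\geq c|R_J|$, giving the parameter separation $\Vert q_\omega - q_{\omega'}\Vert_{L^2} \gtrsim 2^{-J\alpha} \simeq N^{-\alpha/(2\alpha+5)}$. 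After rescaling $\delta_N$ by a small constant, the Fano condition $D \leq (1/4)\log|\Omega_0|$ holds.

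The main obstacle is the matching \emph{lower} bound $\Vert u_{q_\omega}(T) - u_{q_{\omega'}}(T)\Vert_{L^2} \gtrsim \varepsilon_N$ needed for the prediction statement \eqref{FracGeneralerror}; Condition \ref{condreg} only provides the reverse direction. I would linearize the subdiffusion equation around $q^*$: with $h = q_\omega - q^*$ and $v = u_{q_\omega} - u_{q^*}$, one has $v(T) = -L_{q^*}^{-1}(u_{q^*}h)(T) + R(h)$, where $L_{q^*} = \partial_t^\beta - \partial_{xx} + q^*$ with homogeneous data and $R(h) = O(\Vert h\Vert_{L^\infty}^2)$. The Mittag--Leffler/resolvent estimates underlying the conditional stability used in the proof of \eqref{Fracerror} yield $\Vert L_{q^*}^{-1}(u_{q^*}h)(T)\Vert_{L^2} \gtrsim \Vert h\Vert_{(H^{{\kappa}})^*}$ for $T \geq T_0$, using that $u_{q^*}(T)$ is uniformly bounded below by the positivity assumptions on $u_0$, $f$, $a_0$, $a_1$. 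Since $\delta_N$ is small, the remainder is negligible and $\Vert v(T)\Vert_{L^2} \gtrsim \varepsilon_N$; an additional Varshamov--Gilbert/averaging step refines the sub-packing so that the pairwise prediction separation is of order $\varepsilon_N$.

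Applying Fano's lemma to $\{P^{(N)}_{\theta_\omega}\}_{\omega\in\Omega_0}$ with the separation and KL bounds above yields $\max_{\omega}P^{(N)}_{\theta_\omega}(\Tilde{T}\neq\omega)\geq c>0$ for any measurable test $\Tilde{T}$. Reducing estimation to testing by thresholding $\Tilde{u}_N(\theta)$ and $\Tilde{q}_N(\theta)$ to the nearest hypothesis, combined with Markov's inequality applied to the moments $\eta_1$ and $(\alpha+2)\eta_2/\alpha$, and noting that averaging over $\theta\sim\hat{Q}$ merely extends the class of admissible estimators to randomized ones (to which Fano still applies), yields the two stated lower bounds.
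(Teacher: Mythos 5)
Your proposal is correct in outline and shares the paper's skeleton: a wavelet hypercube of perturbations at resolution $2^J\simeq N^{1/(2\alpha+5)}$ with amplitude $\simeq 2^{-J(\alpha+1/2)}$, a Varshamov--Gilbert sub-packing, a KL bound $D(P^{(N)}_{\theta_\omega}\Vert P^{(N)}_{\theta^*})\lesssim N\varepsilon_N^2\simeq\log|\Omega_0|$ via the Lipschitz estimate, and the observation that an estimator $\Tilde u_N(\theta)$ with $\theta\sim\hat Q(\cdot\mid D_N)$ is just a randomized functional of $D_N$ (the paper formalizes this by putting the joint law $\hat Q_q$ on $(\mathbb{R}\times\Omega)^N\times L^2(\Omega)$ and noting $D(\hat Q_{q_1}\Vert\hat Q_{q_2})=D(P^{(N)}_{q_1}\Vert P^{(N)}_{q_2})$, which is exactly your ``randomized estimators'' remark made precise). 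Where you genuinely diverge is the prediction-separation lower bound. The paper avoids linearization entirely: from the PDE identity $-\partial_{xx}w+q_{m'}w=(q_{m'}-q_m)u_{q_m}-(\partial_t^\beta u_{q_m}-\partial_t^\beta u_{q_{m'}})$ with $w=u_{q_m}-u_{q_{m'}}$, it lower-bounds the first right-hand term by $L_0\Vert q_m-q_{m'}\Vert_{(H^2_0)^*}$ using the maximum principle ($u\geq L_0$), absorbs the second term via the $cT^{-\beta}\Vert q_m-q_{m'}\Vert_{(H^2_0)^*}$ decay of Lemma \ref{FracderivativeReg} for $T\geq T_0$ large, and then uses $\Vert w(T)\Vert_{L^2}\geq\Vert A_{q_{m'}}w(T)\Vert_{(H^2_0)^*}$. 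This yields the exact two-sided comparison $\Vert u_{q_m}(T)-u_{q_{m'}}(T)\Vert_{L^2}\gtrsim\Vert q_m-q_{m'}\Vert_{H^{-2}}$ for \emph{arbitrary} pairs in the class, with no smallness requirement on the perturbation. Your linearization-plus-quadratic-remainder route buys nothing here and costs you two things you should make explicit: (i) the inequality $\Vert L_{q^*}^{-1}(u_{q^*}h)(T)\Vert_{L^2}\gtrsim\Vert h\Vert_{(H^2_0)^*}$ is asserted, not proved --- it does follow from the solution representation $u_q(t)=h_q+F_{\beta,q}(t)(u_0-h_q)$, the $T^{-\beta}$ decay of $F_{\beta,q}(T)$, and the duality argument with $h_{q^*}\geq L_0$, but that is essentially reproving the paper's direct bound; and (ii) the remainder control $\Vert R(h)\Vert_{L^2}=O(\Vert h\Vert_{L^\infty}^2)\simeq 2^{-2J\alpha}$ must be dominated by the separation $2^{-J(\alpha+2)}$, which forces $\alpha>2$; this is harmless in the regime $\alpha\geq3$ where $\hat Q$ is defined, but it is an avoidable restriction that the paper's non-perturbative argument does not incur. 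With these two points filled in, your argument goes through.
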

\section{Contraction rate for severely ill-posed problem}\label{SectionContractionRateUnstable}
There are some inverse problems that have not been shown to satisfy our conditional stability condition \cref{stab}. For example, inverse problems with log-type stability may not satisfy the polynomial growth of the coefficients in \cref{condstab}. For this kind of problem, we construct a particular prior $\Tilde{\Pi}_N$ and prove that its variational posterior can still contract to the true parameter. 
\subsection{Construction of the prior distribution}
First we introduce a kind of orthonormal wavelet basis of $L^2(\mathcal{Z})$ used in \cite{IntroNonLinear_nickl2020bernstein}, which is given by
$\left\{\psi^{\mathcal{Z}}_{lr} : r \leq N_l, l \in \mathbb{N}\cup\{-1,0\}\right\}$ for $r, N_l \in \mathbb{N}.$
This wavelet basis consists of $\psi_{lr}^{\mathcal{Z}} = \psi_{lr}$ that are compactly supported in $\mathcal{Z}$ and also consists of boundary corrected wavelets $\psi_{lr}^{\mathcal{Z}} = \psi_{lr}^{bc}$, which are supported near the boundary of $\mathcal{Z}$ and are constructed as linear combinations
\[\psi_{lr}^{bc}(z) = \sum_{\abs{m-m'}\leq K}d_{l}(m,m')\psi_{lm'}(z), \quad d_l(m,m') \in \mathbb{R}, m = m(l,r), K \in \mathbb{N},\]
where $K$ is independent from $l,r$ and $\psi_{lr}$ are the $S$-regular Daubechies wavelets given in \cref{Dbase}. Here we record some properties that will be used later from \cite{IntroNonLinear_nickl2020bernstein}:
\begin{align}\label{baseproperty}
\begin{aligned}
&\qquad\qquad N_l \leq c_02^{ld}, \quad \sum_{\abs{m-m'}\leq K}d_{l}(m,m') \leq D,\\
&\text{supp }\psi_{lr}^{bc} \subset \set{z\in \mathcal{Z}: \text{dist}(z,\partial\mathcal{Z})\leq c_1/2^l}, \quad \sum_r\abs{D^i\psi_{0r}} \in C(\mathbb{R}^d).
\end{aligned}
\end{align}
for $\abs{i} \leq S$. Define H$\Ddot{\text{o}}$lder-Zygmund type spaces for this wavelet basis by
\begin{align}\label{HZspace}
\mathcal{C}^{\alpha,W}(\mathcal{Z}) = \set{f\in L^2(\mathcal{Z}):\norm{f}_{\mathcal{C}^{\alpha,W}(\mathcal{Z})}\equiv \sup_{l,r} 2^{l(\alpha+d/2)}\abs{\pdt{f}{\psi_{lr}^{\mathcal{Z}}}_{L^2(\mathcal{Z})}}< \infty}.
\end{align}
\par
For integer $\alpha \geq 0$, we construct the prior $\Pi_J'$  (use the same notation as the prior in \cref{High-dimensional Gaussian sieve priors} with basis $\psi_{lr}$ in slight abuse of notations) by the law of $\upsilon$, where
\begin{align}\label{priorsvZ}
\upsilon = \sum_{l=-1}^J\sum_{r=1}^{N_l}2^{-l(\alpha+d/2)}\bar{l}^{-2}\xi_{lr}\psi_{lr}^{\mathcal{Z}}, \quad \xi_{lr} \mathop{\sim}^{iid} N(0,1), \quad \bar{l} = \max(l,1).
\end{align}
Its RKHS is $\mathcal{H}_J^{\mathcal{Z}}$ defined by
\begin{align}\label{RKHSsvZ}
\mathcal{H}^{\mathcal{Z}}_J = \bigg\{f = \sum_{l=-1}^{J}\sum_{r=1}^{N_l}2^{-l(\alpha+d/2)}\bar{l}^{-2}f_{lr}\psi_{lr}^{\mathcal{Z}} : f_{lr} \in \mathbb{R}\bigg\} 
\end{align} 
with norm
$\norm{f}_{\mathcal{H}_J^{\mathcal{Z}}} = \sqrt{\sum_{l=1}^J\sum_{r=1}^{N_l}f_{lr}^2}.$
Then, we have that $\mathcal{H}_J^{\mathcal{Z}}$ is continuously embedded in $C^{\alpha}(\mathcal{Z})$ (see \cref{embeddinglemma} in the Supplementary Material \cite{Zu2024AoS}), that is
$\norm{f}_{C^{\alpha}} \lesssim \norm{f}_{\mathcal{H}_J^{\mathcal{Z}}}.$
We also define the rescaled prior $\Pi_N$ as in \cref{rescaledprior} with $\theta' \sim \Pi'_J$ where $\Pi'_J$ is defined by \cref{priorsvZ}.
\par
In order to construct a prior supported in a ball, let us define a function $h$ {\color{black}that} satisfies the following properties:
\begin{description}
    \item[{\color{white}a}(i)] $h : \mathbb{R} \rightarrow (-B,B)$ is continuous with derivative $h' > 0$;
    \item[{\color{white}a}(ii)] $h(0) = 0$ , $h'$ is uniformly bounded with continuous derivative $(h^{-1})'$.
\end{description}
It is obvious that there exists a constant $c$ such that
$\abs{h(x_1)-h(x_2)} \leq c\abs{x_1-x_2}$ and $\abs{h(x)}\leq c\abs{x}.$
Using $h$, we define the operator $\Psi: L^2({\mathcal{Z}}) \rightarrow L^{2}({\mathcal{Z}})$ through
% we define an operator $\Phi:\mathcal{H}^{\mathcal{Z}}_J \rightarrow \mathcal{F}_{f_0}^{J}$ through
\begin{align}\label{linksvZ}
\Psi(\theta) = \sum_{l=-1}^{\infty}\sum_{r=1}^{N_l}2^{-l(\alpha+d/2)}\bar{l}^{-2}h(\theta_{lr})\psi_{lr}^{\mathcal{Z}}, \quad \text{for\ } \theta_{lr} = 2^{l(\alpha+d/2)}\bar{l}^{\, 2}\pdt{\theta}{\psi_{lr}^{\mathcal{Z}}}_{L^2}.
\end{align}
Note that $\Psi(\theta) \in \mathcal{F}_J$ for any $\theta \in \mathcal{H}_J^{\mathcal{Z}}$
where
\[\mathcal{F}_{J} = \left\{\Psi(\theta) : \theta \in \mathcal{H}_J^{\mathcal{Z}}\right\} = \bigg\{\theta = \sum_{l=-1}^{J}\sum_{r=1}^{N_l}2^{-l(\alpha+d/2)}\bar{l}^{-2}\theta_{lr}\psi_{lr}^{\mathcal{Z}} : \abs{\theta_{lr}} < B \bigg\}.\]
Then, the ``push forward'' prior $\Tilde{\Pi}_N$ induced by $\Pi_N$ and $\Psi$ is defined as the law of
\begin{align}\label{pushforwardsvZ}
\Psi(N^{-\frac{d}{4\alpha+4{\kappa}+2d}} F) = \sum_{l=1}^J\sum_{r=1}^{N_l}2^{-l(\alpha+d/2)}\bar{l}^{-2}h(N^{-\frac{d}{4\alpha+4{\kappa}+2d}}\xi_{lr})\psi_{lr}^{\mathcal{Z}}, \quad \xi_{lr} \mathop{\sim}^{iid} N(0,1).
\end{align}
The construction of the prior $\Tilde{\Pi}_N$ is inspired by the prior used in \cite{IntroNonLinear_nickl2020bernstein} which is defined as the law of
$\sum_{l=1}^J\sum_{r=1}^{N_l}b_{lr}\psi_{lr}^{\mathcal{Z}}$, where
$b_{lr} \mathop{\sim}^{iid}U(-B2^{-l(\alpha+d/2)}\bar{l}^{-2},B2^{-l(\alpha+d/2)}\bar{l}^{-2})$ for $\bar{l} = \max(l,1)$.
In our prior, we replace $b_{lr}$ by $2^{-l(\alpha+d/2)}\bar{l}^{-2}h(N^{-\frac{d}{4\alpha+4{\kappa}+2d}}\xi_{lr})$, which is also supported in the interval 
$(-B2^{-l(\alpha+d/2)}\bar{l}^{-2},B2^{-l(\alpha+d/2)}\bar{l}^{-2}).$ This approach aims to establish a connection to the theorems proposed with Gaussian priors in \cref{VariationalConsistency}.
\subsection{Contraction theorem with weak conditions}
We see that $\Psi(\theta)$ is in a ball of $C^{\gamma}(\mathcal{Z})$ for any $\theta \in L^2(\mathcal{Z})$ and any integer $\gamma$ such that $ 0 \leq \gamma \leq \alpha $. By \cref{baseproperty}, we have
\begin{align*}
\Big\vert\sum_{l=-1}^{\infty}\sum_{r=1}^{N_l}2^{-l(\alpha +d/2)}\bar{l}^{-2}h(\theta_{lr})D^i\psi_{lr}^{\mathcal{Z}}\Big\vert
    \leq cB\sum_{l=-1}^{\infty}2^{-l(\alpha - \abs{i})}{\bar{l}}^{-2}\sum_{r}|D^i\psi_{0r}|\leq C(B)
\end{align*}
for any $0\leq \abs{i}\leq \gamma$.
Thus, the ``push forward'' prior $\Tilde{\Pi}_N$ is supported in a ball of $C^{\gamma}(\mathcal{Z})$ with radius $C(B)$. For forward maps defined on a ball of $C^{\gamma}(\mathcal{Z})$ with radius $C(B)$, the growth rates in \cref{condreg,condstab} are naturally satisfied with fixed constants depending on $B$. We are able to relax the conditions on the forward map and give convergence rates for inverse problems with log-type stability under our framework. Here we propose the weak regularity and conditional stability conditions on the forward map $\mathcal{G}$.
\begin{condition}\label{loosecondreg}
     Consider a parameter space $\Theta \subseteq L^2(\mathcal{Z},\mathbb{R})$. 
  The forward map $\mathcal{G}:\Theta \rightarrow L^2_{\lambda}(\mathcal{X},V)$ is measurable. Assume that $(\mathcal{R},\Vert \cdot\Vert _{\mathcal{R}}) = (C^{\gamma},\Vert \cdot\Vert _{C^{\gamma}})$ is a subspace of $\Theta$ for some $\gamma \geq 0$.
  Suppose for all $M>1$, there exist finite constants $U_{\mathcal{G}} > 0$, $L_{\mathcal{G}} > 0$ that may depend on M, such that 
  \begin{gather}
          \mathop{\sup}_{\theta \in \Theta \cap B_{\mathcal{R}}(M)} \mathop{\sup}_{x \in \mathcal{X}}\abs{\mathcal{G}(\theta)(x)}_V \leq U_{\mathcal{G}}, \label{loosebound} \\
          \norm{\mathcal{G}(\theta_1)-\mathcal{G}(\theta_2)}_{L^{2}_{\lambda}(\mathcal{X},V)} \leq L_{\mathcal{G}}\norm{\theta_1-\theta_2} _{L^{2}(\mathcal{Z})}, \quad \theta_1,\theta_2 \in \Theta \cap B_{\mathcal{R}}(M).  \label{looselip}
   \end{gather}
\end{condition}
\begin{condition}\label{loosecondstab}
    Consider a parameter space $\Theta \subseteq L^2(\mathcal{Z},\mathbb{R})$. 
  The forward map $\mathcal{G}:\Theta \rightarrow L^2_{\lambda}(\mathcal{X},V)$ is measurable. Assume that $(\mathcal{R},\Vert \cdot\Vert _{\mathcal{R}}) = (C^{\gamma},\Vert \cdot\Vert _{C^{\gamma}})$ is a subspace of $\Theta$ for some $\gamma \geq 0$.
  Suppose for all $M>1$, there exist a function $F: \mathbb{R}^{+} \rightarrow \mathbb{R}^{+}$ and finite constants $T_{\mathcal{G}} > 0$ that both may depend on $M$ such that
        \begin{align}\label{loosestab}
              F(\Vert f_{\theta} - f_{\theta_0}\Vert_{\mathcal{F}} ) \leq T_{\mathcal{G}}\Vert \mathcal{G}(\theta)-\mathcal{G}(\theta_0)\Vert _{L^{2}_{\lambda}(\mathcal{X},V)}, \,\,\forall\, \theta \in \Theta \cap B_{\mathcal{R}}(M).
        \end{align}
\end{condition}
\par
In order to control the bias $\norm{\theta_0 -P_J(\theta_0)}$, we let the ground truth $\theta_0$ have a stronger regularity than that of the prior. Assume the parameter $\theta_0 \in C_c^{\beta}(\mathcal{Z})$ for $\beta$ such that $\alpha + d/2 < \beta < S$. The truth $\theta_0$ can be represented as
$\theta_0 = \sum_{l=-1}^{\infty}\sum_{r=1}^{N_l}\pdt{\theta_0}{\psi_{lr}}_{L^2(\mathcal{Z})}\psi^{\mathcal{Z}}_{lr}.$
Proposition 26 in \cite{IntroNonLinear_nickl2020bernstein} implies the continuous embedding $C_c^{\beta}(\mathcal{Z}) \subset \mathcal{C}^{\beta,W}(\mathcal{Z})$.
Thus, we have 
\[ \abs{\pdt{\theta_0}{\psi_{lr}^{\mathcal{Z}}}_{L^2(\mathcal{Z})}} \leq 2^{-l(\beta+d/2)}\sup_{l,r} 2^{l(\beta+d/2)}\abs{\pdt{\theta_0}{\psi_{lr}^{\mathcal{Z}}}_{L^2(\mathcal{Z})}} \leq c\norm{\theta_0}_{C^{\beta}(\mathcal{Z})}2^{-l(\beta+d/2)},\]
% which implies that the wavelet coefficients of $\theta_0$ decay like $2^{-l(\beta+d/2)}$.
which leads to the decay assumption on the wavelet coefficients of $\theta_0$:
\begin{align}\label{coefdecay}
\sup_{l,r} 2^{l(\beta+d/2)}\abs{\pdt{\theta_0}{\psi_{lr}^{\mathcal{Z}}}_{L^2(\mathcal{Z})}} =B_0 < B.
\end{align}
Note that in this context, our variational set $\mathcal{Q}_{MF}^J$ is defined by a modification of \cref{GMF}, where we replace the basis $\chi\psi_{lr}$ by the basis $\psi_{lr}^{\mathcal{Z}}$ introduced in this section.
\begin{theorem}\label{finalthmsvZ}
      Suppose \cref{loosecondreg} holds for the forward map $\mathcal{G}$, the normed linear subspace $(\mathcal{R},\Vert \cdot\Vert _{\mathcal{R}}) = (C^{\gamma},\Vert \cdot\Vert _{C^{\gamma}})$ with $U_{\mathcal{G}} > 0$, $L_{\mathcal{G}} > 0$ and integer $\gamma \geq 0$. Denote by $\Tilde{\Pi}_N$ the prior as in \cref{pushforwardsvZ} with $J = J_N \in \mathbb{N}$ such that $2^{J} \simeq N^{\frac{1}{2\alpha +d}}$. $\Tilde{\Pi}_N(\cdot|(Y_i,X_i)_{i=1}^N) = \Tilde{\Pi}_N(\cdot|D_N)$ is the corresponding posterior distribution in \cref{Post} arising from the data in the model \cref{model}. Assume that integer $\alpha \geq \frac{d}{2}\vee\gamma$. Denote by $\hat{Q}$ the variational posterior  defined in \cref{variationalposterior} with variational set $\mathcal{Q}_{MF}^J$. If $\theta_0\in C_c^{\beta}(\mathcal{Z})$ satisfies \cref{coefdecay}, then for $\varepsilon_N=N^{-\frac{\alpha}{2\alpha+d}}$, we have      
  \begin{align}
      P_{\theta_0}^{(N)}\hat{Q}\Vert \mathcal{G}(\theta)-\mathcal{G}(\theta_0)\Vert _{L^{2}_{\lambda}}^{2}\lesssim \varepsilon_N^{2}\log N.
  \end{align}
  Moreover, assume that \cref{loosecondstab} also holds for $\mathcal{G}$, $\mathcal{R}$, the function $F$ and the finite constant $T_{\mathcal{G}} > 0$. Then, we further have
  \begin{align}
      P_{\theta_0}^{(N)}\hat{Q}[F(\Vert f_{\theta} - f_{\theta_0}\Vert_{\mathcal{F}} )]^{2}\lesssim \varepsilon_N^{2}\log N.
  \end{align}
\end{theorem}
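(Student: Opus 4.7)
The plan is to reduce the non-Gaussian, severely ill-posed setting to the Gaussian-sieve framework of Theorem \ref{finalthmsv}, exploiting the key structural fact established just before Condition \ref{loosecondreg}: every sample of $\Tilde{\Pi}_N$ lies in the fixed ball $B_{C^\gamma(\mathcal{Z})}(C(B))$. On this ball, the weak Conditions \ref{loosecondreg}--\ref{loosecondstab} become the strong Conditions \ref{condreg}--\ref{condstab} with $\kappa=0$, $p=l=q=0$, and with absolute constants $U_{\mathcal{G}}, L_{\mathcal{G}}, T_{\mathcal{G}}$ depending only on $B$. So it suffices to establish an analogue of Theorem \ref{mainthm} for the prior $\Tilde{\Pi}_N$ giving the posterior contraction rate $\varepsilon_N$, together with a variational approximation bound $\gamma_N^2 \lesssim \varepsilon_N^2 \log N$ attained in $\mathcal{Q}_{MF}^J$; the two final inequalities then follow immediately.

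First I would prove posterior contraction via the standard testing-and-small-ball machinery adapted to the non-Gaussian prior. The three ingredients are: (a) a prior small-ball estimate $\Tilde{\Pi}_N(\{\theta : \|\mathcal{G}(\theta)-\mathcal{G}(\theta_0)\|_{L^2_\lambda}\leq \varepsilon_N\}) \geq e^{-CN\varepsilon_N^2}$; (b) a sieve $\Theta_N$ of log-covering number $\lesssim N\varepsilon_N^2$ with $\Tilde{\Pi}_N(\Theta_N^c)\leq e^{-(C+4)N\varepsilon_N^2}$; and (c) exponential tests separating $\theta_0$ from $L^2_\lambda$-complements of $\mathcal{G}(\theta_0)$. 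For (a) I use the decay assumption \eqref{coefdecay} with $\beta > \alpha + d/2$: truncating $\theta_0$ at level $J$ yields an $L^2$-error $\lesssim 2^{-J\beta}\ll \varepsilon_N$, and then centring the underlying Gaussians $\xi_{lr}$ at the $h^{-1}$-preimages of the truncated coefficients of $\theta_0$ and invoking the Cameron--Martin-type shift gives the Gaussian small-ball exponent, with the Jacobian controlled by the uniform boundedness of $h'$ and the continuity of $(h^{-1})'$ near $0$. For the sieve, a natural coefficient hypercube $\{|\xi_{lr}|\leq c\sqrt{J}\}$ suffices: its prior mass is controlled by Gaussian tail bounds, and its $L^2(\mathcal{Z})$-entropy is converted to an $L^2_\lambda(\mathcal{X},V)$-entropy via the Lipschitz bound \eqref{looselip}.

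Second, paralleling Theorem \ref{boundgamfinite}, I would exhibit $Q_N\in \mathcal{Q}_{MF}^J$ as a product of univariate Gaussians with means at the $h^{-1}$-preimages of the truncated coefficients of $\theta_0$ and variances of order $N^{-1}$. Direct computation gives
\begin{equation*}
D(Q_N\Vert \Tilde{\Pi}_N) \lesssim N\varepsilon_N^2 \log N,
\end{equation*}
where the $\log N$ factor arises from the variance contribution summed over the $\sim 2^{Jd}$ active coordinates. The $L^2(\mathcal{Z})$-Lipschitz continuity of $h$ combined with \eqref{looselip} and Proposition \ref{le2.1} yields $Q_N[D(P^{(N)}_{\theta_0}\Vert P^{(N)}_\theta)]\lesssim N\varepsilon_N^2\log N$, hence $R(Q_N)\lesssim \varepsilon_N^2\log N$ and $\gamma_N^2\lesssim \varepsilon_N^2\log N$.

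The main obstacle I anticipate is step (a): porting the Gaussian small-ball estimate across the nonlinear link $h$. The standard Cameron--Martin shift becomes a nonlinear change of variables in coefficient space, but it should go through because (i) only moderate-amplitude shifts are required near $\theta_0$ and $h$ behaves linearly there, so the inverse image and its Jacobian are controlled by $h'(0)>0$ and the continuity of $(h^{-1})'$; and (ii) the boundedness of $h$ only constrains the support, not the local behaviour used in the estimate. Once this is established, the rest follows the Gaussian-sieve template: combine posterior contraction with the variational bound exactly as in the deduction of Theorem \ref{finalthmsv} from Theorems \ref{mainthmsv} and \ref{boundgamfinite}, with the trivial exponents $p=q=0$ that apply on the bounded support of $\Tilde{\Pi}_N$.
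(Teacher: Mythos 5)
Your overall strategy coincides with the paper's: push the problem into the Gaussian pre-image space, exploit that every draw from $\Tilde{\Pi}_N$ lies in a fixed ball of $C^{\gamma}(\mathcal{Z})$ so that Conditions \ref{loosecondreg}--\ref{loosecondstab} upgrade to Conditions \ref{condreg}--\ref{condstab} with $\kappa=p=l=q=0$ and fixed constants, run the contraction machinery there, and bound $\gamma_N^2$ by a mean-field candidate centred at the pre-image of the truncated truth. Two concrete points, however, need repair. First, your small-ball step (a) as described --- centring the $\xi_{lr}$ at the $h^{-1}$-preimages ``with the Jacobian controlled'' --- conflates a Cameron--Martin shift with a nonlinear change of variables of densities. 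No Jacobian is needed or wanted: since $\Tilde{\Pi}_N = \Pi_N\circ\Psi^{-1}$, one has $\Tilde{\Pi}_N(\norm{\mathcal{G}(\theta)-\mathcal{G}(\theta_0)}_{L^2_\lambda}\leq\varepsilon) = \Pi_N(\norm{\mathcal{G}^*(\theta^*)-\mathcal{G}^*(\theta_0^*)}_{L^2_\lambda}\leq\varepsilon)$ for the composite map $\mathcal{G}^*=\mathcal{G}\circ\Psi$, which is globally $L^2$-Lipschitz with a fixed constant because $h$ is; the standard Gaussian small-ball and Cameron--Martin bounds, applied to $\theta_0^*=\Psi^{-1}(\theta_0)$ (which lies in $C^{\alpha}$ thanks to \eqref{coefdecay} and the boundedness of $(h^{-1})'$ on $[-\bar{B}_0,\bar{B}_0]$), then apply verbatim. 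This is exactly how the paper proceeds --- it also transfers the tests and the sieve bound by the same push-forward identity --- and it dissolves the ``main obstacle'' you flag.

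Second, and more seriously, the variance choice ``of order $N^{-1}$'' for $Q_N$ fails if, as is natural for an element of $\mathcal{Q}_{MF}^J$, it is placed on the coefficients of $\psi_{lr}^{\mathcal{Z}}$. The prior variance of the coefficient at level $l$ is $(2^{l(\alpha+d/2)}\bar{l}^{\,2}\sqrt{N}\varepsilon_N)^{-2}$, which at $l=J$ equals $2^{-2J(\alpha+d)}J^{-4}\ll N^{-1}$; with $\sigma^2=N^{-1}$ the ratio $\sigma^2/\tau_{Jr}^2\simeq 2^{Jd}J^4$ enters each of the $\simeq 2^{Jd}$ top-level terms of $D(Q_N\Vert\Tilde{\Pi}_N)$, producing a contribution of order $(N\varepsilon_N^2)^2J^4$, far exceeding the required $N\varepsilon_N^2\log N$. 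You must either take $\sigma^2\lesssim 2^{-2J(\alpha+d)}J^{-4}$ on the wavelet coefficients (as the paper does), or place the Gaussians on the standardized pre-image coordinates $\theta^*_{lr}$ and push forward through $h$ --- in which case the resulting product measure still lies in $\mathcal{Q}_{MF}^J$ but its marginals are no longer Gaussian, and one should invoke the fact that the KL divergence decreases under push-forward to reduce both $D(Q_N\Vert\Tilde{\Pi}_N)$ and the likelihood term to the pre-image computation. With either fix the remainder of your argument, including the $\log N$ loss coming from the $d_J\simeq 2^{Jd}\simeq N\varepsilon_N^2$ active coordinates, goes through.
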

\par
\subsection{Inverse medium scattering problem}
Let the real-valued positive refractive index $n \in L^{\infty}(\mathbb{R}^3)$, with supp$(1-n) \subset D$, where $D$ is a bounded domain with smooth boundary. Let $u = u^i + u_n^s$ satisfy 
\begin{align}\label{Helm}
\Delta u + \omega^2 n u = 0\quad \text{in }\mathbb{R}^3,
\end{align}
and the Sommerfeld radiation condition
\begin{align*}
\mathop{\lim}_{\vert x \vert \rightarrow \infty} \abs{x} \left(\frac{\partial u_n^s}{\partial\abs{x}}-\im \omega u_n^s\right) = 0.
\end{align*}
Assume that $u^i$ is the plane incident field, i.e. $u^i = e^{\im\omega x\cdot\vartheta}$ with $\vartheta \in \mathcal{S}^2$, where $\mathcal{S}^2$ is the unit sphere in $\mathbb{R}^3$.
Then the far field pattern $u_n^{\infty}(\hat{x},\vartheta)$ is defined by
\begin{align}
u_n^s(x,\vartheta) = \frac{e^{\im\omega\abs{x}}}{\abs{x}}u_n^{\infty}(\hat{x},\vartheta) + O\left(\frac{1}{\abs{x}^2}\right),
\end{align}
where $\hat{x} = x/\abs{x}$. Our aim is to determine the medium $n$ from the knowledge of the far field pattern $u_n^{\infty}(\hat{x},\vartheta)$ for all $\hat{x},\vartheta \in \mathcal{S}^2$ at a fixed wave number $\omega >0$.
For the statistical model, we set $\lambda$ be the uniform distribution on $\mathcal{S}^2\times \mathcal{S}^2$. The forward map $G$ is defined as
\begin{align}\label{inversemediumforward}
G(n)(X_i) = u_n^{\infty}(\hat{x},\vartheta),
\end{align}
where $(\hat{x},\vartheta)$ is the realization of $X_i$. Since $G(n)(X_i)$ is a complex-valued function, we treat it as a $\mathbb{R}^2$-valued function, denoted as
\[G(n)(X_i) = \left(\begin{array}{c}
                 \text{Re}\{u_n^{\infty}(\hat{x},\vartheta)\} \\
                 \text{Im}\{u_n^{\infty}(\hat{x},\vartheta)\}
            \end{array}\right).\]
We define the parameter space for $M_0>1$ and integer $\alpha > 3/2$
\begin{align*}
\mathcal{F}_{\alpha,M_0} =\left\{ n\in C^{\alpha}(D): 0<n<M_0, n\vert_{\partial D}=1, \frac{\partial^jn}{\partial n^j}
\Big\vert_{\partial D} =0 \text{\ for\ } j=1,\dots,\alpha -1 \right\}.
\end{align*}
Each $n\in\mathcal{F}_{\alpha,M_0}$ can be seen as functions defined on $\mathbb{R}^3$ by extending $n\equiv1 \in \mathbb{R}^3\setminus D$.
We reparametrize $\mathcal{F}_{\alpha,M_0}$ by the same link function $\Phi$ used in \cref{SubsectionSuddiffuion}.
We set $\Theta_{\alpha,M_0} : = \left\{\theta = \Phi^{-1}\circ n: n\in \mathcal{F}_{\alpha,M_0} \right\}$. The reparametrized forward map $\mathcal{G}$ is then defined as
\begin{align}\label{forwardmapScra}
\mathcal{G} : \Theta_{\alpha,M_0} \rightarrow L^2_{\lambda}(\mathcal{S}^2\times \mathcal{S}^2,\mathbb{R}^2), \qquad \theta \mapsto \mathcal{G}(\theta):=G(\Phi(\theta)).
\end{align}
It is verified through the properties of $\Phi$ that
\begin{align*}
\Theta_{\alpha,M_0} = \left\{\theta \in C^{\alpha}(D): \frac{\partial^j \theta}{\partial n^j} = 0 \text{\ on\ } \partial D \text{\ for\ } j=0,\dots,\alpha-1\right\} = C^{\alpha}_c(D).
\end{align*}
\begin{theorem}\label{mainScra}
Consider the forward map $\mathcal{G}$ in \cref{forwardmapScra} with integer $\alpha >3/2$. Let $\Tilde{\Pi}_N$, $\Tilde{\Pi}_N(\cdot|D_N)$ and $\hat{Q}$ be as defined in \cref{finalthmsvZ}. If $\theta_0\in C_c^{\beta}(\mathcal{Z})$ satisfies \cref{coefdecay} with $\beta > \alpha + 3/2$ and $B>0$, then for $\varepsilon_N=N^{-\frac{\alpha}{2\alpha+3}}$, we have
  \begin{align*}
    P_{\theta_0}^{(N)}\hat{Q}\mexp{-C{\norm{n_{\theta}-n_0}_{L^{\infty}(D)}^{\frac{1}{-a(\alpha)+\delta}}}}\lesssim \varepsilon_N^{2}\log N.
\end{align*}
with any $0<\delta<a(\alpha): =\frac{2\alpha-3}{2\alpha+3}$ and $C = C(D,\omega,\delta,B)$.
\end{theorem}
Note that the convergence rate shown in \cref{mainScra} is equivalent to the so-called logarithmic order rate $(\log N)^{-a}$, $a>0$. It is optimal in the sense of \cite{furuya2024consistency}. By a simple application of Markov's inequality, we have the following corollary.
\begin{corollary}\label{CoroScra}
Under the settings of \cref{mainScra}, we have
\begin{align*}
P_{\theta_0}^{(N)}\hat{Q}\bbra{\norm{n_{\theta}-n_0}_{L^{\infty}(D)}>C(\log N)^{-\frac{2\alpha-3}{2\alpha+3}+\delta}} \rightarrow 0.
\end{align*}
for any $0<\delta<\frac{2\alpha-3}{2\alpha+3}$ and $C=C(\alpha,D,\omega,\delta,B).$
\end{corollary}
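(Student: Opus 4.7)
The plan is a direct application of Markov's inequality to the expectation bound from Theorem \ref{mainScra}. Write $s=\frac{1}{-a(\alpha)+\delta}$, which is negative because $0<\delta<a(\alpha)$, and let $C_1=C(D,\omega,\delta,B)$ denote the constant appearing in that theorem, so that
\begin{equation*}
P_{\theta_0}^{(N)}\hat{Q}\,\mexp{-C_1\norm{n_{\theta}-n_0}_{L^{\infty}(D)}^{s}}\lesssim \varepsilon_N^{2}\log N = N^{-\frac{2\alpha}{2\alpha+3}}\log N.
\end{equation*}

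For a candidate threshold $t_N=C_2(\log N)^{-a(\alpha)+\delta}=C_2(\log N)^{1/s}$, the key observation is that $s<0$ reverses inequalities under exponentiation: on the event $\set{\norm{n_{\theta}-n_0}_{L^{\infty}(D)}>t_N}$ one has $\norm{n_{\theta}-n_0}_{L^{\infty}(D)}^{s}<t_N^{s}$, hence
\begin{equation*}
\mexp{-C_1\norm{n_{\theta}-n_0}_{L^{\infty}(D)}^{s}}>\mexp{-C_1 t_N^{s}}=\mexp{-C_1 C_2^{s}\log N}=N^{-C_1 C_2^{s}}.
\end{equation*}
Markov's inequality then yields
\begin{equation*}
P_{\theta_0}^{(N)}\hat{Q}\bbra{\norm{n_{\theta}-n_0}_{L^{\infty}(D)}>t_N}\leq N^{C_1 C_2^{s}}\cdot P_{\theta_0}^{(N)}\hat{Q}\,\mexp{-C_1\norm{n_{\theta}-n_0}_{L^{\infty}(D)}^{s}}\lesssim N^{C_1 C_2^{s}-\frac{2\alpha}{2\alpha+3}}\log N.
\end{equation*}

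Because $s<0$, $C_2^{s}\to 0$ as $C_2\to\infty$, so $C_2$ can be chosen large enough, depending only on $\alpha,D,\omega,\delta,B$, to guarantee $C_1 C_2^{s}<\frac{2\alpha}{2\alpha+3}$. With this choice the right-hand side decays polynomially in $N$ and in particular tends to $0$, establishing the corollary with $C=C_2$. No serious obstacle arises here: the argument is entirely mechanical once Theorem \ref{mainScra} is granted, and the only decision is to pick $C_2$ large enough so that the blow-up factor $N^{C_1 C_2^{s}}$ is absorbed into the polynomial decay provided by $\varepsilon_N^{2}$.
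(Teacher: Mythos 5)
Your proof is correct and follows essentially the same route as the paper: a direct application of Markov's inequality to the exponential bound of Theorem \ref{mainScra}, using that the negative exponent $s=1/(-a(\alpha)+\delta)$ reverses the inequality on the event of interest. The only cosmetic difference is that the paper thresholds the loss $L(\theta,\theta_0)$ at $\varepsilon_N^2\log^2 N$ (yielding a $1/\log N$ decay), whereas you threshold the norm directly and absorb the factor $N^{C_1C_2^{s}}$ by taking $C_2$ large; both give the stated conclusion.
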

\section{Contraction with extra unknown parameters}\label{SectionContractionExtra}
Sometimes, besides the target parameter $\theta$, our forward map $\mathcal{G}$ of PDE also contains another unknown parameter $\beta$ in some finite dimensional vector space with norm $\norm{\ \cdot\ }_{\mathcal{U}}$. The forward map can be denoted as 
\begin{align}
\mathcal{G}_{\beta}(\theta) = G_{\beta}(\Phi(\theta)).
\end{align}
A leading example is the inverse potential problem for a subdiffusion equation with an unknown fractional order $\beta$, which will be discussed later.
To infer $\theta$ while the $\beta$ is unknown, we apply the method of variational inference with model selection as demonstrated in section 4 of \cite{zhang2020convergence}.
\subsection{General theorem}
Assume the true parameter $\beta_0$ belongs to a set $\mathcal{U}$. Then the possible probability models for sampled data are
\begin{align*}
\mathcal{M} = \left\{ P^{(N)}_{\beta, \theta}:\beta \in \mathcal{U}, \theta \in \Theta\right\},
\end{align*}
where the product probability measure is defined as in \cref{modeldensity} with density $p^{(N)}_{\beta,\theta}$. Let $\pi(\beta)$ denote the probabality density for the prior of $\beta$ and $\theta$ follow the prior $\Pi_N$ as in \cref{mainthmsv}. 
For variational inference, we use the Gaussian mean-field variational set $\mathcal{Q}_G^J$ defined by \cref{GMF}.

To select the best $\beta \in \mathcal{U}$ from data, we consider optimizing the evidence lower bound. One of model selection procedures is to maximize $\log (p^{(N)}_{\beta}(D_{N})\pi(\beta))$ over $\beta \in \mathcal{U}$, where 
\begin{align}\label{integral}
p^{(N)}_{\beta}(D_{N}) = \int p^{(N)}_{\beta,\theta}(D_{N}) d\Pi(\theta).
\end{align}
Thus the optimization problem is 
\begin{align}\label{optimizationup}
\mathop{\max}_{\beta \in \mathcal{U}} \log \left(p^{(N)}_{\beta}(D_{N})\pi(\beta)\right).
\end{align}
However, the integral is hard to handle. We instead optimize a lower bound of problem \cref{optimizationup}, and the optimization problem for this lower bound is given by 
\begin{align}\label{opplm}
\mathop{\max}_{\beta \in \mathcal{U}}\mathop{\max}_{Q \in \mathcal{Q}_G^J} \int\log p^{(N)}_{\beta,\theta}(D_{N})dQ(\theta) - D(Q\Vert\Pi_N) + \log\pi(\beta),
\end{align}
which is derived by an application of Jensen’s inequality. We denote the solution to \cref{opplm} by $\hat{\beta}$ and $\hat{Q}$.
To give contraction result of parameter $\theta$ with unknown $\beta$, we need to add some requirements related to $\beta$ into \cref{condreg,condstab}.
\begin{condition}\label{condregmodel}
     Consider a parameter space $\Theta \subseteq L^2(\mathcal{Z},\mathbb{R})$. 
  The forward map $\mathcal{G}_{\beta}:\Theta \rightarrow L^2_{\lambda}(\mathcal{X},V)$ is measurable. 
  Suppose for some normed linear subspace $(\mathcal{R},\Vert \cdot\Vert _{\mathcal{R}})$ of $\Theta$ and all $M>1$, there exist finite constants $C_U > 0$, $C_L > 0$, $C_{\mathcal{U}} >0$, ${\kappa}\geq0$, $p\geq 0$ and $l \geq 0$ such that 
      \begin{gather}
          \mathop{\sup}_{\theta \in \Theta \cap B_{\mathcal{R}}(M)}\mathop{\sup}_{x \in \mathcal{X}}\abs{\mathcal{G}_{\beta}(\theta)(x)}_V\leq C_{U}M^p, \quad \forall \beta \in \mathcal{U},\label{boundmodel}\\
          \norm{\mathcal{G}_{\beta_1}(\theta_1)-\mathcal{G}_{\beta_2}(\theta_2)}_{L^{2}_{\lambda}(\mathcal{X},V)} \leq  C_L(1+\norm{\theta_1}^l_{\mathcal{R}}\vee \norm{\theta_2}^l_{\mathcal{R}})\norm{\theta_1-\theta_2} _{(H^{{\kappa}})^*} + C_{\mathcal{U}} \norm{\beta_1-\beta_2}_{\mathcal{U}},\label{lipmodel}
      \end{gather}
      where $\theta_1,\theta_2$ are in $\mathcal{R}$, and $ \beta_1,\beta_2$ are in $\mathcal{U}$.
\end{condition}
Similarly, the conditional stability condition is given below.
\begin{condition}\label{condstabmodel}
    Consider a parameter space $\Theta \subseteq L^2(\mathcal{Z},\mathbb{R})$. 
  The forward map $\mathcal{G}_{\beta}:\Theta \rightarrow L^2_{\lambda}(\mathcal{X},V)$ is measurable. 
  Suppose for some normed linear subspace $(\mathcal{R},\Vert \cdot\Vert _{\mathcal{R}})$ of $\Theta$ and all $M>1$, there exist a function $F: \mathbb{R}^{+} \rightarrow \mathbb{R}^{+}$ and finite constants $C_T > 0$, $q\geq 0$ such that
        \begin{align}\label{stabmodel}
                  F(\Vert f_{\theta} - f_{\theta_0}\Vert_{\mathcal{F}} ) \leq C_TM^q\Vert \mathcal{G}_{\beta}(\theta)-\mathcal{G}_{\beta_0}(\theta_0)\Vert _{L^{2}_{\lambda}(\mathcal{X},V)},\quad \forall \theta \in \Theta \cap B_{\mathcal{R}}(M). 
          \end{align}
\end{condition}
With these properties in place, we are now in a position to offer a contraction estimate.
\begin{theorem}\label{mainthmModel}
  Suppose \cref{condregmodel} holds for the forward map $\mathcal{G}_{\beta}$, the normed linear subspace $(\mathcal{R},\Vert \cdot\Vert _{\mathcal{R}}) = (H^{\alpha}(\mathcal{Z}),\Vert \cdot\Vert _{H^{\alpha}(\mathcal{Z})})$ with integer $\alpha \geq 0$ and the finite constants $C_U > 0$, $C_L > 0$ , ${\kappa}\geq0$, $p\geq 0$, $l \geq 0$.
Let the probability measure $\Pi'_J$ as in \cref{seiveprior1}, and $J = J_N \in \mathbb{N}$ is such that $2^{J} \simeq N^{\frac{1}{2\alpha +2{\kappa} +d}}$. 
Denote by $\Pi_N$ the rescaled prior as in \cref{rescaledprior} with $\theta' \sim \Pi'_J$, and $\Pi_N(\cdot|(Y_i,X_i)_{i=1}^N) = \Pi_N(\cdot|D_N)$ is the corresponding posterior distribution. Assume that $\alpha + {\kappa} \geq \frac{d(l+1)}{2}$ and $\theta_0\in H_K^\alpha(\mathcal{Z})$.
Then for $\hat{\beta}$ and $\hat{Q}$ the solution to \cref{opplm}, $\varepsilon_N=N^{-\frac{\alpha+{\kappa}}{2\alpha+2{\kappa}+d}}$, we have
\begin{align*}
  P_{\beta_0,\theta_0}^{(N)}\hat{Q}\norm{\mathcal{G}_{\hat{\beta}}(\theta)-\mathcal{G}_{\beta}(\theta_0)}_{L^{2}_{\lambda}(\mathcal{X},\mathbb{R})}^{\frac{2}{p+1}}\lesssim \varepsilon_N^{\frac{2}{p+1}}\log N.
\end{align*}
Moreover, assume that \cref{condstabmodel} also holds for $\mathcal{G}_{\beta}$, $\mathcal{R}$, function $F$ and finite constants $C_T > 0$, $q\geq 0$. Then, we further have
\begin{align}
  P_{\beta_0,\theta_0}^{(N)}\hat{Q}[F(\Vert f_{\theta} - f_{\theta_0}\Vert_{\mathcal{F}} )]^{\frac{2}{p+q+1}}\lesssim \varepsilon_N^{\frac{2}{p+q+1}}\log N.
\end{align}
\end{theorem}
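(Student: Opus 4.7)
The plan is to mimic the proof scheme of Theorems \ref{mainthmsv}, \ref{boundgamfinite} and \ref{finalthmsv}, leveraging the joint ELBO optimality of $(\hat{\beta},\hat{Q})$ to absorb the finite-dimensional nuisance parameter $\beta$ into the existing variational analysis without having to prove a separate consistency statement for $\hat{\beta}$.

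\textbf{Step 1 (ELBO comparison).} First I would invoke Theorem \ref{boundgamfinite}, applied with the forward map $\mathcal{G}_{\beta_0}$, to obtain a mean-field Gaussian measure $Q_N\in\mathcal{Q}_G^J$ with
\[
\frac{1}{N}\Bigl(D(Q_N\Vert\Pi_N)+Q_N\,D(P^{(N)}_{\beta_0,\theta_0}\Vert P^{(N)}_{\beta_0,\theta})\Bigr)\lesssim \varepsilon_N^2\log N.
\]
Condition \ref{condregmodel} makes this estimate uniform in the $\beta$-slice since the upper bound in \eqref{boundmodel} does not depend on $\beta$. Because $(\hat\beta,\hat Q)$ maximizes \eqref{opplm}, testing the ELBO at the competitor $(\beta_0,Q_N)$ yields the inequality
\[
\int\log p^{(N)}_{\hat\beta,\theta}(D_N)\,d\hat Q(\theta)-D(\hat Q\Vert\Pi_N)+\log\pi(\hat\beta)\ \geq\ \int\log p^{(N)}_{\beta_0,\theta}(D_N)\,dQ_N(\theta)-D(Q_N\Vert\Pi_N)+\log\pi(\beta_0).
\]
Taking $P^{(N)}_{\beta_0,\theta_0}$-expectations and using the identity $D(P^{(N)}_{\beta_0,\theta_0}\Vert P^{(N)}_{\beta,\theta})=\frac{N}{2}\Vert\mathcal{G}_{\beta}(\theta)-\mathcal{G}_{\beta_0}(\theta_0)\Vert^2_{L^2_\lambda}$ from Proposition \ref{le2.1}, together with the boundedness of $\log\pi(\cdot)$ on the finite-dimensional set $\mathcal{U}$ containing $\beta_0$, I obtain
\[
P^{(N)}_{\beta_0,\theta_0}\int\Vert\mathcal{G}_{\hat\beta}(\theta)-\mathcal{G}_{\beta_0}(\theta_0)\Vert^2_{L^2_\lambda}\,d\hat Q(\theta)\ \lesssim\ \varepsilon_N^2\log N.
\]

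\textbf{Step 2 (Sieve tail and interpolation).} Replicating the "good event" analysis in the proof of Theorem \ref{mainthmsv}, I would show that with $P^{(N)}_{\beta_0,\theta_0}$-probability tending to one, $\hat Q$ is essentially supported in a sieve ball $B_{\mathcal{R}}(M_N)$ with $M_N\lesssim(\log N)^c$. On this event the uniform bound \eqref{boundmodel} gives $\Vert\mathcal{G}_{\hat\beta}(\theta)-\mathcal{G}_{\beta_0}(\theta_0)\Vert_{L^2_\lambda}\lesssim M_N^p$, and the Hölder trick
\[
\int X^{2/(p+1)}\,d\hat Q\ \leq\ \Bigl(\int X^2\,d\hat Q\Bigr)^{1/(p+1)}\bigl(\sup X\bigr)^{2p/(p+1)}
\]
converts the $L^2$-type bound from Step 1 into the advertised rate $\varepsilon_N^{2/(p+1)}\log N$, at the cost of only a polylog factor absorbed into $\log N$.

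\textbf{Step 3 (Stability transfer).} For the second assertion, I would apply \eqref{stabmodel} pointwise on the good event, giving
\[
F(\Vert f_\theta - f_{\theta_0}\Vert)\ \leq\ C_T M_N^q\Vert\mathcal{G}_{\hat\beta}(\theta)-\mathcal{G}_{\beta_0}(\theta_0)\Vert_{L^2_\lambda},
\]
and then rerun the Hölder interpolation of Step 2 with combined exponent $p+q$ in place of $p$, yielding the bound $\varepsilon_N^{2/(p+q+1)}\log N$.

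\textbf{Main obstacle.} The delicate point is that $\hat\beta$ is random and could a priori drift across $\mathcal{U}$. The saving grace of the ELBO scheme is that Step 1 automatically bundles the variational optimization in $\theta$ and the model selection in $\beta$ into one estimate on the single quantity $\Vert\mathcal{G}_{\hat\beta}(\theta)-\mathcal{G}_{\beta_0}(\theta_0)\Vert_{L^2_\lambda}$, which is exactly the loss appearing in the conclusion. What requires real care is the verification that the extra term $C_{\mathcal{U}}\Vert\beta_1-\beta_2\Vert_{\mathcal{U}}$ in \eqref{lipmodel} is harmless when computing the prior mass of $\Pi_N$ on small Kullback--Leibler neighborhoods of $(\beta_0,\theta_0)$ used in the construction of $Q_N$, and that the uniform-in-$\beta$ character of \eqref{boundmodel} suffices to make the sieve concentration estimates from Section \ref{VariationalConsistency} go through unchanged even though $\hat\beta$ varies with the data.
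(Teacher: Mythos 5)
Your Step 1 contains the decisive gap. After testing the ELBO at the competitor $(\beta_0,Q_N)$ you take $P^{(N)}_{\beta_0,\theta_0}$-expectations and invoke the identity $D(P^{(N)}_{\beta_0,\theta_0}\Vert P^{(N)}_{\beta,\theta})=\tfrac N2\Vert\mathcal G_\beta(\theta)-\mathcal G_{\beta_0}(\theta_0)\Vert^2_{L^2_\lambda}$ to conclude that $P^{(N)}_{\beta_0,\theta_0}\hat Q\Vert\mathcal G_{\hat\beta}(\theta)-\mathcal G_{\beta_0}(\theta_0)\Vert^2\lesssim\varepsilon_N^2\log N$. But that identity converts the \emph{expected} log-likelihood ratio into the KL only when the pair $(\beta,\theta)$ and the mixing measure are fixed; here both $\hat\beta$ and $\hat Q$ are functions of $D_N$, so $P^{(N)}_{\beta_0,\theta_0}\int\log\bigl(p^{(N)}_{\beta_0,\theta_0}/p^{(N)}_{\hat\beta,\theta}\bigr)\,d\hat Q(\theta)$ is \emph{not} $P^{(N)}_{\beta_0,\theta_0}\hat Q\,D(P^{(N)}_{\beta_0,\theta_0}\Vert P^{(N)}_{\hat\beta,\theta})$. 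The variational optimum can place mass exactly where the empirical likelihood ratio overfits, making the left side small while the population loss stays large; this is precisely why posterior-contraction arguments need testing conditions and not only a prior-mass/ELBO comparison. The correct skeleton is the Donsker--Varadhan split $a\hat Q[L]\le D(\hat Q\Vert\Pi(\cdot|D_N,\hat\beta))+\log\Pi(\exp(aL)|D_N,\hat\beta)$: your Step 1 controls only the first term, and your proposal supplies no control of the exponential-moment term, which is where the entire testing/entropy machinery (conditions (C1)--(C3) and the analogues of Lemmas B.3--B.4 of the variational-consistency framework) must enter.

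The second, related omission concerns the data-dependence of $\hat\beta$, which you correctly flag as "the delicate point" but do not resolve. The paper handles it by introducing an auxiliary two-point prior $\nu$ with $\nu(\beta_0)=\nu(\hat\beta)=1/2$ and the product prior $\hat\Pi=\nu\times\Pi_N$ on $\{\beta_0,\hat\beta\}\times\Theta$, verifying the testing, sieve-mass and small-ball conditions for this augmented model (the covering-number bound uses exactly the extra term $C_{\mathcal U}\Vert\beta_1-\beta_2\Vert_{\mathcal U}$ in the Lipschitz condition, which contributes only an additive $\log 2$), and then deducing $\log P^{(N)}_{\beta_0,\theta_0}\Pi(\exp(aL)|D_N,\hat\beta)\lesssim N\varepsilon_N^2$ from the corresponding bound for $\hat\Pi$ at the cost of $-\log\nu(\hat\beta)=\log 2$. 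Your Steps 2--3 (the Hölder interpolation to pass from exponent $2$ to $2/(p+1)$, and the stability transfer with exponent $p+q$) are plausible in shape and roughly parallel how the paper absorbs the polynomial growth $M^p$, $M^q$ into the loss, but they are downstream of a Step 1 that does not hold as written, so the proof as proposed does not go through.
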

\subsection{Inverse potential problem with unknown fractional order}
In this part, we consider inverse potential problem as in \cref{SubsectionSuddiffuion} but with unknown fractional order.
We recall some settings here. Let domain $\Omega = (0,1)$ and we consider a subdiffusion equation with a nonhomogeneous Dirichlet boundary condition:
\begin{align*}
    \left\{\begin{aligned}
    &\partial_t^{\beta}u - \partial_{xx}u + qu = f \quad \mbox{in } \Omega \times (0,T],\\
    &u(0,t) = a_0, u(1,t) = a_1 \quad \mbox{on } (0,T],\\
    &u(0) = u_0 \quad \mbox{in } \Omega,
    \end{aligned}\right.  
\end{align*}
where $\beta \in (0,1)$ is the unknown fractional order $\beta$, $T > 0$ is a fixed final time, $f>0$ is given source term, $u_0>0$ is given initial data, the nonnegative function $q\in L^{\infty}(\Omega)$ is a spatially dependent potential, and $a_0$ and $a_1$ are positive constants.
For $\alpha\in\mathbb{N}$, we define
\begin{align*}
\mathcal{F}_{\alpha,M_0} =\left\{ q\in H^{\alpha} \cap \mathcal{I}: q\vert_{\partial\Omega}=1, \frac{\partial^jq}{\partial n^j}\Big|_{\partial\Omega} =0 \text{\ for\ } j=1,\dots,\alpha -1 \right\},
\end{align*}
where $\mathcal{I} = \{q \in L^{\infty}: 0< q < M_0\}$ for $M_0 >0$.
We assume $u_0,f \in H^{\alpha}(\Omega)$ with $u_0, f \geq L_0$ a.e. and $a_0, a_1 \geq L_0$ for $L_0>0$.
we also assume that the unknown fractional order $\beta$ belongs to $[\beta^{-},\beta^{+}]$ for any $\beta^{-},\beta^{+}$ such that $0<\beta^{-}<\beta^{+}<1$. The forward map $\mathcal{G}_{\beta}$ is defined by \cref{forwardmapFrac}.
\begin{theorem}\label{Fractionalfinal}
Let $\alpha \in \mathbb{N}$, $\alpha> 2+d/2$, $d = 1$, and ${\kappa}= 2$. Consider the forward map $\mathcal{G}_{\beta}$ as in \cref{forwardmapFrac} with $\beta\in [\beta^{-},\beta^{+}]$ for any $\beta^{-}, \beta^{+}$ such that $0<\beta^{-}<\beta^{+}<1$ and terminal time $T\geq T_0$ where $T_0$ is large enough. Let $\Pi_N$, $\Pi_N(\cdot|D_N)$, $\hat{\beta}$, $\hat{Q}$ be as defined in \cref{mainthmModel}. Assume that $\theta_0\in H_K^\alpha(\Omega)$. Then, for $\varepsilon_N=N^{-\frac{\alpha+{\kappa}}{2\alpha+2{\kappa}+d}}$, we have
\begin{gather}
    P_{\beta_0,\theta_0}^{(N)}\hat{Q} \Vert u_{\hat{\beta},q_{\theta}}(T)-u_{\beta_0,q_0}(T)\Vert_{L^2}^{2}\leq C_1 \varepsilon_N^{2}\log N,\label{FractionalGenerror}\\
    P_{\beta_0,\theta_0}^{(N)}\hat{Q}\norm{q_\theta - q_0}_{L^2(\Omega)}^{\frac{\alpha+2}{\alpha}\cdot\frac{2}{q+1}}\leq c\Big(T^{-\beta_0}\abs{\beta_0-\hat{\beta}} +|T^{-\beta_0}-T^{-\hat{\beta}}|\Big)^{\frac{\alpha+2}{\alpha}\cdot\frac{2}{q+1}}  +C_2\varepsilon_N^{\frac{2}{q+1}}\log N,\label{Fractionalerror}
\end{gather}
where $q = 2+4\alpha$, $C_1,C_2>0$, and $c$ is a positive constant independent of $T$.
\end{theorem}
Estimate \cref{FractionalGenerror} implies a contraction rate of variational posterior in ``prediction'' loss. Unfortunately, for the contraction towards the ground truth $q_0$, we did not verify \cref{condstabmodel} for the forward map $\mathcal{G_{\beta}}$. With the conditional stability estimate (see Appendix B.2)
\begin{align}\label{Fracstabilityestimate}
    \begin{aligned}
    \norm{q - q_0}_{L^2(\Omega)}\leq &c\Big((1+\norm{q}_{H^{\alpha}(\Omega)}^{1+2\alpha})^{\frac{2}{\alpha+2}}\norm{u_{q,\hat{\beta}}(T)-u_{q_0,\beta_0}(T)}_{L^2(\Omega)}^{\frac{\alpha}{\alpha+2}}\\
    &+T^{-\beta_0}\abs{\beta_0 - \hat{\beta}} +|T^{-\beta_0}-T^{-\hat{\beta}}|\Big)
\end{aligned}
\end{align}
we derive \cref{Fractionalerror} that gives an estimate of the contraction error of the variational posterior. Though we did not obtain a convergence rate, we can choose terminal time $T$ to make the terms $T^{-\beta_0}\abs{\beta_0-\hat{\beta}}$ and $|T^{-\beta_0}-T^{-\hat{\beta}}|$ as small as possible, which means that the approximation error of the variational posterior $\hat{Q}$ to the ground truth $q_0$ can be arbitrarily small. 
\par
{\color{black}Though we did not verify \cref{condstabmodel} for the forward map $\mathcal{G_{\beta}}$, this condition is not overly restrictive for inverse problems. Simultaneous recovery of two unknown parameters is widely studied, and conditional stability results satisfying our Condition 6.2 can be established for several problems \cite{simultaneous_yamamoto_simultaneous_2001,simultaneous_yang_multi-parameters_2020,simultaneous_ma_simultaneous_2024}. Specifically, see \cref{extracondition} in the Supplementary Material \cite{Zu2024AoS}, the time-fractional subdiffusion equation studied in \cite{simultaneous_ma_simultaneous_2024}.}
\begin{remark}
 Recently, Jing and Yamamoto \cite{Frac_jing2022simultaneous} demonstrated the simultaneous uniqueness of identifying multiple parameters (including fractional order and spatially dependent potential) in a one-dimensional subdiffusion/diffusion-wave ($0<\alpha<2$) equation. Previous studies have explored the conditional stability of recovering fractional order and spatially dependent potential from terminal measurements $u_{\beta,q}(T)$ respectively \cite{Frac_li2020stability,jin2023inverse}. However, to our knowledge, there is no conditional stability estimate for simultaneously recovering fractional order and spatially dependent potential that satisfies \cref{condstabmodel}. Our derived conditional stability estimate \cref{Fracstabilityestimate} suggests that for the terminal measure $u_{\beta,q}(T)$, which can always be represented by a time-dependent and a time-independent term, the time-dependent term vanishes as the terminal time $T$ becomes sufficiently large, and the information in the time-independent term can accurately recover the truth $q_0$ (see Appendix B.2). However, the estimate \cref{Fracstabilityestimate} did not verify \cref{condstabmodel} for the forward map $\mathcal{G_{\beta}}$ defined by \cref{forwardmapFrac}. This issue is beyond the scope of the current paper and remains an open question for future research.
 % Our work aims at establishing the theory of variational posterior convergence rates for general non-linear inverse problem.
\end{remark}
\par
From the proof of \cref{Fracminmax}, one can derive the following theorem which states that for the ``PDE-constrained regression'' problem of recovering $u_{\beta_0,q_0}(T)$ in ``prediction'' loss, the convergence rate obtained in \cref{FractionalGenerror} is minimax optimal (up to a logarithmic factor).
\begin{theorem}\label{Fractinalminmax}
For $M_0>1$, $\alpha \in \mathbb{N}$, $q\in\mathcal{F}_{\alpha,M_0}$, $\beta \in [\beta^-,\beta^+]$, consider the solution $u_{\beta,q}(t)$ of the problem \cref{Fractional}. Then there exist fixed $T_0>0$ and a finite constant $C>0$ such that for $N$ large enough, the terminal time $T\geq T_0$ and any $\eta>0$,
% \begin{align*} \inf_{\Tilde{u}_N}\sup_{\scriptsize\begin{array}{c}
% \beta_0 \in [\beta^-,\beta^+] \\
% q_0\in\mathcal{F}_{\alpha,M_0}(R)
% \end{array}}P^{(N)}_{\beta_0,\theta_0}\hat{Q}\norm{\Tilde{u}_N-u_{\beta_0,q_0}(T)}_{L^2(\Omega)}^{\eta}\geq C N^{-\frac{\alpha+2}{2\alpha+4+1}\cdot\eta},
% \end{align*}
\begin{align*} 
\inf_{\Tilde{u}_N}\sup_{\substack{
\beta_0 \in [\beta^-,\beta^+] \\
q_0\in\mathcal{F}_{\alpha,M_0}(R)
}}P^{(N)}_{\beta_0,\theta_0}\hat{Q}\norm{\Tilde{u}_N-u_{\beta_0,q_0}(T)}_{L^2(\Omega)}^{\eta}\geq C N^{-\frac{\alpha+2}{2\alpha+4+1}\cdot\eta},
\end{align*}
%     \vskip -0.1 cm
where $\theta_0 = \Phi^{-1}(q_0)$ and the infimum ranges over all measurable functions $\Tilde{u}_N = \Tilde{u}_N(\theta)$ that take value in $L^2(\Omega)$ with $\theta$ from the variational posterior $\hat{Q}$ defined in \cref{Fractionalfinal}. 
\end{theorem}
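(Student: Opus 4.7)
The plan is to reduce the minimax problem with unknown fractional order to the one with known fractional order already settled in Theorem \ref{Fracminmax}. The key observation is the trivial restriction inequality: for any fixed value $\beta_0^\star \in [\beta^-,\beta^+]$,
\begin{equation*}
\sup_{\beta_0\in[\beta^-,\beta^+],\, q_0\in\mathcal{F}_{\alpha,M_0}} P^{(N)}_{\beta_0,\theta_0}\hat{Q}\norm{\Tilde{u}_N-u_{\beta_0,q_0}(T)}_{L^2}^{\eta} \;\geq\; \sup_{q_0\in\mathcal{F}_{\alpha,M_0}} P^{(N)}_{\beta_0^\star,\theta_0}\hat{Q}\norm{\Tilde{u}_N-u_{\beta_0^\star,q_0}(T)}_{L^2}^{\eta},
\end{equation*}
which holds because the left supremum is over a strictly larger index set. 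For the right-hand side the fractional order is frozen at $\beta_0^\star$, and the data-generating law $P^{(N)}_{\beta_0^\star,\theta_0}$ coincides structurally with the single-parameter setup of Theorem \ref{Fracminmax}.

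The next step is to invoke the hypothesis construction used to prove Theorem \ref{Fracminmax}. That proof builds (via a standard Assouad/Fano reduction on a wavelet-type perturbation at resolution level $2^{J_N}$ with $J_N$ chosen so that $2^{J_N}\simeq N^{1/(2\alpha+4+1)}$) a family of alternatives $q_j\in\mathcal{F}_{\alpha,M_0}$ whose terminal states $u_{\beta_0^\star,q_j}(T)$ are pairwise separated at the target rate $\delta_N\asymp N^{-(\alpha+2)/(2\alpha+5)}$ in $L^2(\Omega)$, while the associated KL divergences $D(P^{(N)}_{\beta_0^\star,\theta_j}\Vert P^{(N)}_{\beta_0^\star,\theta_0})$ stay bounded, the separation and divergence controls relying on Proposition \ref{le2.1} together with the regularity of the forward map verified in Section \ref{SubsectionSuddiffuion}. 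Exactly the same family and same bounds apply in the present setting because the data model for that single $\beta_0^\star$ is unchanged, and raising the resulting loss inequality to the $\eta$-th power yields the claimed rate $N^{-(\alpha+2)/(2\alpha+5)\cdot\eta}$.

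The only subtlety concerns the estimator class. Here $\Tilde{u}_N$ is required to be a measurable function of $\theta$ drawn from the joint variational posterior $\hat{Q}$ of Theorem \ref{Fractionalfinal}, which is different from the marginal posterior used in Theorem \ref{Fracminmax}. I would handle this by observing that $\hat{Q}\Tilde{u}_N(\theta)$ is itself a measurable function of the data $D_N$, so the mapping $D_N \mapsto \hat{Q}\Tilde{u}_N$ defines a legitimate (deterministic) data-driven estimator in $L^2(\Omega)$. The Assouad/Fano lower bound on the expected loss applies uniformly over all such estimators regardless of how they are constructed, and in particular applies to those arising through the joint optimisation \eqref{opplm}. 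Hence the infimum in Theorem \ref{Fractinalminmax} is bounded below by the same minimax lower bound.

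The main obstacle, such as it is, is bookkeeping: verifying that the testing-type lower bound proof of Theorem \ref{Fracminmax} is truly agnostic to the estimator construction and only uses the information-theoretic distance between data-generating laws, so that swapping the marginal variational posterior for the joint one does not weaken the argument. Since the lower bound ultimately reduces to a Le Cam--style inequality $\sup_i P_i[\|\Tilde{u}_N-u_i\|\geq \delta_N]\gtrsim 1$ whose right-hand side depends only on the statistical distance between the $P_i$'s, not on the estimator class, this step is essentially automatic, which is why the paper's remark asserts the theorem ``straightly'' derives from the proof of Theorem \ref{Fracminmax}.
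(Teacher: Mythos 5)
Your proposal is correct and follows the route the paper intends: freeze $\beta_0$ at a fixed value $\beta_0^\star$, use that the supremum over $(\beta_0,q_0)$ dominates the supremum over $q_0$ alone, and then re-run the alternatives $q_m$ and the KL/separation bounds from the proof of Theorem \ref{Fracminmax} verbatim, since the data-generating laws $P^{(N)}_{\beta_0^\star,q}$ are unchanged. One small caution: your intermediate device of replacing the randomized estimator $\Tilde{u}_N(\theta)$ by its posterior average $\hat{Q}\Tilde{u}_N$ only gives $\hat{Q}\norm{\Tilde{u}_N-u}^{\eta}\geq\norm{\hat{Q}\Tilde{u}_N-u}^{\eta}$ via Jensen when $\eta\geq 1$, whereas the theorem allows any $\eta>0$; the clean resolution (which the paper's proof of Theorem \ref{Fracminmax} already uses, and which your final paragraph correctly identifies) is to work with the joint laws $\hat{Q}_{q}$ on $(\mathbb{R}\times\Omega)^N\times L^2(\Omega)$, under which the randomized estimator is deterministic and $D(\hat{Q}_{q_m}\Vert\hat{Q}_{\hat{q}})=D(P^{(N)}_{q_m}\Vert P^{(N)}_{\hat{q}})$ because the conditional law of $\theta$ given $D_N$ is the same measurable function of $D_N$ under every $(\beta_0,q_0)$.
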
%%%%%%%%%%%%%%%%%%%%%%%%%%%%%%%%%%%%%%%%%%%%%%

\bibliographystyle{siamplain}
\bibliography{references}

\end{document}

% --- supplement: supplement.tex ---

\maketitle

\section{Proofs of results for general inverse problems}
\begin{proof}[Proof of Proposition \ref{le2.1}]
The proofs of \eqref{KL} and \eqref{hellinger} can be founded in \cite[Proposition 1.3.1]{IntroNonLinear_nickl2023bayesian}.
We first denote the probability density of a standard multivariate normal variable $Z$ on the finite-dimensional vector space $V$ by
$\phi_V (z) = (2\pi)^{-p_V/2}e^{-\abs{z}_V^2/2}$. Using the standard identity $Ee^{\pdt{u}{Z}_V} = e^{\abs{u}_V^2/2}, u\in V$, we have
\begin{equation*}
    \begin{aligned}
        &\mexp{D_2(P_{\theta_1}\Vert P_{\theta_2})} =  \int_{V\times\mathcal{X}}\frac{p_{\theta_1}}{p_{\theta_2}}p_{\theta_1}d\mu \\
        = & \frac{1}{(2\pi)^{p_V/2}}\int_{V\times\mathcal{X}}\mexp{\frac{1}{2}\abs{y-\mathcal{G}(\theta_2)}_V^2-\abs{y-\mathcal{G}(\theta_1)}_V^2}d\mu(y,x)\\
        =& \int_{\mathcal{X}}\mexp{\frac{1}{2}\abs{\mathcal{G}(\theta_2)}_V^2-\abs{\mathcal{G}(\theta_1)}_V^2}\int_{V}e^{\pdt{y}{2\mathcal{G}(\theta_1)(x)-\mathcal{G}(\theta_2)(x)}}\phi_V(y) dy d\lambda(x)\\
        =& \int_{\mathcal{X}} \mexp{\frac{1}{2}\abs{\mathcal{G}(\theta_2)}_V^2-\abs{\mathcal{G}(\theta_1)}_V^2 + \frac{1}{2}\abs{2\mathcal{G}(\theta_1)-\mathcal{G}(\theta_2)}_V^2}d\lambda(x)\\
    = &\int_{\mathcal{X}}\mexp{\abs{\mathcal{G}(\theta_1)(x)-\mathcal{G}(\theta_2)(x)}_V^2}d\lambda(x),
    \end{aligned}
\end{equation*}
that is
    \[
    D_2(P_{\theta_1}\Vert P_{\theta_2}) = 
    \log\int_{\mathcal{X}}\mexp{\abs{\mathcal{G}(\theta_1)(x)-\mathcal{G}(\theta_2)(x)}_V^2}d\lambda(x).\]
Combined with an upper bound of $e^z$ given by Taylor {\color{black}expansion}:
        \begin{equation*}
            e^z = 1 + ze^{\eta z} \leq 1 + ze^{z}, \quad \mbox{for}\ z \geq 0,
        \end{equation*}
we deduce that
\begin{equation*}
    \begin{aligned}
        D_2(P_{\theta_1}\Vert P_{\theta_2}) \leq &
    \int_{\mathcal{X}}\abs{\mathcal{G}(\theta_1)(x)-\mathcal{G}(\theta_2)(x)}_V^2\mexp{\abs{\mathcal{G}(\theta_1)(x)-\mathcal{G}(\theta_2)(x)}_V^2}d\lambda(x)\\
    \leq & e^{4U^2} \norm{\mathcal{G}(\theta_1)-\mathcal{G}(\theta_2)}_{L^2_{\lambda}(\mathcal{X},V)}^2.
    \end{aligned}
\end{equation*}
\end{proof}
\subsection{Contraction rates for rescaled Gaussian priors}
\begin{proof}[Proof of Theorem \ref{mainthm}]
We are going to verify the three condition in \cite[Theorem 2.1]{zhang2020convergence}. Steps (i) to (iii) below verify conditions (C1) to (C3) directly.
\par
        We denote $U_{\mathcal{G}}(M),L_{\mathcal{G}}(M),T_{\mathcal{G}}(M)$ by $C_UM^p,C_LM^l,C_TM^q$ respectively.
        Let \[H_N(\varepsilon) = \{\theta: \theta = \theta_1 + \theta_2, \norm{\theta_1}_{(H^{{\kappa}})^*}\leq M^s\varepsilon_N/L_{\mathcal{G}}(M\varepsilon/\varepsilon_N), \Vert \theta_2 \Vert_{\mathcal{H}}\leq M\varepsilon/\varepsilon_{N}\}\] with $s = l + 1$, 
        and we further define $\Theta_N(\varepsilon) = H_N(\varepsilon) \cap B_{\mathcal{R}}(M\varepsilon/\varepsilon_{N})$
        for some $M$ large enough and $\varepsilon > \varepsilon_N$.
        \par
        (i) For (C1), we follow the method used in the proof of \cite[Theorem 7.1.4]{gin2015mathematical}. Let 
        \[S_j = \{\theta \in \Theta_N(\varepsilon):4j \Bar{m} \varepsilon \leq h(p_{\theta},p_{\theta_0})< 4(j+1)\Bar{m}\varepsilon\}, \quad j \in \mathbb{N}, \quad \Bar{m} > 0.\]
        We see that $S_j \subseteq \Theta_N(\varepsilon),$
        so it is sufficient to consider the metric entropy of $\Theta_N(\varepsilon)$.
        Here we introduce the (semi-) metric 
        \[d_{\mathcal{G}}({\theta_1},{\theta_2}) := \Vert \mathcal{G}(\theta_1)-\mathcal{G}(\theta_2)\Vert _{L^2_{\lambda}(\mathcal{X},V)}.\]
        Using Proposition \ref{le2.1}, formula (4.184) in \cite{gin2015mathematical} and Lipschitz condition of $\mathcal{G}$, 
        we have
        \begin{align*}
            \log N(\Theta_N(\varepsilon),h,j\Bar{m}\varepsilon) &\leq \log N(\Theta_N(\varepsilon),h,\Bar{m}\varepsilon)\\
            &\leq \log N(\Theta_N(\varepsilon),d_{\mathcal{G}},2\Bar{m}\varepsilon)\\
            &\leq \log N\Big(\Theta_N(\varepsilon),\norm{\ \cdot \ }_{H^{\kappa})^*},\frac{2\Bar{m}\varepsilon}{C_L(M\varepsilon/\varepsilon_N)^l}\Big).
        \end{align*}
        Because $\Theta_N(\varepsilon) \subset H_N(\varepsilon)$, we only need to bound 
        \[\log N\Big(H_N(\varepsilon),\norm{\ \cdot \ }_{(H^{\kappa})^*},\frac{2\Bar{m}\varepsilon}{C_L(M\varepsilon/\varepsilon_N)^l}\Big).\]
        The definition of $H_N(\varepsilon)$ implies that a $\frac{\Bar{m}\varepsilon}{C_L(M\varepsilon/\varepsilon_N)^l}$-covering in $\norm{\ \cdot \ }_{(H^{\kappa})^*}$ of $B_{\mathcal{H}}(M\varepsilon/\varepsilon_{N})$ is a $\frac{2\Bar{m}\varepsilon}{C_L(M\varepsilon/\varepsilon_N)^l}$-covering in $\norm{\ \cdot \ }_{(H^{\kappa})^*}$ of $H_N(\varepsilon)$ for $\Bar{m} \geq M^s$. 
        It is sufficient to consider \[\log N\Big(B_{\mathcal{H}}(M\varepsilon/\varepsilon_{N}),\norm{\ \cdot \ }_{(H^{\kappa})^*},\frac{\Bar{m}\varepsilon}{C_L(M\varepsilon/\varepsilon_N)^l}\Big).\]
        We record (6.25) in \cite{IntroNonLinear_nickl2023bayesian} as follows:
        \begin{equation}\label{entropy}
            \log N(B_{H_c^{\alpha}}(r), \norm{\ \cdot \ }_{(H^{\kappa})^*}, \delta) \leq C_E\left(\frac{r}{\delta}\right)^{d/(\alpha + {\kappa})},\quad 0< \delta < r,\quad {\kappa} \geq 1/2
        \end{equation}
        with $C_E$ depending on $\alpha, {\kappa}, d$.
        Using the embedding $\norm{\ \cdot \ }_{H^{\alpha}_c} \leq c\norm{\ \cdot \ }_{\mathcal{H}}$ and (\ref{entropy}), for ${\kappa}\geq 1/2$, $\bar{m} = cC_LM^s$, and $\varepsilon > \varepsilon_N$, we deduced that
        \begin{equation} \label{entropybound}
            \begin{aligned} 
            &\log N(B_{\mathcal{H}}(M\varepsilon/\varepsilon_{N}),\norm{\ \cdot \ }_{(H^{\kappa})^*},\frac{\Bar{m}\varepsilon}{C_L(M\varepsilon/\varepsilon_N)^l}) \\
            \leq &\log N(B_{H^{\alpha}}(cM\varepsilon/\varepsilon_{N}),\norm{\ \cdot \ }_{(H^{\kappa})^*},\frac{\Bar{m}\varepsilon}{C_L(M\varepsilon/\varepsilon_N)^l})\\
            \leq &C_E\left(\frac{cM C_L(M\varepsilon/\varepsilon_N)^l}{\Bar{m}\varepsilon_N}\right)^{d/(\alpha +{\kappa})}\\
            \leq &C_E\left(\frac{cM^sC_{L}}{\Bar{m}}\right)^{d/(\alpha+{\kappa})} \cdot N\varepsilon^2 \\
            \leq &C_EN\varepsilon^2,
            % \leq &\frac{C(\alpha,d)\cdot r^{d/\alpha}(M)^{2d/\alpha}C_{L}^{d/\alpha}}{\Bar{m}^{d/\alpha}} N\varepsilon^2
            \end{aligned}
        \end{equation}          
        where we used $\varepsilon/\varepsilon_N>1$, $ld/({\alpha + \kappa}) < 2$, and $\varepsilon_N^{-d/(\alpha+\kappa)} = N\varepsilon_N^2$.
        We note that (\ref{entropy}) is also true for ${\kappa} < 1/2$ with $H^{\alpha}_c$ replaced by $H^{\alpha}$ as discussed after (6.25) in \cite{IntroNonLinear_nickl2023bayesian}.
        Thus, we have 
        \[N(S_j,h,j\Bar{m}\varepsilon)\leq N(\varepsilon) := \exp\{C_EN\varepsilon^2\}.\]
        Choose a minimum finite set $S_j^{'}$ of points in each set $S_j$
        such that every $\theta \in S_j$ is within Hellinger distance $j\Bar{m}\varepsilon$ of at least one of these points. By metric entropy bound above, for $j$ fixed, there are at most $N(j\varepsilon)$ such points $\theta_{jl} \in S_j^{'}$, and from \cite[Corollary 7.1.3]{gin2015mathematical} for each $\theta_{jl}$ there exists a test $\Psi_{N,jl}$ such that
            \[P^{(N)}_{\theta_0}\Psi_{N,jl} \leq e^{-C_tNj^2\Bar{m}^2\varepsilon^2}, \quad \mathop{\sup}_{\theta \in S_j, h(p_{\theta},p_{\theta_{jl}})<j\Bar{m}\varepsilon} P^{(N)}_{\theta}(1-\Psi_{N,jl}) \leq e^{-C_tNj^2\Bar{m}^2\varepsilon^2}\]
        for some universal constant $C_t > 0$. Let $\Psi_N = \mathop{\max}_{j,l}\Psi_{N,jl}$. Then, we have
        \begin{align}
            P^{(N)}_{\theta_0}\Psi_N & \leq P^{(N)}_{\theta_0}(\sum_j\sum_l\Psi_{N,jl}) \leq \sum_j\sum_l\exp\{-C_tNj^2\Bar{m}^2\varepsilon^2\} \nonumber \\
            &\leq N(\varepsilon)\sum_j  \exp\{-C_tNj^2\Bar{m}^2\varepsilon^2\}  \label{test1}\\
            &\leq \frac{1}{1-\exp\{-(C_t\Bar{m}^2-C_E)\}}\exp\{-(C_t\Bar{m}^2-C_E)N\varepsilon^2\}  \nonumber\\
            &\leq \exp\{-CN\varepsilon^2\} \nonumber
        \end{align}
        and
        \begin{equation}\label{test}
            \mathop{\sup}_{\begin{array}{c}
                 \theta \in \Theta_N(\varepsilon)  \\
                 h(p_{\theta},p_{\theta_0}) \geq 4\bar{m}\varepsilon
            \end{array}}
            P^{(N)}_{\theta}(1-\Psi_N) = \mathop{\sup}_{\theta \in \cup_j S_j}P^{(N)}_{\theta}(1-\Psi_N)\leq \exp\{-C N \varepsilon^2\}
        \end{equation}
        for any $C>0$ when $M$ is large enough.
        Using Proposition \ref{le2.1}, Conditions \ref{condreg} and \ref{condstab}, 
        we have the following inequality for $\theta \in \Theta_N(\varepsilon)$ and constants $C_{U},C_{T}$ from conditions (\ref{bound}) and (\ref{stab}):
        \begin{align*}
            h(p_{\theta},p_{\theta_0}) 
        &\geq \frac{1}{2C_{U}\cdot(M\varepsilon/\varepsilon_N)^p}\Vert \mathcal{G}(\theta)-\mathcal{G}(\theta_0)\Vert _{L^{2}_{\lambda}(\mathcal{X},V)}\\
        &\geq \frac{1}{2C_{U}C_{T}\cdot(M\varepsilon/\varepsilon_N)^{p+q}}F(\Vert f_{\theta} - f_{\theta_0}\Vert ).
        \end{align*}
        From the inequality and direct calculations, we note that
        \[\{\theta \in \Theta_N(\varepsilon) : h(p_{\theta},p_{\theta_0}) \geq 4\bar{m}\varepsilon\} \supseteq
        \{\theta \in \Theta_N(\varepsilon) : N\varepsilon_N^{\frac{2p}{p+1}}\Vert \mathcal{G}(\theta)-\mathcal{G}(\theta_0)\Vert _{L^{2}_{\lambda}}^{\frac{2}{p+1}} \geq \Tilde{C}_1 N \varepsilon^2 \}\]
        and
        \[\{\theta \in \Theta_N(\varepsilon) : h(p_{\theta},p_{\theta_0}) \geq 4\bar{m}\varepsilon\} \supseteq
        \{\theta \in \Theta_N(\varepsilon) : N\varepsilon_N^{\frac{2p+2q}{p+q+1}}[F(\Vert f_{\theta} - f_{\theta_0}\Vert )]^{\frac{2}{p+q+1}} \geq C_1 N \varepsilon^2 \}\]
        for $\Tilde{C}_1 = (8\Bar{m}C_UM^{p})^{\frac{2}{p+1}}$ and $C_1 = (8\Bar{m}C_U C_TM^{p+q})^{\frac{2}{p+q+1}}$. Combined with (\ref{test}), we have 
        \begin{equation}
            \sup_{\substack{\theta \in \Theta_N(\varepsilon)  \\
                 L(P^{(N)}_{\theta},P^{(N)}_{\theta_0}) \geq N \varepsilon^2}}
            P^{(N)}_{\theta}(1-\Psi_N)\leq \exp\{-C N \varepsilon^2\}
        \end{equation}
        with  
        \[L(P^{(N)}_{\theta},P^{(N)}_{\theta_0}) := N\varepsilon_N^{\frac{2p+2q}{p+q+1}}[F(\Vert f_{\theta} - f_{\theta_0}\Vert )]^{\frac{2}{p+q+1}}/C_1 \text{ or } N\varepsilon_N^{\frac{2p}{p+1}}\Vert \mathcal{G}(\theta)-\mathcal{G}(\theta_0)\Vert _{L^{2}_{\lambda}}^{\frac{2}{p+1}}/\Tilde{C}_1.\]
        Thus, we have confirmed (C1) with the statement provided above.

        (ii) By the definition of $\Theta_N(\varepsilon)$, we deduce that 
        \begin{align*}
            \Pi_N(\Theta_N(\varepsilon)^c) &= \Pi_N(H_N(\varepsilon)^c \cup B_{\mathcal{R}}(M\varepsilon/\varepsilon_{N})^c) \\
            & \leq \Pi_N(H_N(\varepsilon)^c) + \Pi_N(B_{\mathcal{R}}(M\varepsilon/\varepsilon_{N})^c).
        \end{align*}
        By Fernique’s theorem , for $\theta'\sim \Pi'$,
        $E\Vert \theta'\Vert _{\mathcal{R}}\leq D$ with a constant $D$ only depends on prior $\Pi'$ (see the proof of \cite[Theorem 2.2.2]{IntroNonLinear_nickl2023bayesian}).
        Next, using \cite[Theorem 2.1.20]{gin2015mathematical}, we have
        \begin{align*}
            &\Pi_N(\Vert \theta\Vert _{\mathcal{R}}>M\varepsilon/\varepsilon_N) = \Pi'(\Vert \theta'\Vert _{\mathcal{R}}>M\sqrt{N}\varepsilon)\\
            &\leq \Pi'(\Vert \theta'\Vert _{\mathcal{R}}-E\Vert \theta'\Vert _{\mathcal{R}}>M\sqrt{N}\varepsilon/2)\\
            &\leq 2\exp(-cM^2N\varepsilon^2) \leq \frac{1}{2}\exp(-CN\varepsilon^2)
        \end{align*}
        for the constant $C$ in (i) and $M$ large enough. Then it is sufficient to prove 
        \[\Pi_N(H_N(\varepsilon)) \geq 1 - \exp\{-BN\varepsilon^2\} \geq 1 - \frac{1}{2}\exp\{-CN\varepsilon^2\}\]
        for $B = C+2$.
        Using (\ref{entropy}) as the proof in (i), we have
        \[\log N(B_{\mathcal{H}}(1),\norm{\ \cdot \ }_{(H^{\kappa})^*},\delta)\lesssim \delta^{-d/(\alpha+{\kappa})},\quad 0<\delta<1.\]
        Then, we apply Theorem 6.2.1 in \cite{IntroNonLinear_nickl2023bayesian} to obtain 
        \begin{align*}
            &-\log\Pi'(\theta' : \norm{\theta'}_{(H^{\kappa})^*} \leq M^s\sqrt{N}\varepsilon_N^2/L_{\mathcal{G}}(M\varepsilon/\varepsilon_N)) \\
            &\leq c(M^s\sqrt{N}\varepsilon_N^2/L_{\mathcal{G}}(M\varepsilon/\varepsilon_N))^{-\frac{2d}{2\alpha+2{\kappa}-d}}\\
            &\leq c[M^s/L_{\mathcal{G}}(M\varepsilon/\varepsilon_N)]^{-\frac{2d}{2\alpha+2{\kappa}-d}}N\varepsilon_N^2
        \end{align*}
        for some $c > 0$.
        Combined with Lipschitz condition (\ref{lip}), we deduced that 
        \[-\log\Pi'(\theta' : \norm{\theta'}_{(H^{\kappa})^*} \leq M^s\sqrt{N}\varepsilon_N^2/L_{\mathcal{G}}(M\varepsilon/\varepsilon_N)) \leq cC_L^{\frac{2d}{2\alpha+2{\kappa}-d}}M^{-\frac{2d}{2\alpha+2{\kappa}-d}}N\varepsilon^2.\]
        Let $M \geq C_L\cdot\left(\frac{B}{c}\right)^{-\frac{2\alpha+2{\kappa} - d}{2d}}$ and we have
        \begin{equation}\label{pbabound}
            -\log\Pi'(\theta' : \norm{\theta'}_{(H^{\kappa})^*} \leq M^s\sqrt{N}\varepsilon_N^2/L_{\mathcal{G}}(M\varepsilon/\varepsilon_N)) \leq BN\varepsilon^2.
        \end{equation}
        We denote\[B_N = -2\Phi(e^{-BN\varepsilon^2_N}),\]
        where $\Phi$ is the standard normal cumulative distribution function. By \cite[Lemma K.6]{IntroMy_ghosal2017fundamentals} we have
        \[\sqrt{BN}\varepsilon \leq B_N \leq 2\sqrt{2BN}\varepsilon.\]
        Hence, for $M \geq 2\sqrt{2B}$, we obtain
        \begin{align*}
            &\Pi'(\theta' : \theta' = \theta'_1 + \theta'_2, \norm{\theta'_1}_{(H^{\kappa})^*} \leq M^s\sqrt{N}\varepsilon_N^2/L_{\mathcal{G}}(M\varepsilon/\varepsilon_N), \Vert \theta'_2 \Vert_{\mathcal{H}}\leq M\sqrt{N}\varepsilon)\\
            &\geq \Pi'(\theta' : \theta' = \theta'_1 + \theta'_2, \norm{\theta'_1}_{(H^{\kappa})^*} \leq M^s\sqrt{N}\varepsilon_N^2/L_{\mathcal{G}}(M\varepsilon/\varepsilon_N), \Vert \theta'_2 \Vert_{\mathcal{H}}\leq B_N).
        \end{align*}
        By (\ref{pbabound}) and \cite[Theorem 2.6.12]{gin2015mathematical},
        \begin{align*}
            &\Pi'(\theta' : \theta' = \theta'_1 + \theta'_2, \norm{\theta'_1}_{(H^{\kappa})^*} \leq M^s\sqrt{N}\varepsilon_N^2/L_{\mathcal{G}}(M\varepsilon/\varepsilon_N), \Vert \theta'_2 \Vert_{\mathcal{H}}\leq B_N) \\
            &\geq \Phi(\Phi^{-1}[\Pi'(\norm{\theta'_1}_{(H^{\kappa})^*} \leq M^s\sqrt{N}\varepsilon_N^2/L_{\mathcal{G}}(M\varepsilon/\varepsilon_N))] + B_N) \\
            &\geq \Phi(\Phi^{-1}[e^{-BN\varepsilon^2}] + B_N) = 1 - e^{-BN\varepsilon^2}
        \end{align*}
        that is
        \[\Pi_N(H_N(\varepsilon)) \geq 1 - e^{-BN\varepsilon^2}.\]
        Finally, we have 
        \begin{equation}\label{setbound}
            \Pi_N(\Theta_N(\varepsilon)^c) \leq e^{-CN\varepsilon^2}.
        \end{equation}
        
        (iii) Next, we check (C3) for $\rho = 2$. Relying on Proposition \ref{le2.1}, we have
        \begin{equation*}
        D_2(P_{\theta_0}\Vert P_{\theta}) \leq e^{2(\norm{\mathcal{G}(\theta_0)}_{\infty}^2+\norm{\mathcal{G}(\theta)}_{\infty}^2)} \norm{\mathcal{G}(\theta_0)-\mathcal{G}(\theta)}_{L^2_{\lambda}(\mathcal{X},V)}^2.
        \end{equation*}
        Employing the above inequality and (\ref{bound}), we obtain
        \begin{align*}
            &\Pi_N(\theta : D_2(P^{(N)}_{\theta_0}\Vert P^{(N)}_{\theta})\leq N\varepsilon_N^2)\\
            &\geq \Pi_N(\theta : D_2(P_{\theta_0}\Vert P_{\theta})\leq \varepsilon_N^2, \norm{\theta-\theta_0}_{\mathcal{R}}\leq M')\\
            &\geq \Pi_N(\theta : \exp\{4U^2_{\mathcal{G}}(\Bar{M})\}\norm{\mathcal{G}(\theta)-\mathcal{G}(\theta_0)}^2_{L^{2}_{\lambda}}\leq \varepsilon_N^2, \norm{\theta-\theta_0}_{\mathcal{R}}\leq M')\\
            &\geq \Pi_N(\theta : \norm{\mathcal{G}(\theta)-\mathcal{G}(\theta_0)}_{L^{2}_{\lambda}}\leq \exp\{-2U_{\mathcal{G}}^2(\Bar{M})\}\varepsilon_N, \norm{\theta-\theta_0}_{\mathcal{R}}\leq M')
        \end{align*}
        for some constant $M'>0$ and $\bar{M}=M' + \Vert \theta_0\Vert _{\mathcal{R}}$.
        Then, using Lipschitz condition of $\mathcal{G}$, Corollary 2.6.18 in \cite{gin2015mathematical} and Theorem 6.2.2 in \cite{IntroNonLinear_nickl2023bayesian}, we have
        \begin{align*}
            &\Pi_N(\theta : D_2(P^{(N)}_{\theta_0}\Vert P^{(N)}_{\theta})\leq N\varepsilon_N^2)\\
            &\geq \Pi_N(\theta :\norm{\theta-\theta_0}_{(H^{\kappa})^*} \leq \exp\{-2U_{\mathcal{G}}^2(\Bar{M})\}\varepsilon_N/L_{\mathcal{G}}(\bar{M}), \Vert \theta-\theta_0\Vert _{\mathcal{R}}\leq M')\\
            &\geq \exp(-\frac{1}{2}N\varepsilon_N^2\Vert \theta_0\Vert _{\mathcal{H}}^2)\cdot\Pi_N(\theta :\norm{\theta}_{(H^{\kappa})^*}\leq C_{\mathcal{G}}({\Bar{M}})\varepsilon_N, \Vert \theta\Vert _{\mathcal{R}}\leq M')\\
            &\geq \exp(-\frac{1}{2}N\varepsilon_N^2\Vert \theta_0\Vert _{\mathcal{H}}^2)\cdot \Pi_N(\theta :\norm{\theta}_{(H^{\kappa})^*}\leq C_{\mathcal{G}}({\Bar{M}})\varepsilon_N)\cdot\Pi_N(\Vert \theta\Vert _{\mathcal{R}}\leq M')
        \end{align*}
        for $C_{\mathcal{G}}({\Bar{M}}) = \exp\{-2U_{\mathcal{G}}^2(\Bar{M})\}/L_{\mathcal{G}}(\bar{M})$.
        From the preceding proof in (ii), we have 
        \[\Pi_N(\Vert \theta\Vert _{\mathcal{R}}\leq M') = \Pi'(\Vert \theta'\Vert _{\mathcal{R}}\leq M'\sqrt{N}\varepsilon_N)\geq 1-2\exp(-cM'N\varepsilon_N^2)\geq 1/2\]
        for $M'$ large enough. So it is sufficient to give the lower bound of 
        \[\Pi_N(\theta :\norm{\theta}_{(H^{\kappa})^*}\leq C_{\mathcal{G}}({\Bar{M}})\varepsilon_N)=\Pi'(\theta' :\norm{\theta'}_{(H^{\kappa})^*}\leq C_{\mathcal{G}}({\Bar{M}})\sqrt{N}\varepsilon_N^2).\]
        Using \cite[Theorem 6.2.1]{IntroNonLinear_nickl2023bayesian} as in (ii), we find that for some $c>0$
        \[\Pi'(\theta' :\norm{\theta'}_{(H^{\kappa})^*}\leq C_{\mathcal{G}}({\Bar{M}})\sqrt{N}\varepsilon_N^2)\geq \exp(-cN\varepsilon_N^2).\]
        In short, we have the lower bound
        \[\Pi_N(\theta : D_2(P^{(N)}_{\theta_0}\Vert P^{(N)}_{\theta})\leq N\varepsilon_N^2)\geq \frac{1}{2}\exp(-(c+\Vert \theta_0\Vert ^2_{\mathcal{H}}/2)N\varepsilon_N^2)\geq \exp(-C_2N\varepsilon_N^2)\]
        for some constant $C_2>0$. Thus, we have proved the condition (C3).
        
        Because the three conditions are verified, Theorem 2.1 in \cite{zhang2020convergence} gives
        \begin{equation*}
      P_{\theta_0}^{(N)}\hat{Q}N\varepsilon_N^{\frac{2p}{p+1}}\Vert \mathcal{G}(\theta)-\mathcal{G}(\theta_0)\Vert _{L^{2}_{\lambda}}^{\frac{2}{p+1}}\lesssim N(\varepsilon_N^2 +\gamma^2_N),
        \end{equation*}
        and
        \[P_{\theta_0}^{(N)}\hat{Q}N\varepsilon_N^{\frac{2p+2q}{p+q+1}}[F(\Vert f_{\theta} - f_{\theta_0}\Vert )]^{\frac{2}{p+q+1}}\lesssim N(\varepsilon_N^{2} +\gamma^2_N).\]
        That is to say, we obtain
        \[P_{\theta_0}^{(N)}\hat{Q}\Vert \mathcal{G}(\theta)-\mathcal{G}(\theta_0)\Vert _{L^{2}_{\lambda}}^{\frac{2}{p+1}}\lesssim \varepsilon_N^{\frac{2}{p+1}} +\gamma^2_N \cdot \varepsilon_N^{-\frac{2p}{p+1}},\]
        and
        \[P_{\theta_0}^{(N)}\hat{Q}[F(\Vert f_{\theta} - f_{\theta_0}\Vert )]^{\frac{2}{p+q+1}}\lesssim \varepsilon_N^{\frac{2}{p+q+1}} +\gamma^2_N \cdot \varepsilon_N^{-\frac{2p+2q}{p+q+1}}.\]
\end{proof}
\begin{proof}[Proof of Theorem 3.5]
        As $D_2(P^{(N)}_{\theta_0}\Vert P^{(N)}_{\theta}) \geq D(P^{(N)}_{\theta_0}\Vert P^{(N)}_{\theta})$ , we have
        \[\tilde{\Theta}_N := \set{\theta : D_2(P^{(N)}_{\theta_0}\Vert P^{(N)}_{\theta}) \leq N\varepsilon_N^2, \textcolor{black}{\Vert \theta-\theta_0\Vert _{\mathcal{R}}\leq M}} \subseteq \set{\theta : D(P^{(N)}_{\theta_0}\Vert P^{(N)}_{\theta}) \leq N\varepsilon_N^2}\]
        From the proof of (C3) in Theorem \ref{mainthm} , we know, for $M$ large enough,
        \[\Pi_N(\tilde{\Theta}_N) \geq \exp\{-C_2N\varepsilon_N^2\}.\]
        We define probability measure $Q_N$ with
        \[ Q_N(B) = \frac{\Pi_N(B \cap \tilde{\Theta}_N)}{\Pi_N(\tilde{\Theta}_N)}\]
        for any measurable set $B$.
        Since $\mathrm{supp}(Q_N)\subseteq\tilde{\Theta}_N \subseteq B_{\mathcal{R}}(M')$, $Q_N$ is a restricted Gaussian measure.
        Follow the proof of \cite[Theorem 2.4]{zhang2020convergence} and we deduce that
        \[R(Q_N) \leq (1+C_2)\varepsilon_N^2.\] \textcolor{black}{Here we give the detailed proof. Because \[\mathrm{supp}(Q_N)\subseteq \tilde{\Theta}_N \subseteq \set{\theta : D(P^{(N)}_{\theta_0}\Vert P^{(N)}_{\theta}) \leq N\varepsilon_N^2},\]
        we have 
        \[ Q_ND(P^{(N)}_{\theta_0}\Vert P^{(N)}_{\theta}) \leq N\varepsilon_N^2.\]
        Moreover, we bound $D(Q_N\Vert\Pi_N)$ as below
        \[ D(Q_N\Vert\Pi_N) = Q_N\log \frac{dQ_N}{d\Pi_N} = -\log \Pi_N(\tilde{\Theta}_N)\leq C_2N\varepsilon_N^2,\]
        where we used the definition of $Q_N$ and $\Pi_N(\tilde{\Theta}_N) \geq \exp\{-C_2N\varepsilon_N^2\}$.
        Then, we have
        \[R(Q_N) = \frac{1}{N}\Big(D(Q_N\Vert\Pi_N)+Q_ND(P^{(N)}_{\theta_0}\Vert P^{(N)}_{\theta})\Big) \leq (1+C_2)\varepsilon_N^2.\]}    \end{proof}
\subsection{Contraction rates for High-dimensional Gaussian sieve priors}
For the consistency theorem with the Gaussian sieve prior, we need the bias \(\|\theta_0 - P_J(\theta_0)\|\) to be controlled by the contraction rate \(\varepsilon_N\) of the variational posterior, which is considered in the following lemma.
\begin{lemma}\label{0con}
Assume $\norm{\theta_0}_{H^{\alpha}(\mathcal{Z})}\leq B$ for some constant $B > 0$, and $2^{J} \simeq N^{\frac{1}{2\alpha +2{\kappa} +d}}$. Then,
\begin{equation}
    \norm{\theta_0 - P_J(\theta_0)}_{(H^{{\kappa}}(\mathcal{Z}))^*}\lesssim \varepsilon_N
\end{equation}
for $\varepsilon_N=N^{-\frac{\alpha+{\kappa}}{2\alpha+2{\kappa}+d}}$.
\end{lemma}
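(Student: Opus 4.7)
The plan is to reduce the estimate to a tail bound on the wavelet coefficients of $\theta_0$, exploiting the Besov/Sobolev equivalence
\[
\norm{g}_{H^{s}(\mathbb{R}^d)}^2\simeq\sum_{l,r}2^{2ls}|\pdt{g}{\psi_{lr}}_{L^2}|^2,\qquad |s|<S,
\]
that comes with the Daubechies basis \eqref{Dbase}, together with the fact that multiplication by the smooth compactly supported cutoff $\chi$ is bounded on $H^{s}$ for every $|s|\le S$ (and hence, by duality, on $H^{-s}$).

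First, since $\theta_0$ is supported in $K\subset K'$ and $\chi\equiv 1$ on $K'$, we have $\chi\theta_0=\theta_0$, so
\[
\theta_0-P_J(\theta_0)=\chi\Bigl(\sum_{l>J}\sum_{r\in R_l}\pdt{\theta_0}{\psi_{lr}}_{L^2(\mathcal{Z})}\psi_{lr}\Bigr)=:\chi\,g_J.
\]
Next I would pass from the dual norm on $\mathcal{Z}$ to the full-space Sobolev norm: by duality of the natural extension-by-zero / restriction pair, together with the fact that $\chi$ is supported inside $\mathcal{Z}$,
\[
\norm{\chi g_J}_{(H^{\kappa}(\mathcal{Z}))^*}\lesssim\norm{\chi g_J}_{H^{-\kappa}(\mathbb{R}^d)}\lesssim\norm{g_J}_{H^{-\kappa}(\mathbb{R}^d)},
\]
where the last inequality uses that pointwise multiplication by $\chi\in C_c^\infty$ is bounded on $H^{-\kappa}(\mathbb{R}^d)$.

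Then I would apply the wavelet characterization to $g_J$ and split off the factor that produces the rate. Since $g_J$ only contains scales $l>J$,
\begin{align*}
\norm{g_J}_{H^{-\kappa}(\mathbb{R}^d)}^2
&\simeq\sum_{l>J}\sum_{r\in R_l}2^{-2l\kappa}\,|\pdt{\theta_0}{\psi_{lr}}_{L^2}|^2\\
&=\sum_{l>J}\sum_{r\in R_l}2^{-2l(\alpha+\kappa)}\cdot 2^{2l\alpha}|\pdt{\theta_0}{\psi_{lr}}_{L^2}|^2\\
&\le 2^{-2J(\alpha+\kappa)}\sum_{l,r}2^{2l\alpha}|\pdt{\theta_0}{\psi_{lr}}_{L^2}|^2
\;\lesssim\;2^{-2J(\alpha+\kappa)}\norm{\theta_0}_{H^{\alpha}}^{2}\lesssim B^{2}\,2^{-2J(\alpha+\kappa)}.
\end{align*}
Combining and inserting $2^{J}\simeq N^{1/(2\alpha+2\kappa+d)}$ yields
\[
\norm{\theta_0-P_J(\theta_0)}_{(H^{\kappa}(\mathcal{Z}))^*}\lesssim 2^{-J(\alpha+\kappa)}\simeq N^{-\frac{\alpha+\kappa}{2\alpha+2\kappa+d}}=\varepsilon_N,
\]
which is the desired bound.

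The only point that is not purely routine is the justification of the two reductions in the second step: that the dual $(H^{\kappa}(\mathcal{Z}))^*$-norm of $\chi g_J$ is controlled by an $H^{-\kappa}(\mathbb{R}^d)$-norm of $g_J$ after cutoff. This is standard once one observes that $\chi\in C_c^\infty(\mathcal{Z})$ allows zero-extension without loss, and that the multiplier norm of $\chi$ on $H^{-\kappa}(\mathbb{R}^d)$ equals (by duality) its multiplier norm on $H^{\kappa}(\mathbb{R}^d)$, which is finite by \eqref{Sobolevinter2}. Once this is in place the remainder is the tail computation above.
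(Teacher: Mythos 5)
Your proof is correct and follows the same route as the paper: the paper simply cites the bound $\norm{\theta_0 - P_J(\theta_0)}_{(H^{\kappa}(\mathcal{Z}))^*}\lesssim 2^{-J(\alpha+\kappa)}\norm{\theta_0}_{H^{\alpha}(\mathcal{Z})}$ as estimate (B6) of Giordano--Nickl and then inserts $2^J\simeq N^{1/(2\alpha+2\kappa+d)}$, whereas you re-derive that cited inequality explicitly via the wavelet tail computation and the cutoff-duality reduction (the same reduction the paper itself uses inside the proof of Theorem \ref{boundgamfinite}). The self-contained derivation is a valid substitute for the citation; no gaps.
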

\begin{proof}
This lemma is a simple corollary of (B6) in \cite{IntroNonLinear_giordano2020consistency}, which gives
\begin{equation*}
\norm{\theta_0 - P_J(\theta_0)}_{(H^{{\kappa}}(\mathcal{Z}))^*}\lesssim 2^{-J(\alpha+{\kappa})}\norm{\theta_0}_{H^{\alpha}(\mathcal{Z})}.
\end{equation*}
Because $2^{J} \simeq N^{\frac{1}{2\alpha +2{\kappa} +d}}$, we deduce 
\begin{equation*}
\norm{\theta_0 - P_J(\theta_0)}_{(H^{\kappa}(\mathcal{Z}))^*}\lesssim \varepsilon_N.
\end{equation*}
\end{proof}
\begin{proof}[Proof of Theorem \ref{mainthmsv}]
Following the proof of Theorem \ref{mainthm}, we will now verify conditions (C1) through (C3) as outlined in \cite[Theorem 2.1]{zhang2020convergence}. We define the sets \( H_N(\varepsilon) \) and \( \Theta_N(\varepsilon) \) as stated in Theorem \ref{mainthm}. Since the finite-dimensional Gaussian prior \( \Pi_N \) is supported within the reproducing kernel Hilbert space (RKHS) \( \mathcal{H}_J \), the proof for conditions (C1) and (C2) in Theorem \ref{mainthm} also holds true for the set \( \Theta_N(\varepsilon) \), with the set being replaced by \( \mathcal{H}_J \cap \Theta_N(\varepsilon) \). However, the proof for condition (C3) requires a slight modification because \( \theta_0 \) is no longer within the RKHS \( \mathcal{H}_J \).

Using Lemma \ref{0con} and Lipschitz condition (\ref{lip}), we deduce that
\begin{equation*}
    \norm{\mathcal{G}(\theta_0)-\mathcal{G}(P_J(\theta_0))}_{L^{2}_{\lambda}(\mathcal{X},\mathbb{R})} \leq C_0 \varepsilon_N
\end{equation*}
for some constant $C_0$. Thus, using Proposition \ref{le2.1}, we have
\begin{align*}
            &\Pi_N(\theta : D_2(P^{(N)}_{\theta_0}\Vert P^{(N)}_{\theta})\leq C_3N\varepsilon_N^2)\\
            &\geq \Pi_N(\theta : \norm{\mathcal{G}(\theta)-\mathcal{G}(\theta_0)}_{L^{2}_{\lambda}}\leq C_3\exp\{-2U_{\mathcal{G}}^2(\Bar{M})\}\varepsilon_N, \norm{\theta-\theta_0}_{\mathcal{R}}\leq M')\\
            &\geq \Pi_N(\theta :\norm{\mathcal{G}(\theta)-\mathcal{G}(P_J(\theta_0))}_{L^2_{\lambda}}\leq (C_3\exp\{-2U_{\mathcal{G}}^2(\Bar{M})\}-C_0)\varepsilon_N, \norm{\theta-\theta_0}_{\mathcal{R}}\leq M')
\end{align*}
where the lower bound of the last probability is given in the proof of Theorem \ref{mainthm}.
\end{proof}
\begin{proof}[Proof of Theorem \ref{boundgamfinite}]
The notations $\Tilde{\mathcal{Q}}_G^J, \Psi_J, {\mathcal{Q}}_G^J$ are defined identically as in (\ref{OGMF}-\ref{GMF}). We recall that 
\[R(Q) = \frac{1}{N}\Big(D(Q\Vert \Pi_N) + Q[D(P^{(N)}_{\theta_0}\Vert P^{(N)}_{\theta})]\Big) = \frac{1}{N}D(Q\Vert \Pi_N)+ Q[D(P_{\theta_0}\Vert P_{\theta})].\]
It is necessary to bound $\frac{1}{N}D(Q\Vert \Pi_N)$ and $Q[D(P_{\theta_0}\Vert P_{\theta})]$ respectively. 
We define $\Tilde{Q}_N$ to be 
\[\mathop{\bigotimes}_{l=-1}^J\mathop{\bigotimes}_{r\in R_l}N(\theta_{0,lr},\sigma^2), \quad \sigma = 2^{-J(\alpha+{\kappa}+d/2)},\quad \theta_{0,lr} = \pdt{\theta_0}{\psi_{lr}}_{L^2(\mathcal{Z})}.\]
The probability measure $Q_N$ is defined as the push-forward of $\Tilde{Q}_N$ via $\Psi_J$, that is, \[Q_N = \Tilde{Q}_N \circ \Psi_J^{-1}.\]
It is easy to see $Q_N\in {\mathcal{Q}}_G^J$. We also see that the prior $\Pi_N$ can be represented as 
\[ \Pi_N = \Tilde{\Pi}_N \circ \Psi_J^{-1},\]
where $$\Tilde{\Pi}_N = \mathop{\bigotimes}_{l=-1}^J\mathop{\bigotimes}_{r\in R_l}N(0,(2^{l\alpha}\sqrt{N}\varepsilon_N)^{-2}).$$
We first consider the upper bound of $\frac{1}{N}D(Q_N\Vert \Pi_N)$. Because the KL divergence decreases under push-forward \cite[section 10]{varadhan1984large}, we have 
\begin{equation*}
    \begin{aligned}
        D(Q_N\Vert \Pi_N) = &D(\Tilde{Q}_N \circ \Psi_J^{-1}\Vert \Tilde{\Pi}_N \circ \Psi_J^{-1}) \\
        \leq & D(\Tilde{Q}_N\Vert \Tilde{\Pi}_N) \\
        = &\sum_{l=-1}^J\sum_{r\in R_l} D(N(\theta_{0,lr},\sigma^2)\Vert N(0,(2^{l\alpha}\sqrt{N}\varepsilon_N)^{-2})).
    \end{aligned}
\end{equation*}
Then, it is sufficient to consider the upper bound of $D(N(\theta_{0,lr},\sigma^2)\Vert N(0,(2^{l\alpha}\sqrt{N}\varepsilon_N)^{-2}))$:
\begin{equation*}
    \begin{aligned}
        &D(N(\theta_{0,lr},\sigma^2)\Vert N(0,(2^{l\alpha}\sqrt{N}\varepsilon_N)^{-2}))\\
        =&\int\frac{1}{2}(\log \frac{1}{2\pi\sigma^2}-\frac{(x-\theta_{0,lr})^2}{\sigma^2})\frac{1}{\sqrt{2\pi}\sigma}\mexp{-\frac{(x-\theta_{0,lr})^2}{2\sigma^2}}dx\\
        &-\int\frac{1}{2}(\log \frac{2^{2l\alpha}N\varepsilon_N^2}{2\pi}-2^{2l\alpha}N\varepsilon_N^2x^2)\frac{1}{\sqrt{2\pi}\sigma}\mexp{-\frac{(x-\theta_{0,lr})^2}{2\sigma^2}}dx\\
        =&\frac{1}{2}(\log \frac{1}{2\pi\sigma^2} - \log e - \log \frac{2^{2l\alpha}N\varepsilon_N^2}{2\pi}) + \frac{1}{2}2^{2l\alpha}N\varepsilon_N^2(\theta_{0,lr}^2 + \sigma^2)\\
        \leq&\frac{1}{2}\log\frac{2^{2J(\alpha+{\kappa}+d/2)}}{e2^{2l\alpha}N\varepsilon_N^2} + \frac{1}{2}2^{2l\alpha}N\varepsilon_N^2\theta_{0,lr}^2 + \frac{1}{2}2^{-2J({\kappa}+d/2)}N\varepsilon_N^2.
    \end{aligned}
\end{equation*}
By $2^{Jd} \simeq N\varepsilon_N^2$ and $d_J \leq c_0 2^{Jd}$, we deduce that
\begin{equation*}
    \sum_{l=-1}^J\sum_{r\in R_l}\frac{1}{2}\log\frac{2^{2J(\alpha+{\kappa}+d/2)}}{e2^{2l\alpha}N\varepsilon_N^2} \lesssim 2^{Jd}\log 2^{2J(\alpha+{\kappa})}\lesssim N\varepsilon_N^2 \log N,
\end{equation*}
and
\begin{equation*}
    \sum_{l=-1}^J\sum_{r\in R_l}\frac{1}{2}2^{-2J({\kappa}+d/2)}N\varepsilon_N^2\lesssim 2^{-2J{\kappa}}N\varepsilon_N^2\lesssim N\varepsilon_N^2.
\end{equation*}
For the last term, we have
\begin{equation*}
    \sum_{l=-1}^J\sum_{r\in R_l}\frac{1}{2}2^{2l\alpha}N\varepsilon_N^2\theta_{0,lr}^2 \lesssim \norm{\theta_0}_{H^{\alpha}(\mathcal{Z})}^2N\varepsilon_N^2\lesssim N\varepsilon_N^2.
\end{equation*}
In conclusion, 
\begin{equation*}
    \frac{1}{N}D(Q_N\Vert \Pi_N) \leq \frac{1}{N}\sum_{l=-1}^J\sum_{r\in R_l} D(N(\theta_{0,lr},\sigma^2)\Vert N(0,(2^{l\alpha}\sqrt{N}\varepsilon_N)^{-2}))\lesssim \varepsilon_N^2\log N.
\end{equation*}

Next, we give an upper bound of $Q_N[D(P_{\theta_0}\Vert P_{\theta})]$. We assume independent random variables $\theta_{lr} = \theta_{0,lr} + \sigma Z_{lr}$ where $Z_{lr} \sim N(0,1)$ for $r \in R_l, l \in \{-1,0,\dots,J\}$. Using Proposition \ref{le2.1} and the condition (\ref{lip}), we have 
\begin{equation*}
    \begin{aligned}
        Q_ND(P_{\theta_0}\Vert P_{\theta}) = &\frac{1}{2}Q_N\norm{\mathcal{G}(\theta)-\mathcal{G}(\theta_0)}^2_{L^2_{\lambda}(\mathcal{X},V)} \\
        \lesssim & Q_N[(1+\norm{\theta}_{H^{\alpha}(\mathcal{Z})}^{2l})\norm{\theta-\theta_0}^2_{(H^{{\kappa}}(\mathcal{Z}))^*}]
        \\
        \lesssim & Q_N [e^{\norm{\theta}^2_{H^{\alpha}}}\norm{\theta-\theta_0}^2_{(H^{{\kappa}}(\mathcal{Z}))^*}].
    \end{aligned}     
\end{equation*}
Because $Q_N$ is defined by the law of
\[ \sum_{l=-1}^J\sum_{r\in R_l}\theta_{lr}\chi\psi_{lr}, \quad \theta_{lr}\sim N(\theta_{0,lr},\sigma^2),\]
we further have
\begin{equation}\label{thm3.8_1}
    \begin{aligned}
        Q_ND(P_{\theta_0}\Vert P_{\theta})\lesssim &E \mexp{\norm{\sum_{l=-1}^J\sum_{r\in R_l}\theta_{lr}\chi\psi_{lr}}^2_{H^{\alpha}(\mathcal{Z})}} \\ & \cdot\norm{\sum_{l=-1}^{\infty}\sum_{r\in R_l}(\theta_{lr}-\theta_{0,lr})\chi\psi_{lr}}^2_{(H^{{\kappa}}(\mathcal{Z}))^*},
    \end{aligned}   
\end{equation}
where $\theta_{lr} = 0$ for all $l>J$.
Using \eqref{Sobolevinter2} of the main text and wavelet characterization of Sobolev norms (see \cite[section 4]{gin2015mathematical}), we deduce that
\begin{equation*}
    \begin{aligned}
        \norm{\sum_{l=-1}^J\sum_{r\in R_l}\theta_{lr}\chi\psi_{lr}}_{H^{\alpha}(\mathcal{Z})} =& \norm{\chi\sum_{l=-1}^J\sum_{r\in R_l}\theta_{lr}\psi_{lr}}_{H^{\alpha}(\mathbb{R}^d)}\\
        \lesssim & \norm{\sum_{l=-1}^J\sum_{r\in R_l}\theta_{lr}\psi_{lr}}_{H^{\alpha}(\mathbb{R}^d)} =  \sqrt{\sum_{l=-1}^J\sum_{r\in R_l}2^{2l\alpha}\theta_{lr}^2},
    \end{aligned}
\end{equation*}
and
     \begin{align*}
     \norm{\sum_{l=-1}^{\infty}\sum_{r\in R_l}(\theta_{lr}-\theta_{0,lr})\chi\psi_{lr}}_{(H^{{\kappa}}(\mathcal{Z}))^*} = &\sup_{\norm{\phi}_{H^{\kappa}(\mathcal{Z})}\leq 1}\pdt{\sum_{l=-1}^{\infty}\sum_{r\in R_l}(\theta_{lr}-\theta_{0,lr})\psi_{lr}}{\chi\phi}_{L^2(\mathcal{Z})} \\
     = &\sup_{\norm{\phi}_{H^{\kappa}(\mathcal{Z})}\leq 1}\pdt{\sum_{l=-1}^{\infty}\sum_{r\in R_l}(\theta_{lr}-\theta_{0,lr})\psi_{lr}}{\chi\phi}_{L^2(\mathbb{R}^d)} \\
     \lesssim &\sup_{\norm{\phi}_{H_c^{\kappa}(\mathcal{Z})}\leq 1}\pdt{\sum_{l=-1}^{\infty}\sum_{r\in R_l}(\theta_{lr}-\theta_{0,lr})\psi_{lr}}{\phi}_{L^2(\mathbb{R}^d)}\\
     \lesssim & \norm{\sum_{l=-1}^{\infty}\sum_{r\in R_l}(\theta_{lr}-\theta_{0,lr})\psi_{lr}}_{H^{{-\kappa}}(\mathbb{R}^d)} \\
     \lesssim & \sqrt{\sum_{l=-1}^{\infty}\sum_{r\in R_l}2^{-2l{\kappa}}(\theta_{lr}-\theta_{0,lr})^2}.
     \end{align*}
     Applying the last two inequalities to \eqref{thm3.8_1}, we obtain
    \begin{align*}
        Q_ND(P_{\theta_0}\Vert P_{\theta})\lesssim& E\mexp{\sum_{l=-1}^J\sum_{r\in R_l}2^{2l\alpha}\theta_{lr}^2}\sum_{l=-1}^J\sum_{r\in R_l}2^{-2l{\kappa}}(\theta_{lr}-\theta_{0,lr})^2 \\&+E\mexp{\sum_{l=-1}^J\sum_{r\in R_l}2^{2l\alpha}\theta_{lr}^2}\sum_{l=J+1}^{\infty}\sum_{r\in R_l}2^{-2l{\kappa}}\theta_{0,lr}^2\\
        \lesssim& E\mexp{\sum_{l=-1}^J\sum_{r\in R_l}2^{2l\alpha}\theta_{lr}^2}\sum_{l=-1}^J\sum_{r\in R_l}\sigma^2 Z_{lr}^2
        \\&+E\mexp{\sum_{l=-1}^J\sum_{r\in R_l}2^{2l\alpha}\theta_{lr}^2}2^{-2J({\kappa}+\alpha)}\norm{\theta_0}_{H^{\alpha}}^2\\
        \lesssim& e^{2\norm{\theta_0}_{H^{\alpha}}^2}\cdot E\mexp{2\sum_{l=-1}^J\sum_{r\in R_l}2^{2l\alpha}\sigma^2 Z_{lr}^2}\sum_{l=-1}^J\sum_{r\in R_l}\sigma^2 Z_{lr}^2\\ &+ e^{2\norm{\theta_0}_{H^{\alpha}}^2}\cdot E\mexp{2\sum_{l=-1}^J\sum_{r\in R_l}2^{2l\alpha}\sigma^2 Z_{lr}^2}2^{-2J({\kappa}+\alpha)}\norm{\theta_0}_{H^{\alpha}}^2.
    \end{align*}
Provided that
\begin{equation}\label{Eb1}
    E\mexp{2\sum_{l=-1}^J\sum_{r\in R_l}2^{2l\alpha}\sigma^2 Z_{lr}^2} = O(1),
\end{equation}
and
\begin{equation}\label{Eb2}
    EZ_{\Tilde{l}\Tilde{r}}^2\mexp{2\sum_{l=-1}^J\sum_{r\in R_l}2^{2l\alpha}\sigma^2 Z_{lr}^2} = O(1),\quad \mbox{for} \ \Tilde{r}\in R_{\Tilde{l}},\ \Tilde{l} \in \{-1,0,\dots,J\},
\end{equation}
then, since $2^{-J({\kappa}+\alpha)} \simeq \varepsilon_N$ and $d_J \leq c_0 2^{Jd}$, we have
\begin{equation*}
    \begin{aligned}
        Q_ND(P_{\theta_0}\Vert P_{\theta})&\lesssim E\mexp{2\sum_{l=-1}^J\sum_{r\in R_l}2^{2l\alpha}\sigma^2 Z_{lr}^2}\left(\sum_{l=-1}^J\sum_{r\in R_l}\sigma^2 Z_{lr}^2 + 2^{-2J({\kappa}+\alpha)}\right)\\
        &\lesssim \sigma^2d_J + 2^{-2J({\kappa}+\alpha)}\\
        &\lesssim \varepsilon_N^2.
    \end{aligned}
\end{equation*}

Next, we verify (\ref{Eb1}) and (\ref{Eb2}) to complete the proof. For (\ref{Eb1}),
\begin{equation*}
    \begin{aligned}
        E\mexp{2\sum_{l=-1}^J\sum_{r\in R_l}2^{2l\alpha}\sigma^2 Z_{lr}^2} &\leq E\mexp{2\cdot2^{-2J({\kappa} + d/2)}\sum_{l=-1}^J\sum_{r\in R_l} Z_{lr}^2}\\
        & = E\mexp{2\cdot2^{-2J({\kappa} + d/2)}\chi_{d_J}^2},
    \end{aligned}
\end{equation*}
where $\chi_{d_J}^2$ denotes the random variable distributed according to the chi-squared distribution with $d_J$ be the degrees of freedom.
Further, using the moment-generating function of the chi-square distribution, we deduce that
\begin{equation*}
    \begin{aligned}
       E\mexp{2\sum_{l=-1}^J\sum_{r\in R_l}2^{2l\alpha}\sigma^2 Z_{lr}^2} &\leq E\mexp{2^{-2J({\kappa} + d/2)}\chi_{d_J}^2}\\
       & = \left(1- 4\cdot2^{-2J({\kappa} + d/2)}\right)^{-\frac{d_J}{2}}\\
       & \leq \left(1- 4\cdot2^{-Jd}\right)^{-\frac{c_0}{2}\cdot 2^{Jd}} = O(1),
    \end{aligned}
\end{equation*}
which implies (\ref{Eb1}). For (\ref{Eb2}),  
we split this expectation into to two parts:
\begin{equation*}
    EZ_{\Tilde{l}\Tilde{r}}^2\mexp{2\cdot 2^{2\Tilde{l}\alpha}\sigma^2 Z_{\Tilde{l}\Tilde{r}}^2}\cdot E\mexp{2\sum_{(l,r)\neq (\Tilde{l},\Tilde{r})}2^{2l\alpha}\sigma^2 Z_{lr}^2}.
\end{equation*}
The second part can be bounded by a constant, as indicated by equation (\ref{Eb1}). For the first part, we have
\begin{equation*}
    \begin{aligned}
        EZ_{\Tilde{l}\Tilde{r}}^2\mexp{2\cdot 2^{2\Tilde{l}\alpha}\sigma^2 Z_{\Tilde{l}\Tilde{r}}^2} &\leq  EZ_{\Tilde{l}\Tilde{r}}^2\mexp{2\cdot 2^{-2J({\kappa}+d/2)} Z_{\Tilde{l}\Tilde{r}}^2}\\
        &= \int \frac{1}{\sqrt{2\pi}}x^2 e^{2\cdot 2^{-2J({\kappa}+d/2)}x^2} e^{-x^2/2} dx\\
        &=\int \frac{1}{\sqrt{2\pi}}x^2e^{-\frac{1}{2}(1-4\cdot 2^{-2J({\kappa}+d/2)})x^2}dx\\
        &=(1-4\cdot 2^{-2J({\kappa}+d/2)})^{-\frac{3}{2}} = O(1).
    \end{aligned}  
\end{equation*}
Therefore, formula (\ref{Eb2}) is verified.
\end{proof}

\subsection{Contraction rates for severely ill-posed problem}
\begin{lemma}\label{embeddinglemma}
For an integer $\alpha \geq 0$ , we have $\mathcal{H}_J^{\mathcal{Z}}$ continuously embedding in $C^{\alpha}(\mathcal{Z})$, that is
\[\norm{f}_{C^{\alpha}} \lesssim \norm{f}_{\mathcal{H}_J^{\mathcal{Z}}}, \quad \forall f \in \mathcal{H}_J^{\mathcal{Z}}.\]
\end{lemma}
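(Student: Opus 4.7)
The plan is to bound $\norm{D^i f}_\infty \lesssim \norm{f}_{\mathcal{H}_J^{\mathcal{Z}}}$ for every multi-index with $|i| \leq \alpha$ and then sum over $i$. Expanding $f = \sum_{l=-1}^{J}\sum_{r=1}^{N_l} 2^{-l(\alpha+d/2)} \bar{l}^{-2} f_{lr} \psi_{lr}^{\mathcal{Z}}$, differentiating termwise, and applying Cauchy--Schwarz in the $(l,r)$ sum give
\begin{equation*}
|D^i f(x)|^2 \leq \norm{f}_{\mathcal{H}_J^{\mathcal{Z}}}^2 \sum_{l=-1}^{J} \bar{l}^{-4}\, 2^{-2l(\alpha + d/2)} \sum_{r=1}^{N_l} |D^i \psi_{lr}^{\mathcal{Z}}(x)|^2,
\end{equation*}
so it suffices to show that the right-hand side is uniformly bounded in $x \in \mathcal{Z}$ and $J$.

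First I would establish two wavelet estimates, uniform in $x$: (a) the pointwise bound $|D^i \psi_{lr}^{\mathcal{Z}}(x)| \lesssim 2^{l(d/2+|i|)}$, and (b) the $\ell^1$ aggregate $\sum_r |D^i \psi_{lr}^{\mathcal{Z}}(x)| \lesssim 2^{l(d/2+|i|)}$. For interior Daubechies wavelets $\psi_{lr}(\cdot) = 2^{ld/2}\psi(2^l \cdot - m)$, (a) is immediate, and (b) follows from the third property in \eqref{baseproperty} via the change of variable $z = 2^l x$. Both estimates transfer to the boundary-corrected wavelets $\psi_{lr}^{bc}$, since each is a linear combination of at most $2K+1$ interior wavelets at the same scale with coefficients controlled by $D$ (second property in \eqref{baseproperty}). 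Combining (a) and (b) through the elementary inequality $\sum_r a_r^2 \leq (\sup_r a_r)(\sum_r a_r)$ yields $\sum_r |D^i \psi_{lr}^{\mathcal{Z}}(x)|^2 \lesssim 2^{l(d+2|i|)}$ uniformly in $x$.

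Substituting into the Cauchy--Schwarz estimate produces
\begin{equation*}
|D^i f(x)|^2 \lesssim \norm{f}_{\mathcal{H}_J^{\mathcal{Z}}}^2 \sum_{l=-1}^{J} \bar{l}^{-4}\, 2^{-2l(\alpha - |i|)}.
\end{equation*}
Because $|i| \leq \alpha$, the exponent is non-positive for $l \geq 0$, so the $l \geq 1$ tail is controlled by $\sum_{l \geq 1} \bar{l}^{-4} < \infty$ and the $l \in \{-1,0\}$ contributions are finite. Taking square roots, suprema over $x \in \mathcal{Z}$, and summing over $|i| \leq \alpha$ then delivers the desired embedding.

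I expect the main obstacle to lie in verifying the uniform $\ell^1$ estimate (b) near $\partial \mathcal{Z}$, where the boundary-corrected wavelets live: one must check that the coefficient bound on the $d_l(m,m')$ in \eqref{baseproperty} together with the finite combination length $2K+1$ is enough to preserve the interior bound with $J$- and $l$-independent constants. Once this is granted, the remainder is a convergent series whose convergence in the borderline case $|i| = \alpha$ relies crucially on the logarithmic weight $\bar{l}^{-2}$ built into the RKHS norm; without it the $|i|=\alpha$ case would give $\sum_l 1 \asymp J \to \infty$, showing the embedding is sharp at this scale.
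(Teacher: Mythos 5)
Your proof is correct and takes essentially the same route as the paper's: the paper bounds the coefficients via $\sup_{l,r}|f_{lr}|\le\norm{f}_{\mathcal{H}_J^{\mathcal{Z}}}$ and uses the level-wise $\ell^1$ wavelet bound $\sum_r|D^i\psi_{lr}^{\mathcal{Z}}(x)|\lesssim 2^{l(d/2+|i|)}$ coming from \eqref{baseproperty}, whereas you use Cauchy--Schwarz and the $\ell^2$ analogue, but both reduce to the same convergent series in $l$, with the weight $\bar{l}^{-2}$ handling the borderline case $|i|=\alpha$ exactly as you observe.
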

\begin{proof}
For any $f \in \mathcal{H}_J^{\mathcal{Z}}$ and $|i|\leq \alpha$, we deduce that 
\begin{equation*}
\begin{aligned}
    &|D^if| = \abs{\sum_{l=-1}^J\sum_{r=1}^{N_l}2^{-l(\alpha+d/2)}\bar{l}^{-2}f_{lr}D^i\psi_{lr}^{\mathcal{Z}}} \\
    \leq &c\norm{f}_{\mathcal{H}_J^{\mathcal{Z}}}\sum_{l=-1}^J\bar{l}^{-2}2^{-l(\alpha - |i|)}\sum_{r}|D^i\psi_{0r}|\lesssim \norm{f}_{\mathcal{H}_J^{\mathcal{Z}}}.
\end{aligned}
\end{equation*}
Thus,  \[\norm{f}_{C^{\Tilde{\alpha}}} \lesssim \norm{f}_{\mathcal{H}_J^{\mathcal{Z}}},\]
which implies the embedding.
\end{proof}
\begin{proof}[Proof of Theorem \ref{finalthmsvZ}]
We first define the forward map \( \mathcal{G}^* \) as
\[
\mathcal{G}^* : L^2(\mathcal{Z}) \rightarrow L^2_{\lambda}(\mathcal{X}, V), \quad \theta \mapsto G(\Phi \circ \Psi(\theta)).
\]
Denote by \( \Pi_N(\cdot|D_N) \) the posterior from equation (\ref{Post}) arising from data in the model (\ref{model}) with the forward map \( \mathcal{G}^* \) and prior \( \Pi_N \) as in (\ref{rescaledprior}), where \( \theta' \sim \Pi'_J \) and \( \Pi'_J \) is as defined in (\ref{priorsvZ}). The RKHS of \( \Pi'_J \) is continuously embedded in \( C^{\alpha}(\mathcal{Z}) \).
From the settings in this theorem, we observe that the posterior \( \widetilde{\Pi}_N(\cdot|D_N) \) is the push-forward of \( \Pi_N(\cdot|D_N) \) via \( \Psi \), that is,
\[
\widetilde{\Pi}_N(\cdot|D_N) = \Pi_N(\cdot|D_N) \circ \Psi^{-1}.
\]
Similarly, the prior $\Tilde{\Pi}_N$ can also be presented as
\[\Tilde{\Pi}_N = {\Pi}_N \circ \Psi^{-1}.\]
Consider $\theta_0^*$ such that $\theta_0 = \Psi(\theta_0^*)$, which is defined by
\[\theta_0^* = \sum_{l=-1}^{\infty}\sum_{r=1}^{N_l}2^{-l(\alpha +d/2)}\bar{l}^{-2}\theta_{0,lr}^*\psi_{lr}^{\mathcal{Z}}\]
for $\theta_{0,lr}^* = h^{-1}(2^{l(\alpha+d/2)}\bar{l}^{\, 2}\pdt{\theta_0}{\psi_{lr}^{\mathcal{Z}}}_{L^2(\mathcal{Z})})$.
We note that 
\[ 
\begin{aligned}
    2^{l(\alpha+d/2)}\bar{l}^{\, 2}\abs{\pdt{\theta_0}{\psi_{lr}^{\mathcal{Z}}}_{L^2(\mathcal{Z})}} \leq &2^{-l(\beta-\alpha)}\bar{l}^{\, 2}\sup_{l,r} 2^{l(\beta+d/2)}\abs{\pdt{\theta_0}{\psi_{lr}^{\mathcal{Z}}}_{L^2(\mathcal{Z})}} \\
    \leq &2^{-l(\beta-\alpha)}\bar{l}^{\, 2}B_0 \leq \bar{B}_0
\end{aligned}
\]
for some $\bar{B}_0<B$.
By the properties of $\Psi$, we know $(h^{-1})'$ is bounded on $[-\bar{B}_0,\bar{B}_0]$.
Thus, we have 
\[|\theta_{0,lr}^*| = \abs{h^{-1}(2^{l(\alpha+d/2)}\bar{l}^{\, 2}\pdt{\theta_0}{\psi_{lr}^{\mathcal{Z}}}_{L^2(\mathcal{Z})})}\lesssim2^{-l(\beta-\alpha)}\bar{l}^{\, 2}B_0.\]
For any $\abs{i} \leq \alpha$, we further have
\[\abs{\sum_{l=-1}^{\infty}\sum_{r=1}^{N_l}2^{-l(\alpha +d/2)}\bar{l}^{-2}\theta_{0,lr}^*D^i\psi_{lr}^{\mathcal{Z}}}
    \leq cB_0\sum_{l=-1}^{\infty}2^{-l(\beta - \abs{i})}\sum_{r}|D^i\psi_{0r}|\leq C(B_0),\]
which implies $\theta_0^*\in C^{\alpha}(\mathcal{Z})$.
In conclusion, the setting of this theorem is within the framework of Theorem \ref{mainthm}, which includes a rescaled Gaussian prior \( \Pi_N \), a forward map \( \mathcal{G}^* \), and a ground truth \( \theta_0^* \in C^{\alpha}(\mathcal{Z}) \).

The proof of this theorem is similar to that of Theorem \ref{finalthmsv}. We first verify Conditions \ref{condreg} and \ref{condstab} for the forward map $\mathcal{G}^{*}$. By Condition \ref{loosecondreg}, we have
\begin{equation*}
       \mathop{\sup}_{\Psi(\theta) \in \Theta \cap B_{\mathcal{R}}(M)}\mathop{\sup}_{x \in \mathcal{X}}\abs{\mathcal{G}(\Psi(\theta))(x)}_V\leq U_{\mathcal{G}}(M),
\end{equation*}
\begin{equation*}
       \norm{\mathcal{G}(\Psi(\theta_1))-\mathcal{G}(\Psi(\theta_2))}_{L^{2}_{\lambda}(\mathcal{X},V)} \leq L_{\mathcal{G}}(M)\norm{\Psi(\theta_1)-\Psi(\theta_2)} _{L^{2}(\mathcal{Z})}, \quad \theta_1,\theta_2 \in L^2(\mathcal{Z}),
\end{equation*}
for any $M>1$. By the properties of $\Psi$, we deduce that
\begin{equation*}
    \begin{aligned}
        \norm{\Psi(\theta_1)-\Psi(\theta_2)} _{L^{2}(\mathcal{Z})}^2
        =&\sum_{l=-1}^{\infty}\sum_{r=1}^{N_l}2^{-2l(\alpha+d/2)}\bar{l}^{-4}(h(\theta_{1,lr})-h(\theta_{2,lr}))^2\\
        \lesssim & \sum_{l=-1}^{\infty}\sum_{r=1}^{N_l}2^{-2l(\alpha+d/2)}\bar{l}^{-4}(\theta_{1,lr}-\theta_{2,lr})^2\\
        \lesssim & \norm{\theta_1-\theta_2} _{L^{2}(\mathcal{Z})}^2.
    \end{aligned}
\end{equation*}
Because $\Psi(\theta)$ belongs to a ball in $C^{\gamma}$ with radius $r(B)$ for any $\theta \in L^2(\mathcal{Z})$, we have
\begin{equation*}
       \mathop{\sup}_{\theta \in L^2(\mathcal{Z})}\mathop{\sup}_{x \in \mathcal{X}}\abs{\mathcal{G}^*(\theta)(x)}_V\leq U^*_{\mathcal{G}},
\end{equation*}
\begin{equation}\label{thm5.3_lip}
       \norm{\mathcal{G}^*(\theta_1)-\mathcal{G}^*(\theta_2)}_{L^{2}_{\lambda}(\mathcal{X},V)} \leq L^*_{\mathcal{G}}\norm{\theta_1-\theta_2} _{L^{2}(\mathcal{Z})}, \quad \theta_1,\theta_2 \in L^2(\mathcal{Z}),
\end{equation}
for $U^*_{\mathcal{G}}, L^*_{\mathcal{G}}$ are finite constants depend on $B$. Thus we have verified Condition \ref{condreg} for $\mathcal{G}^{*}$. 
For Condition \ref{condstab}, using Condition \ref{loosecondstab} for $\mathcal{G}$, we have
\begin{equation*}
    F(\Vert f_{\Psi(\theta)} - f_{\Psi(\theta_0^*)}\Vert ) \leq T_{\mathcal{G}}\Vert \mathcal{G}(\Psi(\theta))-\mathcal{G}(\Psi(\theta_0^*))\Vert _{L^{2}_{\lambda}(\mathcal{X},V)}, \forall \Psi(\theta) \in B_{\mathcal{R}}(M).
\end{equation*}
Similar to the proof for Condition \ref{condreg}, we have
\begin{equation*}
    F(\Vert f_{\Psi(\theta)} - f_{\Psi(\theta_0^*)}\Vert ) \leq T^*_{\mathcal{G}}\Vert \mathcal{G}^*(\theta)-\mathcal{G}(\theta_0^*)\Vert _{L^{2}_{\lambda}(\mathcal{X},V)}, \forall \theta \in L^2(\mathcal{Z}),
\end{equation*}
where the function \( F \) and the finite constant \( T^*_{\mathcal{G}} \) may both depend on \( B \).
\par
Then by the proof of Theorem \ref{mainthmsv}, for any $\varepsilon > \varepsilon_N$, $\Theta_N(\varepsilon)$, and the test $\Psi_N$ defined in the proof Theorem \ref{mainthm}, we have
\begin{equation*}
    P^{(N)}_{\Psi(\theta_0^*)}+ \sup_{\substack
                 {\theta^* \in H_J^{\mathcal{Z}}\cap\Theta_N(\varepsilon)  \\L(P^{(N)}_{\Psi(\theta^*)},P^{(N)}_{\Psi(\theta_0^*)}) \geq C_1 N \varepsilon^2}} P^{(N)}_{\Psi(\theta)}(1-\Psi_N) \leq \exp\{-CN\varepsilon^2\} ,
\end{equation*}
\begin{equation*}
\Pi_N\bbra{(H_J^{\mathcal{Z}}\cap\Theta_N(\varepsilon))^c}=\Pi_N\bbra{(\Theta_N(\varepsilon))^c}\leq\exp(-CN\varepsilon^2),
\end{equation*}
and
\begin{equation*}
    \Pi_N\left(\theta^* : D_2(P^{(N)}_{\Psi(\theta_0^*)}\Vert P^{(N)}_{\Psi(\theta^*)})\leq C_3N\varepsilon_N^2\right)\geq\exp(-C_2N\varepsilon_N^2),
\end{equation*}
where $\frac{1}{N}L(P^{(N)}_{\Psi(\theta^*)},P^{(N)}_{\Psi(\theta_0^*)}) := F(\Vert f_{\Psi(\theta^*)} - f_{\Psi(\theta_0^*)}\Vert )^{2}$ or $\Vert \mathcal{G}(\Psi(\theta))-\mathcal{G}(\Psi(\theta_0^*))\Vert _{L^{2}_{\lambda}(\mathcal{X},V)}^{2}$.
Therefore, with $\Tilde{\Theta}_N:= \set{\theta = \Psi(\theta^*): \theta^* \in H_J^{\mathcal{Z}}\cap\Theta_N(\varepsilon)}$ and $\Tilde{\Pi}_N = {\Pi}_N \circ \Psi^{-1}$, we can transform the last three inequalities to $\Tilde{\Pi}_N$ as follows:
\begin{equation*}
    P^{(N)}_{\theta_0}+ \mathop{\sup}_{\begin{array}{c}
                 \theta^* \in H_J^{\mathcal{Z}}\cap\Theta_N(\varepsilon)  \\
                 L(P^{(N)}_{\theta},P^{(N)}_{\theta_0}) \geq C_1 N \varepsilon^2
            \end{array}}P^{(N)}_{\theta}(1-\Psi_N) \leq \exp\{-CN\varepsilon^2\} ,
\end{equation*}
\begin{equation*}
\Tilde{\Pi}_N\bbra{\Tilde{\Theta}_N^c}=\Pi_N\bbra{(H_J^{\mathcal{Z}}\cap\Theta_N(\varepsilon))^c}\leq\exp(-CN\varepsilon^2),
\end{equation*}
and
\begin{equation*}
    \Tilde{\Pi}_N\left(\theta : D_2(P^{(N)}_{\theta_0}\Vert P^{(N)}_{\theta})\leq C_3N\varepsilon_N^2\right)\geq\exp(-C_2N\varepsilon_N^2).
\end{equation*}
Then, similar as in the proof of Theorem \ref{mainthm}, we apply Theorem 2.1 in \cite{zhang2020convergence} to obtain
\begin{equation}
      P_{\theta_0}^{(N)}\hat{Q}\Vert \mathcal{G}(\theta)-\mathcal{G}(\theta_0)\Vert _{L^{2}_{\lambda}}^{2}\lesssim \varepsilon_N^{2} + \gamma^2_N,
  \end{equation}
and
  \begin{equation}
      P_{\theta_0}^{(N)}\hat{Q}[F(\Vert f_{\theta} - f_{\theta_0}\Vert )]^{2}\lesssim \varepsilon_N^{2} +\gamma^2_N,
  \end{equation}
  where $\gamma_N^2 = \frac{1}{N}\inf_{\Tilde{Q}\in\mathcal{Q}_{MF}^J}P_{\theta_0}^{(N)}D(\Tilde{Q}\Vert \Tilde{\Pi}_N(\cdot|D_N))$.
  \par
  To prove this theorem, the remaining task is to demonstrate that \[\gamma_N^2 \lesssim \varepsilon_N^2\log N.\]
We recall that 
\[R(\Tilde{Q}) = \frac{1}{N}(D(\Tilde{Q}\Vert \Tilde{\Pi}_N) + \Tilde{Q}[D(P^{(N)}_{\theta_0^*}\Vert P^{(N)}_{\theta})]) = \frac{1}{N}D(\Tilde{Q}\Vert \Tilde{\Pi}_N)+ \Tilde{Q}[D(P_{\theta_0}\Vert P_{\theta})],\]
and\[\gamma_N^2 \leq \mathop{\inf}_{\Tilde{Q} \in {\mathcal{\Tilde{Q}}}_{MF}^J}R(\Tilde{Q}).\]
We see $Q\circ \Psi^{-1}\in \mathcal{Q}_{MF}^J$ for any $Q \in \mathcal{Q}_G^J$. Thus, it is sufficient to find a $Q_N \in \mathcal{Q}_G^J$ such that
\[ \frac{1}{N}D(Q_N\circ \Psi^{-1}\Vert \Tilde{\Pi}_N)+ Q_N\circ \Psi^{-1}[D(P_{\theta_0}\Vert P_{\theta})] \lesssim \varepsilon_N^2\log N.\]
Since the KL divergence decreases when applying a push-forward, as shown in \cite[section 10]{varadhan1984large}, we can conclude that
\begin{equation*}
    D(Q_N\circ \Psi^{-1}\Vert \Tilde{\Pi}_N) =  D(Q_N\circ \Psi^{-1}\Vert \Pi_N\circ \Psi^{-1}) \leq D(Q_N\Vert \Pi_N).
\end{equation*}
We also have
\[Q_N\circ \Psi^{-1}[D(P_{\theta_0}\Vert P_{\theta})] = Q_N[D(P_{\Psi(\theta^*_0)}\Vert P_{\Psi(\theta)})].\]
As a result, it is essential to establish bounds for \(\frac{1}{N}D(Q_N \parallel \Pi_N)\) and \(Q_N D(P_{\Psi(\theta^*_0)} \parallel P_{\Psi(\theta)})\) individually.
We define \( Q'_N \) as follows:
\[\mathop{\bigotimes}_{l=-1}^J\mathop{\bigotimes}_{r=1}^{N_l}N(2^{-l(\alpha +d/2)}\bar{l}^{-2}\theta_{0,lr}^*,\sigma^2), \quad \sigma = 2^{-J(\alpha+d)}J^{-2}.\]
The mapping \( \Psi_J: \mathbb{R}^{d_J} \to L^2(\mathcal{Z}) \) is given by
\begin{equation*}
    \Psi_J(\Tilde{\theta}) = \sum_{l=-1}^J\sum_{r=1}^{N_l}\Tilde{\theta}_{lr}\psi_{lr}^{\mathcal{Z}},\quad \forall \Tilde{\theta} = (\Tilde{\theta}_{lr})\in \mathbb{R}^{d_J}.
\end{equation*}
where $d_J := \sum_{l=-1}^{J}N_l\simeq 2^{Jd}$.
The probability measure \( Q_N \) is defined as the push-forward of \( Q'_N \) under the mapping \( \Psi_J \), which can be expressed as \[Q_N = Q'_N \circ \Psi_J^{-1}.\]
It is straightforward to observe that \( Q_N \) belongs to \( \mathcal{Q}_G^J \). Additionally, we can express the prior \( \Pi_N \) as
\[ \Pi_N = \Pi'_N \circ \Psi_J^{-1},\]
where $\Pi'_N = \mathop{\bigotimes}_{l=-1}^J\mathop{\bigotimes}_{r=1}^{N_l}N(0,(2^{l(\alpha +d/2)}\bar{l}^{\,2}\sqrt{N}\varepsilon_N)^{-2})$.
We first consider the upper bound of $\frac{1}{N}D(Q_N\Vert \Pi_N)$. Given that the KL divergence decreases under push-forwards, as demonstrated in \cite[section 10]{varadhan1984large}, we have
\begin{equation*}
    \begin{aligned}
        D(Q_N\Vert \Pi_N) = &D(Q'_N \circ \Psi_J^{-1}\Vert \Pi'_N \circ \Psi_J^{-1}) \\
        \leq & D(Q'_N\Vert \Pi'_N) \\
        = &\sum_{l=-1}^J\sum_{r=1}^{N_l} D(N(\Tilde{\theta}^*_{0,lr},\sigma^2)\Vert N(0,(2^{l(\alpha +d/2)}\bar{l}^{\,2}\sqrt{N}\varepsilon_N)^{-2})),
    \end{aligned}
\end{equation*}
where $\Tilde{\theta}^*_{0,lr}$ represents $2^{-l(\alpha +d/2)}\bar{l}^{-2}\theta_{0,lr}^*$.
Then, it is sufficient to consider the upper bound of $D(N(\Tilde{\theta}^*_{0,lr},\sigma^2)\Vert N(0,(2^{l(\alpha +d/2)}\bar{l}^{\,2}\sqrt{N}\varepsilon_N)^{-2}))$:
\begin{align*}
        &D(N(\Tilde{\theta}^*_{0,lr},\sigma^2)\Vert N(0,(2^{l(\alpha +d/2)}\bar{l}^{\,2}\sqrt{N}\varepsilon_N)^{-2}))\\
        =&\int\frac{1}{2}(\log \frac{1}{2\pi\sigma^2}-\frac{(x-\Tilde{\theta}^*_{0,lr})^2}{\sigma^2})\frac{1}{\sqrt{2\pi}\sigma}\mexp{-\frac{(x-\Tilde{\theta}^*_{0,lr})^2}{2\sigma^2}}dx\\
        &-\int\frac{1}{2}(\log \frac{2^{2l(\alpha +d/2)}\bar{l}^{\,4}N\varepsilon_N^2}{2\pi}-2^{2l(\alpha +d/2)}\bar{l}^{\,4}N\varepsilon_N^2x^2)\frac{1}{\sqrt{2\pi}\sigma}\mexp{-\frac{(x-\Tilde{\theta}^*_{0,lr})^2}{2\sigma^2}}dx\\
        =&\frac{1}{2}(\log \frac{1}{2\pi\sigma^2} - \log e - \log \frac{2^{2l(\alpha +d/2)}\bar{l}^{\,4}N\varepsilon_N^2}{2\pi}) + \frac{1}{2}2^{2l(\alpha +d/2)}\bar{l}^{\,4}N\varepsilon_N^2(\Tilde{\theta}^{*2}_{0,lr}+\sigma^2)\\
        \leq&\frac{1}{2}\log\frac{2^{2J(\alpha+d/2)}J^4}{e2^{2l(\alpha +d/2)}\bar{l}^{\,4}N\varepsilon_N^2}+ \frac{1}{2}{\theta}_{0,lr}^{*2}N\varepsilon_N^2 + \frac{1}{2}2^{-Jd}N\varepsilon_N^2.
\end{align*}
By $2^{Jd}\simeq N\varepsilon_N^2$ and $N_l \leq c_0 2^{ld}$, we deduce that
\begin{equation*}
    \sum_{l=-1}^J\sum_{r=1}^{N_l}\frac{1}{2}\log\frac{2^{2J(\alpha+d/2)}J^4}{e2^{2l(\alpha +d/2)}\bar{l}^{\,4}N\varepsilon_N^2} \lesssim 2^{Jd}\log \bbra{J^42^{2J(\alpha+d/2)}}\lesssim N\varepsilon_N^2\log N,
\end{equation*}
and
\begin{equation*}
    \sum_{l=-1}^J\sum_{r=1}^{N_l}\frac{1}{2}2^{-Jd}N\varepsilon_N^2 \lesssim 2^{-Jd}N\varepsilon_N^2 \sum_{l=-1}^J2^{ld}\lesssim N\varepsilon_N^2.
\end{equation*}
Because $ \alpha < \beta - d/2$, for the last term, we have
\begin{equation*}
    \sum_{l=-1}^J\sum_{r=1}^{N_l}\frac{1}{2}N\varepsilon_N^2\theta_{0,lr}^2\lesssim N\varepsilon_N^2\sum_{l=-1}^J2^{2l(\alpha + d/2-\beta )}\lesssim N\varepsilon_N^2.
\end{equation*}
In conclusion, 
\begin{equation*}
    \frac{1}{N}D(Q_N\Vert \Pi_N)\lesssim \varepsilon_N^2.
\end{equation*}

Next, we give an upper bound of $Q_ND(P_{\Psi(\theta^*_0)}\Vert P_{\Psi(\theta)})$. We assume independent random variables $\theta_{lr} = \Tilde{\theta}_{0,lr}^* + \sigma Z_{lr}$ where $Z_{lr} \sim N(0,1)$ for $r \leq N_l, l \in \{-1,0,\dots,J\}$. Using Proposition \ref{le2.1} and the Lipschitz condition \eqref{thm5.3_lip}, we have 
\begin{equation*}
    \begin{aligned}
        Q_ND(P_{\Psi(\theta^*_0)}\Vert P_{\Psi(\theta)}) =& \frac{1}{2}Q_N\norm{\mathcal{G}^*(\theta)-\mathcal{G}^*(\theta_0^*)}^2_{L^2_{\lambda}(\mathcal{X},V)}\\
        \lesssim& Q_N\norm{\theta-\theta_0}^2_{L^2}\\
        \lesssim& E\sum_{l=-1}^J\sum_{r=1}^{N_l}(\theta_{lr}-\Tilde{\theta}_{0,lr}^*)^2 + \sum_{l=J+1}^{\infty}\sum_{r=1}^{N_l}2^{-2l(\alpha +d/2)}\bar{l}^{-4}\theta_{0,lr}^{*2}\\
        \lesssim& E\sum_{l=-1}^J\sum_{r=1}^{N_l}\sigma^2 Z_{lr}^2 +\sum_{l=J+1}^{\infty}\sum_{r=1}^{N_l}2^{-2l(\alpha +d/2)}\bar{l}^{-4}\theta_{0,lr}^{*2}\\
        \lesssim& E\sum_{l=-1}^J\sum_{r=1}^{N_l}\sigma^2 Z_{lr}^2+\sum_{l=J+1}^{\infty}\sum_{r=1}^{N_l}2^{-2l(\alpha + d/2)}\\
        \lesssim& \sigma^2 2^{Jd}+ 2^{-2J\alpha} \lesssim \varepsilon_N^2.
    \end{aligned}
\end{equation*} 
Therefore, $R(Q_N\circ \Psi^{-1}) \lesssim \varepsilon_N^2\log N$ and $\gamma_N^2 \leq \mathop{\inf}_{Q \in \mathcal{Q}_{MF}^J}R(Q)$ imply that
\[\gamma^2_N \lesssim \varepsilon_N^2\log N,\]
which completes the proof.
\end{proof}
\subsection{Contraction rates for inverse problems with extra unknown parameters}
{\color{black}\begin{example}\label{extracondition}
Consider the following time-fractional subdiffusion equation on $\Omega:=(0,1)$
\begin{align*}
    \left\{\begin{aligned}
    &\partial_t^{\alpha} u + Lu -c(t)u = r(x)p(t), \quad \mbox{on } \Omega \times (0,T],\\
    &u = 0 \quad \mbox{on } \partial \Omega \times (0,T],\\
    &u(0) = u_0 \quad \mbox{on } \Omega,
    \end{aligned}\right.  
\end{align*}
where the goal is to simultaneously recover $c(t)$ and $p(t)$ from measured data $u(x_i,t)$ for $0<t<T$ and fixed measure points $x_1,x_2$. The forward map $G$ is defined by 
\[ G({c,p})(t)= \big(u(x_1,t),u(x_2,t)\big)^T,\]
where $u$ is the solution corresponding to $c,p$.
The differential operator $L$ is defined by
\[Lu = -\frac{\partial}{\partial x}\big(a(x)\frac{\partial u}{\partial x}(x,t)\big)+b(x)u(x,t),
\]
with smooth functions $a(x),b(x)$ satisfying $a(x) \ge \mu>0$ and $b(x)\geq 0$ for some $\mu>0$ and all $x \in \bar{\Omega}$.
Let $u_i$ be the solution with $c_i$ and $p_i$ such that $\norm{c_i}_{C^1(0,T)},\norm{p_i}_{C^1(0,T)}\leq M$ for $i={1,2}$. Suppose there exists $\nu>0$ such that 
\[
    \nu \leq u_1(x_1,t)r(x_2)-u_1(x_2,t)r(x_1), \quad t\in(0,T).
\]
Then the conditional stability result in \cite{simultaneous_ma_simultaneous_2024} directly yields
\[
\norm{c_1-c_2}_{\infty}\leq C\sum_{i=1}^2\norm{u_1(x_i,\cdot)-u_2(x_i,\cdot)}_{C[0,T]}^{1-\alpha},
\]
where $C>0$ depending on $M$. Using the regularity results from \cite{simultaneous_ma_simultaneous_2024}, together with Sobolev embedding and interpolation inequalities, we further assume $\norm{c_i}_{{C}^3(0,T)},\norm{p_i}_{{C}^3(0,T)}\leq M'$, and obtain the following estimate for the conditional stability and verify Condition \ref{condstabmodel} (though without the polynomial growth rate in $\norm{c_i}_{C^3(0,T)}$):
\[
    \norm{c_1-c_2}_{\infty}^{{3}/{(1-\alpha)}}\leq C\sum_{i=1}^2\norm{u_1(x_i,\cdot)-u_2(x_i,\cdot)}_{L^2[0,T]},
\]
where $C>0$ depending on $M'$. Although we do not have the polynomial growth rate, a contraction theory similar to Theorem \ref{mainthmModel} can still be obtained with the prior supported on a ball in $C^3$ as constructed in Section \ref{SectionContractionRateUnstable}.  (\emph{Given that this example is not the central focus of our work, we do not provide an exhaustive proof of polynomial growth. Nevertheless, the growth rate is expected to be polynomial, as the underlying PDE is similar to the subdiffusion equation example we analyze in Sections \ref{ApplicationSection} and \ref{SectionContractionExtra}. Furthermore, verifying that existing results exactly align with our Condition \ref{condstabmodel} is nontrivial, as the growth rate of the stability coefficient is rarely the primary concern in the relevant literature.})
\end{example}}
\begin{proof}[Proof of Theorem \ref{mainthmModel}]
We begin by establishing a lemma that provides an upper bound for \[P_{\beta_0,\theta_0}^{(N)}\hat{Q}L(P^{(N)}_{\hat{\beta},\theta},P^{(N)}_{\beta_0,\theta_0})\] for any loss function $L(\cdot,\cdot)$.
\begin{lemma}
    For \( \hat{\beta} \), \( \hat{Q} \) is the solution to equation (\ref{opplm}), we have
    \begin{equation*}
        \begin{aligned}
            &P_{\beta_0,\theta_0}^{(N)}\hat{Q}L(P^{(N)}_{\hat{\beta},\theta},P^{(N)}_{\beta_0,\theta_0})\\
            \leq&\log\pi(\hat{\beta})+\mathop{\inf}_{a>0}\frac{1}{a}\left\{\mathop{\min}_{\beta \in \mathcal{U}}\mathop{\min}_{Q^ \in S_G^J}\left[D\left(Q\Vert\Pi\right) + QD\left(P^{(N)}_{\beta_0,\theta_0}\Vert P^{(N)}_{\beta,\theta}\right)-\log\pi(\beta)\right]\right.\\
            &\left.+ P^{(N)}_{\beta_0,\theta_0}\log\Pi\left(\exp\left(aL(P^{(N)}_{\hat{\beta},\theta},P^{(N)}_{\beta_0,\theta_0})\right)\vert D^{(N)}, \hat{\beta}\right)\right\}.
        \end{aligned}
    \end{equation*}
\end{lemma}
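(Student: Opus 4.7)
The plan is a PAC--Bayes style argument combining the Donsker--Varadhan change of measure with the variational optimality of $(\hat\beta,\hat Q)$ as joint maximizers of the ELBO in \eqref{opplm}. First, for arbitrary $a>0$, I would apply the Donsker--Varadhan variational representation of the KL divergence to $\hat Q$ against the conditional posterior $\Pi(\cdot|D_N,\hat\beta)$ with test function $\phi=aL(P_{\hat\beta,\theta}^{(N)},P_{\beta_0,\theta_0}^{(N)})$. This yields
\begin{equation*}
a\hat Q L \le D\bigl(\hat Q\Vert\Pi(\cdot|D_N,\hat\beta)\bigr) + \log\Pi\bigl(e^{aL}\bigm|D_N,\hat\beta\bigr),
\end{equation*}
which automatically creates the factor $1/a$ in the target bound.

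Next I would exploit the ELBO decomposition $D(Q\Vert\Pi(\cdot|D_N,\beta))=D(Q\Vert\Pi)-\int\log p_{\beta,\theta}^{(N)}\,dQ+\log p_\beta^{(N)}(D_N)$. Since $(\hat\beta,\hat Q)$ is by construction the maximizer of
\begin{equation*}
F(\beta,Q)=\int\log p_{\beta,\theta}^{(N)}\,dQ-D(Q\Vert\Pi)+\log\pi(\beta)
\end{equation*}
over $\mathcal{U}\times\mathcal{Q}_G^J$, the inequality $F(\hat\beta,\hat Q)\ge F(\beta,Q)$ rearranges, after substitution of the identity above, into the pointwise bound
\begin{equation*}
D\bigl(\hat Q\Vert\Pi(\cdot|D_N,\hat\beta)\bigr)\le D(Q\Vert\Pi)-\int\log p_{\beta,\theta}^{(N)}\,dQ+\log p_{\hat\beta}^{(N)}(D_N)+\log\pi(\hat\beta)-\log\pi(\beta),
\end{equation*}
valid for every admissible $(\beta,Q)$. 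Plugging this into the Donsker--Varadhan inequality from the first step produces an upper bound for $a\hat Q L$ that already has the overall shape of the target RHS but still contains the data-dependent likelihood $\log p_{\hat\beta}^{(N)}(D_N)$.

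Finally I would take expectation under $P_{\beta_0,\theta_0}^{(N)}$ and invoke the two bookkeeping identities
\begin{align*}
-P_{\beta_0,\theta_0}^{(N)}\!\int\!\log p_{\beta,\theta}^{(N)}\,dQ &= H\bigl(P_{\beta_0,\theta_0}^{(N)}\bigr) + Q\,D\bigl(P_{\beta_0,\theta_0}^{(N)}\Vert P_{\beta,\theta}^{(N)}\bigr),\\
P_{\beta_0,\theta_0}^{(N)}\log p_{\hat\beta}^{(N)}(D_N) &= -H\bigl(P_{\beta_0,\theta_0}^{(N)}\bigr) + P_{\beta_0,\theta_0}^{(N)}\log\bigl(p_{\hat\beta}^{(N)}/p_{\beta_0,\theta_0}^{(N)}\bigr),
\end{align*}
so that the differential entropies cancel. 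The leftover data-dependent log-likelihood ratio would then be combined with the Donsker--Varadhan term through the algebraic identity
\begin{equation*}
\log p_{\hat\beta}^{(N)}(D_N)+\log\Pi\bigl(e^{aL}\bigm|D_N,\hat\beta\bigr)=\log\int e^{aL}p_{\hat\beta,\theta}^{(N)}(D_N)\,d\Pi(\theta),
\end{equation*}
keeping the right-hand side expressible in terms of the posterior expectation at $\hat\beta$. Passing to $\min_{(\beta,Q)\in\mathcal{U}\times\mathcal{Q}_G^J}$ and $\inf_{a>0}$ on the RHS then delivers the claimed inequality.

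The main obstacle is the handling of the $\hat\beta$-dependent contribution $P_{\beta_0,\theta_0}^{(N)}\log(p_{\hat\beta}^{(N)}/p_{\beta_0,\theta_0}^{(N)})$: because $\hat\beta$ is a functional of $D_N$, this is not a genuine negative Kullback--Leibler divergence and cannot be discarded for free. This is exactly why the $\hat\beta$-dependent terms $\log\pi(\hat\beta)$ and $\log\Pi(e^{aL}|D^{(N)},\hat\beta)$ must remain on the right-hand side of the stated bound, and threading these random quantities through while preserving the $\inf_{a>0}\min_{(\beta,Q)}$ structure is the only delicate point of the argument.
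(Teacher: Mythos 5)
Your proposal is essentially the paper's own argument, just reorganised: the paper runs a single chain of (in)equalities in which your three steps appear as (i) the first inequality, which invokes $H(\hat{Q},\hat{\beta})\ge H(Q,\beta)$ for the ELBO objective $H$ (your optimality step), (ii) the introduction of the tilted measure $\tilde{\Pi}_{\hat{\beta}}\propto p^{(N)}_{\hat{\beta},\theta}\exp(aL)\,d\Pi$ and the discarding of $P^{(N)}_{\beta_0,\theta_0}D(\hat{Q}\Vert\tilde{\Pi}_{\hat{\beta}})\ge 0$, which is exactly your Donsker--Varadhan step in disguise, and (iii) the cancellation of $\log p^{(N)}_{\beta_0,\theta_0}(D^{(N)})$, which is your entropy bookkeeping. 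The one step you explicitly leave open --- controlling $P^{(N)}_{\beta_0,\theta_0}\log\bigl(p^{(N)}_{\hat{\beta}}/p^{(N)}_{\beta_0,\theta_0}\bigr)$ --- is dispatched in the paper simply by writing it as $-D\bigl(P^{(N)}_{\beta_0,\theta_0}\Vert P^{(N)}_{\hat{\beta}}\bigr)\le 0$ and dropping it; that is, the paper does discard it ``for free,'' treating $P^{(N)}_{\hat{\beta}}$ as a fixed probability measure. Your observation that this quantity is not a genuine Kullback--Leibler divergence when $\hat{\beta}$ is a functional of $D^{(N)}$ is well taken, but the paper offers no further justification at this point (the data-dependence of $\hat{\beta}$ is only confronted later, in the main theorem's proof, via the auxiliary two-point prior $\nu$ on $\{\beta_0,\hat{\beta}\}$). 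So your route is correct and coincides with the paper's; to finish you would either accept the paper's identification of that term as a nonnegative divergence or supply the missing bound yourself.
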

\begin{proof}
We denote $\int\log p(D^{(N)}\vert\beta,\theta)dQ(\theta) - D(Q\Vert\Pi_N) + \log\pi(\beta)$ by $H(Q,\beta)$. For any $a >0$, any $\beta \in \mathcal{U}$, and any $Q \in S_G^J$, we deduce that 
    \begin{align*}
        &D\bbra{Q\Vert\Pi} + QD\bbra{P^{(N)}_{\beta_0,\theta_0}\Vert P^{(N)}_{\beta,\theta}} - \log\pi(\beta) - aP_{\beta_0,\theta_0}^{(N)}\hat{Q}L(P^{(N)}_{\hat{\beta},\theta},P^{(N)}_{\beta_0,\theta_0})\\
        =&P_{\beta_0,\theta_0}^{(N)}\bbra{-H(Q,\beta)+\log p^{(N)}_{\beta_0,\theta_0}(D^{(N)})} - aP_{\beta_0,\theta_0}^{(N)}\hat{Q}L(P^{(N)}_{\hat{\beta},\theta},P^{(N)}_{\beta_0,\theta_0})\\
        \geq&P_{\beta_0,\theta_0}^{(N)}\bbra{-H(\hat{Q},\hat{\beta})+\log p^{(N)}_{\beta_0,\theta_0}(D^{(N)})} - aP_{\beta_0,\theta_0}^{(N)}\hat{Q}L(P^{(N)}_{\hat{\beta},\theta},P^{(N)}_{\beta_0,\theta_0})\\
        =&P_{\beta_0,\theta_0}^{(N)}\hat{Q}\log\bbra{\frac{d\hat{Q}(\theta)}{d\Pi(\theta)}\cdot\frac{p^{(N)}_{\beta_0,\theta_0}(D^{(N)})}{\pi(\hat{\beta})p^{(N)}_{\hat{\beta},\theta}(D^{(N)})\exp\bbra{aL(P^{(N)}_{\hat{\beta},\theta},P^{(N)}_{\beta_0,\theta_0})}}}\\
        =&D\bbra{P_{\beta_0,\theta_0}^{(N)}\Vert P^{(N)}_{\hat{\beta}}}  \\
        &+P_{\beta_0,\theta_0}^{(N)}\hat{Q}\log\bbra{\frac{d\hat{Q}(\theta)}{d\Pi(\theta)}\cdot\frac{p^{(N)}_{\hat{\beta}}(D^{(N)})}{\pi(\hat{\beta})p^{(N)}_{\hat{\beta},\theta}(D^{(N)})\exp\bbra{aL(P^{(N)}_{\hat{\beta},\theta},P^{(N)}_{\beta_0,\theta_0})}}}\\
        =&D\bbra{P_{\beta_0,\theta_0}^{(N)}\Vert P^{(N)}_{\hat{\beta}}} + P_{\beta_0,\theta_0}^{(N)}D\bbra{Q\Vert\Tilde{\Pi}_{\hat{\beta}}}\\
        &-P_{\beta_0,\theta_0}^{(N)}\log\frac{\int\pi(\hat{\beta})p^{(N)}_{\hat{\beta},\theta}(D^{(N)})\exp\bbra{aL(P^{(N)}_{\hat{\beta},\theta},P^{(N)}_{\beta_0,\theta_0})}d\Pi(\theta)}{p^{(N)}_{\hat{\beta}}(D^{(N)})}\\
        \geq&-P_{\beta_0,\theta_0}^{(N)}\log\frac{\int\pi(\hat{\beta})p^{(N)}_{\hat{\beta},\theta}(D^{(N)})\exp\bbra{aL(P^{(N)}_{\hat{\beta},\theta},P^{(N)}_{\beta_0,\theta_0})}d\Pi(\theta)}{p^{(N)}_{\hat{\beta}}(D^{(N)})}\\
        =&-P_{\beta_0,\theta_0}^{(N)}\log\Pi\bbra{\exp\bbra{aL(P^{(N)}_{\hat{\beta},\theta},P^{(N)}_{\beta_0,\theta_0})}\vert D^{(N)},\hat{\beta}} - \log \pi(\hat{\beta}),
    \end{align*}
where $P^{(N)}_{\hat{\beta}}$ is the probability measure with the density $p^{(N)}_{\hat{\beta}}(D^{(N)})$ and
\begin{equation*}
    d\Tilde{\Pi}_{\beta}(\theta) = \frac{p^{(N)}_{\beta,\theta}(D^{(N)})\exp\bbra{aL(P^{(N)}_{\beta,\theta},P^{(N)}_{\beta_0,\theta_0})}d\Pi(\theta)}{\int p^{(N)}_{\beta,\theta}(D^{(N)})\exp\bbra{aL(P^{(N)}_{\beta,\theta},P^{(N)}_{\beta_0,\theta_0})}d\Pi(\theta)}.
\end{equation*}
Then the inequality is proved.
\end{proof}
For $L(P^{(N)}_{\hat{\beta},\theta},P^{(N)}_{\beta_0,\theta_0}) := N\varepsilon_N^{\frac{2p+2q}{p+q+1}}[F(\Vert f_{\theta} - f_{\theta_0}\Vert )]^{\frac{2}{p+q+1}}$, we bound each term in the upper bound given above. First, the proof of Theorem \ref{boundgamfinite} implies that there exists a probability measure $Q\in Q_G^J$ such that 
\[D(Q\Vert\Pi)\lesssim N\varepsilon_N^2\log N,\] 
and 
\[QD\bbra{P^{(N)}_{\beta_0,\theta_0},P^{(N)}_{\beta_0,\theta}}\lesssim N\varepsilon_N^2.\]
Then, since $N\varepsilon_N^2\rightarrow \infty$, we have 
\[\log\pi(\hat{\beta})- \log\pi(\beta_0) \leq N\varepsilon_N^2\]
for large enough $N$. 
\par
Next, it remains to bound the term
\[P^{(N)}_{\beta_0,\theta_0}\log\Pi\left(\exp\left(a L(P^{(N)}_{\hat{\beta},\theta},P^{(N)}_{\beta_0,\theta_0})\right)\vert D^{(N)}, \hat{\beta}\right).\]
By Jensen Inequality, we consider its upper bound
\begin{equation}\label{upms}
    \log P^{(N)}_{\beta_0,\theta_0}\Pi\left(\exp\left(aL(P^{(N)}_{\hat{\beta},\theta},P^{(N)}_{\beta_0,\theta_0})\right)\vert D^{(N)}, \hat{\beta}\right).
\end{equation}
To bound (\ref{upms}), we follow the technique employed in the proof of \cite[theorem 4.1]{zhang2020convergence}. We define a discrete probabiliy meausre $\nu$ on $\mathcal{U}$ with $\nu(\beta_0) = 0.5, \nu(\hat{\beta}) =0.5$ and the prior on $\mathcal{A} \times \Theta$ with $\hat{\Pi} = \nu \times \Pi_N$. Recalling that the conditions (C1) to (C3) in \cite[theorem 2.1]{zhang2020convergence} is true here, that is for any $\varepsilon > \varepsilon_N$, 
\begin{equation}\label{C1ms}
    P^{(N)}_{\beta_0,\theta_0}+ \mathop{\sup}_{\begin{array}{c}
                 (\beta,\theta) \in \{\beta_0,\hat{\beta}\}\times\Theta_N(\varepsilon)  \\
                 L(P^{(N)}_{\beta,\theta},P^{(N)}_{\beta_0,\theta_0}) \geq C_1 N \varepsilon^2
            \end{array}}P^{(N)}_{\beta,\theta}(1-\Psi_N) \leq \exp\{-CN\varepsilon^2\} ,
\end{equation}
\begin{equation}\label{C2ms}
    \hat{\Pi}\bbra{(\{\beta_0,\hat{\beta}\}\times\Theta_N(\varepsilon))^c}\leq\exp(-CN\varepsilon^2),
\end{equation}
and
\begin{equation}\label{C3ms}
    \hat{\Pi}\left((\beta,\theta) : D_2(P^{(N)}_{\beta_0,\theta_0}\Vert P^{(N)}_{\beta,\theta})\leq C_3N\varepsilon_N^2\right)\geq\exp(-C_2N\varepsilon_N^2).
\end{equation}
Together with Lemma B.3 and Lemma B.4 in \cite{zhang2020convergence}, we have 
\begin{equation*}
    \log P^{(N)}_{\beta_0,\theta_0}\hat{\Pi}\left(\exp\left(aL(P^{(N)}_{\beta,\theta},P^{(N)}_{\beta_0,\theta_0})\right)\vert D^{(N)}\right) \lesssim N\varepsilon_N^2
\end{equation*}
with $a = 1/2C_1$. Furthermore, we deduce that
\begin{equation}
    \begin{aligned}
        &\log P^{(N)}_{\beta_0,\theta_0}\hat{\Pi}\left(\exp\left(aL(P^{(N)}_{\beta,\theta},P^{(N)}_{\beta_0,\theta_0})\right)\vert D^{(N)}\right) \\
        =&\log P^{(N)}_{\beta_0,\theta_0}\sum_{\beta\in\{\beta_0,\hat{\beta}\}}\nu(\beta)\Pi\left(\exp\left(aL(P^{(N)}_{\beta,\theta},P^{(N)}_{\beta_0,\theta_0})\right)\vert \beta, D^{(N)}\right)\\
        \geq&\log P^{(N)}_{\beta_0,\theta_0}\Pi\left(\exp\left(aL(P^{(N)}_{\hat{\beta},\theta},P^{(N)}_{\beta_0,\theta_0})\right)\vert \hat{\beta}, D^{(N)}\right) + \log\nu(\hat{\beta}).
    \end{aligned}
\end{equation}
Thus, 
\begin{equation*}
    \log P^{(N)}_{\beta_0,\theta_0}\Pi\left(\exp\left(aL(P^{(N)}_{\hat{\beta},\theta},P^{(N)}_{\beta_0,\theta_0})\right)\vert \hat{\beta}, D^{(N)}\right) \lesssim N\varepsilon_N^2.
\end{equation*}
\par
In conclusion, we have
\begin{equation*}
    P^{(N)}_{\beta_0,\theta_0}\hat{Q}L(P^{(N)}_{\hat{\beta},\theta},P^{(N)}_{\beta_0,\theta_0}) \lesssim N \varepsilon_N^2 \log N,
\end{equation*}
that is
\begin{equation*}
    P_{\beta_0,\theta_0}^{(N)}\hat{Q}[F(\Vert f_{\theta} - f_{\theta_0}\Vert )]^{\frac{2}{p+q+1}}\lesssim \varepsilon_N^{\frac{2}{p+q+1}}\log N.
\end{equation*}
Note that the above proof is also true for $L(P^{(N)}_{\hat{\beta},\theta},P^{(N)}_{\beta_0,\theta_0}) := N\varepsilon_N^{\frac{2p}{p+1}}\lVert \mathcal{G}_{\hat{\beta}}({\theta}) - \mathcal{G}_{\beta_0}({\theta_0})\rVert^{\frac{2}{p+1}}_{L^2_{\lambda}}$.
\par
Finally, we verify (\ref{C1ms}-\ref{C3ms}) in a manner similar to the proof of Theorem \ref{mainthm}, thereby completing this proof. We define the set \( \Theta_N(\varepsilon) \) to be the same as the one used in the proof of Theorem \ref{mainthm}.
\par
(i) For (\ref{C1ms}), we first estimate the metric entropy of the set $\{\beta_0,\hat{\beta}\}\times \Theta_N(\varepsilon)$. Utilizing inequality (\ref{lipmodel}), we obtain
\begin{equation*}
    2h(p_{\beta_1,\theta_1},p_{\beta_2,\theta_2}) \leq \norm{\mathcal{G}_{\beta_1}(\theta_1) -\mathcal{G}_{\beta_2}(\theta_2)}_{L^2_{\lambda}} \leq L_{\mathcal{G}}(M)\norm{\theta_1-\theta_2} _{(H^{{\kappa}})^*} + C_{\mathcal{U}} \norm{\beta_1-\beta_2}_{\mathcal{U}}.
\end{equation*}
Thus, for $\bar{m} >0 $, 
\begin{equation}
    \begin{aligned}
        &\log N(\{\beta_0,\hat{\beta}\}\times \Theta_N(\varepsilon),h,\bar{m}\varepsilon) \\
        \leq &\log[N(\Theta_N(\varepsilon),\Vert\cdot\Vert_{(H^{\kappa})^*},\bar{m}\varepsilon/L(M\varepsilon/\varepsilon_N))\cdot N(\{\beta_0,\hat{\beta}\},\Vert\cdot\Vert_{\mathcal{U}},\bar{m}\varepsilon/C_{\mathcal{U}})]\\
        \leq & \log N(\Theta_N(\varepsilon),\Vert\cdot\Vert_{(H^{\kappa})^*},\bar{m}\varepsilon/L(M\varepsilon/\varepsilon_N)) + \log 2.
    \end{aligned}
\end{equation}
Combined with the upper bound (\ref{entropybound}), we have
\begin{equation*}
    \log N(\{\beta_0,\hat{\beta}\}\times \Theta_N(\varepsilon),h,\bar{m}\varepsilon) \leq C_EN\varepsilon^2,
\end{equation*} 
where \( C_E \) is a constant depending on \( \alpha, \kappa, d \). Then, following the same argument as in the proof of Theorem \ref{mainthm}, we verified inequality (\ref{C1ms}).
\par
(ii) For (\ref{C2ms}), by the definition of probability measure $\nu$, we have
\begin{equation*}
    \hat{\Pi}\bbra{(\{\beta_0,\hat{\beta}\}\times\Theta_N(\varepsilon))^c} = \Pi_N(\Theta_N(\varepsilon)^c).
\end{equation*}
The upper bound of the last probability is given by (\ref{setbound}), which implies
\begin{equation*}
    \hat{\Pi}\bbra{(\{\beta_0,\hat{\beta}\}\times\Theta_N(\varepsilon))^c} \leq \exp(-CN\varepsilon^2).
\end{equation*}
\par
(iii) For (\ref{C3ms}), through direct computation, we obtain
\begin{equation*}
    D_2(P^{(N)}_{\beta_0,\theta_0}\Vert P^{(N)}_{\beta,\theta}) = N\log\int_{\mathcal{X}}\exp\{[\mathcal{G}_{\beta}(\theta)(x)-\mathcal{G}_{\beta_0}(\theta_0)(x)]^2\}d\lambda(x).
\end{equation*}
Then, combining this with (\ref{boundmodel}), we deduce that
\begin{equation*}
    \begin{aligned}
        &\hat{\Pi}\bbra{D_2(P^{(N)}_{\beta_0,\theta_0}\Vert P^{(N)}_{\beta,\theta}) \leq C_3N\varepsilon_N^2}\\
        &=\hat{\Pi}\bbra{\log\int_{\mathcal{X}}\exp\{[\mathcal{G}_{\beta}(\theta)(x)-\mathcal{G}_{\beta_0}(\theta_0)(x)]^2\}d\lambda(x) \leq C_3\varepsilon_N^2}\\
        &\geq \hat{\Pi}\bbra{\log\int_{\mathcal{X}}\exp\{[\mathcal{G}_{\beta}(\theta)(x)-\mathcal{G}_{\beta_0}(\theta_0)(x)]^2\}d\lambda(x) \leq C_3\varepsilon_N^2, \norm{\theta - \theta_0}_{\mathcal{R}}\leq M'}\\
        & \geq \hat{\Pi}\bbra{\mexp{4U^2_{\mathcal{G}}(\Bar{M})}\norm{\mathcal{G}_{\beta}(\theta)-\mathcal{G}_{\beta_0}(\theta_0)}^2_{L_{\lambda}^2}\leq C_3\varepsilon_N^2, \norm{\theta - \theta_0}_{\mathcal{R}}\leq M'}\\
        & \geq \hat{\Pi}\bbra{\norm{\mathcal{G}_{\beta}(\theta)-\mathcal{G}_{\beta_0}(\theta_0)}_{L_{\lambda}^2}\leq \sqrt{C_3}U^{-1}\varepsilon_N, \norm{\theta - \theta_0}_{\mathcal{R}}\leq M'}
    \end{aligned}
\end{equation*}
for some constants $M'>0$, $\bar{M} = M' + \norm{\theta_0}_{\mathcal{R}}$ and $U = \mexp{2U^2_{\mathcal{G}}(\Bar{M})}$. Next, using {\color{black}Lipschitz} condition (\ref{lipmodel}), we have
\begin{equation*}
    \begin{aligned}
        &\hat{\Pi}\bbra{D_2(P^{(N)}_{\beta_0,\theta_0}\Vert P^{(N)}_{\beta,\theta}) \leq C_3N\varepsilon_N^2}\\
        \geq&\hat{\Pi}\bbra{L_{\mathcal{G}}(\Bar{M})\norm{\theta-\theta_0} _{(H^{{\kappa}})^*} + C_{\mathcal{U}} \norm{\beta-\beta_0}_{\mathcal{U}}\leq \sqrt{C_3}U^{-1}\varepsilon_N, \norm{\theta - \theta_0}_{\mathcal{R}}\leq M'}\\
        \geq&\Pi_N\bbra{L_{\mathcal{G}}(\Bar{M})\norm{\theta-\theta_0} _{(H^{{\kappa}})^*}\leq \sqrt{C_3}U^{-1}\varepsilon_N/2,\norm{\theta - \theta_0}_{\mathcal{R}}\leq M'}\\
        &\cdot\nu\bbra{C_{\mathcal{U}} \norm{\beta-\beta_0}_{\mathcal{U}}\leq \sqrt{C_3}U^{-1}\varepsilon_N/2}\\
        \geq& \frac{1}{2}\Pi_N\bbra{L_{\mathcal{G}}(\Bar{M})\norm{\theta-\theta_0} _{(H^{{\kappa}})^*}\leq \sqrt{C_3}U^{-1}\varepsilon_N/2,\norm{\theta - \theta_0}_{\mathcal{R}}\leq M'}.
    \end{aligned}
\end{equation*}
From the proof of Theorem \ref{mainthmsv}, we know 
\begin{equation*}
    \Pi_N\bbra{L_{\mathcal{G}}(\Bar{M})\norm{\theta-\theta_0} _{(H^{{\kappa}})^*}\leq \sqrt{C_3}U^{-1}\varepsilon_N/2,\norm{\theta - \theta_0}_{\mathcal{R}}\leq M'} \geq \exp(-cN\varepsilon_N^2)
\end{equation*}
for some constant $c>0$. Thus, we have 
\begin{equation*}
    \hat{\Pi}\left((\beta,\theta) : D_2(P^{(N)}_{\beta_0,\theta_0}\Vert P^{(N)}_{\beta,\theta})\leq C_3N\varepsilon_N^2\right)\geq\exp(-C_2N\varepsilon_N^2)
\end{equation*}
for some $C_2 >0$.
\end{proof}
\section{Proofs of results for specific inverse problems}
\subsection{Darcy flow problem}
In this subsection, we will prove Theorem \ref{mainthmDarcy} by verifying Conditions \ref{condreg} and \ref{condstab} using lemmas introduced below. We will provide regularity and conditional stability estimates for the forward map \( \mathcal{G}(\theta) \) as defined in \eqref{forwardmapDarcyflow} of the main text. We begin by documenting an example of the link function from Section 4.1 of \cite{IntroNonLinear_nickl2020convergence}.
\begin{example}\label{Darcylinkex}
Define the function $\phi: \mathbb{R} \mapsto (0,\infty)$ by
\[\phi(t) = e^t\textbf{1}_{\{t<0\}}+(1+t)\textbf{1}_{\{t\geq0\}}.\]
Let $\psi: \mathbb{R} \mapsto (0,\infty)$ be a smooth, compactly supported function with $\int_{\mathbb{R}}\psi=1$, and write $\phi*\psi= \int_{\mathbb{R}}\phi(\cdot-y)\psi(y)dy$ for their convolution. It follows from elementary calculations that, for any $K_{\min}\in \mathbb{R},$
\[\Phi:\mathbb{R} \mapsto (K_{\min},\infty), \Phi:=K_{\min}+\frac{1-K_{\min}}{\psi*\phi(0)}\psi*\phi,\]
is a link function satisfying the properties in Section \ref{SubsectionDarcyFlow}.
\end{example}
\begin{lemma}\label{Dcbound}
there exists a constant $C = C(\alpha,d,\mathcal{X},K_{\min})$ such that
\begin{equation}
    \Vert u_{f_{\theta}}\Vert _{H^{\alpha+1}} \leq CM^{\alpha^3 + \alpha^2}(\Vert g\Vert _{H^{\alpha-1}}^{\alpha + 1} \vee \Vert g\Vert _{H^{\alpha-1}}^{1/(\alpha + 1)})
\end{equation}
for integer $\alpha >2 + d/2$, and $\norm{\theta}_{H^{\alpha}}\leq M$ with $M>1$.
\end{lemma}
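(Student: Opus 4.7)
The plan is a bootstrap argument based on the rewritten equation
\[
\Delta u \;=\; \frac{g - \nabla f\cdot\nabla u}{f},\qquad u\big|_{\partial\mathcal{X}}=0,\qquad f=\Phi(\theta),
\]
starting from the $H^{1}$ energy estimate (available because $f\geq K_{\min}>0$) and climbing to $H^{\alpha+1}$ by iterated application of $H^{k+2}$ elliptic regularity for the Dirichlet Laplacian on the smooth domain $\mathcal{X}$.

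First I would record the needed Sobolev and pointwise estimates on $f=\Phi(\theta)$ and $1/f$. Since $\Phi(0)=1$, $\Phi>K_{\min}$, and all derivatives of $\Phi$ are uniformly bounded, a chain rule (Faà di Bruno) computation combined with the Moser-type inequalities \eqref{Sobolevinter1}--\eqref{Sobolevinter2} and the Sobolev embedding $H^{\alpha}\hookrightarrow C^{\alpha-\lceil d/2\rceil}$ (valid since $\alpha>2+d/2$) yields, for $0\leq k\leq \alpha$,
\[
\norm{f}_{H^{k}} + \norm{1/f}_{H^{k}} \;\lesssim\; (1+\norm{\theta}_{H^{\alpha}})^{k} \;\lesssim\; M^{k},
\]
as well as $\norm{f}_{C^{k}}\lesssim M^{k}$ for $k$ below the Sobolev threshold, with constants depending only on $\alpha,d,K_{\min}$.

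Next I would handle the base case: coercivity together with Poincaré gives $\norm{u}_{H^{1}}\lesssim K_{\min}^{-1}\norm{g}_{(H^{1})^{*}}$, and the Dirichlet-Laplacian regularity bound $\norm{u}_{H^{k+2}}\leq C(\norm{\Delta u}_{H^{k}}+\norm{u}_{L^{2}})$ applied to the rewritten PDE delivers the inductive step: writing $\Delta u = (g-\nabla f\cdot\nabla u)/f$ and invoking \eqref{Sobolevinter1} (for $k>d/2$) or \eqref{Sobolevinter2} together with Sobolev embedding (for the first $\lceil d/2\rceil$ steps), the resulting recursion has the form $\norm{u}_{H^{k+2}}\leq C\,M^{O(k)}\bigl(\norm{u}_{H^{k+1}}+\norm{g}_{H^{k}}\bigr)$. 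Iterating from $k=0$ to $k=\alpha-1$ and compounding the $M^{O(k)}$ factors from $\norm{1/f}_{H^{k}}$ and $\norm{\nabla f}_{H^{k}\cap C^{0}}$ across all $O(\alpha)$ stages then produces the stated polynomial exponent in $M$.

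The principal obstacle is the careful bookkeeping of the exponents. The split between the low-regularity regime (where \eqref{Sobolevinter1} is unavailable and one must use \eqref{Sobolevinter2} with Sobolev embedding to move from $C^{k}$ into $H^{k}$) and the high-regularity regime requires separate treatment, and propagating both the $M$-exponent and the $\norm{g}_{H^{\alpha-1}}$-exponent through all $\alpha$ bootstrap steps is where the bulk of the calculation lies. The factor $\norm{g}_{H^{\alpha-1}}^{\alpha+1}\vee \norm{g}_{H^{\alpha-1}}^{1/(\alpha+1)}$ arises because the base $H^{1}$ estimate controls $u$ by a negative norm of $g$ while the top estimate requires $H^{\alpha-1}$; interpolating via the Gagliardo--Nirenberg-type inequality stated after \eqref{Sobolevinter2} collapses this into a product whose maximum of two powers dominates in both the small-source and large-source regimes.
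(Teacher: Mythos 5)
The paper's own proof of this lemma is essentially a citation: it invokes Lemma 23 of \cite{IntroNonLinear_nickl2020convergence}, which already states $\norm{u_f}_{H^{\alpha+1}} \leq C(1+\norm{f}_{H^{\alpha}}^{\alpha^2+\alpha})(\norm{g}_{H^{\alpha-1}}^{\alpha+1}\vee\norm{g}_{H^{\alpha-1}}^{1/(\alpha+1)})$, and then Lemma 29 of the same reference, $\norm{f_\theta}_{H^{\alpha}}\lesssim 1+\norm{\theta}_{H^{\alpha}}^{\alpha}$, so that $(1+\norm{f_\theta}_{H^{\alpha}}^{\alpha^2+\alpha})\lesssim M^{\alpha(\alpha^2+\alpha)}=M^{\alpha^3+\alpha^2}$. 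Your bootstrap is a plan to reprove the cited elliptic estimate from scratch; the composition step $\norm{f}_{H^k}+\norm{1/f}_{H^k}\lesssim M^k$ is the analogue of the link-function lemma the paper uses, and the overall strategy (energy estimate plus iterated $H^{k+2}$ regularity for $\Delta u=(g-\nabla f\cdot\nabla u)/f$) is indeed how such estimates are proved. What the citation buys the paper is precisely the part you defer: the exact exponents.

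Three concrete soft spots in your plan. First, the recursion you write, $\norm{u}_{H^{k+2}}\leq CM^{O(k)}(\norm{u}_{H^{k+1}}+\norm{g}_{H^k})$, is \emph{linear} in $u$, and iterating a linear recursion cannot produce the factor $\norm{g}^{\alpha+1}\vee\norm{g}^{1/(\alpha+1)}$; it would yield a bound linear in $\norm{g}_{H^{\alpha-1}}$. That would still imply the stated bound (since $s\leq s^{\alpha+1}\vee s^{1/(\alpha+1)}$ for all $s\geq 0$), but your closing explanation of where the two powers come from is not the actual mechanism: in the cited proof they arise from interpolating intermediate norms of $u$ against $\norm{u}_{H^{\alpha+1}}$ and absorbing quadratic terms by Young's inequality, not from the mismatch between the negative-norm base estimate and $\norm{g}_{H^{\alpha-1}}$. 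Second, in the low-regularity steps $k\leq d/2$ you need $\norm{\nabla f/f}_{C^k}$, hence via Sobolev embedding $\norm{f}_{H^{k+1+d/2+\epsilon}}$ with $k+1+d/2<\alpha$; taking $k$ up to $d/2$ requires $\alpha>1+d$, which does \emph{not} follow from the lemma's hypothesis $\alpha>2+d/2$ when $d>2$. The standard way around this (and what the reference does) is to first obtain $C^1$/$C^2$ control of $u$ from Schauder-type estimates — exactly the ingredient the paper imports as ``Lemma 22'' in its proof of Lemma \ref{Dclip}. Third, the decisive quantitative content — that the compounded exponents come out to exactly $\alpha^3+\alpha^2$ — is asserted rather than computed, so as written the plan establishes only ``some polynomial in $M$''.
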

\begin{proof}
Using \cite[Lemma 23]{IntroNonLinear_nickl2020convergence}, we have 
\[ \Vert u_{f_{\theta}}\Vert _{H^{\alpha+1}} \leq C(1+\Vert f_\theta\Vert _{H^{\alpha}}^{\alpha^2 + \alpha})(\Vert g\Vert _{H^{\alpha-1}}^{\alpha + 1} \vee \Vert g\Vert _{H^{\alpha-1}}^{1/(\alpha + 1)})\]
for some constant $C = C(\alpha,d,\mathcal{X},K_{\min})$. Combined with \cite[Lemma 29]{IntroNonLinear_nickl2020convergence}, we obtain
\[\Vert u_{f_{\theta}}\Vert _{H^{\alpha+1}} \leq C(1 + (1 + \Vert \theta\Vert _{H^{\alpha}}^{\alpha})^{\alpha^2 + \alpha})(\Vert g\Vert _{H^{\alpha-1}}^{\alpha + 1} \vee \Vert g\Vert _{H^{\alpha-1}}^{1/(\alpha + 1)}).\]
\end{proof}
\begin{lemma}\label{Dclip}
There exists a constant $C = C(\mathcal{X},d,K_{\min},\norm{g}_{\infty})$ such that
\begin{equation}
    \Vert \mathcal{G}(\theta_1)-\mathcal{G}(\theta_2)\Vert _{L^2} \leq C (1+\norm{\theta_1}_{C^1}^3\vee\norm{\theta_2}_{C^1}^3)\Vert \theta_1-\theta_2\Vert _{(H^1)^*}
\end{equation}
for integer $\alpha > 2 + d/2$, and $\theta_1, \theta_2 \in C^1(\mathcal{X})$.
\end{lemma}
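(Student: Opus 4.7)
The plan is to use a standard duality argument, transferring the Lipschitz estimate from the $f$-coefficient to the solution via the $L^{2}$-dual of the Darcy operator. Specifically, setting $w := u_{f_{\theta_1}}-u_{f_{\theta_2}}$ and subtracting the two Darcy equations yields
\begin{equation*}
-\nabla\cdot(f_{\theta_1}\nabla w) \;=\; \nabla\cdot\bigl((f_{\theta_1}-f_{\theta_2})\nabla u_{f_{\theta_2}}\bigr)
\end{equation*}
with zero Dirichlet data. For each test function $\phi\in L^{2}(\mathcal{X})$ with $\|\phi\|_{L^{2}}\leq 1$, I would introduce the auxiliary problem $-\nabla\cdot(f_{\theta_1}\nabla v)=\phi$ with $v|_{\partial\mathcal{X}}=0$. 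Two integrations by parts (exploiting the symmetry of the divergence-form operator and the vanishing boundary data) then give the identity
\begin{equation*}
\int_{\mathcal{X}} w\,\phi\,dx \;=\; -\int_{\mathcal{X}}(f_{\theta_1}-f_{\theta_2})\,\nabla u_{f_{\theta_2}}\!\cdot\!\nabla v\,dx,
\end{equation*}
so that $\|w\|_{L^{2}}$ is controlled by the supremum of the right-hand side over $\phi$.

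Next I would split the right-hand side by a negative-order duality estimate
\begin{equation*}
\Bigl|\!\int_{\mathcal{X}}\!(f_{\theta_1}\!-\!f_{\theta_2})\nabla u_{f_{\theta_2}}\!\cdot\!\nabla v\,dx\Bigr|
\;\leq\; \|f_{\theta_1}-f_{\theta_2}\|_{(H^{1})^{*}}\,\|\nabla u_{f_{\theta_2}}\!\cdot\!\nabla v\|_{H^{1}}.
\end{equation*}
The first factor I would transfer to $\theta$ via the mean-value representation $f_{\theta_1}-f_{\theta_2}=\int_{0}^{1}\Phi'(\theta_2+t(\theta_1-\theta_2))(\theta_1-\theta_2)\,dt$. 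Pairing against an arbitrary $\varphi\in H^{1}$ with $\|\varphi\|_{H^{1}}\leq 1$ and applying \eqref{Sobolevinter2} together with the uniform boundedness of $\Phi'$ and $\Phi''$ yields
\begin{equation*}
\|f_{\theta_1}-f_{\theta_2}\|_{(H^{1})^{*}} \;\lesssim\; \bigl(1+\|\theta_1\|_{C^{1}}\vee\|\theta_2\|_{C^{1}}\bigr)\,\|\theta_1-\theta_2\|_{(H^{1})^{*}}.
\end{equation*}

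For the second factor I would combine product estimates with elliptic regularity. Standard $H^{2}$ theory for divergence-form operators with Lipschitz coefficient gives $\|v\|_{H^{2}}\lesssim C(K_{\min},\|f_{\theta_1}\|_{W^{1,\infty}})\|\phi\|_{L^{2}}$; and $W^{2,p}$ theory with $p>d$ applied to $u_{f_{\theta_2}}$, together with the Sobolev embedding $W^{2,p}\hookrightarrow C^{1}$, yields $\|\nabla u_{f_{\theta_2}}\|_{L^{\infty}}+\|u_{f_{\theta_2}}\|_{W^{2,p}}\lesssim C(K_{\min},\|f_{\theta_2}\|_{W^{1,\infty}})\|g\|_{\infty}$. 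The $H^{1}$-norm of the product $\nabla u_{f_{\theta_2}}\cdot\nabla v$ is then estimated by H\"older's inequality and Sobolev embedding, with the second derivatives of $u_{f_{\theta_2}}$ paired in $L^{p}$ against $\nabla v$ in $L^{2p/(p-2)}\subset H^{1}$. Since $\|f_{\theta_i}\|_{W^{1,\infty}}\lesssim 1+\|\theta_i\|_{C^{1}}$, the constant is polynomial in $\max_i\|\theta_i\|_{C^{1}}$. Multiplying the elliptic contribution by the link-function contribution, and taking the supremum over $\phi$, produces the claimed bound $\lesssim (1+M^{3})\|\theta_1-\theta_2\|_{(H^{1})^{*}}$ with $M:=\|\theta_1\|_{C^{1}}\vee\|\theta_2\|_{C^{1}}$.

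The main obstacle is precisely the $H^{1}$-product estimate on $\nabla u_{f_{\theta_2}}\cdot\nabla v$: since only $C^{1}$ control on $\theta_i$ is assumed, we do not have $\nabla u_{f_{\theta_2}}\in C^{1}$ as a naive application of \eqref{Sobolevinter2} would require, and the cubic polynomial exponent is not automatic. Keeping the exponent at $3$ requires choosing $p$ just above $d$ in the $W^{2,p}$ estimate, balancing it against the $H^{2}$ estimate for $v$, and carefully tracking how $\|f_{\theta_i}\|_{W^{1,\infty}}$ enters the two elliptic constants (each contributing a factor linear in $M$) alongside the linear factor from the link-function transfer.
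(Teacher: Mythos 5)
Your proposal follows essentially the same duality skeleton as the paper's proof: subtract the two Darcy equations, invert the divergence-form operator (you use $\mathcal{L}_{f_{\theta_1}}$, the paper uses $\mathcal{L}_{f_{\theta_2}}$; immaterial since the operator is self-adjoint), integrate by parts to move the divergence onto the test function, and peel off $\norm{f_{\theta_1}-f_{\theta_2}}_{(H^1)^*}$ times the $H^1$-norm of a product of gradients; your auxiliary solution $v$ of $-\nabla\cdot(f_{\theta_1}\nabla v)=\phi$ is exactly the paper's $(H^2)^*\!\to\! L^2$ bound on $\mathcal{L}^{-1}$ unwound, and your mean-value treatment of the link function reproduces the cited Lemma~29. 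The one genuine divergence is the bilinear term: the paper bounds $\norm{\nabla u_{f_{\theta_1}}\cdot\nabla\varphi}_{H^1}\lesssim\norm{u_{f_{\theta_1}}}_{C^2}\norm{\varphi}_{H^2}$ via \eqref{Sobolevinter2} and then invokes an off-the-shelf Schauder-type estimate $\norm{u_f}_{C^2}\lesssim 1+\norm{f}_{C^1}^{2}$ (Lemma~22 of the cited reference), whereas you route through $u\in W^{2,p}$, $p>d$, paired by H\"older against $\nabla v\in H^1\hookrightarrow L^{2p/(p-2)}$. Your route is workable (the condition $p\geq d$ you identify is exactly what makes the H\"older/Sobolev pairing close), but it makes the hardest step harder than necessary: the $W^{2,p}$ bound with $\nabla u\in L^\infty$ requires an $L^p$ bootstrap whose number of steps, and hence whose polynomial degree in $M$, grows with $d$, so the degree-$3$ bookkeeping you worry about is indeed not automatic for general $d$. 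Two mitigating remarks: the exact exponent is immaterial downstream (it only changes $l$ in Condition~\ref{condreg} and hence the lower bound imposed on $\alpha$), and the paper's own displayed chain, $(1+\norm{\theta_2}_{C^1})(1+\norm{\theta_1}_{C^1}^{2})(1+\norm{\theta_1}_{C^1}\vee\norm{\theta_2}_{C^1})$, is itself of degree $4$ before being stated as degree $3$. If you want the clean constant, replace your $W^{2,p}$ step by the paper's $C^2$ bound on $u_f$, which is legitimate here because in all applications $\theta\in H^\alpha$ with $\alpha>2+d/2$, so $f_\theta$ has more than the bare $C^1$ regularity the lemma's statement nominally assumes.
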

\begin{proof}
For the operator $\mathcal{L}_{f}$ defined by
\[\mathcal{L}_{f}u = \nabla\cdot(f\nabla u)\]
with Dirichlet boundary condition, we deduce that
\begin{align*}
    \mathcal{L}_{f_{\theta_2}}[\mathcal{G}(\theta_1)-\mathcal{G}(\theta_2)] &= (\mathcal{L}_{f_{\theta_2}} - \mathcal{L}_{f_{\theta_1}})[u_{f_{\theta_1}}]\\
    & = \nabla \cdot((f_{\theta_2}-f_{\theta_1})\nabla u_{f_{\theta_1}}).
\end{align*}
Using (5.19) in \cite{IntroNonLinear_nickl2020convergence} and the property of link function, we have
\begin{align*}
    \Vert \mathcal{G}(\theta_1)-\mathcal{G}(\theta_2)\Vert _{L^2} &= \Vert \mathcal{L}_{f_{\theta_2}}^{-1}[\nabla \cdot((f_{\theta_1}-f_{\theta_2})\nabla u_{f_{\theta_1}})]\Vert _{L^2}\\
    &\leq C(1 + \norm{f_{\theta_2}}_{C^1}) \Vert \nabla \cdot((f_{\theta_1}-f_{\theta_2})\nabla u_{f_{\theta_1}})\Vert _{(H^2)^*}\\
    &\leq C(1 + \norm{\theta_2}_{C^1}) \Vert \nabla \cdot((f_{\theta_1}-f_{\theta_2})\nabla u_{f_{\theta_1}})\Vert _{(H^2)^*}
\end{align*}
for a constant $C = C(K_{\min},\mathcal{X},d)$. Then it is sufficient to give an upper bound of \\ $\Vert \nabla \cdot((f_{\theta_1}-f_{\theta_2})\nabla u_{f_{\theta_1}})\Vert _{(H^2)^*}$ as follows:
\begin{align*}
    \Vert\nabla \cdot((f_{\theta_1}-f_{\theta_2})\nabla u_{f_{\theta_1}})\Vert_{(H^2)^*} 
    & = \mathop{\sup}_{\varphi \in H_0^2,\ \Vert \varphi\Vert _{H^2} \leq 1}\left\vert\int_{\mathcal{X}}\varphi\nabla \cdot((f_{\theta_1}-f_{\theta_2})\nabla u_{f_{\theta_1}})\right\vert\\
    & = \mathop{\sup}_{\varphi \in H_0^2,\ \Vert \varphi\Vert _{H^2} \leq 1}\left\vert\int_{\mathcal{X}}(f_{\theta_1}-f_{\theta_2}) \nabla u_{f_{\theta_1}} \cdot \nabla\varphi\right\vert\\
    & \leq \norm{f_{\theta_1}-f_{\theta_2}}_{(H^1)^*} \cdot \mathop{\sup}_{\varphi \in H_0^2,\ \Vert \varphi\Vert _{H^2} \leq 1} \norm{\nabla u_{f_{\theta_1}} \cdot \nabla\varphi}_ {H^1}\\
    & \lesssim \norm{u_{f_{\theta_1}}}_{C^2}\norm{f_{\theta_1}-f_{\theta_2}}_{(H^1)^*}.
\end{align*}
Using Lemmas 22 and 29 in \cite{IntroNonLinear_nickl2020convergence}, we obtain
\begin{align*}
    \norm{u_{f_{\theta_1}}}_{C^2}\norm{f_{\theta_1}-f_{\theta_2}}_{(H^1)^*} 
    &\leq C(1 + \norm{f_{\theta_1}}^2_{C^1}) (1+\norm{\theta_1}_{C^1}\vee\norm{\theta_2}_{C^1})\norm{\theta_1-\theta_2}_{(H^1)^*}\\
    &\leq C(1 + \norm{\theta_1}_{C^1}^2) (1+\norm{\theta_1}_{C^1}\vee\norm{\theta_2}_{C^1})\norm{\theta_1-\theta_2}_{(H^1)^*}.
\end{align*}
Then, for $\theta_1, \theta_2 \in C^1(\mathcal{X})$, we have
\begin{equation*}
    \Vert \mathcal{G}(\theta_1)-\mathcal{G}(\theta_2)\Vert _{L^2} \leq C (1+\norm{\theta_1}_{C^1}^3\vee\norm{\theta_2}_{C^1}^3)\Vert \theta_1-\theta_2\Vert _{(H^1)^*}
\end{equation*}
with $C = C(\mathcal{X},d,K_{\min},\norm{g}_{\infty})$.
\end{proof}
\begin{lemma}\label{Dcstab}
there exists a constant $C = C(B,\alpha,K_{\min},g_{\min},\mathcal{X},d)$ such that
\begin{equation}
    \Vert f_{\theta} - f_{\theta_0}\Vert _{L^2} \leq C M^{2\alpha^2 + 1}\Vert u_f-u_{f_0}\Vert _{L^2}^{\frac{\alpha-1}{\alpha+1}}
\end{equation}
for $\Vert f_{\theta_0}\Vert _{C^1} \vee \Vert u_{f_{\theta_0}}\Vert _{C^2} \leq B$, integer $\alpha > 2 + d/2$, and $\Vert \theta\Vert _{H^{\alpha}} \leq M$ with $M>1$.
\end{lemma}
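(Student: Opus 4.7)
The plan is to establish the bound by the two-step strategy used for Darcy stability in \cite{IntroNonLinear_nickl2020convergence}: first a dual negative-order Sobolev estimate, then Sobolev interpolation.

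Subtracting the equations for $f_\theta$ and $f_0$ and using that both $f_\theta$ and $f_0$ equal $1$ on $\partial\mathcal{X}$ (so $(f_\theta - f_0)|_{\partial\mathcal{X}}=0$) and that $u_{f_\theta}-u_{f_0}$ vanishes on $\partial\mathcal{X}$, I first derive the divergence identity
\[
\nabla\cdot\bigl((f_\theta - f_0)\nabla u_{f_0}\bigr) \;=\; -\nabla\cdot\bigl(f_\theta \nabla(u_{f_\theta}-u_{f_0})\bigr) \quad \text{in } \mathcal{X}.
\]
Pairing with a test function $\phi \in H^2(\mathcal{X})\cap H^1_0(\mathcal{X})$ and transferring two derivatives from the right-hand side onto $\phi$ (boundary terms vanish thanks to the zero Dirichlet data above) yields
\[
\int_{\mathcal{X}} (f_\theta - f_0)\,\nabla u_{f_0}\cdot\nabla\phi\,dx \;=\; \int_{\mathcal{X}}(u_{f_\theta}-u_{f_0})\,\nabla\cdot(f_\theta\nabla\phi)\,dx,
\]
whose right-hand side is controlled by $\|u_{f_\theta}-u_{f_0}\|_{L^2}\,\|f_\theta\|_{C^1}\,\|\phi\|_{H^2}$.

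Next, I would convert this $\nabla u_{f_0}$-weighted dual pairing into a genuine $(H^2)^*$-bound on $f_\theta - f_0$: for each $w\in H^2_0(\mathcal{X})$ select a test $\phi\in H^2\cap H^1_0$ with $\nabla u_{f_0}\cdot\nabla\phi = w$ modulo lower-order terms that can be absorbed using $(f_\theta - f_0)|_{\partial\mathcal{X}}=0$, with $\|\phi\|_{H^2}\lesssim\|w\|_{H^2}$ controlled via the non-degeneracy of $|\nabla u_{f_0}|$ (a consequence of $g\ge g_{\min}>0$ and the maximum principle, already encoded in $\|u_{f_0}\|_{C^2}\le B$). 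This delivers
\[
\|f_\theta - f_0\|_{(H^2)^*} \;\le\; C_B\,(1+\|f_\theta\|_{C^1})\,\|u_{f_\theta}-u_{f_0}\|_{L^2}.
\]

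Finally, with $\vartheta := 2/(\alpha+1)$ chosen so that $\vartheta(\alpha-1) - (1-\vartheta)\cdot 2 = 0$, I would apply the Sobolev interpolation inequality
\[
\|f_\theta - f_0\|_{L^2} \lesssim \|f_\theta - f_0\|_{H^{\alpha-1}}^{2/(\alpha+1)}\,\|f_\theta - f_0\|_{(H^2)^*}^{(\alpha-1)/(\alpha+1)}.
\]
The polynomial factor $M^{2\alpha^2+1}$ is then obtained by controlling $\|f_\theta\|_{H^{\alpha-1}}$ and $\|f_\theta\|_{C^1}$ in terms of $M$ through the chain-rule/composition estimates for the link function $\Phi$ (as invoked in Lemmas \ref{Dcbound}--\ref{Dclip}), using also Lemma~\ref{Dcbound} for $\|u_{f_\theta}\|_{H^{\alpha+1}}$ where it enters the test-function argument, and raising each factor to its interpolation exponent. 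I expect the main obstacle to be the duality step: constructing an $H^2$-bounded $\phi$ from a prescribed $w\in H^2_0(\mathcal{X})$ so that $\nabla u_{f_0}\cdot\nabla\phi$ reproduces $w$ up to harmless remainders, which is precisely where the geometric assumption on $g$ and the smoothness of $u_{f_0}$ (via $\|u_{f_0}\|_{C^2}\le B$) are indispensable, and where the dependence of $C_B$ on $g_{\min}$ and the $C^2$-norm of $u_{f_0}$ is generated.
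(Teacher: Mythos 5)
Your overall architecture (a stability estimate followed by Sobolev interpolation) is the right shape, but the crux of your argument — the duality step producing
\[
\Vert f_{\theta}-f_{0}\Vert_{(H^{2})^{*}}\;\lesssim\;(1+\Vert f_{\theta}\Vert_{C^{1}})\,\Vert u_{f_{\theta}}-u_{f_{0}}\Vert_{L^{2}}
\]
— is precisely the step you leave unproved, and it is not a routine gap. To run it you must, for every $w$, solve the transport-type equation $\nabla u_{f_{0}}\cdot\nabla\phi = w$ (modulo lower-order terms) with $\Vert\phi\Vert_{H^{2}}\lesssim\Vert w\Vert_{H^{2}}$. Since $u_{f_{0}}$ vanishes on $\partial\mathcal{X}$ and is non-constant, $\nabla u_{f_{0}}$ necessarily vanishes at interior critical points, so this equation degenerates there and a uniform $H^{2}$ solvability theory is not available. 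The classical stability argument (Richter's, which is what \cite[Lemma 24]{IntroNonLinear_nickl2020convergence} packages) handles the critical points by exploiting the zeroth-order coefficient $\Delta u_{f_{0}}$, which stays bounded below there when $g\geq g_{\min}>0$; but that argument yields $\Vert f_{\theta}-f_{0}\Vert_{L^{2}}\lesssim \Vert f_{\theta}\Vert_{C^{1}}\Vert u_{f_{\theta}}-u_{f_{0}}\Vert_{H^{2}}$, i.e. an $L^{2}$-to-$H^{2}$ bound, not the two-derivative-shifted dual version you need. So as written your proposal rests on a stability estimate strictly stronger than what is known, with no construction supplied.

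The paper's proof sidesteps this entirely: it takes $\Vert f_{\theta}-f_{0}\Vert_{L^{2}}\leq C\Vert f_{\theta}\Vert_{C^{1}}\Vert u_{f_{\theta}}-u_{f_{0}}\Vert_{H^{2}}$ directly from \cite[Lemma 24]{IntroNonLinear_nickl2020convergence}, then performs the interpolation on the \emph{solution} rather than the parameter, namely $\Vert u_{f_{\theta}}-u_{f_{0}}\Vert_{H^{2}}\lesssim\Vert u_{f_{\theta}}-u_{f_{0}}\Vert_{L^{2}}^{(\alpha-1)/(\alpha+1)}\Vert u_{f_{\theta}}-u_{f_{0}}\Vert_{H^{\alpha+1}}^{2/(\alpha+1)}$, and controls the $H^{\alpha+1}$ factor by the forward regularity estimate (Lemma \ref{Dcbound}) together with the link-function bounds; this is where the $M^{2\alpha^{2}+1}$ factor comes from. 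If you want to salvage your route, you would either have to prove the degenerate-transport solvability (which I do not believe holds in the required generality), or switch to interpolating the $H^{2}$ norm of $u_{f_{\theta}}-u_{f_{0}}$ as the paper does.
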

\begin{proof}
    From \cite[Lemma 24]{IntroNonLinear_nickl2020convergence}, we know
    \[\Vert f_{\theta} - f_{\theta_0}\Vert _{L^2} \leq C\Vert f_{\theta} \Vert_{C^1} \Vert u_{f_\theta}-u_{f_{\theta_0}} \Vert_{H^2}\]
    for a constant $C = C(B,\alpha,K_{\min},g_{\min},\mathcal{X},d)$.
    Then we use interpolation inequality for Sobolev norms and Lemmas 23 and 29 in \cite{IntroNonLinear_nickl2020convergence} to bound $\Vert u_{f_\theta}-u_{f_{\theta_0}} \Vert_{H^2}$ as follows:
    \begin{align*}
        \Vert u_{f_\theta}-u_{f_{\theta_0}} \Vert_{H^2} &\lesssim \Vert u_{f_\theta}-u_{f_{\theta_0}} \Vert_{L^2}^{\frac{\alpha-1}{\alpha+1}} \Vert u_{f_\theta}-u_{f_{\theta_0}} \Vert_{H^{\alpha+1}}^{\frac{2}{\alpha+1}}\\
        &\lesssim \Vert u_{f_\theta}-u_{f_{\theta_0}} \Vert_{L^2}^{\frac{\alpha-1}{\alpha+1}} (2 + \Vert f_{\theta} \Vert_{H^\alpha}^{\alpha^2 +\alpha} + \Vert f_{\theta_0} \Vert_{H^\alpha}^{\alpha^2 +\alpha})^{\frac{2}{\alpha+1}}\\
        &\lesssim \Vert u_{f_\theta}-u_{f_{\theta_0}} \Vert_{L^2}^{\frac{\alpha-1}{\alpha+1}} \left[2 + (1+\Vert \theta \Vert_{H^\alpha}^{\alpha})^{\alpha^2 +\alpha} + (1+\Vert \theta_0 \Vert_{H^\alpha}^{\alpha})^{\alpha^2 +\alpha}\right]^{\frac{2}{\alpha+1}}.
    \end{align*}
    We use \cite[Lemma 29]{IntroNonLinear_nickl2020convergence} again to give a bound of $\Vert f_{\theta} \Vert_{C^1}$:
    \[\Vert f_{\theta} \Vert_{C^1} \lesssim 1 + \Vert \theta \Vert_{C^1}.\]
    Finally, we have a bound of $\Vert f_{\theta} - f_{\theta_0}\Vert _{L^2}$ as follows:
    \begin{equation*}
        C(1 + \Vert \theta \Vert_{C^1}) \left[2 + (1+\Vert \theta \Vert_{H^\alpha}^{\alpha})^{\alpha^2 +\alpha} + (1+\Vert \theta_0 \Vert_{H^\alpha}^{\alpha})^{\alpha^2 +\alpha}\right]^{\frac{2}{\alpha+1}} \Vert u_{f_\theta}-u_{f_{\theta_0}} \Vert_{L^2}^{\frac{\alpha-1}{\alpha+1}}.
    \end{equation*}
    Thus, for $M>1$, we obtain
    \begin{equation*}
        \Vert f_{\theta} - f_{\theta_0}\Vert _{L^2} \leq C M^{2\alpha^2 + 1}\Vert u_f-u_{f_0}\Vert _{L^2}^{\frac{\alpha-1}{\alpha+1}}.
    \end{equation*}
\end{proof}
\begin{proof}[Proof of Theorem \ref{mainthmDarcy}]
We will verify Conditions \ref{condreg} and \ref{condstab} for the forward map \( \mathcal{G} \) of problem (\ref{Darcy}) with \( \mathcal{R} = H^{\alpha} \). From Theorem \ref{Dcbound}, we have
\[\mathop{\sup}_{\theta \in B_{\mathcal{R}}(M)}\Vert u_{f_{\theta}} \Vert_{H^{t+1}} \leq CM^{t^3+t^2}\]
for $t = \lceil d/2 \rceil+2$. The above inequality, combined with the Sobolev embedding \( H^2 \subset C^0 \), implies that
\[\mathop{\sup}_{\theta \in B_{\mathcal{R}}(M)}\mathop{\sup}_{x \in \mathcal{X}}\vert\mathcal{G}(\theta)(x)\vert \leq CM^p.\]
Therefore, we have condition (\ref{bound}) verified with $p = t^3 + t^2$.

Theorem \ref{Dclip} implies that
\[\Vert \mathcal{G}(\theta_1)-\mathcal{G}(\theta_2)\Vert _{L^2} \leq C (1+\norm{\theta_1}_{C^1}^3\vee\norm{\theta_2}_{C^1}^3)\Vert \theta_1-\theta_2\Vert _{(H^1)^*}\]
for $\theta_1,\theta_2 \in \mathcal{R}$.
Then, for an integer $\alpha > 2 + d/2$,
\[\Vert \mathcal{G}(\theta_1)-\mathcal{G}(\theta_2)\Vert _{L^2} \leq C (1+\norm{\theta_1}_{H^{\alpha}}^3\vee\norm{\theta_2}_{H^{\alpha}}^3)\Vert \theta_1-\theta_2\Vert _{(H^1)^*}\]
which verifies condition (\ref{lip}) with ${\kappa}=1$ and $l=3$.

For condition (\ref{stab}), Theorem \ref{Dcstab} implies that
\[\Vert f_{\theta} - f_{\theta_0}\Vert _{L^2}^{\frac{\alpha+1}{\alpha-1}} \leq C M^{\frac{(2\alpha^2+1)(\alpha+1)}{\alpha-1}}\Vert u_f-u_{f_0}\Vert _{L^2}\]
for $\theta \in B_{\mathcal{R}}(M)$.

Given our requirement on \( \alpha \), we have
\[\alpha + {\kappa} \geq 2d = \frac{d(l+1)}{2}.
\]
In conclusion, using Theorem \ref{finalthm} with the conditions verified above, we obtain
\[P_{\theta_0}^{(N)}\hat{Q} \Vert f_{\theta} - f_{\theta_0}\Vert_{L^2}^{\frac{2}{p+q+1}\cdot\frac{\alpha+1}{\alpha-1}}\lesssim \varepsilon_N^{\frac{2}{p+q+1}}\log N,\]
where $p = t^3 + t^2$ and $q = \frac{(2\alpha^2+1)(\alpha+1)}{\alpha-1}$.
\end{proof}
\subsection{Inverse potential problem for a subdiffusion equation}
In this subsection, we will prove the theorems in Sections 4.2 and 6.2. We first provide some preliminary knowledge.  In the followings, we assume $q\in \mathcal{I}$. 
We define the operator $A = -\Delta  + qI$ on the domain $\Omega$ with a homogeneous Dirichlet boundary condition and 
$\{ \lambda_j(q) \}_{j=1}^{\infty}$  and $\{ \varphi_j(q) \}_{j=1}^{\infty}$ are, respectively, the eigenvalues and eigenfunctions.
\par
Moreover, for any $v \in  H^2_0 (\Omega )$ and $q \in \mathcal{I}$, the following two-sided inequality
holds:
\begin{equation}\label{Fracnormestimate}
c_1\norm{v}_{H^2}(\Omega)\leq\norm{A_qv}_{L^2(\Omega)} + \norm{v}_{L^2}(\Omega) \leq c_2\norm{v}_{H^2(\Omega)}
\end{equation}
with constants $c_1>0$ and $c_2>0$ independent of $q$.
Because
\begin{equation*}
\norm{A_qv}_{(H^{1}_0)^*} = \sup_{\norm{\phi}_{H^1_0}\leq 1}\pdt{A_qv}{\phi}_{L^2} \geq  \frac{\pdt{A_qv}{v}_{L^2}}{\norm{v}_{H^1}}
    =  \frac{\pdt{\nabla v}{\nabla v}_{L^2} + \pdt{qv}{v}_{L^2}}{\norm{v}_{H^1}}
    \geq C(\Omega)\norm{v}_{H^1},
\end{equation*}
we further have
\begin{equation}\label{Fracnormestimate*}
c_1\norm{v}_{H^2(\Omega)}\leq\norm{A_qv}_{L^2(\Omega)}\leq c_2\norm{v}_{H^2(\Omega)},
\end{equation}
with constants $c_1>0$ and $c_2>0$ independent of $q$.
\par
For any $s\in \mathbb{R}$, we define the Hilbert space $\Dot{H}^s_q(\Omega)$ by
\[\Dot{H}^s_q(\Omega) = \Big\{v\in L^2(\Omega) : \sum_{j=1}^{\infty}\lambda^s_j\abs{(v,\varphi_j)}^2 < \infty \Big\}\]
with the norm
\[\norm{v}_{\Dot{H}^s_q(\Omega)}^2 = \sum_{j=1}^{\infty}\lambda^s_j\abs{(v,\varphi_j)}^2,\]
where we will omit the $q$ in the notation $\lambda_j,\phi_j$ when there is no ambiguity.
If $\Omega$ has a smooth boundary, the space $\Dot{H}^s_q(\Omega)$ can also be defined by
\begin{equation*}
\Dot{H}^s_q(\Omega) = \Big\{v\in H^s(\Omega) : v=A_qv=\dots=A_q^{j-1}v=0\ \text{on}\ \partial\Omega \Big\}.
\end{equation*}
for $s\in\mathbb{N}$ and $j\in\mathbb{N}$ such that $2j-3/2 < s < 2j+1/2$.
In particular, $\Dot{H}^2_q(\Omega) = H_0^2$ (see \cite[A.2.4]{jin2021fractional}).
\par
We denote $h_q$ as the solution to the equation:
\begin{equation*}
    \left\{\begin{aligned}
    & - \partial_{xx}h_q + qh_q = f \quad \mbox{in } \Omega,\\
    &h_q(0) = a_0, h_q(1) = a_1.
    \end{aligned}\right.  
\end{equation*}
\par
We can represent the solution $u_{\beta,q}$ to the problem \eqref{Fractional} in the main text using the eigenpairs $\{ \lambda_j(q) \}_{j=1}^{\infty}$ and $\{ \varphi_j(q) \}_{j=1}^{\infty}$ as described in \cite[section 6.2]{jin2021fractional}. Specifically, we define two solution operators $F_{\beta,q}(t)$ and $E_{\beta,q}(t)$ by
\begin{equation*}
F_{\beta,q}(t)v = \sum_{j=1}^{\infty} E_{\beta,1}(-\lambda_jt^{\beta})(v,\varphi_j)\varphi_j,\quad \text{and\ }E_{\beta,q}(t)v = \sum_{j=1}^{\infty} t^{\beta-1}E_{\beta,\beta}(-\lambda_jt^{\beta})(v,\varphi_j)\varphi_j,
\end{equation*}
where we will omit the $q$ in the notation $\lambda_j,\phi_j$ when there is no ambiguity, and $E_{\beta,1}, E_{\beta,\beta}$ are the Mittag-Leffler functions (see \cite[section 3.1]{jin2021fractional}.
Then, the solution $u_{\beta,q}$ of problem \eqref{Fractional} can be represented as
\begin{equation}\label{Fracsolutions}
u_{\beta,q}(t) = h_q + F_{\beta,q}(t)(u_0-h_q).
\end{equation}
\textbf{Proof of theorems in Section 4.2 :} 
We will prove Theorem \ref{mainthmFrac} by verifying Conditions \ref{condreg} and \ref{condstab}.
Next, we estimate the regularity and conditional stability of problem \eqref{Fractional}.
\begin{lemma}\label{FracoperatorReg}
Let $q \in \mathcal{I}$, $\beta \in [\beta^-,\beta^+]$ for $0<\beta^-<\beta^+<1$. Then, for any $t>0$ and any $v \in L^2 (\Omega )$, there exists $c>0$ independent of $q$ and $\beta$ such that
\begin{equation*}
    \norm{F_{\beta,q}(t)v}_{L^2} \leq ct^{-\beta}\norm{v}_{(H^2_0)^*},\quad \norm{F'_{\beta,q}(t)v}_{L^2} \leq ct^{-\beta-1}\norm{v}_{(H^2_0)^*}.
\end{equation*}
\end{lemma}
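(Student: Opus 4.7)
The plan is to expand $F_{\beta,q}(t)v$ in the eigensystem $\{\lambda_j,\varphi_j\}$ of $A_q = -\partial_{xx}+qI$ and reduce both estimates to pointwise decay bounds for the Mittag--Leffler functions, interpreted against a spectral characterisation of the $(H^2_0)^*$ norm. My first step is to establish
\begin{equation*}
\|v\|_{(H^2_0)^*}^2 \simeq \sum_{j=1}^\infty \lambda_j^{-2}|(v,\varphi_j)|^2,
\end{equation*}
with constants independent of $q\in\mathcal{I}$. This is obtained by dualising the two-sided bound \eqref{Fracnormestimate*}, which yields $\Dot{H}^2_q = H^2_0$ with equivalent norms uniformly in $q$, and hence by duality $(H^2_0)^* = \Dot{H}^{-2}_q$ with equivalent norms.

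For the first inequality I would substitute the standard uniform bound $|E_{\beta,1}(-x)|\leq C/(1+x)$ (valid for $x\geq 0$ and $\beta\in[\beta^-,\beta^+]$) into the eigenfunction expansion, and exploit $(1+\lambda_j t^\beta)^{-2}\leq \lambda_j^{-2}t^{-2\beta}$, giving
\begin{equation*}
\|F_{\beta,q}(t)v\|_{L^2}^2 = \sum_j |E_{\beta,1}(-\lambda_j t^\beta)|^2\,|(v,\varphi_j)|^2 \leq C t^{-2\beta}\sum_j \lambda_j^{-2}|(v,\varphi_j)|^2 \lesssim t^{-2\beta}\|v\|_{(H^2_0)^*}^2.
\end{equation*}

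For the derivative bound I would differentiate termwise via the identity $\tfrac{d}{dt}E_{\beta,1}(-\lambda t^\beta) = -\lambda t^{\beta-1}E_{\beta,\beta}(-\lambda t^\beta)$, and then invoke the sharper decay $|E_{\beta,\beta}(-x)|\leq C/(1+x)^2$. This stronger bound holds because in the Wiman asymptotic expansion of $E_{\beta,\beta}(-x)$ the would-be leading term $-[x\,\Gamma(\beta-\beta)]^{-1}=-[x\,\Gamma(0)]^{-1}$ vanishes, pushing the genuine leading behaviour to order $x^{-2}$. Using it,
\begin{equation*}
\Big|\tfrac{d}{dt}E_{\beta,1}(-\lambda_j t^\beta)\Big|^2\lambda_j^2 \leq \frac{C\,\lambda_j^4 t^{2\beta-2}}{(1+\lambda_j t^\beta)^4} \leq C\, t^{-2\beta-2}
\end{equation*}
uniformly in $\lambda_j,t>0$, from which the second bound follows exactly as in the first case.

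The main obstacle is securing the $(1+x)^{-2}$ decay of $E_{\beta,\beta}$ uniformly on $\beta\in[\beta^-,\beta^+]$; the weaker $(1+x)^{-1}$ bound (which would otherwise suffice) only yields $\|F'_{\beta,q}(t)v\|_{L^2}\lesssim t^{-1}\|v\|_{L^2}$, losing both a power of $t$ and the gain of two Sobolev derivatives. I would address this by splitting into the regimes $\lambda_j t^\beta\leq 1$, where $E_{\beta,\beta}(-x)$ is bounded directly from its power series uniformly in $\beta$ on the compact subinterval, and $\lambda_j t^\beta>1$, where the uniform Wiman remainder estimate provides the $O(x^{-2})$ tail. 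Uniformity in $q\in\mathcal{I}$ is then automatic: $q$ enters only through the $\lambda_j$'s and the constants in the norm equivalence, both of which depend only on $\|q\|_{L^\infty}\leq M_0$.
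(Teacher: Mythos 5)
Your proposal is correct and follows essentially the same route as the paper: expand in the eigensystem of $A_q$, use the uniform Mittag--Leffler bounds $|E_{\beta,1}(-x)|\lesssim (1+x)^{-1}$ and $|E_{\beta,\beta}(-x)|\lesssim (1+x)^{-2}$ (the paper cites these from the literature where you justify the latter via Wiman's expansion), and control $\sum_j\lambda_j^{-2}|(v,\varphi_j)|^2$ by $\|v\|_{(H^2_0)^*}^2$ through the $q$-uniform equivalence $\dot H^2_q\simeq H^2_0$. The only cosmetic difference is that you differentiate $E_{\beta,1}(-\lambda_j t^\beta)$ termwise while the paper uses the operator identity $F'_{\beta,q}(t)=-A_qE_{\beta,q}(t)$, which amounts to the same computation on each eigenmode.
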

\begin{proof}
For any $\phi \in  H^2_0 (\Omega )$ and $q \in \mathcal{I}$, definition of $\Dot{H}^s_q(\Omega)$ and inequality \eqref{Fracnormestimate*} imply that 
\begin{equation}\label{Fracnormestimate**}
    c_1\norm{\phi}_{H^2(\Omega)}\leq\norm{\phi}_{\Dot{H}^2_q(\Omega)}\leq c_2\norm{\phi}_{H^2(\Omega)}
\end{equation}
with $c_1>0$ and $c_2>0$ independent of $q$. Using the above inequality, there exists a constant $c>0$ independent of $q$ such that
\begin{equation}\label{FracoperatorReg1}
    \norm{v}_{\Dot{H}^{-2}_q(\Omega)}= \sup_{\norm{\phi}_{\Dot{H}^{2}_q(\Omega)}\leq 1}\pdt{v}{\phi}_{L^2} \leq \sup_{\norm{\phi}_{H_0^{2}(\Omega)}\leq c}\pdt{v}{\phi}_{L^2} \leq c \norm{v}_{(H_0^2(\Omega))^*}.
\end{equation}
By the definition of $F_{\beta,q}(t)$, part (a) of Lemma 2.3 in \cite{Frac_dang2018continuity}, and inequality \eqref{FracoperatorReg1}, we have
\begin{equation*}
    \begin{aligned}
        &\norm{F_{\beta,q}(t)v}_{L^2}^2 = \sum_{j=1}^{\infty} \abs{E_{\beta,1}(-\lambda_jt^{\beta})}^2\pdt{v}{\varphi}_{L^2}^2\\
=&t^{-2\beta}\sum_{j=1}^{\infty}\lambda_j^2t^{2\beta}\abs{E_{\beta,1}(-\lambda_jt^{\beta})}^2\lambda_j^{-2}\pdt{v}{\varphi}_{L^2}^2\\
\leq&C(\beta^-,\beta^+)t^{-2\beta}\sum_{j=1}^{\infty}\frac{\lambda_j^2t^{2\beta}}{(1+\lambda_jt^{\beta})^2}\lambda_j^{-2}\pdt{v}{\varphi}_{L^2}^2.\\
\leq&C(\beta^-,\beta^+)t^{-2\beta}\norm{v}_{\Dot{H}^{-2}_q(\Omega)}^2\leq C(\beta^-,\beta^+)t^{-2\beta}\norm{v}_{(H_0^2(\Omega))^*}^2.
    \end{aligned}
\end{equation*}
\par
For the operator $F'_{\beta,q}(t)$, we have
$\norm{F'_{\beta,q}(t)v}_{L^2} = \norm{A_qE_{\beta,q}(t)v}_{L^2}$ (see \cite[Lemma 6.2]{jin2021fractional}).
Again the definition of $E_{\beta,q}(t)$, part (a) of Lemma 2.3 in \cite{Frac_dang2018continuity}, and inequality \eqref{FracoperatorReg1} imply that
\begin{equation*}
    \begin{aligned}
        &\norm{A_qE_{\beta,q}(t)v}_{L^2} = \sum_{j=1}^{\infty} \lambda_j^2t^{2\beta-2}\abs{E_{\beta,\beta}(-\lambda_jt^{-\beta})}^2\pdt{v}{\varphi}_{L^2}^2\\
        =&t^{-2\beta-2}\sum_{j=1}^{\infty}\lambda_j^4t^{4\beta}\abs{E_{\beta,\beta}(-\lambda_jt^{-\beta})}^2\lambda_j^{-2}\pdt{v}{\varphi}_{L^2}^2\\
        \leq&C(\beta^-,\beta^+)t^{-2\beta-2}\sum_{j=1}^{\infty}\frac{\lambda_j^4t^{4\beta}}{(1+(\lambda_jt^{-\beta})^2)^2}\lambda_j^{-2}\pdt{v}{\varphi}_{L^2}^2\\
        \leq& C(\beta^-,\beta^+)t^{-2\beta-2}\norm{v}_{\Dot{H}^{-2}_q(\Omega)}\leq C(\beta^-,\beta^+)t^{-2\beta-2}\norm{v}_{(H^2_0(\Omega))^*}.
    \end{aligned}
\end{equation*}
\end{proof}
\begin{lemma}\label{FracderivativeReg}
Let $q_1,q_2 \in \mathcal{I}$, $\beta \in [\beta^-,\beta^+]$. Then, for a sufficiently large fixed $T_0 >0$ and any $t \geq T_0$, there exists $c>0$ independent of $q_1,q_2,\beta$, and $T_0$ such that
\begin{equation*}
    \norm{\partial_t^{\beta}u_{q_1}(t) - \partial_t^{\beta}u_{q_2}(t)}_{L^2(\Omega)} \leq ct^{-\beta}\norm{q_1-q_2}_{(H^2_0(\Omega))^*}.
\end{equation*}
\end{lemma}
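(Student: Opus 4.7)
The strategy is to extract the leading $t^{-\beta}$ behaviour of $\partial_t^\beta u_{\beta,q}$ via the large-time asymptotic of the Mittag--Leffler function and then estimate the residual. Because the Caputo derivative annihilates constants and $h_q$ is time-independent, \eqref{Fracsolutions} gives $\partial_t^\beta u_{\beta,q}(t) = -A_q F_{\beta,q}(t)(u_0 - h_q)$. Since $\lambda_1(q) \geq \lambda_1(0) > 0$ uniformly for $q \in \mathcal{I}$, one may pick $T_0$ so large that $\lambda_j(q)\, t^\beta \geq c_0$ for all $j \geq 1$, $q \in \mathcal{I}$, $\beta \in [\beta^-,\beta^+]$, and $t \geq T_0$. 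The Wiman expansion $\lambda E_{\beta,1}(-\lambda t^\beta) = \tfrac{t^{-\beta}}{\Gamma(1-\beta)} + O(\lambda^{-1}t^{-2\beta})$ then holds uniformly, yielding the operator identity $A_q F_{\beta,q}(t) = \tfrac{t^{-\beta}}{\Gamma(1-\beta)} I + R(t,q)$ with $\norm{R(t,q)v}_{L^2(\Omega)} \leq C t^{-2\beta}\norm{v}_{(H_0^2(\Omega))^*}$, where the last inequality is obtained by the same duality-to-negative-Sobolev-norm step used in \eqref{FracoperatorReg1}.

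Substituting this into the difference produces
\[
\partial_t^\beta u_{\beta,q_1}(t) - \partial_t^\beta u_{\beta,q_2}(t) = \tfrac{t^{-\beta}}{\Gamma(1-\beta)}(h_{q_1} - h_{q_2}) + R(t,q_2)(u_0 - h_{q_2}) - R(t,q_1)(u_0 - h_{q_1}).
\]
The leading term is controlled by an elliptic stability argument: $h_{q_1} - h_{q_2}$ solves $A_{q_1}(h_{q_1} - h_{q_2}) = -(q_1 - q_2) h_{q_2}$ with zero Dirichlet data, and combining the self-adjoint inversion bound $\norm{A_{q_1}^{-1} w}_{L^2} \leq C \norm{w}_{(H_0^2)^*}$ (the dual of \eqref{Fracnormestimate**}) with the one-dimensional multiplier inequality $\norm{(q_1-q_2) h_{q_2}}_{(H_0^2)^*} \leq C \norm{h_{q_2}}_{H^2}\norm{q_1-q_2}_{(H_0^2)^*}$ (valid because $H^2 \hookrightarrow C^1$ in one dimension, so $h_{q_2}\phi \in H_0^2$ whenever $\phi \in H_0^2$) yields $\norm{h_{q_1} - h_{q_2}}_{L^2} \leq C \norm{q_1 - q_2}_{(H_0^2(\Omega))^*}$.

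The main obstacle is the remainder difference, which I split as $R(t,q_2)(h_{q_1} - h_{q_2}) + [R(t,q_2) - R(t,q_1)](u_0 - h_{q_1})$. The first summand is immediately bounded by $C t^{-2\beta}\norm{h_{q_1} - h_{q_2}}_{(H_0^2)^*} \leq C t^{-2\beta}\norm{q_1 - q_2}_{(H_0^2)^*}$ via the remainder estimate and the elliptic stability just established. For the operator difference I would use the Hankel-contour representation
\[
A_q F_{\beta,q}(t) - \tfrac{t^{-\beta}}{\Gamma(1-\beta)} I = -\tfrac{1}{2\pi \im}\int_\Gamma e^{zt} z^{2\beta-1} (z^\beta + A_q)^{-1} \, dz
\]
together with the resolvent identity $(z^\beta + A_{q_2})^{-1} - (z^\beta + A_{q_1})^{-1} = (z^\beta + A_{q_2})^{-1}(q_1 - q_2)(z^\beta + A_{q_1})^{-1}$. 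Uniform-in-$q$ resolvent bounds $\norm{(z^\beta + A_q)^{-1}}_{L^2 \to H_0^2} \leq C$ and, by duality, $\norm{(z^\beta + A_q)^{-1}}_{(H_0^2)^* \to L^2} \leq C$ along $\Gamma$ (both of which hold because $\sigma(A_q) \subset [\lambda_1(0), \infty)$ uniformly in $q \in \mathcal{I}$), combined with the multiplier bound $\norm{(q_1 - q_2)\psi}_{(H_0^2)^*} \leq C\norm{q_1-q_2}_{(H_0^2)^*}\norm{\psi}_{H^2}$, show that the integrand has $L^2$-norm controlled by $C\norm{q_1 - q_2}_{(H_0^2)^*}\norm{u_0 - h_{q_1}}_{L^2}$ uniformly along $\Gamma$. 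The standard Hankel-contour bound $\int_\Gamma |e^{zt}|\,|z|^{2\beta-1}\,|dz| \lesssim t^{-2\beta}$ then produces the required $C t^{-2\beta}\norm{q_1 - q_2}_{(H_0^2)^*}$ estimate. Choosing $T_0 \geq 1$ ensures $t^{-2\beta} \leq t^{-\beta}$ for $t \geq T_0$, and summing the three contributions completes the proof. The hardest step is securing the uniform-in-$q$ and uniform-in-$\beta$ resolvent and multiplier bounds along a single Hankel contour whose opening angle must stay strictly below $\pi/(2\beta^+)$ so that $z^\beta$ avoids the spectrum of $-A_q$ for every admissible $\beta$.
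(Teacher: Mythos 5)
Your argument is essentially correct, but it follows a genuinely different route from the paper. You work directly from the spectral representation $\partial_t^{\beta}u_{\beta,q}(t)=-A_qF_{\beta,q}(t)(u_0-h_q)$, peel off the leading large-time term $\tfrac{t^{-\beta}}{\Gamma(1-\beta)}(h_{q_1}-h_{q_2})$ via the Mittag--Leffler asymptotics (controlling it by elliptic stability for $h_{q_1}-h_{q_2}$, exactly as in the paper's \eqref{Fraclip1} plus the multiplier bound), and handle the $O(t^{-2\beta})$ remainder through the Hankel-contour representation and the second resolvent identity, which is the natural way to compare the two operators $A_{q_1}$ and $A_{q_2}$ without matching their eigenbases. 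The paper instead writes $w=u_{q_1}-u_{q_2}$ as the solution of a subdiffusion problem with zero initial data and source $(q_2-q_1)u_{q_2}$, expresses $\partial_t^{\beta}w=\partial_t\phi$ with $\phi(t)=\int_0^tF_{q_1}(t-s)(q_2-q_1)u_{q_2}(s)\,ds$, and extracts the $t^{-\beta}$ decay by the identity $t\phi'(t)=\partial_t(t\phi(t))-\phi(t)$, bounding both $\partial_t(t\phi(t))$ and $\phi(t)$ by $ct^{1-\beta}\norm{q_1-q_2}_{(H_0^2)^*}$ using only the smoothing bounds of Lemma \ref{FracoperatorReg}. The paper's route avoids resolvent calculus entirely and stays at the level of the Duhamel formula; yours is heavier on operator-theoretic machinery but yields more structure (an explicit leading term plus an $O(t^{-2\beta})$ remainder), which is of independent interest in view of the large-$T$ discussion around \eqref{Fracstabilityestimate}.

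Two small points. First, your stated restriction that the contour half-angle stay below $\pi/(2\beta^+)$ is wrong and, taken literally, incompatible with the decay of $e^{zt}$ along the rays (which needs the half-angle $\theta>\pi/2$): since $\sigma(A_q)\subset[\lambda_1(0),\infty)$, what is actually required is $\beta\theta<\pi$ together with $\beta\theta$ bounded away from $0$ and $\pi$, so that $\operatorname{dist}(-z^{\beta},[0,\infty))\gtrsim |z|^{\beta}$ uniformly; any fixed $\theta\in(\pi/2,\pi)$ works for all $\beta\in[\beta^-,\beta^+]\subset(0,1)$. Second, you should record that the Wiman expansion $E_{\beta,1}(-x)=\tfrac{1}{\Gamma(1-\beta)x}+O(x^{-2})$ and the sectorial resolvent bounds hold uniformly for $\beta$ in a compact subinterval of $(0,1)$, since the lemma demands a constant independent of $\beta$; this is standard but needs to be said. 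Neither point invalidates the approach.
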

\begin{proof}
Let $w = u_{q_1}-u_{q_2}.$ Then $w$ solves
\begin{equation}\label{FracderivativeRegEquation}
    \left\{\begin{array}{ll}
     \partial_t^{\beta} w - \Delta w + q_1w = (q_2-q_1)u_{q_2}&  \mbox{in } \Omega\times(0,T],\\
     w=0 & \mbox{on } \partial\Omega\times(0,T],\\
     w(0) = 0 & \mbox{in } \Omega.
\end{array}\right.  
\end{equation}
Referring to the solution representation formula outlined in \cite[Section 6.2.1]{jin2021fractional}, the solution for equation \eqref{FracderivativeRegEquation} is given by:
\[w(t) = \int_0^t E_{q_1}(t-s)(q_2-q_1)u_{q_2}(s)ds.\]
Then, \eqref{FracderivativeRegEquation} combined with the identities $\partial_t^{\beta}F_{q}=-A_qF_q(t)$ and $A_qE_{q}=-A_qF_q(t)$ \cite[Lemma 6.3]{jin2021fractional} leads to
\begin{equation*}
    \begin{aligned}
       \partial_t^{\beta} w(t) = &-A_q \int_0^t E_{q_1}(t-s)(q_2-q_1)u_{q_2}(s)ds + (q_2-q_1)u_{q_2}(t)\\
       =&\int_0^t F'_{q_1}(t-s)(q_2-q_1)u_{q_2}(s)ds + (q_2-q_1)u_{q_2}(t)\\
       =&\partial_t\int_0^tF_{q_1}(t-s)(q_2-q_1)u_{q_2}(s)ds.
    \end{aligned}
\end{equation*}
Let $\phi(t)=\int_0^tF_{q_1}(t-s)(q_2-q_1)u_{q_2}(s)ds$. Combining inequality \eqref{Sobolevinter1} of the main text with Lemma \ref{FracoperatorReg} and Lemma 4.3 from \cite{jin2023inverse}, it implies that
\begin{equation*}
    \begin{aligned}
        \norm{\phi(t)}_{L^2(\Omega)}\leq&\int_0^t\norm{F_{q_1}(t-s)(q_2-q_1)u_{q_2}(s)}_{L^2(\Omega)}ds\\
        \leq&c\int_0^t(t-s)^{-\beta}\norm{(q_2-q_1)u_{q_2}(s)}_{(H_0^2(\Omega))^*}ds\\
        \leq&c\norm{q_1-q_2}_{(H_0^2(\Omega))^*}\int_0^t(t-s)^{-\beta}\norm{u_{q_2}(s)}_{H^2(\Omega)}ds\\
        \leq&c\norm{q_1-q_2}_{(H_0^2(\Omega))^*}\int_0^t(t-s)^{-\beta}(1+s^{-\beta})ds\\
        \leq&c(t^{1-\beta}+t^{1-2\beta})\norm{q_1-q_2}_{(H_0^2(\Omega))^*}\leq ct^{1-\beta}\norm{q_1-q_2}_{(H_0^2(\Omega))^*},
    \end{aligned}
\end{equation*}
where $t\geq T_0$ and $T_0$ is a large enough constant. Next, we easily deduce 
\[t\phi(t)=\int_0^t(t-s)F_{q_1}(t-s)(q_2-q_1)u_{q_2}(s)ds + \int_0^tF_{q_1}(s)(q_2-q_1)(t-s)u_{q_2}(t-s)ds,\]
which leads to
\begin{equation*}
    \begin{aligned}
        \partial_t(t\phi(t)) = &\int_0^t[(t-s)F'_{q_1}(t-s)+F_{q_1}(t-s)](q_2-q_1)u_{q_2}(s)ds \\
        &+\int_0^tF_{q_1}(s)(q_2-q_1)[u_{q_2}(t-s)+(t-s)u'_{q_2}(t-s)]ds =: I_1+I_2.
    \end{aligned}
\end{equation*}
Then, we bound \( I_1 \) and \( I_2 \) respectively. For \( I_1 \), inequalities \eqref{Sobolevinter1} in the main text and Lemma \ref{FracoperatorReg}, in conjunction with Lemma 4.3 from \cite{jin2023inverse}, imply that
\begin{equation*}
    \begin{aligned}
        \norm{I_1}_{L^2(\Omega)} \leq &\int_0^t\Big((t-s)\norm{F'_{q_1}(t-s)(q_2-q_1)u_{q_2}(s)}_{L^2(\Omega)} \\&+ \norm{F_{q_1}(t-s)(q_2-q_1)u_{q_2}(s)}_{L^2(\Omega)}\Big)ds\\
        \leq &c\norm{q_1-q_2}_{(H_0^2(\Omega))^*}\int_0^t(t-s)^{-\beta}\norm{u_{q_2}(s)}_{H^2(\Omega)}ds \\ \leq& ct^{1-\beta}\norm{q_1-q_2}_{(H_0^2(\Omega))^*},
    \end{aligned}
\end{equation*}
for any \( t \geq T_0 \), provided that \( T_0 \) is sufficiently large.
Similarly, we also have
\[\norm{I_2}_{L^2(\Omega)} \leq ct^{1-\beta}\norm{q_1-q_2}_{(H_0^2(\Omega))^*}.\]
Finally, the triangle inequality implies that for any $t>T_0$, there exists a constant $c>0$ independent of $q_1,q_2,\beta,T_0$ such that
\[t\norm{\phi'(t)}_{L^2(\Omega)}\leq\norm{\partial_t(t\phi(t))}_{L^2(\Omega)} + \norm{\phi(t)}_{L^2(\Omega)} \leq ct^{1-\beta}\norm{q_1-q_2}_{(H_0^2(\Omega))^*},\]
that is
\[\norm{\partial_t^{\beta}u_{q_1}(t) - \partial_t^{\beta}u_{q_2}(t)}_{L^2(\Omega)} = \norm{\phi'(t)}_{L^2(\Omega)} \leq ct^{-\beta}\norm{q_1-q_2}_{(H_0^2(\Omega))^*}.\]
\end{proof}
\begin{lemma}\label{FracLip}
Let \( q_1, q_2 \in \mathcal{I} \), \( \beta \in [\beta^-, \beta^+] \). Then, for a sufficiently large fixed \( T_0 > 0 \) and any \( t \geq T_0 \), there exists a constant \( c > 0 \) independent of \( q_1, q_2, \beta, \) and \( T_0 \) such that
\begin{equation*}
    \norm{u_{q_1}(t) - u_{q_2}(t)}_{L^2(\Omega)} \leq c(1+t^{-\beta})\norm{q_1-q_2}_{(H_0^2(\Omega))^*}.
\end{equation*}
\end{lemma}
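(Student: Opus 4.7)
The plan is to leverage Lemma~\ref{FracderivativeReg}, which already bounds $\partial_t^\beta w$ for the difference $w := u_{q_1} - u_{q_2}$, and to recover a bound on $w$ itself by inverting the elliptic operator $A_{q_1} = -\partial_{xx} + q_1 I$. The starting point is the PDE satisfied by $w$,
\[
\partial_t^\beta w + A_{q_1} w = (q_2 - q_1)\,u_{q_2}, \qquad w(0) = 0, \qquad w|_{\partial\Omega} = 0,
\]
which was already written down in the proof of the preceding lemma. Rearranging as $A_{q_1} w(t) = (q_2 - q_1) u_{q_2}(t) - \partial_t^\beta w(t)$ and inverting yields
\[
w(t) = A_{q_1}^{-1}\bigl[(q_2 - q_1) u_{q_2}(t)\bigr] - A_{q_1}^{-1}\bigl[\partial_t^\beta w(t)\bigr],
\]
so it suffices to bound the two pieces separately in $L^2(\Omega)$.

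The second ingredient I will need is the uniform-in-$q_1$ elliptic estimate $\|A_{q_1}^{-1} g\|_{L^2(\Omega)} \leq c\,\|g\|_{(H_0^2(\Omega))^*}$, which I would obtain by duality from the two-sided bound \eqref{Fracnormestimate*}. For $g \in (H_0^2)^*$ and any $\psi \in L^2$, set $\phi_\psi := A_{q_1}^{-1}\psi \in H_0^2$; by \eqref{Fracnormestimate*} one has $\|\phi_\psi\|_{H^2} \leq c\,\|\psi\|_{L^2}$, so $\langle A_{q_1}^{-1} g, \psi\rangle_{L^2} = \langle g, \phi_\psi\rangle \leq c\,\|g\|_{(H_0^2)^*}\,\|\psi\|_{L^2}$, and the claim follows by taking the supremum over $\psi$. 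Together with the trivial bound $\|A_{q_1}^{-1}\|_{L^2 \to L^2} \leq c$ (again uniform in $q_1$ by \eqref{Fracnormestimate*}), this handles the operator-theoretic side.

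The first piece is then controlled by pairing $(q_2-q_1)u_{q_2}(t)$ against $\phi \in H_0^2$: writing $\langle (q_2-q_1)u_{q_2}(t), \phi\rangle = \langle q_2-q_1, u_{q_2}(t)\phi\rangle$ and using \eqref{Sobolevinter1} together with the fact that $u_{q_2}(t)\phi$ vanishes on $\partial\Omega$ gives $\|(q_2-q_1)u_{q_2}(t)\|_{(H_0^2)^*} \leq c\|q_1-q_2\|_{(H_0^2)^*}\|u_{q_2}(t)\|_{H^2}$. Invoking Lemma~4.3 of \cite{jin2023inverse} to obtain $\|u_{q_2}(t)\|_{H^2} \leq c(1+t^{-\beta})$ uniformly and applying the elliptic bound controls the first piece by $c(1+t^{-\beta})\|q_1-q_2\|_{(H_0^2)^*}$. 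For the second piece, combining $\|A_{q_1}^{-1}\|_{L^2\to L^2} \leq c$ with Lemma~\ref{FracderivativeReg} yields $\|A_{q_1}^{-1}\partial_t^\beta w(t)\|_{L^2} \leq c\,t^{-\beta}\|q_1-q_2\|_{(H_0^2)^*}$, and summing the two bounds completes the estimate.

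The proof is essentially routine once Lemma~\ref{FracderivativeReg} is in hand, so the main obstacle is not a computational one but rather ensuring that every constant in the argument is uniform in $q_1,q_2 \in \mathcal I$ and $\beta \in [\beta^-, \beta^+]$ (and independent of $T_0$). The uniformity of the elliptic bound in $q_1$ is the core observation and rests on the $q$-independence of the constants in \eqref{Fracnormestimate*}; the uniformity in $\beta$ of both Lemma~\ref{FracderivativeReg} and the $H^2$ regularity estimate from \cite{jin2023inverse} is already built into those statements. The factor $(1+t^{-\beta})$ in the final bound is inherited directly from the $H^2$ bound on $u_{q_2}(t)$.
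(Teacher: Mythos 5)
Your proposal is correct and follows essentially the same route as the paper: rearrange the subdiffusion equation into an elliptic equation for $w=u_{q_1}-u_{q_2}$, bound the commutator term $(q_2-q_1)u_{q}$ in $(H_0^2)^*$ via the uniform $H^2$ bound on $u_q(t)$, absorb the time-fractional term with Lemma~\ref{FracderivativeReg}, and invert the elliptic operator with a $q$-uniform constant coming from \eqref{Fracnormestimate*}. The only cosmetic differences are that you work with $A_{q_1}$ and $(q_2-q_1)u_{q_2}$ where the paper uses $A_{q_2}$ and $(q_2-q_1)u_{q_1}$, and you phrase the inversion as a bound on $A_{q_1}^{-1}:(H_0^2)^*\to L^2$ rather than as the equivalent lower bound $\norm{A_qv}_{(H_0^2)^*}\gtrsim\norm{v}_{L^2}$.
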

\begin{proof}
Because \[\partial_t^{\beta}u_{q} - \Delta u_{q}  + qu_{q}  = f,\]
we have \[ - \Delta (u_{q_1}-u_{q_2})  + q_2(u_{q_1}-u_{q_2})  = (q_2-q_1)u_{q_1}-(\partial_t^{\beta}u_{q_1}-\partial_t^{\beta}u_{q_2}).\]
Thus,
\begin{equation*}
    \begin{aligned}
    \norm{A_{q_2}(u_{q_1}-u_{q_2})(t)}_{(H_0^2(\Omega))^*}
        \leq \norm{(q_2-q_1)u_{q_1}(t)}_{(H_0^2(\Omega))^*} + \norm{\partial_t^{\beta}(u_{q_1}-u_{q_2})(t)}_{(H_0^2(\Omega))^*}.
    \end{aligned}
\end{equation*}
Lemma 4.3 in \cite{jin2023inverse} implies that 
\begin{equation*}
    \begin{aligned}
        \norm{(q_2-q_1)u_{q_1}(t)}_{(H_0^2(\Omega))^*} \leq& \norm{u_{q_1}(t)}_{H^2(\Omega)}\norm{q_2-q_1}_{(H_0^2(\Omega))^*}\\
        \leq&c(1+t^{-\beta})\norm{q_2-q_1}_{(H_0^2(\Omega))^*}.
    \end{aligned}
\end{equation*}
Combined with Lemma \ref{FracderivativeReg}, we have 
\begin{equation*}
    \norm{A_{q_2}(u_{q_1}-u_{q_2})(t)}_{(H_0^2(\Omega))^*} \leq c(1+t^{-\beta})\norm{q_1-q_2}_{(H_0^2(\Omega))^*}.
\end{equation*}
\par
Next, we prove $\norm{A_{q}v}_{(H_0^2(\Omega))^*} \geq c\norm{v}_{L^2(\Omega)}$ to complete the proof.
Using \eqref{FracoperatorReg1}, we deduce that
\begin{equation*}\label{Fraclip1}
    \begin{aligned}
       &\norm{A_{q}v}_{(H_0^2(\Omega))^*} \geq c\norm{A_{q}v}_{\Dot{H}_q^{-2}(\Omega)} \\  
       = &\sup_{\norm{A_q\phi}_{L^2}\leq 1}c\pdt{A_qv}{\phi}_{L^2}= \sup_{\norm{A_q\phi}_{L^2}\leq 1}c\pdt{v}{A_q\phi}_{L^2} = c\norm{v}_{L^2(\Omega)},
    \end{aligned}
\end{equation*}
with constant $c>0$ independent of $q$.
Thus, we finally have
\begin{equation*}
    \begin{aligned}
        \norm{u_{q_1}(t) - u_{q_2}(t)}_{L^2(\Omega)} \leq c\norm{A_{q_2}(u_{q_1}-u_{q_2})(t)}_{(H_0^2(\Omega))^*}
        \leq c(1+t^{-\beta})\norm{q_1-q_2}_{(H_0^2(\Omega))^*}
    \end{aligned}
\end{equation*}
with $c>0$ independent of $q_1,q_2,\beta,T_0$.
\end{proof}
\begin{lemma}\label{FracReg}
Let $u_0\in H^{\alpha}(\Omega)$, $f \in L^{\infty}(\Omega)\cap H^{\alpha}(\Omega)$, and $q\in H^{\alpha}(\Omega)\cap\mathcal{I}$ for some integer $\alpha \geq 0$. Let $\beta \in [\beta^-,\beta^+]$ for $0<\beta^-<\beta^+<1$. Then, there exists a constant $c>0$ independent of $q$ and $\beta$ such that
\begin{equation*}
    \norm{u_{q}(t)}_{H^{\alpha+2}} \leq c(1+t^{-\beta})(1+\norm{q}_{H^{\alpha}}^{1+2\alpha})(1+\norm{u_0}_{H^{\alpha}} + \norm{f}_{H^{\alpha}}^{1+\alpha/2} + \norm{f}_{\infty}).
\end{equation*}
\end{lemma}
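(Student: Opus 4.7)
The plan is to split the solution via the representation \eqref{Fracsolutions}, namely $u_q(t) = h_q + F_{\beta,q}(t)(u_0 - h_q)$, and estimate each piece in $H^{\alpha+2}(\Omega)$ separately. Since $q \in \mathcal{I}$ already gives $\norm{q}_\infty \leq M_0$ uniformly, all the dependence on $q$ that matters is through $\norm{q}_{H^\alpha}$.

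For the stationary part I would first reduce to a homogeneous Dirichlet problem by subtracting the linear function interpolating $a_0, a_1$, and then iterate standard elliptic regularity for $-\partial_{xx} h_q = f - qh_q$. An energy estimate together with the bound $\norm{q}_\infty \leq M_0$ and the positivity $q \geq 0$ yields a base bound $\norm{h_q}_{L^2} \leq c(1+\norm{f}_\infty)$ uniform in $q$. Then for $1 \leq k \leq \alpha$ one has the bootstrap $\norm{h_q}_{H^{k+2}} \leq c(\norm{f}_{H^k} + \norm{qh_q}_{H^k})$, and invoking the Sobolev algebra estimate \eqref{Sobolevinter2}, $\norm{qh_q}_{H^k} \lesssim \norm{q}_{H^\alpha}\norm{h_q}_{H^k}$, each step multiplies the current estimate by an extra factor of order $\norm{q}_{H^\alpha}$ and absorbs a half-power of $\norm{f}_{H^\alpha}$ through interpolation. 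After the full bootstrap one arrives at
\begin{equation*}
\norm{h_q}_{H^{\alpha+2}} \leq c(1+\norm{q}_{H^\alpha}^{1+2\alpha})(1+\norm{f}_{H^\alpha}^{1+\alpha/2}+\norm{f}_\infty).
\end{equation*}

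For the time-dependent part, set $v = u_0 - h_q$, which belongs to $H^\alpha \cap H^1_0(\Omega)$ by the compatibility conditions. Using the spectral expansion of $F_{\beta,q}(t)$ and the standard bound $|E_{\beta,1}(-\lambda_j t^\beta)| \leq C(\beta^-,\beta^+)/(1+\lambda_j t^\beta)$ already exploited in Lemma \ref{FracoperatorReg}, one obtains for each $m \geq 1$ the smoothing estimate $\norm{F_{\beta,q}(t)v}_{\dot H^{2m}_q} \leq ct^{-\beta}\norm{v}_{\dot H^{2(m-1)}_q}$. Applying this with $2m = \alpha+2$ (and $2m = \alpha+1$ if $\alpha$ is odd, by interpolation) and then converting the $\dot H^s_q$-norms into standard $H^s$-norms by writing $A_q^k = (-\Delta)^k + (\text{lower-order terms involving products with } q)$ and bounding the product terms by \eqref{Sobolevinter1}--\eqref{Sobolevinter2} gives
\begin{equation*}
\norm{F_{\beta,q}(t)v}_{H^{\alpha+2}} \leq ct^{-\beta}(1+\norm{q}_{H^\alpha})^{2\alpha}\norm{v}_{H^\alpha}.
\end{equation*}
Plugging in the bound on $\norm{h_q}_{H^\alpha}$ already obtained to control $\norm{v}_{H^\alpha} \lesssim \norm{u_0}_{H^\alpha} + \norm{h_q}_{H^\alpha}$ and combining with the stationary part yields the claimed estimate.

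The main obstacle is keeping the polynomial dependence on $\norm{q}_{H^\alpha}$ sharp at the exponent $1+2\alpha$ while passing between the $\dot H^s_q$-scale (which is natural for the spectral representation of $F_{\beta,q}$) and the $H^s(\Omega)$-scale (which is what the statement uses), because the two scales agree only up to $s \leq 2$; beyond that, each extra application of $A_q$ introduces commutator/product terms with $q$ that must be controlled by the Sobolev algebra property, and one has to verify that these terms accumulate at most two powers of $\norm{q}_{H^\alpha}$ per bootstrap step. A secondary bookkeeping issue is handling the factor $\norm{f}_{H^\alpha}^{1+\alpha/2}$: this exponent arises from the iterative interpolation between $\norm{f}_{L^2}$ and $\norm{f}_{H^\alpha}$ needed at each elliptic-regularity step for $h_q$, so one must carefully track how the power of $\norm{f}_{H^\alpha}$ grows relative to the number of bootstrap steps.
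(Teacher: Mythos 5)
Your proposal follows essentially the same route as the paper's proof: the same decomposition $u_q = h_q + F_{\beta,q}(t)(u_0-h_q)$, the same elliptic bootstrap for $h_q$, the same Mittag--Leffler smoothing estimate on the $\dot H^s_q$-scale, and the same inductive conversion between $\dot H^s_q$ and $H^s$ via the Sobolev algebra property, with the correct identification of the main bookkeeping difficulty. The only cosmetic difference is that the paper handles odd $\alpha$ via half-powers $A_q^{k+1/2}$ rather than interpolation, which does not change the substance of the argument.
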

\begin{proof}
Define $v_q := u_q - h_q$ and $v_0 = u_0 - h_q$. We observe that $v_q, v_0$ vanish on $\partial \Omega$. In the following proof, let $s,k\in\mathbb{N}$. From the representation of solutions \eqref{Fracsolutions}, we have
\[v_q = F_{\beta,q}(t)v_0.\]
We first estimate the $\Dot{H}^{s}_{q}$ norm of $v_q$. 
Assume that $s:=2k$ is even.
Using part (a) of Lemma 2.3 in \cite{Frac_dang2018continuity}, the $L^2$ norm of $A_q^k v_q$ can be bounded by
 \begin{equation*}
    \begin{aligned}
        &\norm{A_q^kv_q(t)}_{L^2}^2 = \norm{A_q^kF_q(t)v_0}_{L^2}^2 \\
        =&\norm{A_q^k\bbra{\sum_{j=1}^{\infty}E_{\beta,1}(-\lambda_j(q)t^{\beta})\pdt{v_0}{\varphi_j(q)}_{L^2}\varphi_j(q)}}_{L^2}^2\\
        =&\sum_{j=1}^{\infty} \lambda_j^{2k}(q) E_{\beta,1}^2(-\lambda_j(q)t^{\beta})\pdt{v_0}{\varphi_j(q)}_{L^2}^2\\
        \lesssim & \sum_{j=1}^{\infty} \frac{\lambda_j^{2k}(q)}{\bbra{1+\lambda_j(q)t^{\beta}}^2}\pdt{v_0}{\varphi_j(q)}_{L^2}^2\\
        \lesssim &t^{-2\beta} \sum_{j=1}^{\infty} \lambda_j^{2k-2}(q) \pdt{v_0}{\varphi_j(q)}_{L^2}^2
        \lesssim t^{-2\beta} \norm{A_q^{k-1}v_0}_{L^2}^2.
    \end{aligned} 
\end{equation*}
Combined with the definition of $A_q$ and Sobolev inequality \eqref{Sobolevinter2} of the main text, we further deduce that 
\begin{equation}\label{FracReg1}
    \norm{A_q^kv_q(t)}_{L^2} \lesssim t^{-\beta} \norm{A_q^{k-1}v_0}_{L^2} \lesssim t^{-\beta}(1+\norm{q}_{H^{2k-2}}^{k-1})\norm{v_0}_{H^{2k-2}}.
\end{equation}
By regularity estimates of elliptic problems \cite[Proposition 6.1.5]{IntroNonLinear_nickl2023bayesian} , we estimates the regularity for $h_q$ and have
\begin{equation*}
    \norm{h_q}_{H^2(\Omega)} \lesssim \bbra{1 + \norm{f}_{\infty}}, 
\end{equation*}
\begin{equation*}
    \norm{h_q}_{H^{k+2}(\Omega)} \lesssim \bbra{1+\norm{q}_{H^k(\Omega)}^{1+k/2}}\bbra{1 + \norm{f}_{H^k(\Omega)}^{1+k/2}}.
\end{equation*}
Combined with \eqref{FracReg1}, we have
\begin{equation}\label{FracReg2}
    \begin{aligned}
        \norm{A_q^kv_q(t)}_{L^2} \lesssim &t^{-\beta}(1+\norm{q}_{H^{2k-2}}^{k-1})\norm{v_0}_{H^{2k-2}} \\
        \lesssim & t^{-\beta}(1+\norm{q}_{H^{2k-2}}^{k-1})(\norm{u_0}_{H^{2k-2}}+\norm{h_q}_{H^{2k-2}})\\
        \lesssim &t^{-\beta}(1+\norm{q}_{H^{2k-2}}^{2k-2})(1+\norm{u_0}_{H^{2k-2}} + \norm{f}_{H^{2k-4}}^{k-1} + \norm{f}_{\infty}).
    \end{aligned} 
\end{equation}
where we let the term $\norm{f}_{H^{2k-4}}^{k-1}$ equal to $1$ when $k=1$.
Next, we assume that $s:=2k+1$ is odd.
Using the regularity for $h_q$ again, we estimate the $L^2$ norm of $A_q^{k+\frac{1}{2}} v_q$ similarly and obtain
\begin{equation}\label{FracRegdotodd}
    \begin{aligned}
        \norm{A_q^{k+\frac{1}{2}}v_q(t)}_{L^2}\lesssim & t^{-\beta}\norm{A_q^{k-\frac{1}{2}}v_0}_{L^2} \lesssim t^{-\beta}\norm{A_q^{k-1}v_0}_{H^1}\\
        \lesssim & t^{-\beta}(1+\norm{q}_{H^{2k-1}}^{k-1})\norm{v_0}_{H^{2k-1}}\\
        \lesssim & t^{-\beta}(1+\norm{q}_{H^{2k-1}}^{2k-\frac{3}{2}})(1+\norm{u_0}_{H^{2k-1}} + \norm{f}_{H^{2k-3\vee0}}^{k-\frac{1}{2}} + \norm{f}_{\infty}).
    \end{aligned}
\end{equation} 
\par
Next, we estimate the $H^{s}$ norm of $v_q$ with the mathematical induction method. Assume that $s:=2k$ is even. From \eqref{Fracnormestimate}, we have
\[\norm{\phi}_{H^2} \leq c(\norm{\phi}_{L^2}+\norm{A_q\phi}_{L^2})\quad \text{if}\ \phi \in \Dot{H}^{2}_{q},\]
where $c$ is independent of $q$. Assume that this statement is true for $\phi \in \Dot{H}^{2l}_{q}$, which is
\begin{equation}\label{Fracreg3}
    \norm{\phi}_{H^{2l}} \lesssim (1+\norm{q}_{H^{2l-2}}^{l-1})\sum_{j=0}^{l}\norm{A_q^j\phi}_{L^2}\quad \text{if}\ \phi \in \Dot{H}^{2l}_{q}.
\end{equation}
If $\phi \in \Dot{H}^{2l+2}_{q}\subset H^{2l+2}_0$, then $A_q\phi \in \Dot{H}^{2l}_{q}$.
Using \eqref{Sobolevinter2} and inequality $\norm{\Delta\phi}_{H^{2l}}\gtrsim \norm{\phi}_{H^{2l+2}}$ , we deduce that
\begin{equation*}
    \begin{aligned}
        \norm{A_q\phi}_{H^{2l}} \geq & \norm{\Delta\phi}_{H^{2l}} - \norm{q}_{H^{2l}}\norm{\phi}_{H^{2l}} \\
        \gtrsim & \norm{\phi}_{H^{2l+2}} - \norm{q}_{H^{2l}}\norm{\phi}_{H^{2l}},
    \end{aligned}
\end{equation*}
that is,
\begin{equation*}
    \norm{\phi}_{H^{2l+2}} \lesssim \norm{A_q\phi}_{H^{2l}} + \norm{q}_{H^{2l}}\norm{\phi}_{H^{2l}}.
\end{equation*}
Combined with inequalities \eqref{Fracreg3} of $\phi,A_q\phi \in \Dot{H}^{2l}_{q}$, we have
\begin{equation*}
    \begin{aligned}
        \norm{\phi}_{H^{2l+2}} \lesssim &(1+\norm{q}_{H^{2l-2}}^{l-1})\sum_{j=0}^{l}\norm{A_q^{j+1}\phi}_{L^2} + (1+\norm{q}_{H^{2l}}^{l})\sum_{j=0}^{l}\norm{A_q^j \phi}_{L^2} \\
        \lesssim &(1+\norm{q}_{H^{2l}}^{l})\sum_{j=0}^{l+1}\norm{A_q^j \phi}_{L^2}.
    \end{aligned}
\end{equation*}
Thus, by the method of mathematical induction, we have proved that
\begin{equation}\label{FracRegeven}
    \norm{v_q}_{H^{2k}} \lesssim (1+\norm{q}_{H^{2k-2}}^{k-1})\sum_{j=0}^{k}\norm{A_q^j {\color{black}v_q}}_{L^2}\quad \text{if}\ v_q \in \Dot{H}^{2k}_{q}.
\end{equation}
Next, we assume that $s:=2k+1$ is odd.
For $\phi \in \Dot{H}^{3}_{q}\subset H^{3}_{0}$, using \eqref{Sobolevinter2} of the main text and the inequality $\norm{\Delta\phi}_{H^{1}}\gtrsim \norm{\phi}_{H^{3}}$ , we deduce that
\begin{equation*}
    \begin{aligned}
        \norm{A_q\phi}_{H^{1}} \geq & \norm{\Delta\phi}_{H^{1}} - \norm{q}_{H^{1}}\norm{\phi}_{H^{1}} \\
        \gtrsim & \norm{\phi}_{H^{3}} - \norm{q}_{H^{1}}\norm{\phi}_{H^{1}},
    \end{aligned}
\end{equation*}
which implies
\begin{equation*}
    \norm{\phi}_{H^{3}} \lesssim \norm{A_q\phi}_{H^{1}} + \norm{q}_{H^{1}}\norm{\phi}_{H^{1}}.
\end{equation*}
By the method of mathematical induction, we can similarly prove that
\begin{equation}\label{Fracreg4}
    \norm{v_q}_{H^{2k+1}} \lesssim (1+\norm{q}_{H^{2k-1}}^{k})\sum_{j=0}^{k}\norm{A_q^j{\color{black}v_q}}_{H^1}\quad \text{if}\ v_q \in \Dot{H}^{2k+1}_{q}.
\end{equation}
For $v_q \in \Dot{H}^{2k+1}_{q}$, we note that $A_q^kv_q \in \Dot{H}^{1}_{q} \subset {H}^{1}_{0}$.
Thus, the Poincaré inequality $\norm{\nabla A_q^kv_q}_{L^2} \gtrsim \norm{A_q^kv_q}_{H^1}$ implies that 
\begin{equation*}
    \begin{aligned}
        \norm{A_q^{k+\frac{1}{2}}v_q}_{L^2}^2 = & \pdt{A_q^k\phi}{A_q^{k+1}v_q}_{L^2} =-\pdt{A_q^kv_q}{\Delta A_q^kv_q}_{L^2} + \pdt{A_q^kv_q}{qA_q^kv_q}_{L^2} \\
        = &\pdt{\nabla A_q^kv_q}{\nabla A_q^kv_q}_{L^2} + \pdt{A_q^kv_q}{qA_q^kv_q}_{L^2} \\
        \gtrsim& \norm{A_q^kv_q}_{H^1}^2 
    \end{aligned}
\end{equation*}
The last inequality combined with \eqref{Fracreg4} implies that
\begin{equation}\label{Fracreg5}
    \norm{v_q}_{H^{2k+1}} \lesssim (1+\norm{q}_{H^{2k-1}}^{k})\sum_{j=0}^{k}\norm{A_q^{j+\frac{1}{2}}{\color{black}v_q}}_{L^2}\quad \text{if}\ v_q \in \Dot{H}^{2k+1}_{q}.
\end{equation}
For $\phi \in \Dot{H}^{1}_{q}\subset H^{1}_{0}$, we deduce that
\begin{equation*}
    \begin{aligned}
        \norm{A_q^{\frac{1}{2}}\phi}_{L^2}^2 = & \pdt{\phi}{A_q\phi}_{L^2} =-\pdt{\phi}{\Delta\phi}_{L^2} + \pdt{\phi}{q\phi}_{L^2} \\
        = &\pdt{\nabla\phi}{\nabla\phi}_{L^2} + \pdt{\phi}{q\phi}_{L^2} \\
        \gtrsim& \norm{\phi}_{H^1}^2 ,
    \end{aligned}
\end{equation*}
which means that inequality \eqref{Fracreg5} is also true when $k=0$.
\par
Assume that $\alpha:= 2l$ is even integer for $l\in \mathbb{N}\cup\{0\}$. The estimate \eqref{FracReg2} implies that
\[\norm{A_q^{k+1}v_q(t)}_{L^2}\lesssim t^{-\beta}(1+\norm{q}_{H^{2k}}^{2k})(1+\norm{u_0}_{H^{2k}} + \norm{f}_{H^{2k}}^{k} + \norm{f}_{\infty})\]
for $k\in \mathbb{N}\cup\{0\}$ such that $0 \leq k\leq l$, which means $v_q \in \Dot{H}^{2l+2}_{q}$.
Using \eqref{FracRegeven}, we further have
\begin{equation}\label{FracReg6}
    \begin{aligned}
        \norm{v_q}_{H^{\alpha+2}} \lesssim &(1+\norm{q}_{H^{2l}}^{l})\sum_{j=0}^{l+1}\norm{A_q^j{\color{black}v_q}}_{L^2}\\ \lesssim &t^{-\beta}(1+\norm{q}_{H^{\alpha}}^{3\alpha/2})(1+\norm{u_0}_{H^{\alpha}} + \norm{f}_{H^{\alpha}}^{\alpha/2} + \norm{f}_{\infty}).
    \end{aligned}
\end{equation}
\par
Next, we assume that $\alpha:= 2l+1$ is an odd integer for $l\in \mathbb{N}\cup\{0\}$. The estimate \eqref{FracRegdotodd} implies that
\begin{equation*}
    \norm{A_q^{k+\frac{3}{2}}v_q(t)}_{L^2} \lesssim t^{-\beta}(1+\norm{q}_{H^{2k+1}}^{2k+\frac{1}{2}})(1+\norm{u_0}_{H^{2k+1}} + \norm{f}_{H^{2k+1}}^{k+\frac{1}{2}} + \norm{f}_{\infty}).
\end{equation*}
for $k\in \mathbb{N}\cup\{0\}$ such that $0 \leq k\leq l$, which means $v_q \in \Dot{H}^{2l+3}_{q}$. Using \eqref{FracReg6} when $\alpha = 0$, we deduce that
\begin{equation*}
    \begin{aligned}
        \norm{A_q^{\frac{1}{2}}v_q}_{L^2}^2 = & \pdt{{\color{black}v_q}}{A_qv_q}_{L^2} =-\pdt{v_q}{\Delta v_q}_{L^2} + \pdt{v_q}{qv_q}_{L^2} \\
        = &\pdt{\nabla v_q}{\nabla v_q}_{L^2} + \pdt{v_q}{qv_q}_{L^2} \\
        \leq &\pdt{\nabla v_q}{\nabla v_q}_{L^2} + M_0\pdt{v_q}{v_q}_{L^2} \\
        \lesssim& \norm{v_q}_{H^1}^2 \lesssim t^{-2\beta}(1+\norm{u_0}_{L^2} + \norm{f}_{\infty})^2.
    \end{aligned}
\end{equation*}
With these estimates of the $L^2$ norm for $A_q^{k+\frac{3}{2}} v_q$ and $A_q^{\frac{1}{2}}v_q$, inequality \eqref{Fracreg5} implies that
\begin{equation*}
    \begin{aligned}
        \norm{v_q}_{H^{\alpha+2}} \lesssim &(1+\norm{q}_{H^{2l+1}}^{l+1})\sum_{j=0}^{l+1}\norm{A_q^{j+\frac{1}{2}}{\color{black}v_q}}_{L^2}\\ \lesssim &t^{-\beta}(1+\norm{q}_{H^{\alpha}}^{3\alpha/2})(1+\norm{u_0}_{H^{\alpha}} + \norm{f}_{H^{\alpha}}^{\alpha/2} + \norm{f}_{\infty}).
    \end{aligned}
\end{equation*}
\par
The above proof demonstrates that for any integer \( \alpha \geq 0 \), we have
\[\|v_q\|_{H^{\alpha+2}} \lesssim t^{-\beta} \left(1 + \|q\|_{H^{\alpha}}^{3\alpha/2}\right) \left(1 + \|u_0\|_{H^{\alpha}} + \|f\|_{H^{\alpha}}^{\alpha/2} + \|f\|_{\infty}\right).\]
Using the regularity estimate for \( h_q \) and the triangle inequality, we ultimately obtain
\[
\begin{aligned}
	\|u_q(t)\|_{H^{\alpha+2}} &\leq \|v_q(t)\|_{H^{\alpha+2}} + \|h_q\|_{H^{\alpha+2}} \\
	&\lesssim \left(1 + t^{-\beta}\right) \left(1 + \|q\|_{H^{\alpha}}^{1+2\alpha}\right) \left(1 + \|u_0\|_{H^{\alpha}} + \|f\|_{H^{\alpha}}^{1+\alpha/2} + \|f\|_{\infty}\right).
\end{aligned}
\]
\end{proof}
\begin{lemma}\label{FracStab}
Let $u_0\in H^{\alpha}(\Omega)$, $f \in H^{\alpha}(\Omega)$ for $\alpha \in \mathbb{N}$ with $u_0, f \geq L_0$ a.e. and $a_0, a_1 \geq L_0$ for $L_0>0$. Let $q_1, q_2 \in H^{\alpha}(\Omega)\cap\mathcal{I}$, $\beta \in [\beta^-,\beta^+]$. Then, for a sufficiently large fixed $T_0 >0$ and any $t \geq T_0$, there exists $c>0$ independent of $q_1, q_2, T_0, \beta$ such that
\begin{equation*}
    \norm{q_1 - q_2}_{L^2(\Omega)}\leq c(1+\norm{q_1}_{H^{\alpha}(\Omega)}^{1+2\alpha}\vee\norm{q_2}_{H^{\alpha}(\Omega)}^{1+2\alpha})^{\frac{2}{\alpha+2}}\norm{u_{q_1}(t)-u_{q_2}(t)}_{L^2(\Omega)}^{\frac{\alpha}{\alpha+2}}.
\end{equation*}
\end{lemma}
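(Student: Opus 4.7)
The plan is to derive a pointwise PDE identity for $q_1-q_2$ at the terminal time $T$ and then convert its right-hand side into an $L^2$ bound via a uniform positive lower bound on $u_{q_1}(T)$, the time-derivative estimate of Lemma~\ref{FracderivativeReg}, and Sobolev interpolation together with the regularity estimate of Lemma~\ref{FracReg}. Setting $v:=u_{q_1}-u_{q_2}$ and subtracting the two subdiffusion equations at $t=T$, the algebraic identity $q_1u_{q_1}-q_2u_{q_2}=(q_1-q_2)u_{q_1}+q_2v$ yields the pointwise relation
\begin{equation*}
(q_1-q_2)\,u_{q_1}(T) \,=\, -\partial_t^{\beta}v(T)+\partial_{xx}v(T)-q_2\,v(T).
\end{equation*}

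To divide by $u_{q_1}(T)$ I first secure a $q_1$-independent positive lower bound on $u_{q_1}(T)$ for $T\geq T_0$ with $T_0$ large enough. Using the decomposition $u_{q_1}(T)=h_{q_1}+F_{\beta,q_1}(T)(u_0-h_{q_1})$ from \eqref{Fracsolutions}, the weak maximum principle applied to the elliptic equation for $h_{q_1}$, with $q_1\geq 0$, $f\geq L_0$, and boundary values $a_0,a_1\geq L_0$, gives $h_{q_1}\geq L_0$ uniformly in $q_1\in\mathcal{I}$, while Lemma~\ref{FracoperatorReg} combined with the one-dimensional Sobolev embedding $H^2\hookrightarrow L^\infty$ shows that $\Vert F_{\beta,q_1}(T)(u_0-h_{q_1})\Vert_{L^\infty}\lesssim T^{-\beta}$ with a constant uniform in $q_1$; hence $u_{q_1}(T)\geq L_0/2$ for $T_0$ large enough. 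Taking $L^2$ norms after division by $u_{q_1}(T)$ then yields
\begin{equation*}
\Vert q_1-q_2\Vert_{L^2} \,\lesssim\, \Vert\partial_t^{\beta}v(T)\Vert_{L^2} + \Vert v(T)\Vert_{H^2} + \Vert q_2\Vert_{\infty}\,\Vert v(T)\Vert_{L^2}.
\end{equation*}
Lemma~\ref{FracderivativeReg} bounds the first term by $cT^{-\beta}\Vert q_1-q_2\Vert_{(H_0^2)^*}\leq cT^{-\beta}\Vert q_1-q_2\Vert_{L^2}$, so by further enlarging $T_0$ so that $cT^{-\beta}\leq 1/2$ this term is absorbed on the left.

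For the remaining two terms, Sobolev interpolation gives $\Vert v(T)\Vert_{H^2}\lesssim \Vert v(T)\Vert_{L^2}^{\alpha/(\alpha+2)}\Vert v(T)\Vert_{H^{\alpha+2}}^{2/(\alpha+2)}$, and Lemma~\ref{FracReg} applied to both $u_{q_1}(T)$ and $u_{q_2}(T)$ (with $T\geq T_0$, so that the prefactor $1+T^{-\beta}$ is bounded) yields
\begin{equation*}
\Vert v(T)\Vert_{H^{\alpha+2}} \,\lesssim\, 1+\Vert q_1\Vert_{H^{\alpha}}^{1+2\alpha}\vee\Vert q_2\Vert_{H^{\alpha}}^{1+2\alpha}.
\end{equation*}
Since $\Vert q_2\Vert_\infty\leq M_0$ is uniform and $\Vert v(T)\Vert_{L^2}\leq C_0$ is uniform in $q_i\in\mathcal{I}$ (again by Lemma~\ref{FracReg}), the trivial inequality $\Vert v(T)\Vert_{L^2}\leq C_0^{2/(\alpha+2)}\Vert v(T)\Vert_{L^2}^{\alpha/(\alpha+2)}$ converts the $\Vert q_2\Vert_\infty\Vert v(T)\Vert_{L^2}$ term to the same exponent $\alpha/(\alpha+2)$, and collecting the two contributions gives the stated stability bound.

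The main obstacle is the uniform positive lower bound on $u_{q_1}(T)$: the conclusion of the lemma admits no constant depending on $\Vert u_{q_1}(T)\Vert_\infty^{-1}$ or on pointwise values of $u_{q_1}(T)$, so this bound must be genuinely independent of $q_1\in\mathcal{I}$. The maximum principle handles the steady-state part $h_{q_1}$ cleanly, but obtaining the $L^\infty$ (rather than merely $L^2$) decay of the transient $F_{\beta,q_1}(T)(u_0-h_{q_1})$ is what pins the argument to $d=1$, via the embedding $H^2\hookrightarrow L^\infty$. A secondary subtlety is to track constants carefully in the absorption of $\Vert\partial_t^{\beta}v(T)\Vert_{L^2}$ so that no residual of order $T^{-\beta}$ remains on the right-hand side, which would otherwise weaken the estimate below the required Hölder-type form in $\Vert v(T)\Vert_{L^2}$.
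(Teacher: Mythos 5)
Your proof is correct in substance but takes a genuinely different route from the paper's. The paper's proof of this lemma is very short: it simply cites Theorem 4.5 of \cite{jin2023inverse} for the key bound $\norm{q_1-q_2}_{L^2}\leq c\norm{u_{q_1}(t)-u_{q_2}(t)}_{H^2}$ (valid for $t\geq T_0$ with $T_0$ large), and then performs exactly your final step: Sobolev interpolation of the $H^2$ norm between $L^2$ and $H^{\alpha+2}$ together with Lemma \ref{FracReg}. You instead reconstruct that $H^2$ stability bound from the paper's own ingredients, via the pointwise identity $(q_1-q_2)u_{q_1}(T)=-\partial_t^\beta v(T)+\partial_{xx}v(T)-q_2v(T)$, a uniform positive lower bound on $u_{q_1}(T)$, and absorption of the $\partial_t^\beta v$ term using Lemma \ref{FracderivativeReg}. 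This is essentially the strategy the paper itself uses later for the harder Lemma \ref{FractionalStab} (the variant with unknown fractional order), where it writes $u_{q,\beta}u_{\tilde q,\tilde\beta}(q-\tilde q)$ as a sum of three terms and divides by $L_0^2$ after invoking the maximum principle. Your version buys self-containedness (no external citation for the stability step) at the cost of having to justify the positivity of the solution; the paper's version is shorter but leans on \cite{jin2023inverse}.

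Two small points to tighten. First, the claimed bound $h_{q_1}\geq L_0$ is not literally correct when $q_1$ may exceed $1$ (take $q_1$ constant close to $M_0$ and $f\equiv L_0$: the interior minimum of $h_{q_1}$ drops below $L_0$); a comparison with the constant subsolution $L_0/M_0$ gives $h_{q_1}\geq L_0/M_0>0$ uniformly in $q_1\in\mathcal{I}$, which is all you need. (The paper's own assertion ``$u(t)\geq L_0$'' in the proof of Lemma \ref{FractionalStab} has the same imprecision.) Second, Lemma \ref{FracoperatorReg} as stated only controls $\norm{F_{\beta,q}(T)(u_0-h_{q})}_{L^2}$, and an $L^2$ bound cannot be upgraded to $L^\infty$ by Sobolev embedding; you need the $H^2$ decay of the transient, which follows from $\norm{A_qF_{\beta,q}(T)v_0}_{L^2}\lesssim T^{-\beta}\norm{v_0}_{L^2}$ (estimate \eqref{FracReg1} with $k=1$ in the proof of Lemma \ref{FracReg}) together with the norm equivalence \eqref{Fracnormestimate*}, and only then the embedding $H^2(\Omega)\hookrightarrow L^\infty(\Omega)$. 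With these repairs the argument goes through.
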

\begin{proof}
According to Theorem 4.5 in \cite{jin2023inverse}, for $T_0>0$ sufficiently large and any $t \geq T_0$, there exists a constant $c$ such that
\begin{equation}\label{theorem4.5}
    \begin{aligned}
    \norm{q_1 - q_2}_{L^2(\Omega)} \leq c\norm{u_{q_1}(t)-u_{q_2}(t)}_{H^2(\Omega)}.
    \end{aligned}
\end{equation}
Next, using Sobolev interpolation inequality and Lemma \ref{FracReg}, we have
\[
\begin{aligned}
    \norm{u_{q_1}(t)-u_{q_2}(t)}_{H^2(\Omega)} \leq& \norm{u_{q_1}(t)-u_{q_2}(t)}_{H^{\alpha +2}(\Omega)}^{\frac{2}{\alpha+2}}\norm{u_{q_1}(t)-u_{q_2}(t)}_{L^2(\Omega)}^{\frac{\alpha}{\alpha+2}}\\
    \lesssim& (1+\norm{q_1}_{H^{\alpha}(\Omega)}^{1+2\alpha}\vee\norm{q_2}_{H^{\alpha}(\Omega)}^{1+2\alpha})^{\frac{2}{\alpha+2}}\norm{u_{q_1}(t)-u_{q_2}(t)}_{L^2(\Omega)}^{\frac{\alpha}{\alpha+2}}.
\end{aligned}
\]
Combining the above inequality with (\ref{theorem4.5}), we have
\[
\begin{aligned}
    \norm{q_1 - q_2}_{L^2(\Omega)}\leq &c(1+\norm{q_1}_{H^{\alpha}(\Omega)}^{1+2\alpha}\vee\norm{q_2}_{H^{\alpha}(\Omega)}^{1+2\alpha})^{\frac{2}{\alpha+2}}\norm{u_{q_1}(t)-u_{q_2}(t)}_{L^2(\Omega)}^{\frac{\alpha}{\alpha+2}}.
\end{aligned}
\]
for any $t \geq T_0$.
\end{proof}
Using the regularity and stability estimates presented above, we now aim to prove Theorem \ref{mainthmFrac}.
\begin{proof}[Proof of Theorem \ref{mainthmFrac}]
We will verify the Conditions \ref{condreg} and \ref{condstab} for the forward map $\mathcal{G}$ as defined in (\ref{forwardmapFrac}) 
with $\mathcal{R} = H^{\alpha}$.
From Lemma \ref{FracReg}, we have
\[\mathop{\sup}_{\theta \in B_{\mathcal{R}}(M)}\Vert u_{q_{\theta}}(T) \Vert_{H^{2}} \leq c(1+T^{-\beta}).\]
This inequality, combined with the Sobolev embedding $H^2 \subset C$, imply that
\[\mathop{\sup}_{\theta \in B_{\mathcal{R}}(M)}\mathop{\sup}_{x \in \Omega}\vert\mathcal{G}(\theta)(x)\vert \leq c(1+T^{-\beta}).\]
Therefore, we have verified the condition \eqref{bound} with $p = 0$.

Theorem \ref{FracLip}, in conjunction with Lemma 29 in \cite{IntroNonLinear_nickl2020convergence} and Sobolev embedding theorem, implies that
\[\lVert \mathcal{G}(\theta_1)-\mathcal{G}(\theta_2)\rVert _{L^2} \leq c(1+T^{-\beta})(1+\norm{\theta_1}_{\mathcal{R}}^2\vee\norm{\theta_2}_{\mathcal{R}}^2)\lVert \theta_1-\theta_2\rVert _{(H_0^2(\Omega))^*}\]
for $\theta_1,\theta_2 \in \mathcal{R}$, which verifies condition (\ref{lip}) with ${\kappa}=2$, $l=2$.

For the condition \eqref{stab}, Theorem \ref{FracStab}, combined with Lemma 29 in \cite{IntroNonLinear_nickl2020convergence}, implies that
\[\Vert q_{\theta} - q_{0}\Vert_{L^2}^{\frac{2+\alpha}{\alpha}} \leq C M^{2+4\alpha}\Vert \mathcal{G}(\theta)-\mathcal{G}(\theta_0)\Vert_{L^2}\]
for $\theta \in B_{\mathcal{R}}(M)$.

Given our requirement on \( \alpha \), we have
\[\alpha + {\kappa} \geq \frac{3}{2} = \frac{d(l+1)}{2}.\]
In conclusion, using {\color{black}Theorem \ref{finalthmsv}} with conditions verified above, we obtain
\[P_{\theta_0}^{(N)}\hat{Q} \Vert q_{\theta} - q_{0}\Vert_{L^2}^{\frac{2+\alpha}{\alpha}\cdot\frac{2}{p+q+1}}\lesssim \varepsilon_N^{\frac{2}{p+q+1}}\log N\]
for $p = 0$, $q = 2+4\alpha$.
\end{proof}
\begin{proof}[Proof of Theorem \ref{Fracminmax}: ]
    Let $\hat{q} = 1$ and $\psi_{lr}$ be 1-dimensional, compactly supported, as least $(\alpha + 1)$-regular Daubechies wavelets in \eqref{Dbase} of the main text. Define a fixed interval $[a,b]\subset K \subset \Omega$. Due to the compact support of $\psi_{lr}$ (see \cite[Theorem 4.2.10]{gin2015mathematical}), there exists a constant $c_0$ independent of $l$, ensuring that for any $l\geq l_0$,  we can choose a set of indices $r$ noted as $R_l$ such that $n_l := \abs{R_l}=c_02^{l}$ and $\psi_{lr}, r \in R_l$ are supported in $[a,b]$. Here, $l_0 \in \mathbb{N}$ is a large enough fixed integer that depends only on $\alpha, a, b$. For convenience, we denote $(\psi_{lr})_{r\in R_l}$ by $(\varphi_{lr})_{r=1}^{n_l}$ in the followings. 
    For a sufficiently small constant $\delta >0$, we define
    \begin{equation*}
        q_m := q + \delta2^{-l(\alpha+1/2)}\sum_{r=1}^{n_l}\gamma_{mr}\varphi_{lr}, \qquad m=1,\dots,M,
    \end{equation*}
    where $\gamma_m := (\gamma_{mr})_{r=1}^{n_l} \in \{-1,1\}^{n_l}, m=1,\dots,M$ will be chosen later.
    \par
    Then, we choose $\delta$ small enough as follows. By wavelet characterization of Sobolev norms in \cite[Chapter 4.3]{gin2015mathematical},
    we have
    \[\norm{q_m-\hat{q}}_{H^{\alpha}(\Omega)} = \delta\sqrt{2^{-l}\sum_{r=1}^{n_l}\gamma_{mr}^2} = \delta\sqrt{c_0}.\]
    Thus, $\norm{q_m-\hat{q}}_{H^{\alpha}(\Omega)}$ can be made as small as desired with $\delta$ small enough.
    By Sobolev embedding $H^{\alpha}\subset C$, we can choose suitable $\delta$ such that
    \[ 0 < q_m < M_0,\quad \norm{q_m}_{H^{\alpha}(\Omega)} < R.\]
    which combined with definition of $q_m$ further imply that $q_m \in \mathcal{F}_{\alpha,M_0}(R)$.
    \par
    For any $q\in \mathcal{F}_{\alpha,M_0}$ and measurable sets $A,B$ belonging to $(\mathbb{R}\times\Omega)^N, L^2(\Omega)$ respectively, we define the probability measures $\hat{Q}_{q}$ on $(\mathbb{R}\times\Omega)^N\times L^2(\Omega)$ as
    \begin{equation*}
        \hat{Q}_{q}(A\times B) = \int_{A}\hat{Q}(B\mid D_N) dP^{(N)}_q,
    \end{equation*}
    where $P^{(N)}_q$ denotes $P^{(N)}_{\theta}$ defined in \eqref{modeldensity} of the main text for $\Phi^{-1}(q) = \theta$ (in slight abuse of notations).
    Note that all $\hat{Q}_{q}$ are mutually absolutely continuous because 
    \[ 
       \hat{Q}_{q_1}(A\times B) = \int_{A} \hat{Q}(B) dP^{(N)}_{q_1} = \int_{A} \hat{Q}(B) \frac{dP^{(N)}_{q_1}}{dP^{(N)}_{q_2}}dP^{(N)}_{q_2} = \int_{A\times B} \frac{dP^{(N)}_{q_1}}{dP^{(N)}_{q_2}}d\hat{Q}_{q_2}, 
    \]
    that is,
    \[\frac{d\hat{Q}_{q_1}}{d\hat{Q}_{q_2}} = \frac{dP^{(N)}_{q_1}}{dP^{(N)}_{q_2}},\qquad \forall q_1,q_2 \in \mathcal{F}_{\alpha,M_0}.\]
    \par
    Next, we apply \cite[Theorem 6.3.2]{gin2015mathematical} to prove the conclusion, where two steps are needed : Choose a list of $\gamma_m$ such that the $L^2$ distance between $u_{q_m}(T)$ and $q_m$ has an appropriate lower bound respectively, and give an appropriate upper bound on the KL-divegence of the laws $\hat{Q}_{q_m},\hat{Q}_{\hat{q}}$. We give the prove of two steps respectively in below i) ,ii).
    \par
    i) By \eqref{Fracnormestimate**}, for all $v\in L^2(\Omega)$ and $q\in\mathcal{I}$, there exists a constant $c>0$ independent of $q$ such that
    \begin{equation}\label{minmax1}
            \norm{A_qv}_{(H^2_0(\Omega))^*} = \sup_{\norm{\phi}_{H^2_0}\leq 1}\pdt{A_qv}{\phi} \leq \sup_{\norm{A_q\phi}_{L^2}\leq c}\pdt{v}{A_q\phi}\leq c \norm{v}_{L^2}
    \end{equation}
    For all $m,m'\in\{1,\dots,M\}$, by triangle inequality, we have
    \begin{equation*}
        \begin{aligned}
            \norm{A_{q_{m'}}(u_{q_m}-u_{q_{m'}})(t)}_{(H^2_0(\Omega))^*} \geq & \norm{(q_{m'}-q_{m})u_{q_{m}}(t)}_{(H^2_0(\Omega))^*}\\&- \norm{\partial_t^{\beta}(u_{q_{m}}-u_{q_{m'}})(t)}_{(H^2_0(\Omega))^*}\\
            := &I_1 - I_2
        \end{aligned}
    \end{equation*}
    Because $u_0, f \geq L_0$ a.e. and $a_0, a_1 \geq L_0$, the maximum principle of time-fractional diffusion implies that $u(t) \geq L_0$.
    Therefore, we deduce that
    \begin{equation*}
        \begin{aligned}
            I_1 = &\sup_{\norm{\phi}_{H^2_0}\leq1}\pdt{(q_{m'}-q_{m})u_{q_{m}}(t)}{\phi}_{L^2} \\
            \geq & \frac{\pdt{(q_{m'}-q_{m})u_{q_{m}}(t)}{q_{m'}-q_{m}}_{L^2}}{\norm{q_{m'}-q_{m}}_{H^2_0}}\\
            \geq & L_0 \frac{\pdt{q_{m'}-q_{m}}{q_{m'}-q_{m}}_{L^2}}{\norm{q_{m'}-q_{m}}_{L^2}} \\
            \geq & L_0 \norm{q_{m'}-q_{m}}_{L^2} \geq L_0 \norm{q_{m'}-q_{m}}_{(H^2_0)^*}
        \end{aligned}
    \end{equation*}
    Lemma \ref{FracderivativeReg} implies that for fixed large enough $T_0>0$ and all $t>T_0$, there exists $c>0$ independent of $m,m',T_0$ such that
    \begin{equation*}
        \begin{aligned}
           I_2 \leq ct^{-\beta}\norm{q_{m'}-q_{m}}_{(H^2_0(\Omega))^*}
        \end{aligned}
    \end{equation*}
    Using the bound of $I_1, I_2$, we have
    \begin{equation*}
        \begin{aligned}
            \norm{A_{q_{m'}}(u_{q_m}-u_{q_{m'}})(t)}_{(H^2_0(\Omega))^*} \geq &(L_0-ct^{-\beta})\norm{q_{m'}-q_{m}}_{(H^2_0(\Omega))^*} \\
            \geq & \frac{L_0}{2}\norm{q_{m'}-q_{m}}_{H^{-2}(\Omega)}
        \end{aligned}
    \end{equation*}
    for any $t\geq T_0$ where $T_0 >0$ is chosen large enough such that $L_0-cT_0^{-\beta}> L_0/2$.
    This inequality, wavelet characterization of Sobolev norms and \eqref{minmax1} imply that
    \begin{equation}\label{minmaxlowB}
        \begin{aligned}
            \norm{u_{q_m}(t)-u_{q_{m'}}(t)}_{L^2(\Omega)} \geq & \norm{A_{q_{m'}}(u_{q_m}-u_{q_{m'}})(t)}_{(H^2_0(\Omega))^*}\\
            \geq & \frac{L_0}{2}\norm{q_{m'}-q_{m}}_{H^{-2}(\Omega)}\\
            \gtrsim & \sqrt{2^{-2l(\alpha + 2 + 1/2)}\sum_{r=1}^{n_l}\abs{\gamma_{mr}-\gamma_{m'r}}^2}.
        \end{aligned}
    \end{equation}
     By the Varshamov-Gilbert-bound \cite[Example 3.1.4]{gin2015mathematical}, for constants $c_1,c_2>0$ independent of $l$, there exists a subset $\mathcal{M}_l\subset \{-1,1\}^{c_02^{l}}$ with $M_l := \abs{\mathcal{M}_l} = 2^{c_12^{l}}$ such that 
     \begin{equation*}
         \sum_{r=1}^{n_l}\abs{\gamma_{mr}-\gamma_{m'r}}^2 \geq c_22^l
     \end{equation*}
     for all $m,m'\in \mathcal{M}_l$ when $m\neq m'$. Combined with \eqref{minmaxlowB}, we further have
     \begin{equation*}
         \norm{u_{q_m}(T)-u_{q_{m'}}(T)}_{L^2(\Omega)} \gtrsim 2^{-l(\alpha + 2)}
     \end{equation*}
     for $T\geq T_0$.
     \par
     For all $m,m'\in \mathcal{M}_l$ when $m\neq m'$, we also deduce that
     \[ \norm{q_{m'}-q_{m}}_{L^2(\Omega)} = \sqrt{2^{-2l(\alpha + 1/2)}\sum_{r=1}^{n_l}\abs{\gamma_{mr}-\gamma_{m'r}}^2} \gtrsim 2^{-l\alpha}.\]
     ii) We see 
     \begin{equation*}
         \begin{aligned}
             D(\hat{Q}_{q_m}\Vert \hat{Q}_{\hat{q}}) = & \int_{(\mathbb{R}\times \Omega)^N \times L^2(\Omega)} \log \Big(\frac{d\hat{Q}_{q_m}}{d\hat{Q}_{\hat{q}}}\Big) \frac{d\hat{Q}_{q_m}}{d\hat{Q}_{\hat{q}}} d\hat{Q}_{\hat{q}}\\
             = & \int_{(\mathbb{R}\times \Omega)^N \times L^2(\Omega)} \log \Big(\frac{dP^{(N)}_{q_m}}{dP^{(N)}_{\hat{q}}}\Big) \frac{dP^{(N)}_{q_m}}{dP^{(N)}_{\hat{q}}} d\hat{Q}_{\hat{q}} \\
             = & \int_{(\mathbb{R}\times \Omega)^N}\log \Big(\frac{dP^{(N)}_{q_m}}{dP^{(N)}_{\hat{q}}}\Big) \frac{dP^{(N)}_{q_m}}{dP^{(N)}_{\hat{q}}} dP^{(N)}_{\hat{q}}
             = D(P^{(N)}_{q_m}\Vert P^{(N)}_{\hat{q}}).
         \end{aligned}
     \end{equation*}
     From Lemma \ref{FracLip} and Proposition \ref{le2.1}, we further deduce that
     \begin{equation}\label{minmax3}
         \begin{aligned}
             D(\hat{Q}_{q_m}\Vert \hat{Q}_{\hat{q}}) = &\frac{N}{2}\norm{u_{q_m}(T)-u_{\hat{q}}(T)}^2_{L_{\lambda}^2(\Omega)}\\
             \leq & cN\norm{q_{m}-\hat{q}}^2_{(H^2_0(\Omega))^*} \\
         \end{aligned}
     \end{equation}
     By our definition of $q_{m}$, we see $q_m-\hat{q}$ is supported in $[a,b]$. For compact set $[a,b]$, we can have a compact set $K'$ such that $[a,b] \subsetneq K' \subset \Omega$, and a cut-off function $\chi \in C_c^{\infty}(\Omega)$ such that $\chi = 1$ on $K'$. By \eqref{Sobolevinter2} in the main text, for any $v\in H^2(\Omega)$, we have
     \[\norm{\chi v}_{H^2_c(\Omega)}\lesssim \norm{v}_{H^2(\Omega)}.\]
     By this inequality, we have
     \begin{equation*}
         \begin{aligned}
             \norm{q_{m}-\hat{q}}_{(H^2_0(\Omega))^*} =& \sup_{\norm{\phi}_{H^2_0}\leq 1}\int_{\Omega}(q_{m}-\hat{q})\phi dx \\
             = & \sup_{\norm{\phi}_{H^2_0}\leq 1}\int_{\Omega}(q_{m}-\hat{q})\chi\phi dx \\
             \lesssim & \sup_{\norm{\phi}_{H^2_c}\leq 1}\int_{\Omega}(q_{m}-\hat{q})\phi dx\\
             \lesssim &\norm{q_{m}-\hat{q}}_{H^{-2}(\Omega)}
         \end{aligned}
     \end{equation*}
     Using this inequality, \eqref{minmax3} and wavelet characterization of Sobolev norms, we have
     \begin{equation*}
         \begin{aligned}
             D(\hat{Q}_{q_m}\Vert \hat{Q}_{\hat{q}}) \lesssim& N\norm{q_{m}-\hat{q}}^2_{H^{-2}(\Omega)}\\
             \lesssim &\delta N 2^{-2l(\alpha + 2 + 1/2)}\sum_{r=1}^{n_l}\abs{\gamma_{mr}}^2\\
             \lesssim &\delta N 2^{-2l(\alpha + 2)}
         \end{aligned}
     \end{equation*}
     \par
     Let $2^l \simeq N^{\frac{1}{2\alpha+4+1}}$. From the proof in i), ii), we have
     \begin{equation*}
         \norm{u_{q_m}(T)-u_{q_{m'}}(T)}^{\eta_1}_{L^2(\Omega)} \gtrsim 2^{-l(\alpha + 2)\eta_1}\gtrsim N^{-\frac{\alpha+2}{2\alpha+4+1}\cdot\eta_1},
     \end{equation*}
     and
     \begin{equation*}
         \begin{aligned}
             D(\hat{Q}_{q_m}\Vert \hat{Q}_{\hat{q}}) \lesssim & \delta N 2^{-2l(\alpha + 2)} \lesssim \delta N^{\frac{1}{2\alpha+4+1}} \lesssim \delta \log M_l
         \end{aligned}
     \end{equation*}
     for all $m,m'\in \mathcal{M}_l$ and $m\neq m'$. Then, apply \cite[Theorem 6.3.2]{gin2015mathematical} and we have
     \begin{equation*}
         \inf_{\Tilde{u}_N}\sup_{q_0\in\mathcal{F}_{\alpha,M_0}(R)}\hat{Q}_{q_0}\norm{\Tilde{u}_N-u_{q_0}(T)}^{\eta_1}_{L^2(\Omega)}\gtrsim N^{-\frac{\alpha+2}{2\alpha+4+1}\cdot\eta_1},
     \end{equation*}
     that is,
     \begin{equation*}
         \inf_{\Tilde{u}_N}\sup_{q_0\in\mathcal{F}_{\alpha,M_0}(R)}P^{(N)}_{q_0}\hat{Q}\norm{\Tilde{u}_N-u_{q_0}(T)}^{\eta_1}_{L^2(\Omega)}\gtrsim N^{-\frac{\alpha+2}{2\alpha+4+1}\cdot\eta_1}
     \end{equation*}
     where the infimum ranges over all measurable functions $\Tilde{u}_N = \Tilde{u}_N((Y_i,X_i)_{i=1}^N,\theta)$ that take value in $L^2(\Omega)$ with $(Y_i,X_i)_{i=1}^N$ from $P^{(N)}_{q_0}$ and $\theta$ from $\hat{Q}$.
     Similarly, with
     \[\norm{q_{m'}-q_{m}}_{L^2(\Omega)}^{\frac{\alpha+2}{\alpha}\cdot \eta_2} \gtrsim 2^{-l(\alpha+2)\eta_2} \gtrsim N^{-\frac{\alpha+2}{2\alpha+4+1}\cdot \eta_2}\]
     for all $m,m'\in \mathcal{M}_l$ and $m\neq m'$, Theorem 6.3.2 in \cite{gin2015mathematical} also implies that
     \begin{equation*}
         \inf_{\Tilde{q}_N}\sup_{q_0\in\mathcal{F}_{\alpha,M_0}(R)}P^{(N)}_{q_0}\hat{Q}\norm{\Tilde{q}_N-q_0}^{\frac{\alpha+2}{\alpha}\cdot \eta_2}_{L^2(\Omega)}\gtrsim N^{-\frac{\alpha+2}{2\alpha+4+1}\cdot \eta_2}
     \end{equation*}
      where the infimum ranges over all measurable functions $\Tilde{q}_N = \Tilde{q}_N((Y_i,X_i)_{i=1}^N,\theta)$ that take value in $L^2(\Omega)$ with $(Y_i,X_i)_{i=1}^N$ from $P^{(N)}_{q_0}$ and $\theta$ from $\hat{Q}$. 
\end{proof}
% Unknow Fractional Order
\textbf{Proof of Theorem \ref{Fractionalfinal}:}  Next, we will prove Theorem \ref{Fractionalfinal} with the lemmas introduced below. With an unknown fractional order $\beta$, we will need additional regularity and stability estimates for the forward map $\mathcal{G}_{\beta}(\theta)$, as shown in \eqref{forwardmapFrac} of the main text, to verify Conditions \ref{condregmodel} and \ref{condstabmodel}.
\begin{lemma}\label{FracElip}
For any $\beta,\Tilde{\beta}\in [\beta^{-},\beta^{+}]$ and any nonnegative sequence $(\lambda_j)_{j=1}^{\infty}$, there exists a constant $C=C(\beta^{-},\beta^{+})$ such that
\[
\abs{E_{\beta,1}(-\lambda_jt^{\beta})-E_{\Tilde{\beta},1}(-\lambda_jt^{\Tilde{\beta}})} \leq C\lambda_j^{-1}(t^{-\beta}\abs{\beta - \Tilde{\beta}} +|t^{-\beta}-t^{-\Tilde{\beta}}| ),
\]
and
\[
\abs{E_{\beta,1}(-\lambda_jt^{\beta})-E_{\Tilde{\beta},1}(-\lambda_jt^{\Tilde{\beta}})} \leq C(1 + |\log t|)\abs{\beta - \Tilde{\beta}}.
\]
\end{lemma}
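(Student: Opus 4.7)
The plan is to split the double difference into a change-of-argument piece and a change-of-order piece,
\[
E_{\beta,1}(-\lambda_j t^{\beta})-E_{\tilde{\beta},1}(-\lambda_j t^{\tilde{\beta}}) = \mathrm{I} + \mathrm{II},
\]
where $\mathrm{I} := E_{\beta,1}(-\lambda_j t^{\beta})-E_{\beta,1}(-\lambda_j t^{\tilde{\beta}})$ and $\mathrm{II} := E_{\beta,1}(-\lambda_j t^{\tilde{\beta}})-E_{\tilde{\beta},1}(-\lambda_j t^{\tilde{\beta}})$. The three tools that I would use, uniformly in $\beta \in [\beta^-,\beta^+]$, are: (a) the decay bound $|E_{\beta,1}(-x)| \leq C/(1+x)$; (b) the derivative bound $|E'_{\beta,1}(-x)| \leq C/(1+x)^2$; and (c) the one-term asymptotic $E_{\beta,1}(-x) = -[\Gamma(1-\beta)\,x]^{-1} + r_\beta(x)$ with $|r_\beta(x)| \leq C/(1+x)^2$. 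All three follow from the standard Hankel contour representation of the Mittag--Leffler function (see, e.g., \cite[Theorem 3.2]{jin2021fractional}).

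For the first inequality, the plan is to substitute the asymptotic (c) into both $E_{\beta,1}(-\lambda_j t^{\beta})$ and $E_{\tilde{\beta},1}(-\lambda_j t^{\tilde{\beta}})$ and read off the main term. The leading contributions combine as
\[
\frac{1}{\lambda_j}\left[\frac{1}{\Gamma(1-\tilde\beta)\, t^{\tilde\beta}}-\frac{1}{\Gamma(1-\beta)\, t^{\beta}}\right] = \frac{1}{\lambda_j \Gamma(1-\tilde\beta)}\bigl(t^{-\tilde\beta}-t^{-\beta}\bigr) + \frac{t^{-\beta}}{\lambda_j}\left[\frac{1}{\Gamma(1-\tilde\beta)}-\frac{1}{\Gamma(1-\beta)}\right],
\]
and since $\beta \mapsto 1/\Gamma(1-\beta)$ is $C^{\infty}$ on $[\beta^-,\beta^+] \subset (0,1)$ the second bracket is $O(|\beta-\tilde\beta|)$, which delivers exactly the desired two-term upper bound $C\lambda_j^{-1}(t^{-\beta}|\beta-\tilde\beta| + |t^{-\beta}-t^{-\tilde\beta}|)$. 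The remainder $r_\beta(\lambda_j t^\beta) - r_{\tilde\beta}(\lambda_j t^{\tilde\beta})$ is absorbed by splitting on the size of $\lambda_j t^{\beta \wedge \tilde\beta}$: when it is at least $1$, the bound from (c) gives $|r_\beta|+|r_{\tilde\beta}| \leq C(\lambda_j t^{\beta\wedge\tilde\beta})^{-2} \lesssim \lambda_j^{-1} t^{-\beta}$; when it is at most $1$, the trivial bound $|E_{\beta,1}(-x)| \leq 1$ combined with $\lambda_j^{-1}t^{-\beta} \geq 1$ yields the conclusion directly.

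For the second inequality, I would apply the mean value theorem separately to $\mathrm{I}$ and $\mathrm{II}$. For $\mathrm{I}$, the bound (b) and $|t^\beta - t^{\tilde\beta}| \leq (t^\beta \vee t^{\tilde\beta})\,|\log t|\,|\beta-\tilde\beta|$ give
\[
|\mathrm{I}| \leq \frac{C\, \lambda_j\, (t^\beta \vee t^{\tilde\beta})\,|\log t|\,|\beta-\tilde\beta|}{(1+\lambda_j(t^\beta \wedge t^{\tilde\beta}))^{2}},
\]
which is dominated by $C|\log t|\,|\beta-\tilde\beta|$ after the same dichotomy on the size of $\lambda_j(t^\beta \wedge t^{\tilde\beta})$. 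For $\mathrm{II}$, differentiating the Hankel representation in $\beta$ yields
\[
\partial_\beta E_{\beta,1}(-x) = \frac{1}{2\pi i}\int_{\Gamma} \frac{x\, e^{z}\, z^{\beta-1}\log z}{(z^\beta+x)^{2}}\,dz,
\]
and a standard contour deformation (splitting into $|z|\leq 1$ and $|z|\geq 1$) gives a uniform bound $|\partial_\beta E_{\beta,1}(-x)| \leq C$ for $x \geq 0$ and $\beta \in [\beta^-,\beta^+]$; a mean-value step in $\beta$ then delivers $|\mathrm{II}| \leq C|\beta-\tilde\beta|$, completing the argument.

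The main technical obstacle I anticipate is the uniform-in-$\beta$ bound on $\partial_\beta E_{\beta,1}(-x)$ via the Hankel contour, because the integrand carries a branch point at $z=0$ and a $\log z$ singularity, so obtaining a bound that is uniform jointly in $x\geq 0$ and $\beta \in [\beta^-,\beta^+]$ needs a careful splitting of the contour together with a separate treatment of the regime $x \to \infty$ in which the factor $x/(z^\beta+x)^{2}$ must be controlled. Once $\beta$ is bounded away from $0$ and $1$, however, the Gamma factors and the powers $z^{\beta-1}$ behave uniformly, so this reduces to a routine (if careful) contour estimate, after which the two decompositions above close the proof.
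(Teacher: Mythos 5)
Your decomposition into an order-change piece and an argument-change piece, and your derivative-based treatment of the second inequality, follow essentially the same skeleton as the paper (which changes the order first, at the argument $t^{\beta}$, using the cited bound $\abs{\partial_\zeta E_{\zeta,1}(-x)}\leq C/(1+x)$, and then changes the argument at fixed order $\tilde\beta$ via the identity $\partial_t E_{\beta,1}(-\lambda t^{\beta})=-\lambda t^{\beta-1}E_{\beta,\beta}(-\lambda t^{\beta})$ together with $\abs{E_{\beta,\beta}(-x)}\leq C/(1+x^{2})$). However, your proof of the \emph{first} inequality has a genuine gap. The target bound $C\lambda_j^{-1}(t^{-\beta}\abs{\beta-\tilde\beta}+\abs{t^{-\beta}-t^{-\tilde\beta}})$ vanishes when $\tilde\beta=\beta$, so every term in your estimate must carry a factor that vanishes there. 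Your main term does, but your remainder does not: you bound $\abs{r_\beta(\lambda_j t^{\beta})-r_{\tilde\beta}(\lambda_j t^{\tilde\beta})}$ by $\abs{r_\beta}+\abs{r_{\tilde\beta}}\lesssim(\lambda_j t^{\beta\wedge\tilde\beta})^{-2}\lesssim\lambda_j^{-1}t^{-\beta}$, and the small-argument branch likewise only yields $\lesssim\lambda_j^{-1}t^{-\beta}$; neither carries the factor $\abs{\beta-\tilde\beta}$ (take $\tilde\beta=\beta$: the left side is $0$, your bound is a positive quantity, and the right side is $0$). To close this you would have to control the \emph{difference} of the remainders, i.e., establish decaying bounds on $\partial_\beta r_\beta(x)$ and $\partial_x r_\beta(x)$ and run a mean-value argument on them — at which point you are effectively redoing the direct derivative argument on $E$ itself, which is what the paper does and which produces the difference factors automatically.

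A secondary issue: in your estimate for $\mathrm{I}$ in the second inequality, the ratio $\lambda_j(t^{\beta}\vee t^{\tilde\beta})/(1+\lambda_j(t^{\beta}\wedge t^{\tilde\beta}))^{2}$ is bounded only by $\frac{1}{4}(t^{\beta}\vee t^{\tilde\beta})/(t^{\beta}\wedge t^{\tilde\beta})=\frac{1}{4}e^{\abs{\log t}\abs{\beta-\tilde\beta}}$, which your dichotomy on $\lambda_j(t^{\beta}\wedge t^{\tilde\beta})$ does not control for large $t$; you need an additional (easy) case split on whether $\abs{\log t}\abs{\beta-\tilde\beta}\leq 1$, falling back on the trivial bound $\abs{\mathrm{I}}\leq 2\sup\abs{E_{\beta,1}}$ otherwise. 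With that patch, and with the uniform bound $\abs{\partial_\beta E_{\beta,1}(-x)}\leq C$ you propose to extract from the Hankel representation (the paper instead cites the stronger bound $C/(1+x)$ from the literature, which is also what you would need if you wanted to rescue the first inequality along these lines), the second inequality does go through.
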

\begin{proof}
First, we divide left side of this inequality into two part:
\begin{equation*}
    \begin{aligned}
        &\abs{E_{\beta,1}(-\lambda_jt^{\beta})-E_{\Tilde{\beta},1}(-\lambda_jt^{\Tilde{\beta}})}\\
        \leq & \abs{E_{\beta,1}(-\lambda_jt^{\beta})-E_{\Tilde{\beta},1}(-\lambda_jt^{\beta})} + \abs{E_{\Tilde{\beta},1}(-\lambda_jt^{\beta})-E_{\Tilde{\beta},1}(-\lambda_jt^{\Tilde{\beta}})}\\
        = & I_1 + I_2
    \end{aligned}
\end{equation*}
For the estimate of $I_1$, by part (a) of Lemma 2.3 in \cite{Frac_dang2018continuity}, there exists a constant $C(\beta^-,\beta^+)>0$ that depends only on $\beta^-,\beta^+$ such that
\begin{equation*}
    \abs{\partial_\zeta E_{\zeta,1}(-\lambda_j t^{\beta})} \leq \frac{C(\beta^-,\beta^+)}{1+\lambda_jt^{\beta}}
\end{equation*}
for any $\zeta\in [\beta^-,\beta^+]$ and $t>0$. Then,
\begin{equation*}
    \begin{aligned}
        I_1 = &\abs{E_{\beta,1}(-\lambda_jt^{\beta})-E_{\Tilde{\beta},1}(-\lambda_jt^{\beta})} \\
          \leq&\abs{\int_{\beta}^{\Tilde{\beta}}\partial_\zeta E_{\zeta,1}(-\lambda_j t^{\beta})d\zeta}\\
          \leq&\frac{C(\beta^-,\beta^+)}{1+\lambda_jt^{\beta}}\abs{\beta - \Tilde{\beta}} \leq C(\beta^-,\beta^+)\min(1,\lambda_j^{-1}t^{-\beta})\abs{\beta - \Tilde{\beta}}.
    \end{aligned} 
\end{equation*}
Next, we provide estimates for $I_2$. From (3.21) in \cite{jin2021fractional}, we have
\[\partial_tE_{\beta,1}(-\lambda_jt^{\beta}) = -\lambda_jt^{\beta-1}E_{\beta,\beta}(-\lambda_jt^{\beta})\]
for any $\beta \in [\beta^-,\beta^+]$ and $t>0$. 
Using part (a) of Lemma 2.3 in \cite{Frac_dang2018continuity} again, combined with (3.21) in \cite{jin2021fractional}, there exists a constant $C(\beta^-,\beta^+)>0$ that depends only on $\beta^-,\beta^+$ such that
\begin{equation*}
    \abs{E_{\beta,\beta}(-\lambda_j t^{\beta})} \leq \frac{C(\beta^-,\beta^+)}{1+(\lambda_jt^{\beta})^2}
\end{equation*}
for any $\beta \in [\beta^-,\beta^+]$ and $t>0$.
Then,we deduce that
\begin{equation*}
    \begin{aligned}
        I_2 = &\abs{E_{\Tilde{\beta},1}(-\lambda_jt^{\beta})-E_{\Tilde{\beta},1}(-\lambda_jt^{\Tilde{\beta}})} \\
        =&\abs{\int_{t}^{t^{\beta/{\Tilde{\beta}}}} \partial_rE_{\Tilde{\beta},1}(-\lambda_jr^{\Tilde{\beta}}) dr} \\
        \leq& C(\beta^-,\beta^+) \lambda_j^{-1}\abs{\int_{t}^{t^{\beta/{\Tilde{\beta}}}} r^{-\beta-1}\frac{(\lambda_jr^{\beta})^2}{1+(\lambda_jr^{\beta})^2} dr}\\
        \leq& C(\beta^-,\beta^+) \lambda_j^{-1}\abs{\int_{t}^{t^{\beta/{\Tilde{\beta}}}} r^{-\beta-1}dr}\\
        \leq& C(\beta^-,\beta^+) \lambda_j^{-1}\abs{t^{-\beta}-t^{-\Tilde{\beta}}}.
    \end{aligned}
\end{equation*}
And we can also have
\begin{equation*}
    \begin{aligned}
        I_2 \leq &C(\beta^-,\beta^+) \abs{\int_{t}^{t^{\beta/{\Tilde{\beta}}}} r^{-1}\frac{\lambda_jr^{\beta}}{1+(\lambda_jr^{\beta})^2} dr} \\
        =&C(\beta^-,\beta^+)\abs{\log(t)}\abs{\beta-\Tilde{\beta}},
    \end{aligned}
\end{equation*}
where we used the fact that $\frac{\lambda_jr^{\beta}}{1+(\lambda_jr^{\beta})^2} \leq 1$.
\end{proof}
\begin{lemma}\label{FractionalDerivativeReg}
Let $q,\Tilde{q} \in \mathcal{I}$ and let $\beta, \Tilde{\beta} \in [\beta^{-},\beta^{+}]$ for $\beta^{-}>0$. Then, for a fixed and sufficiently large $T_0 >0$ and any $t \geq T_0$, there exists a constant $c>0$ independent of $q,\Tilde{q},\beta,\Tilde{\beta},T_0$ such that
\begin{equation*}
    \norm{\partial_t^{\beta}u_{q,\beta}(t) - \partial_t^{\Tilde{\beta}}u_{\Tilde{q},\Tilde{\beta}}(t)}_{L^2(\Omega)} \leq c(t^{-\beta}\norm{q-\Tilde{q}}_{(H^2_0(\Omega))^*} + (1+\abs{\log t})\abs{\beta - \Tilde{\beta}})
\end{equation*}
and
\begin{equation*}
    \norm{\partial_t^{\beta}u_{q,\beta}(t) - \partial_t^{\Tilde{\beta}}u_{\Tilde{q},\Tilde{\beta}}(t)}_{L^2(\Omega)} \leq c(t^{-\beta}\norm{q-\Tilde{q}}_{(H^2_0(\Omega))^*} + t^{-\beta}\abs{\beta - \Tilde{\beta}} +|t^{-\beta}-t^{-\Tilde{\beta}}|).
\end{equation*}
\end{lemma}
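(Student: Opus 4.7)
The plan is to decouple the two sources of variation ($q$ vs.\ $\Tilde{q}$, and $\beta$ vs.\ $\Tilde{\beta}$) by a triangle inequality split:
\begin{equation*}
\partial_t^{\beta}u_{q,\beta}(t) - \partial_t^{\Tilde{\beta}}u_{\Tilde{q},\Tilde{\beta}}(t)
= \bigl(\partial_t^{\beta}u_{q,\beta}(t) - \partial_t^{\beta}u_{\Tilde{q},\beta}(t)\bigr)
+ \bigl(\partial_t^{\beta}u_{\Tilde{q},\beta}(t) - \partial_t^{\Tilde{\beta}}u_{\Tilde{q},\Tilde{\beta}}(t)\bigr).
\end{equation*}
The first summand differs only in the potential and is bounded directly by Lemma \ref{FracderivativeReg}, contributing the term $c t^{-\beta}\norm{q-\Tilde{q}}_{(H^2_0)^*}$.

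For the second summand I fix the potential $\Tilde{q}$ and exploit the solution representation \eqref{Fracsolutions} $u_{\beta,\Tilde{q}}(t) = h_{\Tilde{q}} + F_{\beta,\Tilde{q}}(t)(u_0 - h_{\Tilde{q}})$. Since $h_{\Tilde{q}}$ is time-independent, its Djrbashian--Caputo derivative vanishes, and combining this with the identity $\partial_t^{\beta} F_{\beta,\Tilde q}(t) = -A_{\Tilde q} F_{\beta,\Tilde q}(t)$ from \cite[Lemma 6.3]{jin2021fractional} yields
\begin{equation*}
\partial_t^{\beta}u_{\Tilde{q},\beta}(t) - \partial_t^{\Tilde{\beta}}u_{\Tilde{q},\Tilde{\beta}}(t)
= -A_{\Tilde{q}}\bigl(F_{\beta,\Tilde{q}}(t) - F_{\Tilde{\beta},\Tilde{q}}(t)\bigr)(u_0 - h_{\Tilde{q}}).
\end{equation*}
Expanding in the eigenbasis $\{\varphi_j(\Tilde{q})\}$ of $A_{\Tilde{q}}$, the $L^2$ norm squared equals
\begin{equation*}
\sum_j \lambda_j^2 \bigl|E_{\beta,1}(-\lambda_j t^{\beta}) - E_{\Tilde{\beta},1}(-\lambda_j t^{\Tilde{\beta}})\bigr|^2 \bigl(u_0 - h_{\Tilde{q}},\varphi_j\bigr)^2.
\end{equation*}

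To derive the first claimed inequality, I apply the first estimate in Lemma \ref{FracElip} to absorb one power of $\lambda_j$, obtaining the uniform-in-$j$ bound $\lambda_j|E_{\beta,1}-E_{\Tilde{\beta},1}| \leq C(t^{-\beta}|\beta-\Tilde{\beta}| + |t^{-\beta}-t^{-\Tilde{\beta}}|)$; Parseval then leaves only $\norm{u_0 - h_{\Tilde{q}}}_{L^2}$, which is uniformly bounded thanks to $u_0 \in H^\alpha$ and the elliptic regularity estimate for $h_{\Tilde{q}}$ used in Lemma \ref{FracReg}. For the second claimed inequality I instead invoke the second (logarithmic) estimate in Lemma \ref{FracElip}, which produces the $j$-independent factor $C(1+|\log t|)|\beta-\Tilde{\beta}|$; pulling it out leaves $\sum_j \lambda_j^2(u_0-h_{\Tilde{q}},\varphi_j)^2 = \norm{A_{\Tilde{q}}(u_0-h_{\Tilde{q}})}_{L^2}^2 = \norm{-\Delta u_0 + \Tilde{q}u_0 - f}_{L^2}^2$, which is again uniformly bounded by the assumed regularity of $u_0$, $f$, and the uniform $L^\infty$ control of $\Tilde{q}\in\mathcal{I}$.

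The main obstacle I anticipate is the second bound: it requires $u_0 - h_{\Tilde{q}}$ to lie in the domain of $A_{\Tilde{q}}$, i.e., in $H^2\cap H^1_0$, so that the series for $\norm{A_{\Tilde{q}}(u_0-h_{\Tilde{q}})}_{L^2}$ actually converges. This hinges on the compatibility $u_0|_{\partial\Omega} = (a_0,a_1)$ (implicit in the problem setup) together with $u_0 \in H^2$, which is guaranteed by $\alpha \geq 3$. Once this regularity/compatibility is confirmed, the constants in both bounds can be tracked to be independent of $q,\Tilde{q},\beta,\Tilde{\beta}$, and $T_0$, because $\mathcal{I}$ is uniformly bounded in $L^\infty$, the elliptic estimates for $h_{\Tilde{q}}$ are uniform on $\mathcal{I}$, and the constants in Lemma \ref{FracElip} depend only on $[\beta^-,\beta^+]$. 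The condition $t \geq T_0$ plays no special role in this second summand; it is inherited from the first summand via Lemma \ref{FracderivativeReg}.
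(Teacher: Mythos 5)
Your proof follows essentially the same route as the paper: the identical triangle-inequality split, Lemma \ref{FracderivativeReg} for the potential perturbation, and the eigenfunction expansion of $A_{\tilde q}\bigl(F_{\beta,\tilde q}(t)-F_{\tilde\beta,\tilde q}(t)\bigr)(u_0-h_{\tilde q})$ combined with the two estimates of Lemma \ref{FracElip} for the order perturbation, with $\norm{A_{\tilde q}(u_0-h_{\tilde q})}_{L^2}$ controlled by the regularity of $u_0$, $f$ and the uniform bound on $\mathcal{I}$. The only slip is that you interchanged which estimate of Lemma \ref{FracElip} yields which displayed inequality (the logarithmic bound gives the first claim, the $\lambda_j^{-1}$-weighted bound gives the second), and your boundary-compatibility concern is resolved exactly as the paper does it, by taking $u_0-h_{\tilde q}$ to vanish on $\partial\Omega$ so that it lies in $\dot H^2_{\tilde q}$.
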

\begin{proof}
By Lemma \ref{FracderivativeReg}, we have 
\[\norm{\partial_t^{\beta}u_{q,\beta}-\partial_t^{\beta}u_{\Tilde{q},\beta}}_{L^2(\Omega)} \leq ct^{-\beta}\norm{q-\Tilde{q}}_{(H^2_0(\Omega))^*}.\]
With notation $v_q := u_0 - h_q$, we have
\begin{equation*}
    \begin{aligned}
        \norm{\partial_t^{\beta}u_{\Tilde{q},\beta}-\partial_t^{\Tilde{\beta}}u_{\Tilde{q},\Tilde{\beta}}}_{L^2(\Omega)} =& \norm{A_{\Tilde{q}}(u_{\Tilde{q},\beta}-u_{\Tilde{q},\Tilde{\beta}})}_{L^2(\Omega)}\\
        =& \norm{A_{\Tilde{q}}(F_{\Tilde{q},\beta}(t)-F_{\Tilde{q},\Tilde{\beta}}(t))v_{\Tilde{q}}}_{L^2(\Omega)}.
    \end{aligned}
\end{equation*}
Using Lemma \ref{FracElip}, we deduce that
\begin{equation*}
    \begin{aligned}
        \norm{\partial_t^{\beta}u_{\Tilde{q},\beta}-\partial_t^{\Tilde{\beta}}u_{\Tilde{q},\Tilde{\beta}}}_{L^2(\Omega)} =& \norm{\sum_{j=1}^{\infty}\lambda_j\bbra{E_{\beta,1}(-\lambda_jt^{\beta})-E_{\Tilde{\beta},1}(-\lambda_jt^{\Tilde{\beta}})}(v_{\Tilde{q}},\varphi_j)\varphi_j}_{L^2(\Omega)}\\
        =&\sqrt{\sum_{j=1}^{\infty}\lambda_j^2\bbra{E_{\beta,1}(-\lambda_jt^{\beta})-E_{\Tilde{\beta},1}(-\lambda_jt^{\Tilde{\beta}})}^2(v_{\Tilde{q}},\varphi_j)^2}\\
        \leq &c\abs{\beta-\Tilde{\beta}}(1+\abs{\log t})\sqrt{\sum_{j=1}^{\infty}\lambda_j^2(v_{\Tilde{q}},\varphi_j)^2}\\
        \leq &c\abs{\beta-\Tilde{\beta}}(1+\abs{\log t})\norm{v_{\Tilde{q}}}_{H^2},
    \end{aligned}
\end{equation*}
and also
\begin{equation*}
    \begin{aligned}
        \norm{\partial_t^{\beta}u_{\Tilde{q},\beta}-\partial_t^{\Tilde{\beta}}u_{\Tilde{q},\Tilde{\beta}}}_{L^2(\Omega)} =&\sqrt{\sum_{j=1}^{\infty}\lambda_j^2\bbra{E_{\beta,1}(-\lambda_jt^{\beta})-E_{\Tilde{\beta},1}(-\lambda_jt^{\Tilde{\beta}})}^2(v_{\Tilde{q}},\varphi_j)^2}\\
        \leq &c(t^{-\beta}\abs{\beta - \Tilde{\beta}} +|t^{-\beta}-t^{-\Tilde{\beta}}|)\sqrt{\sum_{j=1}^{\infty}(v_{\Tilde{q}},\varphi_j)^2}\\
        \leq &c(t^{-\beta}\abs{\beta - \Tilde{\beta}} +|t^{-\beta}-t^{-\Tilde{\beta}}|)\norm{v_{\Tilde{q}}}_{H^2}.
    \end{aligned}
\end{equation*}
Thus, we estimate the norm $\norm{v_{\tilde{q}}}_{H^2}$ to complete the proof:
\begin{equation*}
\norm{v_{\Tilde{q}}}_{H^2} \leq \norm{u_0}_{H^2} + \norm{h_{\Tilde{q}}}_{H^2} \lesssim \norm{u_0}_{H^2} + \norm{f}_{L^2}.
\end{equation*}
\end{proof}
\begin{lemma}\label{FractionalReg}
Let $q,\Tilde{q} \in \mathcal{I}$ and let $\beta, \Tilde{\beta} \in [\beta^{-},\beta^{+}]$. Then, for a fixed and sufficiently large $T_0 >0$ and any $t \geq T_0$, there exists a constant $c>0$ independent of $q,\Tilde{q},\beta,\Tilde{\beta},T_0$ such that
\begin{equation*}
    \norm{u_{q,\beta}(t) - u_{\Tilde{q},\Tilde{\beta}}(t)}_{L^2(\Omega)} \leq c((1+t^{-\beta})\norm{q-\Tilde{q}}_{(H^2_0(\Omega))^*} + (1+\abs{\log t})\abs{\beta - \Tilde{\beta}}).
\end{equation*}
\end{lemma}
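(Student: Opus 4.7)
The plan is to follow essentially the same recipe as in the proof of Lemma \ref{FracLip}, but now treating $\beta$ and $\tilde{\beta}$ as different and feeding in the estimate from Lemma \ref{FractionalDerivativeReg} instead of Lemma \ref{FracderivativeReg}. First, I would subtract the two subdiffusion equations satisfied by $u_{q,\beta}$ and $u_{\tilde{q},\tilde{\beta}}$ and rearrange to obtain the elliptic identity
\begin{equation*}
A_{\tilde{q}}(u_{q,\beta}-u_{\tilde{q},\tilde{\beta}})(t) = (\tilde{q}-q)u_{q,\beta}(t) - \bigl(\partial_t^{\beta}u_{q,\beta}(t) - \partial_t^{\tilde{\beta}}u_{\tilde{q},\tilde{\beta}}(t)\bigr),
\end{equation*}
which isolates the mismatch in the potential and the mismatch in the time-fractional derivative.

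Next, I would take the $(H^2_0(\Omega))^*$ norm on both sides. The potential-mismatch term is handled exactly as in Lemma \ref{FracLip}: multiplying by a test function and invoking Lemma~4.3 of \cite{jin2023inverse} to bound $\|u_{q,\beta}(t)\|_{H^2(\Omega)}\lesssim 1+t^{-\beta}$ gives
\begin{equation*}
\|(\tilde{q}-q)u_{q,\beta}(t)\|_{(H^2_0(\Omega))^*} \leq c(1+t^{-\beta})\|q-\tilde{q}\|_{(H^2_0(\Omega))^*}.
\end{equation*}
The time-derivative mismatch is absorbed by the (logarithmic) estimate of Lemma \ref{FractionalDerivativeReg}, combined with the trivial bound $\|\cdot\|_{(H^2_0)^*}\lesssim \|\cdot\|_{L^2}$, yielding
\begin{equation*}
\|\partial_t^{\beta}u_{q,\beta}(t) - \partial_t^{\tilde{\beta}}u_{\tilde{q},\tilde{\beta}}(t)\|_{(H^2_0(\Omega))^*} \leq c\bigl(t^{-\beta}\|q-\tilde{q}\|_{(H^2_0(\Omega))^*} + (1+|\log t|)|\beta-\tilde{\beta}|\bigr).
\end{equation*}

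Finally, I would invert $A_{\tilde{q}}$ using the coercivity bound $\|A_{\tilde{q}}v\|_{(H^2_0(\Omega))^*}\geq c\|v\|_{L^2(\Omega)}$ already established in the proof of Lemma \ref{FracLip} (with the constant $c$ uniform in $\tilde{q}\in\mathcal{I}$), and absorb the $t^{-\beta}$ piece from the derivative estimate into the $(1+t^{-\beta})$ prefactor on the potential term. Collecting the two contributions gives exactly the claimed inequality.

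The only substantive piece of work beyond Lemma \ref{FracLip} is the temporal-mismatch estimate, which is provided by Lemma \ref{FractionalDerivativeReg}; once that is in hand, the remaining steps are mechanical. I anticipate the only mild subtlety is checking that all implicit constants remain independent of $q,\tilde{q},\beta,\tilde{\beta}$ and $T_0$ for $t\geq T_0$ with $T_0$ sufficiently large, which requires the uniform-in-$\tilde{q}$ version of \eqref{Fracnormestimate} together with the uniform-in-$\beta$ bounds for the Mittag-Leffler function used in Lemma \ref{FracElip}; both are already available in the preceding lemmas.
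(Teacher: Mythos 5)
Your proposal is correct and follows essentially the same route as the paper's proof: the same elliptic identity for $A_{\tilde q}(u_{q,\beta}-u_{\tilde q,\tilde\beta})$, the same treatment of the potential-mismatch term via Lemma 4.3 of \cite{jin2023inverse}, the first inequality of Lemma \ref{FractionalDerivativeReg} for the time-derivative mismatch, and the coercivity bound $\norm{A_{\tilde q}v}_{(H^2_0)^*}\gtrsim\norm{v}_{L^2}$ from the proof of Lemma \ref{FracLip} to conclude. No gaps.
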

\begin{proof}
Because \[\partial_t^{\beta}u_{q,\beta} - \Delta u_{q,\beta}  + qu_{q,\beta}  = f,\]
we have \[ - \Delta (u_{q,\beta}-u_{\Tilde{q},\Tilde{\beta}})  + \Tilde{q}(u_{q,\beta}-u_{\Tilde{q},\Tilde{\beta}})  = (\Tilde{q}-q)u_{q,\beta}-(\partial_t^{\beta}u_{q,\beta}-\partial_t^{\Tilde{\beta}}u_{\Tilde{q},\Tilde{\beta}}).\]
Thus,
\begin{equation*}
    \begin{aligned}
\norm{A_{\Tilde{q}}(u_{q,\beta}-u_{\Tilde{q},\Tilde{\beta}})}_{(H^2_0(\Omega))^*} 
        \leq \norm{(\Tilde{q}-q)u_{q,\beta}}_{(H^2_0(\Omega))^*} + \norm{\partial_t^{\beta}u_{q,\beta}-\partial_t^{\Tilde{\beta}}u_{\Tilde{q},\Tilde{\beta}}}_{(H^2_0(\Omega))^*}.
    \end{aligned}
\end{equation*}
Lemma 4.3 in \cite{jin2023inverse} and \eqref{Sobolevinter1} in the main text imply that 
\begin{equation*}
    \begin{aligned}
        \norm{(\Tilde{q}-q)u_{q,\beta}}_{(H^2_0(\Omega))^*} \leq& \norm{u_{q,\beta}}_{H^2(\Omega)}\norm{\Tilde{q}-q}_{(H^2_0(\Omega))^*}\\
        \leq&c(1+t^{-\beta})\norm{\Tilde{q}-q}_{(H^2_0(\Omega))^*}.
    \end{aligned}
\end{equation*}
Combined with the first inequality in Lemma \ref{FractionalDerivativeReg} and \eqref{Fraclip1}, we have 
\begin{equation*}
    \norm{u_{q,\beta}(t) - u_{\Tilde{q},\Tilde{\beta}}(t)}_{L^2(\Omega)} \leq c((1+t^{-\beta})\norm{q-\Tilde{q}}_{(H^2_0(\Omega))^*} + (1+\abs{\log t})\abs{\beta - \Tilde{\beta}}).
\end{equation*}
\end{proof}
\begin{lemma}\label{FractionalStab}
Let $u_0 \in H^{\alpha}(\Omega)$, $f \in H^{\alpha}(\Omega)$ for $\alpha \in \mathbb{N}$ with $u_0, f \geq L_0$ a.e. and let $a_0, a_1 \geq L_0$ for $L_0>0$. Let $q,\Tilde{q} \in H^{\alpha}(\Omega)\cap\mathcal{I}$ and let $\beta, \Tilde{\beta} \in [\beta^{-},\beta^{+}]$. Then, for a fixed and sufficiently large $T_0>0$, there exists a constant $c>0$ independent of $q,\Tilde{q},\beta,\Tilde{\beta},T_0$ such that
\begin{equation*}
    \begin{aligned}
    \norm{q - \Tilde{q}}_{L^2(\Omega)}\leq &c\Big((1+\norm{q}_{H^{\alpha}(\Omega)}^{1+2\alpha}\vee\norm{\Tilde{q}}_{H^{\alpha}(\Omega)}^{1+2\alpha})^{\frac{2}{\alpha+2}}\norm{u_{q,\beta}(t)-u_{\Tilde{q},\Tilde{\beta}}(t)}_{L^2(\Omega)}^{\frac{\alpha}{\alpha+2}}\\
    &+t^{-\beta}\abs{\beta - \Tilde{\beta}} +|t^{-\beta}-t^{-\Tilde{\beta}}|\Big)
\end{aligned}
\end{equation*}
for any $t \geq T_0$.
\end{lemma}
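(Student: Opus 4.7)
The plan is to combine the same-order conditional stability estimate already proved for Lemma \ref{FracStab} with a spectral control of the dependence on the fractional order. First I would reduce to the $H^2$-stability inequality
\begin{equation*}
\norm{q - \Tilde{q}}_{L^2(\Omega)} \;\leq\; c\,\norm{u_{q,\beta}(t) - u_{\Tilde{q},\beta}(t)}_{H^2(\Omega)}
\end{equation*}
for $t \geq T_0$, which holds with a constant independent of $q,\Tilde{q},\beta$ (this is exactly \eqref{theorem4.5} in the proof of Lemma \ref{FracStab}, obtained from Theorem 4.5 of \cite{jin2023inverse} once one checks that the argument in the same-$\beta$ case only uses that $\beta\in[\beta^-,\beta^+]$). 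The triangle inequality then yields
\begin{equation*}
\norm{u_{q,\beta}(t) - u_{\Tilde{q},\beta}(t)}_{H^2} \;\leq\; \norm{u_{q,\beta}(t) - u_{\Tilde{q},\Tilde{\beta}}(t)}_{H^2} + \norm{u_{\Tilde{q},\beta}(t) - u_{\Tilde{q},\Tilde{\beta}}(t)}_{H^2},
\end{equation*}
so the two right-hand terms can be handled separately.

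For the first term I would repeat the Sobolev interpolation used in Lemma \ref{FracStab}, namely
\begin{equation*}
\norm{u_{q,\beta}(t) - u_{\Tilde{q},\Tilde{\beta}}(t)}_{H^2} \;\lesssim\; \norm{u_{q,\beta}(t) - u_{\Tilde{q},\Tilde{\beta}}(t)}_{H^{\alpha+2}}^{\frac{2}{\alpha+2}}\norm{u_{q,\beta}(t) - u_{\Tilde{q},\Tilde{\beta}}(t)}_{L^2}^{\frac{\alpha}{\alpha+2}},
\end{equation*}
and invoke Lemma \ref{FracReg} on each of $u_{q,\beta}(t)$ and $u_{\Tilde{q},\Tilde{\beta}}(t)$ (both hypotheses $\beta,\Tilde{\beta}\in[\beta^-,\beta^+]$ are already in place, and the implied constant in that lemma is independent of $\beta$) to produce the factor $(1+\norm{q}_{H^{\alpha}}^{1+2\alpha}\vee\norm{\Tilde{q}}_{H^{\alpha}}^{1+2\alpha})^{\frac{2}{\alpha+2}}$ that appears in the target inequality.

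For the second term I would exploit the spectral representation $u_{\Tilde{q},\beta}(t)-u_{\Tilde{q},\Tilde{\beta}}(t) = \bigl(F_{\Tilde{q},\beta}(t) - F_{\Tilde{q},\Tilde{\beta}}(t)\bigr)(u_0-h_{\Tilde{q}})$, then use the $q$-uniform equivalence \eqref{Fracnormestimate*} to pass to the $L^2$ norm of $A_{\Tilde{q}}$ applied to this difference, which diagonalises as
\begin{equation*}
\norm{A_{\Tilde{q}}\bigl(u_{\Tilde{q},\beta}(t)-u_{\Tilde{q},\Tilde{\beta}}(t)\bigr)}_{L^2}^2 = \sum_{j=1}^\infty \lambda_j^2\bigl|E_{\beta,1}(-\lambda_j t^\beta)-E_{\Tilde{\beta},1}(-\lambda_j t^{\Tilde{\beta}})\bigr|^2 (u_0-h_{\Tilde{q}},\varphi_j)^2.
\end{equation*}
The first bound in Lemma \ref{FracElip} gives $\lambda_j|E_{\beta,1}(-\lambda_j t^\beta)-E_{\Tilde{\beta},1}(-\lambda_j t^{\Tilde{\beta}})| \lesssim t^{-\beta}|\beta-\Tilde{\beta}| + |t^{-\beta}-t^{-\Tilde{\beta}}|$ uniformly in $j$, and Parseval together with the standard control $\norm{u_0-h_{\Tilde{q}}}_{L^2}\lesssim \norm{u_0}_{L^2}+\norm{f}_{L^2}$ yields
\begin{equation*}
\norm{u_{\Tilde{q},\beta}(t)-u_{\Tilde{q},\Tilde{\beta}}(t)}_{H^2} \;\lesssim\; t^{-\beta}\abs{\beta-\Tilde{\beta}} + \bigl|t^{-\beta}-t^{-\Tilde{\beta}}\bigr|,
\end{equation*}
with a constant independent of $T_0$. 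Assembling these estimates reproduces the claimed inequality.

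The main technical point is checking that all constants are uniform in $q,\Tilde{q},\beta,\Tilde{\beta}\in\mathcal{I}\times[\beta^-,\beta^+]$: the equivalence $\norm{A_{\Tilde{q}}v}_{L^2}\simeq\norm{v}_{H^2}$ on $H_0^2$ comes from \eqref{Fracnormestimate*}, the Mittag-Leffler estimates in Lemma \ref{FracElip} are $\beta$-uniform on $[\beta^-,\beta^+]$, and the $H^{\alpha+2}$ regularity of Lemma \ref{FracReg} carries an explicit polynomial dependence on $\norm{q}_{H^\alpha}$ that precisely matches the exponent $1+2\alpha$ in the target. The most delicate part is ensuring that the same-order stability estimate (Theorem 4.5 in \cite{jin2023inverse}) can be invoked with a constant independent of $\beta$; once this is secured, the remaining pieces assemble cleanly via the triangle-plus-spectral split described above.
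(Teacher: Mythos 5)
Your proof is correct, and it takes a genuinely different route from the paper's. You reduce the mixed-order claim to the fixed-order conditional stability $\norm{q-\Tilde{q}}_{L^2}\lesssim\norm{u_{q,\beta}(t)-u_{\Tilde{q},\beta}(t)}_{H^2}$ (inequality \eqref{theorem4.5}, i.e.\ Theorem 4.5 of \cite{jin2023inverse} as already invoked in Lemma \ref{FracStab}), and then absorb the change of fractional order by a triangle inequality, controlling $\norm{u_{\Tilde{q},\beta}(t)-u_{\Tilde{q},\Tilde{\beta}}(t)}_{H^2}\simeq\norm{A_{\Tilde{q}}(u_{\Tilde{q},\beta}-u_{\Tilde{q},\Tilde{\beta}})(t)}_{L^2}$ spectrally via the first bound of Lemma \ref{FracElip} together with \eqref{Fracnormestimate*}; all the uniformity checks you flag do go through within the paper's framework. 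The paper instead re-derives the stability from scratch: it writes $u_{q,\beta}u_{\Tilde{q},\Tilde{\beta}}(q-\Tilde{q})$ as the sum of $f(u_{\Tilde{q},\Tilde{\beta}}-u_{q,\beta})$, a cross term in the fractional derivatives, and a cross term in the Laplacians, bounds the middle term by the second inequality of Lemma \ref{FractionalDerivativeReg}, uses the maximum principle $u\geq L_0$ to divide by $u_{q,\beta}u_{\Tilde{q},\Tilde{\beta}}$, and absorbs the resulting $c\,t^{-\beta}\norm{q-\Tilde{q}}_{L^2}$ term on the right for $T_0$ large, before interpolating. Your route is shorter and delegates the hard analytic content to the cited fixed-order theorem plus the already-proved Mittag--Leffler perturbation bound; the paper's route avoids relying, in this lemma, on the $\beta$-uniformity of the constant in Theorem 4.5 of \cite{jin2023inverse} (it needs only the maximum principle), at the cost of the extra absorption step. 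Both arguments conclude with the same Sobolev interpolation and the $H^{\alpha+2}$ regularity bound of Lemma \ref{FracReg}, so the final constants and exponents agree.
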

\begin{proof}
Equation (\ref{Fractional}) implies that
\[ u_{q,\beta}(t) q = {f -\partial_t^{\beta}u_{q,\beta}(t) + \Delta u_{q,\beta}(t)}.\]
Then we have
\[ \begin{aligned}
    &u_{q,\beta}(t)u_{\Tilde{q},\Tilde{\beta}}(t)(q - \Tilde{q}) \\
    =&  f\bbra{u_{\Tilde{q},\Tilde{\beta}}(t)-u_{q,\beta}(t)} + \bbra{u_{q,\beta}(t)\partial_t^{\Tilde{\beta}}u_{\Tilde{q},\Tilde{\beta}}(t)-u_{\Tilde{q},\Tilde{\beta}}(t)\partial_t^{\beta}u_{q,\beta}(t)}\\
   &+ \bbra{u_{\Tilde{q},\Tilde{\beta}}(t)\Delta u_{q,\beta}(t)-u_{q,\beta}(t)\Delta u_{\Tilde{q},\Tilde{\beta}}(t)}\\
    =& \sum_{i=1}^{3} I_i.
\end{aligned} \]
For $I_1$, we have 
\[ \norm{I_1}_{L^2(\Omega)} \leq \norm{f}_{L^2(\Omega)} \norm{(u_{\Tilde{q},\Tilde{\beta}}-u_{q,\beta})(t)}_{H^2(\Omega)}.\]
By Lemma 3.1 in \cite{zhang2022identification} and Lemma 4.3 in \cite{jin2023inverse}, we have
\begin{equation}\label{FractionalStabL1}
    \norm{\partial_t^{\beta}u_{q,\beta}(t)}_{H^2(\Omega)} \leq ct^{-\beta}, \qquad \norm{u_{q,\beta}(t)}_{H^2(\Omega)} \leq c(1+t^{-\beta}).
\end{equation}
Those imply that
\[ 
\begin{aligned}
    \norm{I_3}_{L^2(\Omega)} \leq &c\left(\norm{u_{q,\beta}(t)}_{L^{\infty}(\Omega)}\norm{\Delta (u_{q,\beta}-u_{\Tilde{q},\Tilde{\beta}})(t)}_{L^2(\Omega)}\right. \\
    &+ \left.\norm{\Delta u_{q,\beta}(t)}_{L^2(\Omega)}\norm{(u_{q,\beta}-u_{\Tilde{q},\Tilde{\beta}})(t)}_{L^{\infty}(\Omega)}\right)\\
    \leq & c\norm{(u_{q,\beta}-u_{\Tilde{q},\Tilde{\beta}}(t))}_{H^2(\Omega)}.
\end{aligned}\]
By \eqref{FractionalStabL1} and the second inequality in Lemma \ref{FractionalDerivativeReg}, we deduce that
\[ 
\begin{aligned}
    \norm{I_2}_{L^2(\Omega)} \leq &c\left(\norm{u_{q,\beta}(t)}_{L^{\infty}(\Omega)}\norm{\partial_t^{\beta}u_{q,\beta}(t)-\partial_t^{\Tilde{\beta}}u_{\Tilde{q},\Tilde{\beta}}(t)}_{L^2(\Omega)}\right. \\
    &\left.+ \norm{\partial_t^{\beta}u_{q,\beta}(t)}_{L^2(\Omega)}\norm{u_{q,\beta}(t)-u_{\Tilde{q},\Tilde{\beta}}(t)}_{L^{\infty}(\Omega)}\right)\\
    \leq & c\bbra{\norm{u_{q,\beta}(t)-u_{\Tilde{q},\Tilde{\beta}}(t)}_{H^2(\Omega)} + t^{-\beta}\norm{q-\Tilde{q}}_{L^2(\Omega)} + t^{-\beta}\abs{\beta - \Tilde{\beta}} +|t^{-\beta}-t^{-\Tilde{\beta}}|}.
\end{aligned}\]
Because $u_0, f \geq L_0$ a.e. and $a_0, a_1 \geq L_0$, the maximum principle of time-fractional diffusion implies that $u(t) \geq L_0$. Thus, we have
\begin{equation}\label{FractionalStabL2}
    \begin{aligned}
    \norm{q - \Tilde{q}}_{L^2(\Omega)}\leq &L_0^{-2}\norm{u_{q,\beta}(t)u_{\Tilde{q},\Tilde{\beta}}(t)(q - \Tilde{q})}_{L^2(\Omega)} \\ 
    \leq & c\bigg(\norm{u_{q,\beta}(t)-u_{\Tilde{q},\Tilde{\beta}}(t)}_{H^2(\Omega)} + t^{-\beta}\norm{q-\Tilde{q}}_{L^2(\Omega)} \\ &+ t^{-\beta}\abs{\beta - \Tilde{\beta}} +|t^{-\beta}-t^{-\Tilde{\beta}}|\bigg).
    \end{aligned}
\end{equation}
Then, for $T_0>0$ sufficiently large such that $1 - cT_0^{-\beta^+} \geq 1/2 $, we have
\[
\begin{aligned}
\norm{q - \Tilde{q}}_{L^2(\Omega)} \leq c\bbra{\norm{u_{q,\beta}(t)-u_{\Tilde{q},\Tilde{\beta}}(t)}_{H^2(\Omega)}+t^{-\beta}\abs{\beta - \Tilde{\beta}} +|t^{-\beta}-t^{-\Tilde{\beta}}|}.
\end{aligned}
\]
for a constant $c>0$ independent of $q,\Tilde{q},\beta,\Tilde{\beta},T_0$ and any $t\geq T_0$.
Next, using Sobolev interpolation inequality and Lemma \ref{FracReg}, we have
\[
\begin{aligned}
    \norm{u_{q,\beta}(t)-u_{\Tilde{q},\Tilde{\beta}}(t)}_{H^2(\Omega)} \leq& \norm{u_{q,\beta}(t)-u_{\Tilde{q},\Tilde{\beta}}(t)}_{H^{\alpha +2}(\Omega)}^{\frac{2}{\alpha+2}}\norm{u_{q,\beta}(t)-u_{\Tilde{q},\Tilde{\beta}}(t)}_{L^2(\Omega)}^{\frac{\alpha}{\alpha+2}}\\
    \lesssim& (1+\norm{q}_{H^{\alpha}(\Omega)}^{1+2\alpha}\vee\norm{\Tilde{q}}_{H^{\alpha}(\Omega)}^{1+2\alpha})^{\frac{2}{\alpha+2}}\norm{u_{q,\beta}(t)-u_{\Tilde{q},\Tilde{\beta}}(t)}_{L^2(\Omega)}^{\frac{\alpha}{\alpha+2}}.
\end{aligned}
\]
Combining the above inequality with (\ref{FractionalStabL2}), we have
\[
\begin{aligned}
    \norm{q - \Tilde{q}}_{L^2(\Omega)}\leq &c\Big((1+\norm{q}_{H^{\alpha}(\Omega)}^{1+2\alpha}\vee\norm{\Tilde{q}}_{H^{\alpha}(\Omega)}^{1+2\alpha})^{\frac{2}{\alpha+2}}\norm{u_{q,\beta}(t)-u_{\Tilde{q},\Tilde{\beta}}(t)}_{L^2(\Omega)}^{\frac{\alpha}{\alpha+2}}\\
    &+t^{-\beta}\abs{\beta - \Tilde{\beta}} +|t^{-\beta}-t^{-\Tilde{\beta}}|\Big)
\end{aligned}
\]
for any $t \geq T_0$.
\end{proof}
Using the regularity and stability estimates mentioned above, we are now here to prove Theorem \ref{Fractionalfinal}.
\begin{proof}[Proof of Theorem \ref{Fractionalfinal}]
We verify the conditions in Theorem \ref{mainthmModel} for the forward map $\mathcal{G}_{\beta}$ as defined in (\ref{forwardmapFrac}) with $\mathcal{R} = H^{\alpha}$.
From \cite[lemma 4.3]{jin2023inverse}, we have
\[\mathop{\sup}_{\theta \in B_{\mathcal{R}}(M)}\Vert u_{q_{\theta}}(T) \Vert_{H^{2}} \leq c(1+T^{-\beta}).\]
The above inequality, combined with the Sobolev embedding $H^2 \subset C^0$, implies that
\[\mathop{\sup}_{\theta \in B_{\mathcal{R}}(M)}\mathop{\sup}_{x \in \Omega}\vert\mathcal{G}(\theta)(x)\vert \leq c(1+T^{-\beta}).\]
Therefore, we have condition (\ref{boundmodel}) verified with $p = 0$.
Theorem \ref{FractionalReg}  and \cite[Lemma 29]{IntroNonLinear_nickl2020convergence} imply that
\[\begin{aligned}
    \Vert \mathcal{G}_{\beta_1}(\theta_1)-\mathcal{G}_{\beta_2}(\theta_2)\Vert _{L^2} \leq & c\Big((1+T^{-\beta})(1 + \norm{\theta_1}^2_{\mathcal{R}}\vee\norm{\theta_2}^2_{\mathcal{R}})\norm{\theta_1-\theta_2}_{(H^2_0(\Omega))^*} \\&+ (1+\abs{\log T})\abs{\beta_1 - \beta_2}\Big)
\end{aligned}
\]
for $\theta_1,\theta_2 \in \mathcal{R}$ and $\beta_1,\beta_2 \in [\beta^-,\beta^+]$, which verifies condition (\ref{lipmodel}) with ${\kappa}=2$, $l=2$.
Given our requirement on $\alpha$, we have
\[
\alpha + {\kappa} \geq 5 \geq \frac{3}{2} = \frac{d(l+1)}{2}.
\]
\par
Then, by Theorem \ref{mainthmModel} and the proof of Theorem \ref{mainthm}, we have
\[P_{\beta_0,\theta_0}^{(N)}\hat{Q} \Vert \mathcal{G}_{\hat{\beta}}(\theta)-\mathcal{G}_{\beta_0}(\theta_0)\Vert_{L^2}^{2}\leq C_1 \varepsilon_N^{2}\log N,\]
and we also have
\begin{equation}\label{Fractionalfinal1}
    P_{\beta_0,\theta_0}^{(N)}\hat{Q} (1+\norm{\theta}_{H^2}^q)^{\frac{2}{q+1}}\Vert \mathcal{G}_{\hat{\beta}}(\theta)-\mathcal{G}_{\beta_0}(\theta_0)\Vert_{L^2}^{\frac{2}{q+1}}\leq C_2 \varepsilon_N^{\frac{2}{q+1}}\log N
\end{equation}
with $q=2+4\alpha$.
Using Lemma 29 in \cite{IntroNonLinear_nickl2020convergence} and Lemma \ref{FractionalStab},
we deduce that
\begin{equation*}
    \begin{aligned}
    \norm{q_\theta - q_0}_{L^2(\Omega)}^{\frac{\alpha+2}{\alpha}}\leq &c\Big((1+\norm{\theta}_{H^{\alpha}(\Omega)}^{2+4\alpha})\norm{\mathcal{G}_{\hat{\beta}}(\theta)-\mathcal{G}_{\beta_0}(\theta_0)}_{L^2(\Omega)}\\
    &+T^{-2\beta_0}\abs{\beta_0-\hat{\beta}}^{\frac{\alpha+2}{\alpha}} +|T^{-\beta_0}-T^{-\hat{\beta}}|^{\frac{\alpha+2}{\alpha}}\Big)
    \end{aligned}
\end{equation*}
with a constant $c>0$ independent of $T$. 
Plugging (\ref{Fractionalfinal1}) into the above inequality, we obtain
\begin{equation*}
    \begin{aligned}
    P_{\beta_0,\theta_0}^{(N)}\hat{Q}\norm{q_\theta - q_0}_{L^2(\Omega)}^{\frac{\alpha+2}{\alpha}\cdot\frac{2}{q+1}}\leq &C_2\varepsilon_N^{\frac{2}{q+1}}\log N+c\Big(T^{-\beta_0}\abs{\beta_0-\hat{\beta}} +|T^{-\beta_0}-T^{-\hat{\beta}}|\Big)^{\frac{\alpha+2}{\alpha}\cdot\frac{2}{q+1}}.
    \end{aligned}
\end{equation*}
\end{proof}
\subsection{Inverse medium scattering problem}
In the followings, we assume that
\[n \in L^{\infty}(\mathbb{R}^3), \ \mbox{supp}(1-n) \subset D,\ 0\leq n \leq M_0 \ \mbox{with}\ M_0 \geq 1.\]
It is true that $u_n^s$ satisfies the following problem \cite[(1.51)-(1.52)]{cakoni2022inverse}:
\begin{equation}\label{Scat}
    \left\{\begin{aligned}
    &-\Delta w-{\omega}^2nw = {\omega}^2(n-1)u^i \quad \mbox{in}\ B_R,\\
    &\partial w/\partial r = S_R(w|_{\partial B_R}) \quad \mbox{on}\  \partial B_R,
    \end{aligned}\right.  
\end{equation}
where $B_R$ is an open ball of radius $R$ such that $\Bar{D} \subset B_R$. Here, the Dirichlet-to-Neumann map $S_R$ : $H^{1/2}(\partial B_R) \rightarrow H^{-1/2}(\partial B_R)$ is defined by
\[S_R : v \rightarrow \frac{\partial u_v}{\partial r}|_{\partial B_R}\quad \mbox{for}\ v \in H^{1/2}(\partial B_R),\]
where $u_v$ is the solution of (\ref{Helm}) satisfying the Sommerfeld radiation condition in $R^3\setminus B_R$ and the Dirichlet condition $u_v = v$ on $\partial B_R$.
It has been proved that $S_R$ is a bounded  bijective operator, and further that
\begin{equation}\label{SRbnd}
Re\langle S_R(v), v \rangle \leq 0 \quad \mbox{and} \quad Im\langle S_R(v),v \rangle \geq 0,\quad \forall  v \in H^{1/2}(\partial B_R),
\end{equation}
where $\langle \cdot,\cdot \rangle$ denotes the duality pairing between $H^{-1/2}(\partial B_R)$ and $H^{1/2}(\partial B_R)$, see \cite[(14)]{bourgeois2012remark}.
In addition, it is shown in \cite[1.22]{cakoni2022inverse} that the far field pattern have the following explicit expression:
\begin{equation}\label{ffpre}
    \begin{aligned}
        u_n^{\infty}(\hat{x},\vartheta) &= \frac{1}{4\pi}\int_{\partial B_R}\bbra{u^s(y,\vartheta)\frac{\partial e^{-\im {\omega} \hat{x}\cdot y}}{\partial r(y)}-\frac{\partial u^s}{\partial r}(y,\vartheta)e^{-\im {\omega} \hat{x}\cdot y}}ds(y)\\
        &= \frac{{\omega}^2}{4\pi}\int_{D}e^{-\im{\omega}\hat{x}\cdot y}(n-1)(y)u(y,\vartheta)dy,
    \end{aligned}
\end{equation}
where $u(u,\vartheta) = u^i(y,\vartheta) + u_q^s(y,\vartheta)$ with $u^i(y,\vartheta) = e^{\im{\omega} y\cdot\vartheta}$.

Now, we consider a general form of (\ref{Scat}) by replacing the right-hand side of the first equation with a source  $f \in L^2(B_R)$, i.e.,
\begin{equation}\label{GScat}
        \left\{\begin{aligned}
        &-\Delta w-{\omega}^2nw = f \quad \mbox{in}\ B_R,\\
        &\partial w/\partial r = S_R(w|_{\partial B_R}) \quad \mbox{on}\  \partial B_R.
        \end{aligned}\right.  
\end{equation}
\begin{lemma} \label{GScabnd}
For problem (\ref{GScat}), we have
\begin{equation} \label{ieqScat}
    \norm{w}_{H^1(B_R)} \leq C\norm{f}_{H^{-1}(B_R)},
\end{equation}
and
\begin{equation}
    \norm{w}_{H^2(B_R)} \leq C\norm{f}_{L^2(B_R)},
\end{equation}
where $C = C(D,{\omega},M_0)$.
\end{lemma}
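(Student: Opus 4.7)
\textbf{Plan for Lemma \ref{GScabnd}.} The strategy is a Fredholm/Garding argument in $H^1(B_R)$ for the first estimate, followed by elliptic regularity to upgrade to $H^2$. First, I would test \eqref{GScat} against $\bar v\in H^1(B_R)$ and use the boundary condition to arrive at the sesquilinear form
\[
a(w,v) := \int_{B_R}\!\bigl(\nabla w\cdot \overline{\nabla v} - \omega^2 n\,w\,\bar v\bigr)\,dx - \langle S_R(w|_{\partial B_R}),\,v|_{\partial B_R}\rangle = \langle f, v\rangle.
\]
Continuity of $a$ on $H^1(B_R)$ follows from boundedness of $S_R:H^{1/2}(\partial B_R)\to H^{-1/2}(\partial B_R)$ combined with the trace inequality, and the real-part inequality in \eqref{SRbnd} together with $0\le n\le M_0$ yields a uniform Garding estimate $\mathrm{Re}\,a(w,w) + (1+\omega^2 M_0)\norm{w}_{L^2(B_R)}^2 \ge \norm{w}_{H^1(B_R)}^2$. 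Since $H^1(B_R)\hookrightarrow L^2(B_R)$ is compact, the operator $A:H^1(B_R)\to H^1(B_R)^*$ associated with $a$ is Fredholm of index zero.

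Second, I would establish injectivity of $A$ to invoke the Fredholm alternative. Any homogeneous solution $w$ extends, through the exterior radiating Helmholtz solution with Dirichlet data $w|_{\partial B_R}$, to a global radiating solution of \eqref{Helm}; testing against $w$ and using the imaginary-part inequality in \eqref{SRbnd} forces $\mathrm{Im}\,\langle S_R w,w\rangle=0$, which via the standard far-field identity makes the far-field pattern of $w$ vanish. Rellich's lemma then gives $w\equiv 0$ outside $\bar D$, and unique continuation for $-\Delta w-\omega^2 nw=0$ with $n\in L^\infty$ propagates this into $B_R$. The Fredholm alternative now yields unique solvability with $\norm{w}_{H^1}\le C(n)\norm{f}_{H^{-1}}$. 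To upgrade $C(n)$ to a constant depending only on $D,\omega,M_0$, I argue by contradiction along a sequence $(n_k,w_k,f_k)$ with $\norm{w_k}_{H^1}=1$ and $\norm{f_k}_{H^{-1}}\to 0$, each $n_k$ admissible; weak-$*$ compactness in $L^\infty$ and Rellich-Kondrachov compactness produce, along a subsequence, a nontrivial $w_\infty\in H^1$ solving the homogeneous problem for an admissible $n_\infty$, contradicting the uniqueness just proved.

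For the $H^2$ estimate, I would rewrite \eqref{GScat} as $-\Delta w = f + \omega^2 n w$, whose right-hand side lies in $L^2(B_R)$ with norm $\lesssim \norm{f}_{L^2}+\norm{w}_{L^2}\lesssim \norm{f}_{L^2}$ by the first bound, so classical interior $H^2$ regularity handles $w$ away from $\partial B_R$. Near the boundary the condition $\partial_r w = S_R(w|_{\partial B_R})$ is a first-order pseudodifferential constraint on the smooth sphere whose principal symbol coincides (modulo lower order) with that of the free-space Helmholtz DtN and therefore satisfies the Lopatinskii-Shapiro covering condition for the Laplacian; standard regularity for elliptic pseudodifferential boundary problems then gives $\norm{w}_{H^2(B_R)}\lesssim \norm{f}_{L^2}+\norm{w}_{H^1}\lesssim \norm{f}_{L^2}$. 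The main obstacle is precisely this final step: rigorously justifying boundary $H^2$ regularity under the nonlocal Robin-type condition. One may alternatively bypass abstract pseudodifferential theory by decomposing $S_R = S_R^0+K$, with $S_R^0$ the free-space DtN (explicitly diagonalisable in vector spherical harmonics on $\partial B_R$, for which $H^2$ estimates are elementary) and $K$ smoothing, then bootstrapping.
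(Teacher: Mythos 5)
Your plan follows the same route as the paper's proof of Lemma \ref{GScabnd}: a variational formulation in $H^1(B_R)$, coercivity of a shifted form via $\mathrm{Re}\langle S_R v,v\rangle\le 0$ and $0\le n\le M_0$, compactness of $H^1(B_R)\hookrightarrow L^2(B_R)$, the Fredholm alternative combined with uniqueness of the scattering problem, and then $-\Delta w=f+\omega^2 nw$ for the $H^2$ bound. (The paper implements the Garding step by splitting the form as $a_1+a_2$ with $a_1(w,v)$ containing an added $\omega^2 M_0\int w\bar v$ term and $a_2=-\omega^2 M_0\int w\bar v$ treated as a compact perturbation, and it outsources uniqueness to a cited reference rather than re-deriving it from Rellich's lemma and unique continuation; these are cosmetic differences.) Two of your additions go beyond what the paper writes and address real gaps in its terse argument: the compactness/contradiction step showing the constant can be taken to depend only on $D,\omega,M_0$ rather than on the particular $n$ (the paper simply asserts $C=C(D,\omega,M_0)$ after inverting an $n$-dependent operator), and the discussion of boundary $H^2$ regularity under the nonlocal condition $\partial_r w=S_R(w|_{\partial B_R})$ (the paper passes from $\|\Delta w\|_{L^2}+\|w\|_{L^2}$ to $\|w\|_{H^2(B_R)}$ without comment, which is only an interior estimate as stated). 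Your proposed remedy of writing $S_R=S_R^0+K$ with the free-space Dirichlet-to-Neumann map diagonalised in spherical harmonics is a sound way to close the second point; alternatively one can note that in the paper's application the source is supported in $D\subset\subset B_R$, so $w$ extends as a radiating Helmholtz solution across $\partial B_R$ and is smooth near the boundary by interior regularity in a larger ball.
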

\begin{proof}
The problem above is equivalent to the following variational formulation: for all $v \in H^1(B_R)$,
\begin{equation*}
    a_1(w,v) + a_2(w,v) = F(v),
\end{equation*}
where
\begin{equation*}
    a_1(w,v) = \int_{B_R} \nabla w\cdot \nabla \Bar{v} dx - {\omega}^2\int_{B_R}nw\Bar{v}dx + {\omega}^2M_0\int_{B_R}w\Bar{v}dx - \langle S_R(w),\Bar{v} \rangle,
\end{equation*}
\begin{equation*}
    a_2(w,v) = -{\omega}^2M_0\int_{B_R}w\Bar{v}dx,
\end{equation*}
and
\begin{equation*}
    F(v) = \int_{B_R} f\Bar{v}dx.
\end{equation*}
Since $Re\langle S_R(v), v \rangle \leq 0$ and $n \leq M_0$, we deduce that
\[Re\ a_1(w,w) \geq \int_{B_R}(\abs{\nabla w}^2 + {\omega}^2(M_0-n)\abs{w}^2)dx \geq C({\omega})\norm{w}_{H^1(B_R)}^2,\]
which implies $a_1(\cdot,\cdot)$ is strictly coercive. Furthermore, $a_1(\cdot,\cdot)$ is bounded because $S_R$ is a bounded operator.
Combining the Lax-Milgram theorem and the Riesz representation theorem, there exists a bounded invertible linear operator $\mathcal{A}: H^1(B_R) \rightarrow (H^1(B_R))^*$
such that\[a_1(w,v) = \mathcal{A}w(v).\]

Define an operator $I : H^1(B_R) \rightarrow (H^1(B_R))^*$ as
\[Iw(v) = \int_{B_R}w\Bar{v}dx.\] By the Sobolev compact embedding, $I$ is compact.
We see that $a_2(w,v) = -{\omega}^2 M_0 Iw(v)$. Then, (\ref{GScat}) is equivalent to finding $w \in H^1(B_R)$ such that
\[\mathcal{A}w-{\omega}^2M_0 Iw = F,\]
i.e.,
\[w-{\omega}^2 M_0\mathcal{A}^{-1}Iw = \mathcal{A}^{-1}F.\]
Since $\mathcal{A}^{-1}I$ is compact, by the Fredholm alternative, the unique solution to $\mathcal{A}w-{\omega}^2 {\color{black}M_0} Iw = F$ exists for every $f \in H^{-1}(B_R)$ provided the kernel of $\mathcal{A} -{\omega}^2 M_0 I$ is trivial, which follows from the uniqueness of the scattered solution (see page 10 in \cite{bourgeois2012remark}).
In other words, $\mathcal{A}-{\omega}^2 M_0I$ is bounded and invertible.
Then we have
\begin{equation*}
    \norm{w}_{H^1(B_R)} \leq C\norm{f}_{H^{-1}(B_R)}
\end{equation*}
where $C = C(D,{\omega},M_0)$. Since $\Delta w = -f+{\omega}^2nw$, we have
\begin{equation*}
    \norm{\Delta w}_{L^2(B_R)} \leq C \norm{f}_{L^2(B_R)} + \norm{w}_{L^2(B_R)} \leq C \norm{f}_{L^2(B_R)},
\end{equation*}
that is,
\begin{equation*}
    \norm{w}_{H^2(B_R)} \leq C \norm{f}_{L^2(B_R)} + \norm{w}_{L^2(B_R)} \leq C \norm{f}_{L^2(B_R)}
\end{equation*}
for $C = C(D,{\omega},M_0)$.
\end{proof}
\begin{lemma}\label{Scrabnd}
\begin{equation}\label{ffbnd}
     \norm{u_n^{\infty}}_{\infty} \leq  C(1 + \norm{n}_{L^2} + \norm{n}_{L^2}^2)
\end{equation}
for $C = C(D,{\omega},M_0)$.
\end{lemma}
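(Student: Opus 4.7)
The plan is to start from the explicit integral representation of the far field pattern given in \eqref{ffpre}, namely
\[
u_n^{\infty}(\hat{x},\vartheta) = \frac{\omega^2}{4\pi}\int_{D}e^{-\im\omega\hat{x}\cdot y}(n-1)(y)u(y,\vartheta)\,dy,
\]
and apply Cauchy--Schwarz in $y$ to get the pointwise estimate
\[
|u_n^{\infty}(\hat{x},\vartheta)| \leq \frac{\omega^2}{4\pi}\,\|n-1\|_{L^2(D)}\,\|u(\cdot,\vartheta)\|_{L^2(D)}.
\]
Then I would split $u = u^i + u_n^s$ and handle the two pieces separately.

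For the incident piece, since $|u^i(y,\vartheta)| = |e^{\im\omega y\cdot\vartheta}| = 1$, we immediately have $\|u^i(\cdot,\vartheta)\|_{L^2(D)} \leq |D|^{1/2}$, uniformly in $\vartheta\in\mathcal{S}^2$. For the scattered piece, I would invoke problem \eqref{Scat}, which $u_n^s$ satisfies with source term $f = \omega^2(n-1)u^i \in L^2(B_R)$. Applying the first bound in Lemma \ref{GScabnd} and using the trivial embedding $L^2(B_R) \hookrightarrow H^{-1}(B_R)$ with $|u^i|\equiv 1$, one gets
\[
\|u_n^s(\cdot,\vartheta)\|_{L^2(D)} \leq \|u_n^s\|_{H^1(B_R)} \leq C\|\omega^2(n-1)u^i\|_{H^{-1}(B_R)} \leq C\|n-1\|_{L^2(D)},
\]
where $C = C(D,\omega,M_0)$ and where we used $\mathrm{supp}(n-1)\subset D$. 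Combining the two estimates yields
\[
\|u(\cdot,\vartheta)\|_{L^2(D)} \leq |D|^{1/2} + C\|n-1\|_{L^2(D)} \leq C(1 + \|n\|_{L^2(D)}),
\]
uniformly in $\vartheta$.

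Plugging this back into the Cauchy--Schwarz bound and using $\|n-1\|_{L^2(D)}\leq |D|^{1/2} + \|n\|_{L^2(D)} \lesssim 1+\|n\|_{L^2}$ gives
\[
|u_n^{\infty}(\hat{x},\vartheta)| \leq C(1+\|n\|_{L^2})(1+\|n\|_{L^2}) \leq C(1+\|n\|_{L^2}+\|n\|_{L^2}^2),
\]
with $C=C(D,\omega,M_0)$. Taking the supremum over $(\hat{x},\vartheta)\in\mathcal{S}^2\times\mathcal{S}^2$ concludes the proof. There is no real obstacle here; the only point requiring care is to ensure that all constants in the application of Lemma \ref{GScabnd} depend only on $D,\omega,M_0$ (which they do, since the hypothesis $0\leq n\leq M_0$ with $\mathrm{supp}(1-n)\subset D$ is exactly what Lemma \ref{GScabnd} assumes), so that the final constant $C$ is of the stated form.
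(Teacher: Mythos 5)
Your proof is correct and follows essentially the same route as the paper: both start from the far-field representation \eqref{ffpre}, apply Cauchy--Schwarz, and control $\|u_n^s(\cdot,\vartheta)\|_{L^2(D)}$ via Lemma \ref{GScabnd} applied to \eqref{Scat} with source $f=\omega^2(n-1)u^i$, obtaining a bound linear in $\|n-1\|_{L^2}$ and hence the stated quadratic bound. The only cosmetic difference is that the paper passes through the $(H^1(D))^*$ norm of $n-1$ before relaxing to $L^2$, whereas you go directly to $L^2$; both are valid and yield the same constant dependence.
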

\begin{proof}
Let $f = {\omega}^2(n-1)u^i$ and $w = u_n^s$, the above lemma \ref{GScabnd} implies
\begin{equation}\label{usbnd}
    \norm{u_n^s(\cdot,\vartheta)}_{H^1(B_R)} \leq C\norm{n-1}_{(H^1(D))^{*}}
\end{equation}
uniformly in $\vartheta \in \mathcal{S}^2$ for $C = C(D,{\omega},M_0)$.
Using (\ref{usbnd}) and (\ref{ffpre}), we have 
\begin{equation*}
    \begin{aligned}
       \abs{u_n^{\infty}(\hat{x},\vartheta)} &\leq \frac{{\omega}^2}{4\pi} \norm{n-1}_{L^2}(\abs{D}+\norm{u_n^s(\cdot,\vartheta)}_{L^2(D)}) \\
       &\leq C(\norm{n-1}_{L^2} + \norm{n-1}_{L^2}^2)\\
       &\leq C(1 + \norm{n}_{L^2} + \norm{n}_{L^2}^2)
    \end{aligned}
\end{equation*}
for $C = C(D,{\omega},M_0)$, that is,
\begin{equation*}
    \norm{u_n^{\infty}}_{\infty} \leq C(1 + \norm{n}_{L^2} + \norm{n}_{L^2}^2).
\end{equation*}
\end{proof}
\begin{lemma}\label{Scralip}
\begin{equation} \label{Scalip}
    \norm{u_{n_1}^{\infty}-u_{n_2}^{\infty}}_{\infty} \leq C(1 + \norm{n_1}_{L^2}^2\vee\norm{n_2}_{L^2}^2)\norm{n_2 - n_1}_{L^2}
\end{equation}
for $C = C(D,{\omega},M_0)$.
\end{lemma}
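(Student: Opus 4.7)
The plan is to exploit the explicit integral representation of the far field pattern in \eqref{ffpre} together with a telescoping decomposition, and then close the loop using the resolvent estimate of Lemma \ref{GScabnd}. More precisely, writing $u_k := u^i + u^s_{n_k}$ for $k=1,2$, formula \eqref{ffpre} gives
\begin{equation*}
  u^{\infty}_{n_1}(\hat x,\vartheta) - u^{\infty}_{n_2}(\hat x,\vartheta)
  = \frac{\omega^2}{4\pi}\int_D e^{-\im\omega\hat x\cdot y}\Big[(n_1-n_2)(y)\,u_1(y,\vartheta)
  + (n_2-1)(y)\,(u_1-u_2)(y,\vartheta)\Big]dy,
\end{equation*}
uniformly in $(\hat x,\vartheta)\in \mathcal{S}^2\times \mathcal{S}^2$. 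I would first treat the $L^\infty$ bound on $u^{\infty}_{n_1}-u^{\infty}_{n_2}$ pointwise in $(\hat x,\vartheta)$ via Cauchy--Schwarz, splitting into the two contributions above.

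For the first contribution, the exponential factor is bounded, and \eqref{usbnd} from the proof of Lemma \ref{Scrabnd} yields $\Vert u_1(\cdot,\vartheta)\Vert_{L^2(D)} \le \Vert u^i\Vert_{L^2(D)} + \Vert u^s_{n_1}(\cdot,\vartheta)\Vert_{L^2(D)} \le C(1+\Vert n_1\Vert_{L^2})$, so this contribution is at most $C(1+\Vert n_1\Vert_{L^2})\Vert n_1-n_2\Vert_{L^2}$. For the second contribution, the factor $\Vert n_2-1\Vert_{L^2}\le C(1+\Vert n_2\Vert_{L^2})$ is immediate, and the real work is to estimate $u_1-u_2 = u^s_{n_1}-u^s_{n_2}$. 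Subtracting the two copies of \eqref{Scat} shows that $u^s_{n_1}-u^s_{n_2}$ solves the problem \eqref{GScat} with coefficient $n_1$ and source $f := \omega^2(n_1-n_2)u_2$.

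Applying Lemma \ref{GScabnd} to this equation yields $\Vert u^s_{n_1}-u^s_{n_2}\Vert_{H^1(B_R)} \le C\Vert (n_1-n_2)u_2\Vert_{H^{-1}(B_R)}$. The step I expect to be the main technical obstacle is the $H^{-1}$ bound on the product $(n_1-n_2)u_2$, since $u_2$ is only $H^1(B_R)$ and therefore not a priori in $L^{\infty}$. Here I would exploit the three-dimensional Sobolev embedding $H^1(B_R)\hookrightarrow L^6(B_R)$ (equivalently, its dual $L^{6/5}(B_R)\hookrightarrow H^{-1}(B_R)$): for any $\phi \in H^1(B_R)$, H\"older's inequality with exponents $(2,6,3)$ gives
\begin{equation*}
  \Big|\int_{B_R}(n_1-n_2)u_2\,\bar\phi\,dy\Big|
  \le \Vert n_1-n_2\Vert_{L^2}\Vert u_2\Vert_{L^6}\Vert\phi\Vert_{L^3}
  \le C\Vert n_1-n_2\Vert_{L^2}\Vert u_2\Vert_{H^1}\Vert\phi\Vert_{H^1},
\end{equation*}
so $\Vert (n_1-n_2)u_2\Vert_{H^{-1}(B_R)} \le C\Vert n_1-n_2\Vert_{L^2}\Vert u_2\Vert_{H^1(B_R)}$. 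Combining with $\Vert u_2\Vert_{H^1(B_R)}\le C(1+\Vert n_2\Vert_{L^2})$ (again from \eqref{usbnd}), the second contribution is bounded by $C(1+\Vert n_2\Vert_{L^2})^2\Vert n_1-n_2\Vert_{L^2}$.

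Collecting both contributions gives a bound of the form $C(1+\Vert n_1\Vert_{L^2}+\Vert n_2\Vert_{L^2}^2)\Vert n_1-n_2\Vert_{L^2}$, which is dominated by $C(1+\Vert n_1\Vert_{L^2}^2\vee\Vert n_2\Vert_{L^2}^2)\Vert n_1-n_2\Vert_{L^2}$ up to a universal constant, completing the proof. The entire argument only uses $D,\omega,M_0$ through the implicit constants of Lemma \ref{GScabnd} and the Sobolev embedding on $B_R$, which matches the claimed dependence $C=C(D,\omega,M_0)$.
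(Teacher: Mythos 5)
Your proposal is correct, and its overall architecture coincides with the paper's: the same telescoping decomposition of $(n_1-1)u_1-(n_2-1)u_2$ under the far-field integral \eqref{ffpre}, the same observation that $u^s_{n_1}-u^s_{n_2}$ solves \eqref{GScat} with source $\omega^2(n_1-n_2)$ times a total field, and the same appeal to Lemma \ref{GScabnd} and to \eqref{usbnd} to close the estimate. The one place where you genuinely diverge is the dual-norm bound on the source: the paper estimates $\norm{(n_2-n_1)u^s_{n_1}}_{(H^1)^*}\le \norm{n_2-n_1}_{\infty}\norm{u^s_{n_1}}_{L^2}$, so its intermediate inequality \eqref{usstab} carries $\norm{n_2-n_1}_{\infty}$, and the final display then silently replaces this by $\norm{n_2-n_1}_{L^2}$ — a substitution that does not follow in general, since the sup norm is not controlled by the $L^2$ norm. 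Your H\"older $(2,6,3)$ argument combined with the three-dimensional embedding $H^1(B_R)\hookrightarrow L^6(B_R)$ delivers $\norm{(n_1-n_2)u_2}_{(H^1(B_R))^*}\lesssim \norm{n_1-n_2}_{L^2}\norm{u_2}_{H^1}$ directly, which is exactly what the stated $L^2$-Lipschitz bound requires; this is the cleaner route and repairs the paper's step. The only caveat you inherit from the paper is the implicit uniformity in $n$ of the constant in Lemma \ref{GScabnd}, which both arguments use as a black box.
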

\begin{proof}
Using lemma \ref{GScabnd} with $w = u_{n_1}^s-u_{n_2}^s$ and $f = {\omega}^2(n_2- n_1)u_{n_1}^s + {\omega}^2(n_2- n_1)u^i$, we deduce that
\begin{equation*}
    \begin{aligned}
        \norm{u_{n_1}^s-u_{n_2}^s}_{H^1(B_R)}&\leq C\norm{{\omega}^2(n_2- n_1)u_{q_1}^s + {\omega}^2(n_2- n_1)u^i}_{(H^1(D))^{*}}\\
        &\leq C(\norm{n_2 - n_1}_{\infty}\norm{u_{n_1}^s}_{L^2(D)} + \norm{n_2 - n_1}_{\infty}).
    \end{aligned}
\end{equation*}
Then, applying (\ref{usbnd}) to the above inequality gives
\begin{equation}\label{usstab}
    \norm{u_{n_1}^s-u_{n_2}^s}_{H^1(B_R)}\leq C(1+\norm{n_1}_{L^2})\norm{n_2 - n_1}_{\infty}.
\end{equation}
for $C = C(D,{\omega},M_0)$.
From the expression of $u_q^{\infty}$, we have
\begin{equation*}
    \begin{aligned}
      \abs{u_{n_1}^{\infty}-u_{n_2}^{\infty}} &\leq
      \frac{{\omega}^2}{4\pi}\int_D\abs{(n_2-n_1)e^{\im{\omega}\vartheta\cdot y} +(n_2 - n_1)u_{n_1}^s - (n_2-1)(u_{n_1}^s-u_{n_2}^s)}dy\\
      &\leq \frac{{\omega}^2}{4\pi}\left[(\abs{D} + \norm{u_{n_1}^s}_{L^2(D)})\norm{n_2 - n_1}_{L^2} + \norm{n_2}_{L^2}\norm{u_{n_1}^s-u_{n_2}^{s}}_{L^2(D)}\right].
    \end{aligned}
\end{equation*}
Using (\ref{usbnd}) and (\ref{usstab}) to the above inequality, we finally have
\begin{equation*}
    \begin{aligned}
      \abs{u_{n_1}^{\infty}-u_{n_2}^{\infty}}
      &\leq C(1 + \norm{n_1}_{L^2}+\norm{n_2}_{L^2}+\norm{n_1}_{L^2}\norm{n_2}_{L^2})\norm{n_2 - n_1}_{L^2}
    \end{aligned}
\end{equation*}
for $C = C(D,{\omega},M_0)$, that is
\begin{equation*}
    \norm{u_{n_1}^{\infty}-u_{n_2}^{\infty}}_{\infty} \leq C(1 + \norm{n_1}_{L^2}^2\vee\norm{n_2}_{L^2}^2)\norm{n_2 - n_1}_{L^2}.
\end{equation*}
Next, we recall the following stability estimate
\begin{lemma}\label{Scrastab}
    Let $t>3/2, M>0,$ and $0<\delta<\frac{2t-3}{2t+3}$ be given constants. Assume that $1-n \in H^t(\mathbb{R}^3)$ satisfying $\norm{1-n_j}_{H^t(R^3)}\leq M$ and supp$(1-n)\subset D$, $j=1,2$. Then 
    \begin{equation*}
        \norm{n_1-n_2}_{L^{\infty}(D)} \leq C[-\ln^{-}(\norm{u_{n_1}^{\infty}-u_{n_2}^{\infty}}_{L^2(\mathcal{S}^2\times\mathcal{S}^2)})]^{-(\frac{2t-3}{2t+3}-\delta)},
    \end{equation*}
    where $C = C(D,t,{\omega},M,\varepsilon)$ and 
    \[\ln^{-}(z)=\left\{\begin{array}{ll}
       \ln(z)  &  \text{if\ } z\leq e^{-1},\\
       -1  &  \text{otherwise.\ }
    \end{array}\right.\]
\end{lemma}
\end{proof}
Then we prove Theorem \ref{mainScra} and Corollary \ref{CoroScra}.
\begin{proof}[Proof of Theorem \ref{mainScra}]
We verify Conditions \ref{loosecondreg} and \ref{loosecondstab} for $\mathcal{G}$ as defined in (\ref{forwardmapScra}) with $\mathcal{R} = C^{\alpha}(D)$.
\par
\textbf{Condition \ref{loosecondreg}}: by Lemmas \ref{Scrabnd} and \ref{Scralip}, combined with Lemma 29 in \cite{IntroNonLinear_nickl2020convergence}, we have
\begin{equation*}
    \norm{\mathcal{G}(\theta)}_{\infty} \leq  C, \quad \forall \theta \in B_{\mathcal{R}}(M)
\end{equation*}
\begin{equation*}
    \norm{\mathcal{G}(\theta_1)-\mathcal{G}(\theta_2)}_{L^2_{\lambda}(\mathcal{S}^2\times\mathcal{S}^2,\mathbb{R}^2)} \leq C\norm{\theta_2 - \theta_1}_{L^2(D)}, \quad \forall \theta_1,\theta_2 \in B_{\mathcal{R}}(M)
\end{equation*}
for $C = C(D,{\omega},M_0)$, which together verify Condition \ref{loosecondreg}.
\par
\textbf{Condition \ref{loosecondstab}}: Using Lemma \ref{Scrastab} with $t=\alpha$ and $0<\delta< a(\alpha): =\frac{2\alpha-3}{2\alpha+3}$, we deduce that
\begin{equation*}
    \mexp{-C{\norm{n_{\theta}-n_0}_{L^{\infty}(D)}^{\frac{1}{-a(\alpha)+\delta}}}} \leq \norm{u_{n_{\theta}}^{\infty}-u_{n_0}^{\infty}}_{L^2(\mathcal{S}^2\times\mathcal{S}^2)},\quad \forall \theta \in B_{\mathcal{R}}(M).
\end{equation*}
for $C = C(D,{\omega},\delta,M)$.
\par
Above all, we apply Theorem \ref{finalthmsvZ} and obtain
\begin{equation*}
    P_{\theta_0}^{(N)}\hat{Q}\mexp{-C{\norm{n_{\theta}-n_0}_{L^{\infty}(D)}^{\frac{1}{-a(\alpha)+\delta}}}}\lesssim \varepsilon_N^{2}\log N.
\end{equation*}
for $C = C(D,{\omega},\delta,B)$.
\end{proof}
\begin{proof}[Proof of Corollary \ref{CoroScra}]
We first denote that
\[L(\theta,\theta_0) = \mexp{-C{\norm{n_{\theta}-n_0}_{L^{\infty}(D)}^{\frac{1}{-a(\alpha)+\delta}}}},\]
where the right side of the equation is as in the result of Theorem \ref{mainScra}.
Apply Markov’s inequality to the result of Theorem \ref{mainScra}, we have
\begin{equation*}
P_{\theta_0}^{(N)}\hat{Q}\bbra{L(\theta,\theta_0)>\varepsilon_N^{2}\log^2 N}\leq \frac{P_{\theta_0}^{(N)}\hat{Q}L(\theta,\theta_0)}{\varepsilon_N^{2}\log^2 N}\lesssim \frac{1}{\log N} \rightarrow 0.
\end{equation*}
Plugging in the explicit forms of $\varepsilon_N$ and $L(\theta,\theta_0)$, we have
\begin{equation*}
P_{\theta_0}^{(N)}\hat{Q}\bbra{\norm{n_{\theta}-n_0}_{L^{\infty}(D)}>C(\log N)^{-\frac{2\alpha-3}{2\alpha+3}+\delta}} \rightarrow 0
\end{equation*}
for any $0<\delta<\frac{2\alpha-3}{2\alpha+3}$ and $C=C(\alpha,D,{\omega},\delta,B).$
\end{proof}
{\color{black}\section{Discussion about computation} \label{sec:computation} 
In this section, we will discuss how to compute the Gaussian mean-field family $\mathcal{Q}_G^J$ we proposed in \eqref{GMF} of the main text. We first note that Theorem \ref{finalthmsv} stated in the main text is not restricted to the specific Gaussian mean-field family $\mathcal{Q}_G^J$ proposed in this work. In fact, $\mathcal{Q}_G^J$ serves only as a sufficiently rich variational class for which the approximation error $\gamma_N^2$ can be controlled by the convergence rate $\varepsilon_N^2$. Theorem \ref{finalthmsv} is applicable to any variational class $\mathcal{Q}$ that contains $\mathcal{Q}_G^J$. In the context of inverse problems, many applications of variational inference methods fall within the scope of our theorem. More generally, in our view, there are two main research directions for variational inference methods in inverse problems:
\begin{enumerate}
\item \emph{Parameterizing measures directly:} This approach consists of directly parameterizing the approximating measure and optimizing the associated parameters to approximate the posterior distribution. For example, when restricting attention to Gaussian approximations, a novel Robbins--Monro algorithm was developed from a calculus-of-variations perspective \cite{pinski2015algorithms,pinski2015kullback}. Under the classical mean-field assumption, a general variational inference framework in separable Hilbert spaces was proposed in \cite{Jia2023JMLR,sui2024non}.
\item \emph{Parameterizing transformations:} By parameterizing a transformation, a simple reference measure can be mapped to a more complex measure in order to approximate the posterior distribution. The optimization of the transformation parameters then drives the approximation process. For example, the infinite-dimensional Stein variational gradient descent method introduced in \cite{jia2022stein} can be viewed as a function-space particle optimization approach with rigorous mathematical foundations in separable Hilbert spaces. Moreover, the normalizing flow based variational inference algorithm developed in \cite{zhao2025functionalnormalizingflowstatistical} maps Gaussian measures to complex distributions via functional normalizing flows.
\end{enumerate}

Variational inference methods introduced in \cite{pinski2015algorithms,pinski2015kullback,jia2022stein,zhao2025functionalnormalizingflowstatistical} fall within the scope of our theorem, since their variational classes contain $\mathcal{Q}_G^J$. Moreover, these methods are computationally tractable, as demonstrated by the numerical examples provided in the corresponding works. Specifically, in \cite{pinski2015algorithms} and \cite{pinski2015kullback}, the posterior is approximated by Gaussian distributions that are equivalent to the prior. With the wavelet prior $\Pi_N$ considered in our work, their variational family is broader than $\mathcal{Q}_G^J$. 
In \cite{jia2022stein,zhao2025functionalnormalizingflowstatistical}, the prior $\mu_0$ is transformed through a nonlinear mapping $T$ to approximate the posterior. This approach gives rise to the specific forms of their variational sets $\mathcal{Q}$:
\[
    \mathcal{Q} = \set{\nu | \nu = \mu_0 \circ T^{-1}, T\text{ is a restricted transform}}.
\]
The Functional Normalizing Flow introduced in \cite{zhao2025functionalnormalizingflowstatistical} allows the transformation $T$ to be decomposed into a sequence of mappings of the form $I + \varphi_n$, where each $\varphi_n$ takes values in a finite-dimensional subspace of the reproducing kernel Hilbert space (RKHS) induced by the prior. From the explicit expression of $\varphi_n$ given in \cite{zhao2025functionalnormalizingflowstatistical}, the composed transformation $T$ can represent linear mappings. Therefore, with the prior $\Pi_N$ considered in our work, the variational set in \cite{zhao2025functionalnormalizingflowstatistical} contains $\mathcal{Q}_G^J$.
Similarly for the iSVGD proposed in \cite{jia2022stein}, the transformation $T$ can also be decomposed into a series of transformation with form $I + \varphi_n$ with $\varphi_n$ lies in some RKHS. In principle, the composed mapping $T$ contains the linear mapping, which ensures that the variational set contains $\mathcal{Q}_G^J$ as a special case \cite{guella2022operator}. The mean-field assumptions considered in \cite{Jia2023JMLR,sui2024non} mainly concern hyperparameters in the prior and noise models, rather than the components of the unknown parameters themselves. Consequently, the approaches in \cite{Jia2023JMLR,sui2024non} can hardly be directly incorporated into the current theoretical framework. How to generalize the present framework to accommodate settings involving hyper-parameters remains an interesting direction for future research.
\par
As for the Gaussian mean-field family 
$\mathcal{Q}_G^J$ proposed in our work, although to our knowledge there may be no existing numerical results applying it to inverse problems of partial differential equations, this type of variational family is widely adopted in neural network contexts, and its computational methods have been extensively studied \cite{NIPS2011_7eb3c8be,pmlr-v119-swiatkowski2020a,NEURIPS2022_8ea50bf4,NEURIPS2020_310cc7ca}. Following the work \cite{NIPS2011_7eb3c8be}, we give some discussion about how we compute the variational set $\mathcal{Q}_G^J$. First, we introduce a linear operator
\begin{equation*}
    \Psi (\tilde{\theta}) =  \sum_{l=-1}^{\infty}\sum_{r\in R_l}\tilde{\theta}_{lr}\chi\psi_{lr}
\end{equation*}
defined on $$\tilde{\Theta}_{\alpha}=\set{\tilde{\theta}= (\tilde{\theta}_{lr}): \tilde{\theta}_{lr}\in \mathbb{R}, r\in R_l , l \in \mathbb{N}\cup\{-1,0\},  \sum_{l=-1}^{\infty}\sum_{r\in R_l}2^{2l\alpha}\theta_{lr}^2<\infty}.$$
Obviously, $\Psi$ is a linear operator from  $\tilde{\Theta}_{\alpha}$ to $H^{\alpha}$.
Let us define 
$$\tilde{\Pi}_N = \mathop{\bigotimes}_{l=-1}^J\mathop{\bigotimes}_{r\in R_l}N(0,(N\varepsilon_N^2)^{-1}).$$ 
We can see the original prior $\Pi_N = \tilde{\Pi}_N \circ \Psi^{-1}$ which is a pushforward measure of  $\tilde{\Pi}_N$ through the linear operator $\Psi$.
With $\Psi$, we consider independent and identically distributed (i.i.d.) random variables $(Y_i,X_i)_{i=1}^N$ of the following random design regression model
\begin{align*}
    Y_i = \mathcal{G}(\Psi(\tilde{\theta}))(X_i) + \varepsilon_i, \quad \varepsilon_i \mathop{\sim}^{iid} N(0,I_V), \quad i = 1, 2, \dots, N.
\end{align*}
For $\theta_0 \in H^{\alpha}$, assume data $D_N$ are drawn from the model
$$Y_i = \mathcal{G}(\Psi(\tilde{\theta}_0))(X_i) + \epsilon_i$$ where $\tilde{\theta}_0 = (\tilde{\theta}_{0,lr})=(\pdt{\theta_0}{\psi_{lr}}_{L^2(\mathcal{Z})})$.
Instead of considering the optimization problem on measures in function space:
$$ \hat{Q} = \mathop{\mathrm{argmin}}_{Q\in \mathcal{Q}_G^J} D(Q\Vert \Pi(\theta|D_{N})),$$
we turn to a more tractable optimization problem for the computation on a finite dimensional vector space:
$$ \hat{Q}_v = \mathop{\mathrm{argmin}}_{\tilde{Q}\in \tilde{\mathcal{Q}}_G^J} D(\tilde{Q}\Vert \tilde{\Pi}(\tilde{\theta}|D_{N})),$$
where 
$\tilde{\mathcal{Q}}_G^J=\Bigg\{\mathop{\bigotimes}_{l=-1}^J\mathop{\bigotimes}_{r\in R_l}N(\mu_{lr},\sigma_{lr}^2):\mu_{lr} \in \mathbb{R}, \sigma_{lr}^2 \geq 0\Bigg\}$ is a standard Gaussian mean-field family.
We see that the alternative optimization problem is an upper bound of the original: $$\mathop{\mathrm{min}}_{Q\in \mathcal{Q}_G^J} D(Q\Vert \Pi(\theta|D_{N})) \leq \mathop{\mathrm{min}}_{\tilde{Q}\in \tilde{\mathcal{Q}}_G^J} D(\tilde{Q}\Vert \tilde{\Pi}(\tilde{\theta}|D_{N}))$$ because $D(Q\Vert \Pi(\cdot|D_{N})) = D(\tilde{Q} \circ \Psi^{-1}\Vert \tilde{\Pi}(\cdot|D_{N})\circ \Psi^{-1}) \leq D(\tilde{Q}\Vert \tilde{\Pi}(\cdot|D_{N}))$ for any $Q\in \mathcal{Q}_G^J$.
In fact, for the solution $\hat{Q}_v$ of the alternative problem, we can still prove that its pushforward measure $\hat{Q}=\hat{Q}_v \circ \Psi^{-1} $ (with slight abuse of the symbol $\hat{Q}$) shares the same convergence rates as the original one, that are 
\begin{gather*}
      P_{\theta_0}^{(N)}\hat{Q}\Vert \mathcal{G}(\theta)-\mathcal{G}(\theta_0)\Vert _{L^{2}_{\lambda}}^{\frac{2}{p+1}}\lesssim \varepsilon_N^{\frac{2}{p+1}}\log N.\\
      P_{\theta_0}^{(N)}\hat{Q}[F(\Vert f_{\theta} - f_{\theta_0}\Vert )]^{\frac{2}{p+q+1}}\lesssim \varepsilon_N^{\frac{2}{p+q+1}}\log N.
  \end{gather*}
The proof follows the proof of Theorem 3.7 by transferring those results from $\Pi_N$ to $\tilde{\Pi}_N$ through the linear operator $\Psi$.
\par
Now we are going to discuss how to compute the alternative problem. Our regression model can be generalized as 
$$Y_i = h(\tilde{\theta})(X_i) + \epsilon_i, $$
where $h$ is a nonlinear operator from $\mathbb{R}^{d_J}$ to a function space. Our goal is to solve the optimization problem $\hat{Q}_v = \mathop{\mathrm{argmin}}_{\tilde{Q}\in \tilde{\mathcal{Q}}_G^J} D(\tilde{Q}\Vert \tilde{\Pi}(\tilde{\theta}|D_{N}))$. This type of problems widely appears in neural network settings where $h$ is a neural network with nonlinear activation functions and $\tilde{\theta}$ stands for weights of the network \cite{NIPS2011_7eb3c8be}. The computation approaches to this optimization problem have also been well studied. Following the procedure introduced in \cite{NIPS2011_7eb3c8be}, we reinterpreted the KL divergence $D(\tilde{Q}\Vert \tilde{\Pi}(\tilde{\theta}|D_{N}))$ as 
\begin{align*}
    &D(\tilde{Q}\Vert \tilde{\Pi}(\tilde{\theta}|D_{N}))\\ = &\int \log\frac{d\tilde{Q}}{d\tilde{\Pi}(\cdot|D_N)}d\tilde{Q} \\
    =& \int \log \frac{d\tilde{Q}}{d\tilde{\Pi}_N}d\tilde{Q}- \int \sum_{i=1}^{N}\log p_{\tilde{\theta}}(y_i,x_i)d\tilde{Q} +\log\int\prod_{i=1}^Np_{\tilde{\theta}}(y_i,x_i)d\tilde{\Pi}_N\\
    =&D(\tilde{Q}\Vert\tilde{\Pi}_N) -E_{\tilde{Q}}\sum_{i=1}^{N}\log p_{\tilde{\theta}}(y_i,x_i) + I(y_i,x_i).
\end{align*}
Since the integral term $I(y_i,x_i)$ is independent of $\tilde{Q}$, it is sufficient to consider
$$\mathop{\mathrm{min}}_{\tilde{Q}\in \tilde{\mathcal{Q}}_G^J} D(\tilde{Q}\Vert\tilde{\Pi}_N) -E_{\tilde{Q}}\sum_{i=1}^{N}\log p_{\tilde{\theta}}(y_i,x_i).$$
Because $\tilde{Q}_{\boldsymbol{\mu},\boldsymbol{\sigma}^2}$ is parametrized by $\boldsymbol{\mu}=(\mu_{lr})$ and $\boldsymbol{\sigma}^2=(\sigma_{lr}^2)$, we can transform the optimization problem into
$$\mathop{\mathrm{min}}_{\boldsymbol{\mu},\boldsymbol{\sigma}^2} L(\boldsymbol{\mu},\boldsymbol{\sigma^2},D_N) =\mathop{\mathrm{min}}_{\boldsymbol{\mu},\boldsymbol{\sigma}^2}L^C(\boldsymbol{\mu},\boldsymbol{\sigma}^2) + L^E(\boldsymbol{\mu},\boldsymbol{\sigma}^2,D_N),$$where we follow the symbol of \cite{NIPS2011_7eb3c8be} that $$L^C(\boldsymbol{\mu},\boldsymbol{\sigma}^2):= D(\tilde{Q}_{\boldsymbol{\mu},\boldsymbol{\sigma}^2}\Vert\tilde{\Pi}_N), \quad L^E(\boldsymbol{\mu},\boldsymbol{\sigma}^2,D_N):= -E_{\tilde{Q}}\sum_{i=1}^{N}\log p_{\tilde{\theta}}(y_i,x_i).$$ In order to minimize $L(\boldsymbol{\mu},\boldsymbol{\sigma}^2,D_N)$ with gradient descent, we need to derive the gradients of $L^E(\boldsymbol{\mu},\boldsymbol{\sigma}^2,D_N)$ and $L^C(\boldsymbol{\mu},\boldsymbol{\sigma}^2)$ with respect to $\boldsymbol{\mu},\boldsymbol{\sigma}^2$. Derivatives of multivariate Gaussian expectations have the following identities \cite{opper2009variational}:
\begin{gather*}
    \nabla_{\boldsymbol{\mu}}E_{\boldsymbol{a}\sim {N}(\boldsymbol{\mu},\boldsymbol{\Sigma})}[V(\boldsymbol{a})] = E_{\boldsymbol{a}\sim {N}(\boldsymbol{\mu},\boldsymbol{\Sigma})}[\nabla_{\boldsymbol{a}}V(\boldsymbol{a})],\\\nabla_{\boldsymbol{\Sigma}}E_{\boldsymbol{a}\sim {N}(\boldsymbol{\mu},\boldsymbol{\Sigma})}[V(\boldsymbol{a})] = \frac{1}{2}E_{\boldsymbol{a}\sim {N}(\boldsymbol{\mu},\boldsymbol{\Sigma})}[\nabla_{\boldsymbol{a}}\nabla_{\boldsymbol{a}}V(\boldsymbol{a})],
\end{gather*}
where $V$ is an arbitrary function of $\boldsymbol{a}$. Differentiating $L^E(\boldsymbol{\mu},\boldsymbol{\sigma}^2,D_N)$ and applying these identities yields
\begin{align*}
    \frac{\partial L^E(\boldsymbol{\mu},\boldsymbol{\sigma}^2,D_N)}{\partial \mu_{lr}} =& -E_{\tilde{\mathcal{Q}}}\sum_{i=1}^N  \frac{\partial \log p_{\tilde{\theta}}(y_i,x_i)}{\partial \tilde{\theta}_{lr}}  = E_{\tilde{\mathcal{Q}}}\sum_{i=1}^N \frac{1}{2} \frac{\partial \abs{y_i-\mathcal{G}(\Psi(\tilde{\theta}))(x_i)}_V^2}{\partial \tilde{\theta}_{lr}} \\ =& E_{\tilde{\mathcal{Q}}}\sum_{i=1}^N \pdt{\mathcal{G}(\Psi(\tilde{\theta}))(x_i)-y_i}{D\mathcal{G}_{\tilde{\theta}}(\Psi_{lr}(\tilde{\theta}_{lr}))(x_i)}_V,\\
    \frac{\partial L^E(\boldsymbol{\mu},\boldsymbol{\sigma}^2,D_N)}{\partial \sigma^2_{lr}} = & -\frac{1}{2} E_{\tilde{\mathcal{Q}}}\sum_{i=1}^N \frac{\partial^2 \log p_{\tilde{\theta}}(y_i,x_i)}{\partial \tilde{\theta}_{lr}^2} \approx \frac{1}{2}E_{\tilde{\mathcal{Q}}}\sum_{i=1}^N  \left(\frac{\partial \log p_{\tilde{\theta}}(y_i,x_i)}{\partial \tilde{\theta}_{lr}}\right)^2 \\
    \approx & \frac{1}{2}E_{\tilde{\mathcal{Q}}}\sum_{i=1}^N \pdt{\mathcal{G}(\Psi(\tilde{\theta}))(x_i)-y_i}{D\mathcal{G}_{\tilde{\theta}}(\Psi_{lr}(\tilde{\theta}_{lr}))(x_i)}_V^2,
\end{align*}
where $\Psi_{lr}(\tilde{\theta}_{lr}) = \chi \tilde{\theta}_{lr}\psi_{lr}$, $D\mathcal{G}_{\tilde{\theta}}$ denotes the Fréchet derivative of $\mathcal{G}$ at $\Psi{(\tilde{\theta})}$ and the approximation comes from substituting the diagonal of the empirical Fisher information matrix for the diagonal of the Hessian. The expectation with respect to $\tilde{Q}$ can be calculated by applying Monte-Carlo integration. Since the prior ${\tilde{\Pi}_N}=\mathop{\bigotimes}_{l=-1}^J\mathop{\bigotimes}_{r\in R_l}N(0,(N\varepsilon_N^2)^{-1})$, we have
\begin{align*}
    &L^C(\boldsymbol{\mu},\boldsymbol{\sigma}^2) = \sum_{l=-1}^J\sum_{r\in R_l} -\log {\sqrt{N\varepsilon_N^2}\sigma_{lr}} + \frac{N\varepsilon_N^2}{2}[\mu_{lr}^2 + \sigma_{lr}^2 - (N\varepsilon_N^2)^{-1}]\\
    &\frac{\partial L^C(\boldsymbol{\mu},\boldsymbol{\sigma}^2)}{\partial \mu_{lr}} = N\varepsilon_N^2\mu_{lr},\qquad \frac{\partial L^C(\boldsymbol{\mu},\boldsymbol{\sigma}^2)}{\partial {\sigma}_{lr}^2} = \frac{1}{2}\left(N\varepsilon_N^2 - \frac{1}{{\sigma}_{lr}^2}\right).
\end{align*}
In general, we can compute the gradient as
\begin{align*}
    &\frac{\partial L(\boldsymbol{\mu},\boldsymbol{\sigma^2},D_N)}{\partial \mu_{lr}} = N\varepsilon_N^2\mu_{lr} + \frac{1}{S}\sum_{k=1}^{S}\sum_{i=1}^N \pdt{\mathcal{G}(\Psi(\tilde{\theta}^{(k)}))(x_i)-y_i}{D\mathcal{G}_{\tilde{\theta}}(\Psi_{lr}(\tilde{\theta}_{lr}^{(k)}))(x_i)}_V\\
    &\frac{\partial L(\boldsymbol{\mu},\boldsymbol{\sigma^2},D_N)}{\partial {\sigma}_{lr}^2} = \frac{1}{2}\left(N\varepsilon_N^2 - \frac{1}{{\sigma}_{lr}^2}\right) +\frac{1}{2S}\sum_{k=1}^{S}\sum_{i=1}^N \pdt{\mathcal{G}(\Psi(\tilde{\theta}^{(k)}))(x_i)-y_i}{D\mathcal{G}_{\tilde{\theta}}(\Psi_{lr}(\tilde{\theta}_{lr}^{(k)}))(x_i)}_V^2,
\end{align*}
where $\tilde{\theta}^{(k)}$ are drawn independently from $\tilde{Q}$. Then, we can compute the optimization problem $\hat{Q}_v = \mathop{\mathrm{argmin}}_{\tilde{Q}\in \tilde{\mathcal{Q}}_G^J} D(\tilde{Q}\Vert \tilde{\Pi}(\tilde{\theta}|D_{N}))$ with gradient descent.}

{\color{black} We note that the number of parameters in our optimization problem grows with the truncation level $J$. Given that an increasing sample size $N$ necessitates a higher truncation level $J$ to maintain theoretical convergence,  we are here to explain the reason why our computation is scalable respect to the parameters dimension \cite{ghattas_learning_2021,bui-thanh_computational_2013}, which ensures a computational advantage in practice compared to standard function-space MCMC methods as $N$ grows. Since the computation cost of the gradients $\frac{\partial L(\boldsymbol{\mu},\boldsymbol{\sigma^2},D_N)}{\partial \mu_{lr}}, \frac{\partial L(\boldsymbol{\mu},\boldsymbol{\sigma^2},D_N)}{\partial {\sigma}_{lr}^2}$  is dominated by solving the PDEs, it is standard practice to assess the scalability of sampling algorithms in terms of the number of PDE solves \cite{amestoy_exploiting_2019,jia2022stein,ghattas_learning_2021,bui-thanh_computational_2013}. {Therefore,  we focus on the computation of the terms $\mathcal{G}(\Psi(\tilde{\theta}^{(k)}))$ and $D\mathcal{G}_{\tilde{\theta}}(\Psi_{lr}(\tilde{\theta}_{lr}^{(k)}))$} (the index $k$ is omitted in the following analysis for simplicity).
\begin{enumerate}[label=(\alph*)]
    \item The first term, $\mathcal{G}(\Psi(\tilde{\theta}))$, requires a single PDE solve, { so its computation is scalable with $J$}.
    \item The second term, $D\mathcal{G}_{\tilde{\theta}}(\Psi_{lr}(\tilde{\theta}_{lr}))$, is the Fréchet derivative applied to each $\Psi_{lr}(\tilde{\theta}_{lr})$, Since $D\mathcal{G}_{\tilde{\theta}}$ is a linear operator, we can compute $D\mathcal{G}_{\tilde{\theta}}(\Psi_{lr}(\tilde{\theta}_{lr}))$ simultaneously using a single factorization of the stiffness matrix obtained from the finite element method; {consequently, the overall computation is scalable with $J$.}
\end{enumerate}
We see that our computation is scalable respect to the parameters dimension, since the number of PDE solves required is independent with $J$. We illustrate (b) with a concrete example. Consider a following Darcy flow problem as in \cite{IntroNonLinear_nickl2023bayesian}:
\begin{align*}
    \left\{\begin{aligned}
    &\nabla\cdot (e^{\theta} \nabla u) = g \quad \mbox{on } \mathcal{X},\\
    &u = 0 \quad \mbox{on } \partial\mathcal{X},
    \end{aligned}\right.  
\end{align*}
with the PDE operator $\mathcal{G}(\theta) = u$.
Then, the derivative $v_{lr}:=D\mathcal{G}_{\tilde{\theta}}(\Psi_{lr}(\tilde{\theta}_{lr}))$ solves the equation:
\begin{align*}
    \left\{\begin{aligned}
    &\nabla\cdot (e^{\Psi(\tilde{\theta})} \nabla v_{lr}) = -\nabla\cdot (e^{\Psi(\tilde{\theta})}\Psi_{lr}(\tilde{\theta}_{lr})\nabla u_{\tilde{\theta}} ) \quad \mbox{on } \mathcal{X},\\
    & v_{lr} = 0 \quad \mbox{on } \partial\mathcal{X}.
    \end{aligned}\right.  
\end{align*}
where $u_{\tilde{\theta}}= \mathcal{G}(\Psi(\tilde{\theta}))$. Using a finite element basis $\set{\phi_1,\phi_2,\dots,\phi_n}$ on $\mathcal{X}$, Solving for $v_{lr}$ reduces to a system of linear equations:
\begin{align}\label{AX=B}
     A[\textbf{v}_{1,1},\textbf{v}_{1,2},\dots,\textbf{v}_{J,c2^{Jd}}]=[\textbf{F}_{1,1},\textbf{F}_{1,2},\dots,\textbf{F}_{J,c2^{Jd}}]
\end{align}
where $\textbf{v}_{l,r}$ is the coefficient vector of $v_{lr}$ corresponding to the finite element basis, and
\[
    \textbf{F}_{l,r} = \big(\int_{\mathcal{X}} \nabla\cdot (e^{\Psi(\tilde{\theta})}\Psi_{lr}(\tilde{\theta}_{lr})\nabla u_{\tilde{\theta}} )\phi_idx\big)_{i=1}^n,\quad A_{ij} = \int_{\mathcal{X}} \nabla \phi_{i}\cdot (e^{\Psi(\tilde{\theta})}\nabla \phi_{j})dx.
\]
The computation cost of obtaining all $v_{lr}$ is dominated by the factorization of matrix $A$ that is uniform for all $l,r$ \cite{duff_direct_2017}. Explicitly, $T_{\rm total}= T_{\rm factor} + cJ2^{Jd}\cdot T_{\rm solve}$ where $T_{\rm solve}$ is the solving time for a single right-hand side (RHS) vector, with $T_{\rm solve}\ll T_{\rm factor}$. Moreover, solving for all $v_{lr}$ simultaneously can be handled as a matrix $[\textbf{v}_{1,1},\textbf{v}_{1,2},\dots,\textbf{v}_{J,c2^{Jd}}]$, which can further reduces the computation cost \cite{amestoy_exploiting_2019}. Hence, the overall computation is scalable with $J$. Moreover, the computation of the matrix equation (\ref{AX=B}) can be readily parallelized, which may further improve computational efficiency.

In conclusion, the computation cost of the high-dimensional parameter optimization problem is scalable respect to the parameters dimension, just as in standard function-space MCMC methods \cite{ghattas_learning_2021}. Furthermore, the variational inference optimization problem benefits from parallel computation and subsampling, resulting in lower time costs compared to function-space MCMC algorithms that often suffer from poor mixing \cite{pinski2015algorithms,pinski2015kullback,jia2022stein,zhao2025functionalnormalizingflowstatistical} (as discussed above, the variational inference methods introduced in \cite{pinski2015algorithms,pinski2015kullback,jia2022stein,zhao2025functionalnormalizingflowstatistical} also fall within the scope of our theorem). Therefore, solving the high-dimensional parameter optimization problem remains computationally advantageous in practice compared to standard function-space MCMC methods as $N$ grows.}
\bibliographystyle{siamplain}
\bibliography{references}

%% or include bibliography directly:
% \begin{thebibliography}{}
% \bibitem{b1}
% \end{thebibliography}